\documentclass[11pt,letterpaper]{amsart}
\usepackage{stmaryrd}
\usepackage{amsfonts}
\usepackage{esint}

\usepackage{amsmath}
\usepackage{amssymb}
\usepackage{latexsym}
\usepackage{amscd}
\usepackage{mathrsfs}

\usepackage{hyperref}
\usepackage{graphicx} 
\usepackage{amsthm}
\usepackage{xypic}
\usepackage{bm}
\usepackage[all]{xy}
\usepackage{color}

\newtheorem{theorem}{Theorem}

\newtheorem{remark}[theorem]{Remark}

\newtheorem{corollary}[theorem]{Corollary}

\newtheorem{definition}[theorem]{Definition}

\newtheorem{lemma}[theorem]{Lemma}
\newtheorem{proposition}[theorem]{Proposition}

\numberwithin{equation}{section} \numberwithin{theorem}{section}

\renewcommand{\oddsidemargin}{5mm}

\def\C{\mathbb C}
\def\R{\mathbb R}

\def\Z{\mathbb Z}

\def\Z{\mathbb Z}
\def\N{\mathbb N}

\def\cal{\mathcal}

\def\na{\nabla}
\def\bn{\overline\nabla}

\def\f#1#2{\frac{#1}{#2}}

\def\a{\alpha}
\def\be{\beta}

\def\r{\Re_{I\!V}}

\def\p#1{\partial #1}

\def\de{\delta}
\def\De{\Delta}
\def\e{\eta}
\def\ep{\epsilon}
\def\vep{\varepsilon}
\def\G{\Gamma}
\def\g{\gamma}
\def\k{\kappa}
\def\la{\lambda}
\def\La{\Lambda}
\def\lan{\langle}
\def\ran{\rangle}

\def\Om{\Omega}
\def\th{\theta}
\def\Th{\Theta}

\def\si{\sigma}
\def\Si{\Sigma}

\def\r{\rho}
\def\z{\zeta}
\def\div{\mathrm{div}}
\def\vth{\vartheta}

\begin{document}

\title
[Hamiltonian stationary Lagrangian currents]
{Hamiltonian stationary Lagrangian currents in Ricci-flat K\"ahler manifolds}
\author{Qi Ding}
\address{Shanghai Center for Mathematical Sciences, Fudan University, Shanghai 200438, China}
\email{dingqi@fudan.edu.cn}

\thanks{The author is partially supported by NSFC (Grant Numbers 12625106, 12371053 and 12526203). The author would like to express his sincere gratitude to Jingyi Chen for valuable discussion.}

\begin{abstract}

In this paper, we investigate Hamiltonian stationary Lagrangian currents with single-valued phases in open sets of Ricci-flat K\"ahler manifolds.
We first establish the relation between phase and mean curvature for a class of integral Lagrangian currents in Ricci-flat K\"ahler manifolds. 
Then we derive several fundamental properties of integral Lagrangian currents with single-valued harmonic phases, including an almost monotonicity formula, an Allard-type regularity theorem, and the structure of limits of currents in this class.
Finally, we apply the above results to study the regularity and rigidity of Hamiltonian stationary Lagrangian graphs in complex Euclidean space under certain conditions. 
\end{abstract}

\maketitle
\tableofcontents

\section{Introduction}

Let $(M,\omega)$ be a Ricci-flat K\"ahler manifold of dimension $n$ with a Riemannian metric $g$ and complex structure $J$ on $M$, and $U$ be an open set of $M$ so that there is a holomorphic $n$-form $\Om$ on $U$ satisfying $(-1)^{n^2/2}\Om\wedge\overline{\Om}=2^{n}\omega^n/n!$.  
For a smooth Lagrangian $L\subset U$,
\begin{equation}\aligned\label{OmThL}
\Omega\big|_L=e^{\sqrt{-1}\th}\mathrm{vol}_L,
\endaligned
\end{equation}
where $\th$ is called \emph{phase} of $L$, and $\mathrm{vol}_L$ is an induced volume form w.r.t. the orientation of $L$.
The Maslov class on $L$ is defined by the 1-form $d\th$, and $L$ has \emph{zero-Maslov} class (in $U$) if and only if $\th$ is single-valued.
Let $\na^L$ denote the Levi-Civita connection of $L$ w.r.t. its induced metric from $(M,g)$,
then the mean curvature $H$ of $L$ satisfies
\begin{equation}\aligned\label{HLJnaTh}
H=J\na^L\th.
\endaligned
\end{equation}
We call $L$ \emph{special Lagrangian} (SL) if and only if its phase $\th$ is a constant, and call $L$ \emph{Hamiltonian stationary Lagrangian} (HSL) if and only if its phase $\th$ is harmonic on $L$.

Special Lagrangian submanifolds are of fundamental importance in the study of mirror symmetry and the Strominger-Yau-Zaslow conjecture \cite{SYZ}. For existence results in certain Calabi-Yau manifolds, we refer to Collins-Jacob-Lin \cite{CJL} and the references therein.
For the construction of SLs in general Calabi-Yau manifolds, it is natural to investigate minimization in Lagrangian homology.
Schoen-Wolfson \cite{SW} developed an existence and regularity
theory for area minimizers among Lagrangian maps. They
proved that each Lagrangian homology group in a compact symplectic 4-manifold is generated by classes representable by Lagrangian Lipschitz maps: these are branched HSL immersions, except at finitely many singular points whose tangent maps are HSL cones in $\C^2$. These cones are now called Schoen-Wolfson cones. Micallef and Wolfson \cite{MW,W} found that there indeed exists an integral homology class in a K3 surface such that the area minimizer among Lagrangian two-spheres representing this class has isolated conical singularities.

In high dimension, the situation becomes more difficult, partly due to the absence of a correspondence between SLs and holomorphic curves that holds	for hyperk\"ahler surfaces.
In fact, it is unknown whether a general Calabi-Yau manifold admits even a single SL. 
Therefore, as a strictly broader class than SLs, HSLs may be regarded as the 'best' representatives of a Hamiltonian isotopy class of Lagrangians, and their study may facilitate a deeper understanding of the family of all Lagrangians (see \S7 in \cite{JLS} by Joyce-Lee-Schoen for further results).

There are numerous HSL examples, e.g., the totally geodesic $\R P^n\subset\C P^n$, the flat tori $\mathbb{S}^1(r_1)\times\cdots\times\mathbb{S}^1(r_n)\subset\C^n$ for constants $r_1,\cdots,r_n>0$, and the corresponding Clifford tori in $\C P^n$ (see Oh \cite{Oh0,Oh1}). There is a large class of smooth HSL surfaces constructed via integrable systems, perturbation and gluing techniques (refer to the bibliography of Chen-Ma \cite{CMa1} for a more comprehensive list of references). 
Recently, Gaia-Orriols-Rivi\`ere \cite{GOR} constructed many HSL surfaces with any finite number of isolated Schoen-Wolfson conical singularities via variational methods. 

The regularity theory for HSLs plays a critical role in the study of their existence in almost K\"ahler manifolds.
Nevertheless, it is difficult to establish the regularity of HSLs under weak conditions, partly because they are high codimensional and governed by a fourth-order PDE. 
In \cite{CW}, Chen-Warren proved a Morrey-type theorem: If a $C^1$ Lagrangian submanifold in $\C^n$ is a critical point of the volume functional under Hamiltonian variations, then it must be real analytic.
The ambient space $\C^n$ can be weakened to abitrary almost K\"ahler manifold by Bhattacharya-Chen-Warren \cite{BCW} (see \cite{BS} for further results).
Moreover, Chen-Warren \cite{CW} first studied the regularity of weak solutions to \emph{geometric Hamiltonian stationary equation}
\begin{equation}\aligned\label{HSL}
\int_L\left\lan\na^L\th,\na^L\phi\right\ran=0\qquad \mathrm{for\ any\ } \phi\in C^\infty_c(\mathbb{U}\times\R^n),
\endaligned
\end{equation}
where $L\subset\C^n$ is a countably $n$-rectifiable Lagrangian graph over an open subset $\mathbb{U}$ of $\R^n$, and the phase $\th\in W^{1,2}(\mathbb{U})$.

Minicozzi \cite{M} successfully established the compactness and smoothness of minimizers for Willmore energy amongst all embedded Lagrangian tori in $\C^2$. Nevertheless, significant challenges remain in studying the compactness and regularity of volume minimizers within Lagrangian homology classes in Ricci-flat K\"ahler manifolds. One of the main difficulties stems from the fact that the $L^2$ integral of mean curvatures of Lagrangian minimizers for volume may be infinite.

Since the volume form $\mathrm{vol}_L$ in \eqref{OmThL} defines an orientation, we consider the class of integral Lagrangian currents in Ricci-flat K\"ahler manifolds within the framework of geometric measure theory, as a natural generalization of orientable smooth Lagrangian submanifolds. In general, an orientable immersed Lagrangian submanifold can locally decompose into embedded components with smooth phases, but its phase function does not belong to $W^{1,1}_{\mathrm{loc}}$ and is merely of locally bounded variation. Accordingly, we consider phases of bounded variation on varifolds associated with integral Lagrangian currents, which locally decompose into components whose phases live in $W^{1,q}$ for some $q>1$.

Cheeger \cite{Ch} and Cheeger-Colding \cite{CCo3} showed that the differential of Lipschitz functions can be well-defined a.e. on rectifiable metric spaces. This enables us to define Cheeger $q$-energy for (multi-valued) phases of integral Lagrangian currents through approximation by Lipschitz functions for each constant $q>1$.
Let $U$ be an open set of the Ricci-flat K\"ahler manifold $(M^n,\omega)$ so that there is a holomorphic $n$-form $\Om$ on $U$ satisfying $(-1)^{n^2/2}\Om\wedge\overline{\Om}=2^{n}\omega^n/n!$,
and $T$ be an integral Lagrangian current in $U$ with phase $\th$ having bounded Cheeger $q$-energy and with zero boundary $\p T$ in $U$.
In general, the Sobolev space $W^{1,q}$ on $L$ (w.r.t. $\mu_T$) is not reflexive. However, we can overcome this difficulty by a calibration argument, and prove that the generalized mean curvature $H$ of the varifold $|T|$ satisfies  (see Theorem \ref{thW12-H})
\begin{equation}\aligned\label{HJnaLth*}
H=J\na^L\th\qquad\mu_T-a.e..
\endaligned
\end{equation}
It's worth noting that every Schoen-Wolfson cone has locally bounded Cheeger $q$-energy of its phase for every $q\in(1,2)$, but has unbounded Cheeger energy (i.e., the case of $q=2$) in any small neighborhood of its vertex. In this situation, the Cheeger energy for $n=2$ is the so-called Willmore energy.

Let $T_i$ be a sequence of integral Lagrangian currents in $U$ with phases $\th_i$ and uniformly bounded mass such that each $\th_i$ has bounded variation on $L_i:=\mathrm{spt}T_i$ w.r.t. $\mu_{T_i}$ (see \eqref{DEF-naphi} for the definition) and the generalized mean curvature $H_i=J\na^{L_i}\th_i$ of $|T_i|$ is $L^q$-integrable for some $q>1$. Here, the condition on $\th_i$ is strictly weaker than requiring uniformly bounded Cheeger $q$-energy (see \eqref{HJnaLth*} and \eqref{thnaLiejthX}).
If $|T_i|$ converges to a varifold $V_*$ with support $L_*$ and $e^{2\sqrt{-1}\th_i}$ converges in the sense of measure to a function $\z$, then
we can prove that $\z$ has bounded variation in $U$ w.r.t. the Radon measure $\mu_{V_*}$ such that the mean curvature $H_*$ of $V_*$ satisfies (see Theorem \ref{zinftyJxiLVinfty})
\begin{equation}\aligned
H_*=-\f{\sqrt{-1}}{2}\bar{\z} J\na^{L_*}\z\qquad\mu_{V_*}-a.e..
\endaligned
\end{equation}
Here, the gradient $\na^{L_*}\z$ is defined in the distribution sense. 
In particular, $|\z|\equiv1$ $\mu_{V_*}$-a.e., and $H_*$ is $L^q$-integrable w.r.t. $\mu_{V_*}$.

Under the finite total curvature condition, almost monotonicity for volume ratios on submanifolds are available, see Simon \cite{S1} and Chen-Warren \cite{CW1} for instance.
In general, however, no such formulas exist for HSLs without this condition.
For instance, the cylinder $\mathbb{S}^1(t)\times\R$ is Hamiltonian stationary in $\C^2$, and clearly fails to admit monotonicity formulas.
In fact, Minicozzi \cite{M} proved that the cylinder minimizes area within its Hamiltonian isotopy class. On the positive side, Schoen-Wolfson \cite{SW} established a monotonicity formula for disk-type weak HSL surfaces that lift to H-minimal Legendrian surfaces, while Rivi\`ere \cite{R} derived an almost monotonicity formula for such surfaces in the Heisenberg group. 
However, to our knowledge, no such formula is available for HSLs of dimension $\ge3$.

Lagrangian submanifolds with
single-valued phases (i.e., the zero-Maslov class) have been extensively studied. Graphical HSL in $\C^n$ are typical  examples admitting single-valued phases. Furthermore, every smooth closed HSL in a Calabi-Yau manifold admits a decomposition into finitely many simply connected components, on each of which the phase is single-valued.
For further references in Lagrangian mean curvature flow with zero-Maslov class, see Chen-Li \cite{CL}, Neves \cite{Ne} and Lotay-Schulze-Sz\'ekelyhidi \cite{LSS} for instance.

Harmonic functions on metric spaces can be defined in the distribution sense, provided the Cheeger energy is bounded. Accordingly, we define \emph{harmonic} phases for a class of integral Lagrangian currents in Ricci-flat K\"ahler manifolds, whose local indecomposable components have, roughly speaking, phases with bounded Cheeger energy (see Definition \ref{Def-harm-curr} for the rigorous definition).
Typical examples include multiplicity one currents supported on oriented immersed HSL submanifolds in Ricci-flat K\"ahler manifolds. In what follows, we study the class of integral Lagrangian currents with single-valued harmonic phases in open subsets of Ricci-flat K\"ahler manifolds, and apply the obtained results to study the regularity and rigidity of HSL graphs in $\C^n$.
We expect our results to be useful for studying limits of certain Lagrangian sequences with uniformly finite fundamental groups and uniformly bounded volumes.

For a bounded open subset $U$ of a Ricci-flat K\"ahler manifold $(M^n,\omega)$,
by scaling we may assume that $U$ is a $2n$-submanifold in $\R^m$ for some integer $m\ge 2n$ with $\mathbf{B}_{R_0}(\mathbf{0})\cap\p U=\emptyset$ for some $R_0>0$ from Nash's isometric embedding theorem, where $\mathbf{B}_r(\mathbf{x})$ denotes the ball in $\R^m$ with radius $r$ and centered at $\mathbf{x}\in\R^m$. Let $\k$ be the smallest constant satisfying $|\mathbf{A}_U(X,X)|\le\k|X|^2/n$ for any $X$ tangent to $U$, where $\mathbf{A}_U$ denotes the second fundamental form of $U$.

Let $T$ be an integral Lagrangian current in $U$ with $\p T\llcorner U=\emptyset$ and with bounded, single-valued, harmonic phase $\th$ satisfying \eqref{OmThL}.
Let $V=|T|$ denote the varifold associated with $T$, and $\mu_V$ be the Radon measure corresponding to $V$. From Sobolev inequality on $V$ by Michael-Simon, 
we can derive a mean value inequality for $\th$, which yields a uniform local lower bound on the volume w.r.t. $\mu_V$.

Let $\ell_*:=\max\{2,2^{n/2}e^{\k R_0}\sup_{L\cap\mathbf{B}_{R_0}(\mathbf{0})}|\th|\}$.
We prove an almost monotonicity formula for the function
\begin{equation}\aligned\label{ThpmuVr0}
\mathbf{m}_{\mathbf{p}}(\mu_V,r):=\f1{r^n}\int_{\mathbf{B}_{r}(\mathbf{p})}|\th|^{\ell_*} d\mu_V+\f{\ell_*^2e^{-\k r}}{2^{n+2}}\int_{r/2}^rs^{1-n}\int_{\mathbf{B}_{s}(\mathbf{p})}|\th|^{\ell_*-2}|\na^L\th|^2 d\mu_Vds
\endaligned
\end{equation}
as follows  (see \eqref{mon-bfTh}):
\begin{theorem}\label{Al-Mono-int}
\begin{equation}\aligned
e^{\k R}&\mathbf{m}_\mathbf{p}(\mu_V,R)\ge e^{\k r}\mathbf{m}_\mathbf{p}(\mu_V,r)+ \f{n-1}{n}\int_{\mathbf{B}_R(\mathbf{p})\setminus \mathbf{B}_r(\mathbf{p})}|\th|^{\ell_*}\f{|\na^N\r|^2}{\r^{n}}d\mu_V\\
&+\f{\ell_*^2}{2^{n+2}}\int_r^{R/2}s^{1-n}\int_{\mathbf{B}_{s}(\mathbf{p})}|\th|^{\ell_*-2}|\na^L\th|^2 d\mu_Vds-\f{1}{nr^n}\int_{\mathbf{B}_r(\mathbf{p})}|\th|^{\ell_*}|\na^N\r|^2d\mu_V,
\endaligned
\end{equation}
whenever $0<r<R$ and $\mathbf{B}_{R}(\mathbf{p})\subset\mathbf{B}_{R_0}(\mathbf{0})$, where $\r(\mathbf{x})=|\mathbf{x}-\mathbf{p}|$ is the distance function from $\mathbf{p}$ on $\R^m$, and $\na^N$ denotes the projection into the normal bundle of $L$ in $\R^m$ $\mu_V$-a.e.. 
\end{theorem}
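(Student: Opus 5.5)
We test the first variation formula of the varifold $V=|T|$ in $\R^m$ with a weighted radial vector field, taking $|\th|^{\ell_*}$ as the weight. The mean curvature term is absorbed using harmonicity of the phase. Write $\ell=\ell_*$ and $\r=|\mathbf{x}-\mathbf{p}|$. The mean curvature of $V$ in $\R^m$ is $\mathbf{H}=H+\mathbf{H}_U$, where $H=J\na^L\th$ by Theorem \ref{thW12-H} and $|\mathbf{H}_U|\le\k$. We take $X=\phi(\r)(\mathbf{x}-\mathbf{p})$ with $\phi$ a cutoff approximating $\r^{-n}\chi_{\{r<\r<R\}}$ (the standard $\int_r^R$ layer-cake form). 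We then apply
\[
\int \div_L(|\th|^\ell X)\,d\mu_V=-\int |\th|^\ell\lan \mathbf{H},X\ran d\mu_V ,
\]
which is legitimate after approximating $|\th|^\ell$ by Lipschitz functions in the Cheeger energy.

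The terms are handled as follows.

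\emph{Tangential part.} Expanding $\div_L(|\th|^\ell X)$ gives $|\th|^\ell\div_L X+\ell|\th|^{\ell-2}\th\lan\na^L\th,X\ran$. The first summand yields the usual monotonicity identity
\[
\f{d}{ds}\Big(s^{-n}\int_{\mathbf{B}_s}|\th|^\ell\Big)=\f{d}{ds}\int_{\mathbf{B}_s}|\th|^\ell\f{|\na^N\r|^2}{\r^n}+\cdots,
\]
which produces the second term on the right of Theorem \ref{Al-Mono-int}. The factor $\f{n-1}{n}$ and the last boundary term $-\f1{nr^n}\int_{\mathbf{B}_r}|\th|^\ell|\na^N\r|^2$ come from splitting $|\na^N\r|^2$ between the interior and boundary contributions; part of this term is spent in the next step.

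\emph{Curvature part of $U$.} The term $\lan\mathbf{H}_U,X\ran$ is bounded by $\k\r|\th|^\ell\phi$. Integrating in $s$, it gives the factor $e^{\k r}$.

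\emph{Key term.} What remains is
\[
\ell\int|\th|^{\ell-2}\th\lan\na^L\th,X\ran+\int|\th|^\ell\lan J\na^L\th,X\ran .
\]
The second integral involves only $X^N=\phi\,\r\na^N\r$, because $J\na^L\th$ is normal. We write $\lan\na^L\th,\mathbf{x}-\mathbf{p}\ran=\f12\lan\na^L\th,\na^L\r^2\ran$ and use harmonicity of $\th$, tested against $|\th|^{\ell-2}\th\,\psi(\r)$ with $\psi(\r)=\int_\r^R s^{1-n}ds$-type functions. This turns the first integral into
\[
-(\ell-1)\int|\th|^{\ell-2}|\na^L\th|^2\psi
\]
plus boundary terms. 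The resulting expression is exactly the good term $\int_r^{R}s^{1-n}\int_{\mathbf{B}_s}|\th|^{\ell-2}|\na^L\th|^2$, with the correct sign.

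For the second integral we apply Cauchy–Schwarz:
\[
\ell|\th|^{\ell}|\na^L\th|\,\r|\na^N\r|\,\r^{-n}\le \ep\,\ell^2|\th|^{\ell-2}|\na^L\th|^2\r^{2-n}+\ep^{-1}|\th|^{\ell+2}|\na^N\r|^2\r^{-n}.
\]
The factor $|\th|^{\ell+2}$ is controlled by $\sup|\th|^2|\th|^\ell$, and it is here that the choice $\ell_*\ge 2^{n/2}e^{\k R_0}\sup|\th|$ enters. With $\ep$ chosen so that the $\na^L\th$ part consumes half of the good term, the constant is $\ell^2/2^{n+2}$. Meanwhile the $|\na^N\r|^2$ part consumes a $\f1n$ fraction of the normal term, which explains the coefficient $\f{n-1}{n}$.

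\emph{Recombination.} The extra integral $\int_{r/2}^r$ in $\mathbf{m}_\mathbf{p}$ absorbs the boundary terms at radius $r$ created by $\psi$, using $s^{2-n}\le 2^{n}\cdots$ comparisons on the annulus. We then multiply by $e^{\k s}$ and integrate from $r$ to $R$.

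The main obstacle is the key-term step. One has to exploit harmonicity of a merely $W^{1,2}$ phase on a rectifiable varifold, which needs truncation and Lipschitz approximation of $|\th|^{\ell-2}\th$ near $\th=0$. The bookkeeping of constants must then yield precisely $\ell_*^2/2^{n+2}$ and $\f{n-1}{n}$, and I would expect to adjust the cutoff $\psi$ (e.g. linear on $[s/2,s]$) to make the absorption close.
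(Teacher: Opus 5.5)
Your outline follows the paper's route. The paper applies the weighted monotonicity inequality \eqref{Almost-mono-Vinfty} with $f=|\th|^{\ell_*}$, including the factor $e^{\k s}$. It tests harmonicity against $|\th|^{\ell-2}\th$ times a radial function, which turns $\lan\na^L f,\r\na\r\ran$ into a nonnegative Dirichlet term. It applies Cauchy--Schwarz to $\lan fH,\r\na\r\ran$, using the size of $\ell_*$ relative to $\sup|\th|$. The factors $\frac1n$ come from $\int_\r^R s^{-n-1}ds\le\frac1{n\r^n}$ on the annulus and $\int_r^R s^{-n-1}ds\le\frac1{nr^n}$ on $\mathbf B_r(\mathbf p)$.

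There is one genuine gap: your claim that harmonicity yields \emph{exactly} $\int_r^R s^{1-n}\int_{\mathbf B_s(\mathbf p)}|\th|^{\ell-2}|\na^L\th|^2$ is false. This is precisely the step that forces the form of $\mathbf m_{\mathbf p}$, and you misidentify how it closes. The radial test function compatible with volume ratios is, for each $s$, $(s^2-\r^2)_+$, and it gives
\[
\int_{\mathbf B_s(\mathbf p)}\lan\r\na^L\r,\na^L|\th|^\ell\ran d\mu_V=\frac{\ell(\ell-1)}2\int_{\mathbf B_s(\mathbf p)}(s^2-\r^2)|\th|^{\ell-2}|\na^L\th|^2d\mu_V .
\]
In your single-test-function language this means $|\psi(\r)|=\frac12\int_{\max\{r,\r\}}^R s^{-n-1}(s^2-\r^2)\,ds$, up to the factor $e^{\k s}$. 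It does not mean $\int_\r^R s^{1-n}ds$, which would need the discontinuous weight $\r^{-n}\chi_{\{r<\r<R\}}$; that weight produces sphere integrals, not volume ratios.

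The weight $(s^2-\r^2)$ degenerates at $\r=s$, but the Cauchy--Schwarz error $\frac{e^{\k R}}4\int_r^R\frac{e^{\k s}}{s^{n+1}}\int_{\mathbf B_s(\mathbf p)}\r^2|H_V|^2|\th|^{\ell_*}d\mu_Vds$ does not. After Fubini, for $\r$ near $R$ the good weight is of order $(R-\r)^2$ and the bad one of order $R-\r$. So no choice of $\ep$ or $\ell_*$ absorbs the error near the \emph{outer} radius.

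The paper handles this as follows:
\begin{enumerate}
\item It uses $s^2-\r^2\ge\frac34s^2$ on $\mathbf B_{s/2}(\mathbf p)$ and substitutes $s\mapsto2s$. This gives the good term $\frac{\ell_*^2}{2^{n+1}}\int_{r/2}^{R/2}\mathcal W_{\mathbf p,\ell_*}(s)ds$ against the error $\frac{\ell_*^2}{2^{n+2}}\int_r^R\mathcal W_{\mathbf p,\ell_*}(s)ds$. The $2^{-n}$ loss here is the source of the factor $2^{n/2}$ in $\ell_*$.
\item The unabsorbed piece $\frac{\ell_*^2}{2^{n+2}}\int_{R/2}^R\mathcal W_{\mathbf p,\ell_*}$ is moved to the left and becomes part of $e^{\k R}\mathbf m_{\mathbf p}(\mu_V,R)$.
\item The portion of the good term on $[r/2,r]$ supplies the matching piece of $e^{\k r}\mathbf m_{\mathbf p}(\mu_V,r)$.
\end{enumerate}
So the extra integral in $\mathbf m_{\mathbf p}$ compensates a deficit at radius $R$, not boundary terms at radius $r$. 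With this correction your argument closes with the stated constants.

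Two smaller points. No truncation near $\th=0$ is needed, since $t\mapsto|t|^{\ell-2}t$ is Lipschitz on bounded sets for $\ell\ge2$. The real technical issue is that Definition \ref{Def-harm-curr} gives harmonicity only on local indecomposable components. The paper globalizes \eqref{thinanathi*} using a partition of unity and Lemma \ref{duiduikvar}.
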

\begin{remark}
The single-valued phase condition in Theorem \ref{Al-Mono-int} is necessary, as illustrated by cylindrical examples. Monotonicity formulas of Schoen-Wolfson \cite{SW} and Rivi\`ere \cite{R} yield corresponding monotonicity formulas for exact HSLs in $\C^2$. 
\end{remark}
As a consequence, we immediately have that $\mathbf{m}_{\mathbf{p}}(\mu_V,r)$ is uniformly bounded and converges as $r\to0$ to a function $\mathbf{m}_{\mathbf{p}}(\mu_V)$, which is upper semi-continuous on $\mathbf{p}$ (see Proposition \ref{semi-bfTh}).
From Theorem \ref{Al-Mono-int}, the volume doubling property holds w.r.t. $\mu_V$ (see Theorem \ref{Ahlfors}). In particular, we obtain a sharp lower bound (see Corollary \ref{lower}): 
\begin{equation}\aligned\label{sharp-lower}
\liminf_{r\to0}r^{-n}\mu_V(\mathbf{B}_{r}(\mathbf{p}))\ge\omega_n,
\endaligned
\end{equation}
where $\omega_n$ denotes the volume of standard unit $n$-ball in $\R^n$.

In 1972, Allard \cite{A} proved the celebrated regularity theorem for rectifiable $n$-varifolds with generalized mean curvatures in $L^q$ for some $q>n$ (see also Simon \cite{S} and De Philippis-Gasparetto-Schulze \cite{DGS}). Using a new method, we prove Allard's type regularity theorem for HSLs as follows (given in \S 6).  
\begin{theorem}\label{Allard}
For a constant $\be>1$, there is a constant $\de\in(0,1/4]$ depending only on $n,\be$ and $U$ so that if $T=(L,\vth,\vec{T})$ is an integral Lagrangian current in $U$ with single-valued harmonic phase $\th$ and $\p T\llcorner U=0$ so that $\mathbf{0}\in L$, $0\le\th\le\be$, and $V=\vth|L|$ satisfies
\begin{equation}\aligned\label{muVbgTh0}
\mu_V(\mathbf{B}_{r}(\mathbf{0}))\le(1+\de)\omega_nr^n\qquad\mathrm{and}\qquad \mathbf{m}_\mathbf{0}(\mu_V,r)\le(1+\de)\mathbf{m}_\mathbf{0}(\mu_V),
\endaligned
\end{equation}
then $L\cap \mathbf{B}_{\de/9}(\mathbf{0})$ is an embedded Lagrangian submanifold with
$$d_{G}(T_\mathbf{x}L,T_\mathbf{y}L)\le c|\mathbf{x}-\mathbf{y}|\qquad \mathrm{for\ each}\ \mathbf{x},\mathbf{y}\in L\cap \mathbf{B}_{\de/9}(\mathbf{0}),$$
where $c>0$ is a constant depending only on $n,\be$ and $U$.
\end{theorem}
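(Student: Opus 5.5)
The idea is to turn the two pinching conditions in \eqref{muVbgTh0} into smallness of the $L^2$ norm of mean curvature at small scales. Then Allard's theorem, or a Lagrangian variant of it, gives a $C^{1,\alpha}$ graph. The harmonic phase equation then bootstraps this to a $C^{1,1}$ estimate on the Gauss map.

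\emph{Step 1 (smallness of the energy term).} By \eqref{HJnaLth*} we have $H=J\na^L\th$ $\mu_V$-a.e. Apply Theorem \ref{Al-Mono-int} between $r'\ll r$ and $r$ with $\mathbf{p}=\mathbf{0}$, and let $r'\to0$. The second condition $\mathbf{m}_\mathbf{0}(\mu_V,r)\le(1+\de)\mathbf{m}_\mathbf{0}(\mu_V)$ then bounds three quantities by $C(\de+\k r)\,\mathbf{m}_\mathbf{0}(\mu_V)$:
\begin{itemize}
\item the integral $\int_0^{r/2}s^{1-n}\int_{\mathbf{B}_s}|\th|^{\ell_*-2}|\na^L\th|^2\,d\mu_V\,ds$;
\item the normal-projection term $\int |\th|^{\ell_*}|\na^N\r|^2\r^{-n}$;
\item the correction terms.
\end{itemize}
Since $0\le\th\le\be$, the weight $|\th|^{\ell_*-2}$ degenerates only where $\th$ is near $0$. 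To handle this, I would first replace $\th$ by $\th+\be$, which is still a harmonic phase for the rotated form $e^{-\sqrt{-1}\be}\Om$, so that $\be\le\th\le 2\be$. The weights are then comparable to constants depending on $\be$, and we obtain scale-invariant smallness of
\[
s^{2-n}\int_{\mathbf{B}_s}|H|^2\,d\mu_V
\]
for most $s\le r/2$. Combined with the density bound $\mu_V(\mathbf{B}_r)\le(1+\de)\omega_n r^n$ and the lower bound \eqref{sharp-lower}, this shows that every point of $L$ near $\mathbf{0}$ has density close to $1$, and that the normal part of the position vector is small in $L^2$. So $V$ is $L^2$-close to a multiplicity-one plane at all small scales and centers, using upper semicontinuity from Proposition \ref{semi-bfTh}.

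\emph{Step 2 (tilt-excess decay and harmonic approximation).} $L^2$ control of $H$ alone is insufficient for Allard's theorem, which needs $q>n$. Here I would exploit the structure. Since $\Delta_L\th=0$ weakly and $\th$ is bounded, the mean value inequality (Michael--Simon) and Caccioppoli give
\[
\int_{\mathbf{B}_{s/2}}|\na\th|^2\le Cs^{-2}\int_{\mathbf{B}_s}|\th-\bar\th_s|^2 .
\]
At the same time, the tilt excess is controlled, via the Lagrangian angle, by oscillation of $\th$ plus height excess. I would run a Lipschitz approximation of $V$ by a graph of a Lagrangian potential, $L=\{(x,\na u)\}$, with $\th=\sum\arctan\la_i(D^2u)$. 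The approximation is harmonic to leading order, since the linearization of the phase is $\Delta u$ and $\th$ harmonic gives $\Delta^2u\approx0$. Comparing with a biharmonic/harmonic approximant should yield a decay
\[
E(s/2)\le\tfrac12E(s)+Cs^2,
\qquad
E(s)=s^{-n}\int_{\mathbf{B}_s}\big(|\na\th|^2s^2+\text{tilt}^2\big).
\]
The $s^2$ term comes from the ambient curvature $\k$.

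\emph{Step 3 (conclusion).} Iterating the decay gives $E(s)\le Cs^2$ uniformly at centers in $\mathbf{B}_{\de/9}$. By Campanato's criterion, $T_\mathbf{x}L$ is $C^{0,1}$-controlled up to the borderline case, so the tilt is Lipschitz; embeddedness follows from the density-one conclusion. I expect Step 2 to be the main obstacle: the equation is fourth order and $H$ is only $L^2$, so decay of $\na\th$ must be obtained simultaneously with decay of the tilt. Reaching a Lipschitz rather than merely Hölder Gauss map likely needs a second pass, using the $C^{1,\alpha}$ graph to get an elliptic equation for $\th$ with Hölder coefficients and then $C^{1,\alpha}$ bounds on $\th$, hence $H\in L^\infty$ and $u\in C^{2,\alpha}$.
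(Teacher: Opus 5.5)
There is a genuine gap, and it is where you suspect: Step 2 is asserted rather than derived, and the ingredient needed to derive it is missing. In any Allard-type excess improvement, the mean curvature enters through the critical quantity $s^{2-n}\int_{\mathbf{B}_s}|H|^2$. The monotonicity formula at best makes this small; it gives it no decay. At this scaling tilt-excess decay fails in general, which is why Allard needs $H\in L^q$ with $q>n$. So the subcritical decay has to come from the equation for $\th$.

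Your Caccioppoli inequality bounds $\int_{\mathbf{B}_{s/2}}|\na^L\th|^2$ by $s^{-2}\int_{\mathbf{B}_s}|\th-\bar\th_s|^2$. Turning that into decay of $E$ requires the reverse estimate, a Neumann--Poincar\'e inequality for $\th$ on $L$, and nothing in your plan supplies it. A priori $L\cap\mathbf{B}_s$ may split, up to a small boundary, into pieces (e.g.\ sheets joined by thin necks) on which $\th$ takes different values. That is exactly where a graphical approximation carries no information. Moreover, off its good set the Lipschitz approximant need not be a gradient graph, so $\th=\sum\arctan\la_i$ and the $\Delta^2u\approx0$ heuristic are not available there. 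Finally, even granting your decay inequality, iterating a factor $\frac12$ against a $Cs^2$ error gives only $E(s)\le Cs$, not $Cs^2$.

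The paper's proof never decays the tilt and $\na^L\th$ together; instead it produces exactly the Poincar\'e inequality you lack.
\begin{itemize}
\item Lemma \ref{smallde1} shows, by blow-up and the constancy theorem, that every point of $L$ near $\mathbf{0}$ has density exactly $1$. Your appeal to Proposition \ref{semi-bfTh} does not give this. Upper semicontinuity bounds $\mathbf{m}_\mathbf{p}(\mu_V)$ from above, whereas transferring the pinching from $\mathbf{0}$ to $\mathbf{p}$ requires a lower bound on $\mathbf{m}_\mathbf{p}(\mu_V)$. The paper extracts that lower bound from the complementary functional $\tilde{\mathbf{m}}$ of Remark \ref{tildebfTh}, together with $\Th_\mathbf{0}(\mu_V)=1$.
\item Density one at all nearby points yields quantitative indecomposability in small balls (Lemma \ref{Indecomp}). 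From this the paper gets an isoperimetric inequality with no mean curvature term, the relative isoperimetric inequality of Lemma \ref{N-P}, and the Neumann--Poincar\'e inequality of Theorem \ref{NP}.
\item De Giorgi--Nash--Moser for the harmonic $\th$ then gives H\"older continuity of $\th$ and the Morrey bound $\int_{\mathbf{B}_r}|H_V|^2\le cr^{n-2+2\alpha}$.
\item The Allard-type theorem of Bourni--Volkmann for mean curvature in this Morrey space finishes the argument.
\end{itemize}

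One further point: the shift $\th\mapsto\th+\be$ is not free. The pinching hypothesis concerns $\mathbf{m}_\mathbf{0}$ built from the given $\th$ and its $\ell_*$, so it does not transfer to the functional of the shifted phase.
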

Here, $d_{G}(P_1,P_2)$ denotes the distance between $P_1,P_2$ in the Grassmannian manifold $G_{n,m}$ for each two $n$-planes $P_1,P_2$ (through the origin) in $\R^m$ (see \S 7.1 in \cite{X} for more details). 
The proof of Theorem \ref{Allard} differs from existing arguments due to the absence of uniform Morrey-type estimates for phases. We use the almost monotonicity formula to show a 'quantitative' indecomposablity for integral Lagrangian currents of single-valued harmonic phases in small balls under the condition \eqref{muVbgTh0}.
Then we can derive a Neumann-Poincar\'e inequality on such currents, which yields H\"older estimates of phases. As a result, we complete the proof of Theorem \ref{Allard} directly with the help of Theorem 1.3 in \cite{BV} by Bourni-Volkmann.

From Theorem \ref{Allard}, we immediately have the following regularity result. 
\begin{corollary}\label{Allard-HSL}
If $\liminf_{r\to0}r^{-n}\mu_T(\mathbf{B}_r(\mathbf{p})))\le\omega_n$ for some $\mathbf{p}\in\mathrm{spt}V\cap\mathbf{B}_1(\mathbf{0})$, then $\mathrm{spt}T$ is analytic in a small neighborhood of $\mathbf{p}$.
\end{corollary}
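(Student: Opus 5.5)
The plan is to verify the two hypotheses in \eqref{muVbgTh0} at a small scale around $\mathbf{p}$, after translating and rescaling. Then Theorem \ref{Allard} gives $C^{1,1}$ regularity near $\mathbf{p}$. The known regularity theory for Hamiltonian stationary Lagrangians then upgrades this to analyticity.

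First, combine the hypothesis with the sharp lower bound \eqref{sharp-lower} (Corollary \ref{lower}). This gives $\liminf_{r\to0}r^{-n}\mu_V(\mathbf{B}_r(\mathbf{p}))=\omega_n$. Hence there is a sequence $r_j\to0$ with $\mu_V(\mathbf{B}_{r_j}(\mathbf{p}))\le(1+\de)\omega_nr_j^n$.

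Second, use Proposition \ref{semi-bfTh}: $\mathbf{m}_\mathbf{p}(\mu_V,r)$ converges as $r\to0$ to $\mathbf{m}_\mathbf{p}(\mu_V)$. So for all small $r$ we have $\mathbf{m}_\mathbf{p}(\mu_V,r)\le(1+\de)\mathbf{m}_\mathbf{p}(\mu_V)$. In the degenerate case $\mathbf{m}_\mathbf{p}(\mu_V)=0$, I would first add a constant to $\th$ so that $\th\ge c_0>0$ near $\mathbf{p}$. This is allowed because a constant shift of a harmonic phase is still a harmonic phase (it corresponds to rotating $\Om$ by $e^{\sqrt{-1}c}$). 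After the shift, $\mathbf{m}_\mathbf{p}(\mu_V)\ge c_0^{\ell_*}\omega_n>0$ by \eqref{sharp-lower}. The shift also makes $0\le\th\le\be$ hold with $\be$ depending on $\sup|\th|$. We then fix $\de=\de(n,\be,U)$ from Theorem \ref{Allard}.

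Third, pick $r=r_j$ small. Translate $\mathbf{p}$ to $\mathbf{0}$ and rescale by $1/r$ so that both conditions in \eqref{muVbgTh0} hold at unit scale. The rescaled ambient $U$ has smaller curvature $\k r$, so the constants stay uniform. One point needs care: $\mathbf{m}$ is not exactly scale invariant, because $\ell_*$ and the weight $e^{-\k r}$ depend on $\k R_0$. Rescaling improves these, and I would check that the resulting comparison is controlled by the almost monotonicity in Theorem \ref{Al-Mono-int}, at the cost of replacing $\de$ by $\de/2$.

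Theorem \ref{Allard} then shows that $\mathrm{spt}T$ is an embedded Lagrangian $C^{1,1}$ submanifold near $\mathbf{p}$, with multiplicity one by the density bound. Its phase is a weakly harmonic $W^{1,2}$ function, and by Theorem \ref{thW12-H} the mean curvature satisfies $H=J\na\th$. So the submanifold is a $C^1$ critical point of volume under Hamiltonian variations. Analyticity then follows from Chen-Warren \cite{CW} in $\C^n$, or from Bhattacharya-Chen-Warren \cite{BCW} in a general (almost) Kähler ambient.

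The main obstacle is the bookkeeping of the scale dependence of $\mathbf{m}_\mathbf{p}$ and the possible degeneracy $\mathbf{m}_\mathbf{p}(\mu_V)=0$, both handled by the constant-shift and rescaling arguments above. A secondary concern is checking that the weak Hamiltonian-stationary equation holds in the form required by \cite{BCW}.
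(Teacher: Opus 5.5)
Your proposal is correct and follows the paper's route. The paper treats the corollary as immediate from Theorem \ref{Allard}: Corollary \ref{lower} forces the density condition, the ratio condition on $\mathbf{m}$ follows from $\mathbf{m}_\mathbf{p}(\mu_V,r)\to\mathbf{m}_\mathbf{p}(\mu_V)$, and analyticity of the resulting embedded $C^{1,1}$ Hamiltonian stationary Lagrangian comes from the Morrey-type theorem of Chen--Warren and Bhattacharya--Chen--Warren \cite{CW,BCW}. Your extra bookkeeping is sound and lighter than you feared. The constant shift of $\th$ is the same device the paper uses before Theorem \ref{Low-V-Brp}, and with $\ell_*$ held fixed, $\mathbf{m}_\mathbf{p}(\mu_V,r)$ is exactly invariant under the blow-up $\mathbf{x}\mapsto(\mathbf{x}-\mathbf{p})/r_0$: the curvature bound becomes $\k r_0$, so the products $\k r$ appearing in the weights are unchanged.
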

From \cite{Ba,Fc,NV1,Y1}, every
Lipschitz graphical SL cone in $\R^{2n}$ is flat if $n\le4$.
Building on this, Bhattacharya \cite{B} recently proved that any locally Lipschitz HSL graph in $\C^{n}$ is smooth for $n\le4$, where 2-dimensional case has been obtained in \cite{BW} by Bhattacharya-Warren. More recently, Bhattacharya-Orriols-Skorobogatova \cite{BOS} generalized this smoothness theorem from the Euclidean setting to almost K\"ahler manifolds, and using AI tools they constructed a non-flat SL graphical cone in $\R^{10}$.
By a standard blow-up argument and Federer's dimension reduction argument,
Corollary \ref{Allard-HSL} and Bernstein theorem for SL graphs of dimension $\le4$ can directly imply that every locally Lipschitz, multiplicity one HSL $n$-current in a Ricci-flat K\"ahler manifold has smooth support for $n\le4$.

Chen-Warren \cite{CW1} established compactness theorems on the space of all compact immersed HSLs in $\C^n$ with uniform bounds on volume and total (extrinsic) curvature, which has been generalized to ones in almost K\"ahler manifolds by Chen-Ma \cite{CMa2}. For K\"ahler surfaces, the assumption of total curvature can be made on the Willmore energy for HSL tori, and Chen-Ma \cite{CMa1} gave a more precise bubble-tree convergence on the singular part.

We study the singular structure of varifold limits of integral Lagrangian currents with single-valued harmonic phases, 
and use this structure to define an orientation on such limits. Combining Theorem \ref{zinftyJxiLVinfty}, we can prove the following compactness result (see Theorem \ref{current-n}). 
\begin{theorem}\label{Comp-LCHP}
Let $\{T_i\}$ be a sequence of integral Lagrangian currents in $U$ with $\p T_i\llcorner U=0$ so that each $T_i$ has single-valued harmonic phase $\th_i$. If $|\th_i|$ and mass of $T_i$ and $\p T_i$ are uniformly bounded,
then there are a subsequence $\{i'\}\subset\{i\}$ and a Hamiltonian stationary Lagrangian current $T_\infty=(L_\infty,\vth_\infty,\vec{T}_\infty)$ in $U$ with bounded single-valued phase $\tilde{\th}$ of bounded variation w.r.t. $\mu_{T_\infty}$ so that $J\na^{L_\infty}\tilde{\th}$ is the generalized mean curvature of $|T_\infty|$, and  
$|T_{i'}|\to |T_\infty|$, $\th_{i’}\to\th_\infty$ both in the sense of Radon measure, where $\tilde{\th}$ and $\th_\infty$ differ only by a constant on every indecomposable component of $T_\infty$.
\end{theorem}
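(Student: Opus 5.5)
The plan is to combine three ingredients: Federer--Fleming/Allard compactness for the currents and varifolds, Theorem \ref{zinftyJxiLVinfty} to identify the limit mean curvature, and the almost monotonicity formula (Theorem \ref{Al-Mono-int}) to control the singular structure of the limit. This last control is what lets us remove the ambiguity of the sign of $\z^{1/2}$ and build a single-valued phase.

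\textbf{Step 1: extracting limits.} Since $|\th_i|\le\be$ and the masses of $T_i,\p T_i$ are bounded, harmonicity of $\th_i$ gives a uniform $L^2$ bound on $\na^{L_i}\th_i$ on compact subsets. To see this, test the weak equation with $\phi^2\th_i$ for a cutoff $\phi$, on each indecomposable component as in Definition \ref{Def-harm-curr}. By Theorem \ref{thW12-H}, $H_i=J\na^{L_i}\th_i$, so $|T_i|$ has locally uniformly $L^2$-bounded generalized mean curvature.

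Allard's compactness theorem and Federer--Fleming then give a subsequence with $T_{i'}\to T_\infty$ as integral currents and $|T_{i'}|\to V_*$ as varifolds. The $\th_{i'}\mu_{T_{i'}}$ and $e^{2\sqrt{-1}\th_{i'}}\mu_{T_{i'}}$ converge as measures to $\th_\infty\mu_{V_*}$ and $\z\mu_{V_*}$, respectively.

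The limit $V_*$ is Lagrangian, since approximate tangent planes are limits of Lagrangian planes by Allard's tangent-plane convergence in $L^2$-mean-curvature settings. Theorem \ref{zinftyJxiLVinfty} then gives $|\z|=1$, $\z$ of bounded variation, and $H_*=-\frac{\sqrt{-1}}{2}\bar\z J\na^{L_*}\z$.

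\textbf{Step 2: showing $V_*=|T_\infty|$, i.e.\ no cancellation.} Orientations of Lagrangian planes are encoded by $\Om$, so at a.e.\ point the oriented tangent planes of $T_i$ carry phase $\th_i$. Two sheets with opposite orientation have phases differing by $\pi$ modulo $2\pi$. The monotonicity of $\mathbf m_\mathbf p$ and the density lower bound \eqref{sharp-lower} pass to the limit and give upper semicontinuity of the density. Together with the almost-monotonicity cone structure, this should force blow-ups at $\mu_{V_*}$-a.e.\ point to be multiplicity-$k$ planes on which the limiting phases of all sheets agree. Indeed, the term $\int s^{1-n}\int|\th|^{\ell_*-2}|\na\th|^2$ decays under blow-up, so phases become locally constant, and $\th_\infty$ is the average over sheets. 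Since the $\th_i$ are single-valued and bounded in $[-\be,\be]$, rather than defined modulo $2\pi$, sheets meeting in the limit with the same $e^{2\sqrt{-1}\th}$ but different $\th$ need to be excluded.

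I expect this step to be the main obstacle: showing that no two sheets converge to the same plane with opposite orientations, or with phases differing by a nonzero multiple of $\pi$. I would try an indecomposability argument: restrict to an indecomposable component, use the Neumann--Poincar\'e inequality from the proof of Theorem \ref{Allard}, and show the limit phase has no jump. Any sheet-wise jump of $\pi$ must then persist on an open piece, contradicting the BV control of $\z$ and the Lagrangian constraint on orientation.

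\textbf{Step 3: constructing $\tilde\th$.} On each indecomposable component of $T_\infty$, choose a BV lift $\tilde\th$ of $\z$, meaning $e^{2\sqrt{-1}\tilde\th}=\z$, that is continuous across the component; this is possible because the jump part of $D\z$ must vanish there by Step 2. Then $H_*=-\frac{\sqrt{-1}}{2}\bar\z J\na\z=J\na^{L_\infty}\tilde\th$. Harmonicity of $\tilde\th$ follows by passing the weak equation $\int\lan\na\th_i,\na\phi\ran=0$ to the limit, i.e.\ from the first variation of $|T_{i'}|$ against Hamiltonian vector fields $J\na\phi$, which converges by varifold convergence. Finally, $\th_\infty$ is a weak limit of functions satisfying $H_i=J\na\th_i$, so its gradient also equals $-JH_*$ distributionally. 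Hence $\na(\tilde\th-\th_\infty)=0$, and $\tilde\th$ and $\th_\infty$ differ by a constant on every indecomposable component.
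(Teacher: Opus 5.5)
\textbf{Where the proposal breaks.} Step 1 and the passage of harmonicity to the limit are fine. The problem is Step 2, which sets out to prove something false. You take $T_\infty$ to be the Federer--Fleming limit of the $T_{i'}$ and try to show $|T_\infty|=V_*$, i.e.\ that two sheets with opposite orientations (phases differing by $\pi$) cannot collapse onto the same plane. Remark \ref{TitoTinftyW} in the paper is a counterexample. Take $T_i=[|\R^1\times\{0\}|]-[|\R^1\times\{i^{-1}\}|]$, or the analogous pair of parallel Lagrangian $n$-planes in $\C^n$. It has bounded mass and single-valued constant, hence harmonic, phases $0$ and $\pi$ on its two sheets. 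Yet $T_i\rightharpoonup0$, while $|T_i|\to2|\R^1\times\{0\}|$ and $\th_i\to\pi/2$.

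None of your tools can exclude this:
\begin{itemize}
\item The Caccioppoli bound and $\mathbf{m}_\mathbf{p}$ act sheet by sheet, and two flat sheets with different constant phases satisfy them.
\item $\z=\lim e^{2\sqrt{-1}\th_i}$ cannot see a shift of $\th$ by $\pi$, since $e^{2\sqrt{-1}(\th+\pi)}=e^{2\sqrt{-1}\th}$. This is exactly why \S4 works with the orientation-independent quantity $e^{2\sqrt{-1}\th}$.
\item The Neumann--Poincar\'e inequality of Theorem \ref{NP} needs density close to $1$, so it is unavailable at the multiplicity-$\ge2$ points where cancellation occurs.
\end{itemize}
The statement deliberately asks only for $|T_{i'}|\to|T_\infty|$, not $T_{i'}\rightharpoonup T_\infty$.

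\textbf{The missing idea.} $T_\infty$ must be \emph{constructed} by re-orienting $L_\infty$; this is the core of the paper's proof (Theorem \ref{current-n}).
\begin{itemize}
\item The paper first uses the Schoen--Simon-type Lemma \ref{Rig-Gcodim1}: a limit cone $\g_*\times\R^{n-1}$ must have $\g_*$ a union of lines, by slicing and the uniform $L^2$ bound on phase gradients. With dimension reduction this gives Corollary \ref{GH-sing}: the set $\mathcal{S}_{L_\infty}$ where $L_\infty$ is not locally close to a union of Lagrangian planes has dimension at most $n-2$.
\item Away from a cover of $\mathcal{S}_{L_\infty}$ with small $(n-3/2)$-content, it splits each $T_i$ locally into pieces $T^\pm$ whose orientations are close to $\pm$ those of the limiting planes.
\item It flips the $T^-$ pieces, replacing $\th_i$ by $\th_i+\pi$ there, and switches choices so they agree on overlapping balls.
\item It removes the resulting boundary with small-mass currents. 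The new currents converge to $T^*_\infty$ with $|T^*_\infty|=V_\infty$ and $\p T^*_\infty\llcorner U=0$.
\end{itemize}
Your Step 3 lift of $\z$ is this orientation problem in disguise, since choosing a square root of $\z$ amounts to choosing an orientation. Its existence does not come from Step 2.

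\textbf{Two smaller gaps in Step 3.}
\begin{itemize}
\item Your claim that $\na^{L_\infty}\th_\infty=-JH_*$ because $\th_\infty$ is a weak limit is unjustified: $\int\th_i\lan H_i,X\ran d\mu_{V_i}$ is a product of two weakly convergent sequences. The paper needs Theorem \ref{JnaLinftythinfty}. It uses that collapsing sheets have phases differing by multiples of $\pi$ to get $e^{2\ell(\Th_*)\sqrt{-1}\th_i}\to e^{2\ell(\Th_*)\sqrt{-1}\th_\infty}$, with $\ell(\Th_*)=\mathrm{lcm}(1,\dots,\Th_*)$.
\item A real function with zero BV gradient is not automatically constant on indecomposable components. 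The paper uses that $\frac{\ell(\Th_*)}{\pi}(\th^*-\th_\infty)$ is integer-valued and then applies Lemma \ref{constantf}.
\end{itemize}
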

In Theorem \ref{Comp-LCHP}, the existence of the current $T_\infty$ ensures that the phase is well-defined $\mu_{T_\infty}$-a.e., while the convergence in the sense of measure ensures that $\na^{L_\infty}\tilde{\th}$ is divergence-free in the weak sense.
In general, $T_{i'}$ may have no subsequences converging to $T_\infty$, and may converges to zero (see Remark \ref{TitoTinftyW} for more details).
Although the harmonicity of the phase $\th_\infty$ of $T_\infty$ in the sense of Definition \ref{Def-harm-curr} is not yet known, we are still able to derive several useful properties of $\th_\infty$ in \S7, notably a Sobolev inequality and a Neumann-Poincar\'e inequality under suitable assumptions.

In what follows, we will use our above results to study the regularity and rigidity of HSL graphs in $\C^n$ (given in \S 8).
We consider a locally Lipschitz Lagrangian graph over an open set $\mathbb{U}\subset\R^n$ defined by
$$\{(x,Du(x))\in\R^n\times\R^n|\, x\in \mathbb{U}\}$$
for some function $u\in C^{1,1}_{loc}(\mathbb{U})$. 
For each $r>0$, let $B_r\subset\R^n$ denote the ball of radius $r$ centered at the origin.
In Theorem 1.2 of \cite{CW}, Chen-Warren proved Schauder estimates for $C^{1,1}$ weak solutions to \eqref{HSL} under one of the following conditions: a) the Lagrangian phase
$\th$ satisfies $|\th|\ge\f{n-2}2\pi+\de$; b) the potential $u$ is strongly convex; c) $-(1-\de)\le D^2u\le1-\de$ for some $\de>0$.
The constant $\de$ in condition a) can be removed for Schauder estimates by Bhattacharya-Ogden \cite{BO}. 
Both the above Schauder estimates depend on the Hessian bounds of solutions, while Hessian bounds can not be derived from $C^{1,\f13}$ solutions by the counterexample in Theorem 1.2 of \cite{BO}. 

Using Theorem \ref{Allard} and Theorem \ref{Comp-LCHP}, 
we obtain the following curvature estimate for graphs with certain constraints. 
\begin{theorem}\label{HessHS}
Let $u\in C^{1,1}_{loc}(B_1)$ and $L\subset\R^{2n}$ be a Lagrangian graph of $Du$ over $B_1$ with harmonic phase $\th$.
If one of the following conditions holds:
\begin{itemize}
  \item[i)] $(n-2)\pi/2\le\th<n\pi/2$ a.e. on $B_1$, 
  \item[ii)] $u(x)+\f1{2\sqrt{3}}|x|^2$ is convex on $B_1$,
\end{itemize}
then $L$ is smooth, and the norm of the second fundamental form of $L\cap(B_{1/2}\times\R^n)$ is bounded by a constant depending only on $n$ and $\sup_{B_1}|Du|$.
\end{theorem}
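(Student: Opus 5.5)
We argue by compactness and contradiction, with Theorem \ref{Allard} supplying the final regularity. The plan is to show that every tangent cone of such a graph is flat with multiplicity one, uniformly in the family, and then let Theorem \ref{Allard} produce curvature bounds.

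Let $T$ be the multiplicity one current carried by the graph $L$. Since $u\in C^{1,1}_{loc}$, $L$ is locally Lipschitz and its phase is $\th=\sum_j\arctan\la_j$, where the $\la_j$ are the eigenvalues of $D^2u$. Harmonicity together with $|\th|\le n\pi/2$ puts $T$ in the class of Theorem \ref{Al-Mono-int}, after shifting $\th$ by a constant so that it lies in $[0,\be]$ with $\be$ depending only on $n$. Fix $\mathbf{p}\in L\cap(B_{1/2}\times\R^n)$.

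The first step is to reduce the curvature bound to the hypotheses of Theorem \ref{Allard}. It suffices to find $r_0>0$, depending only on $n$ and $\sup|Du|$, such that the rescaled current $T_{\mathbf{p},r_0}=(\eta_{\mathbf{p},r_0})_\#T$ satisfies \eqref{muVbgTh0} at unit scale. Theorem \ref{Allard} then gives $C^{1,1}$ control of $L$ in $\mathbf{B}_{\de r_0/9}(\mathbf{p})$. Analyticity and higher estimates follow from the $C^1$ regularity theory of Chen--Warren \cite{CW}. Since every point can be covered in this way, we obtain $|A|\le c/r_0$.

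The second step argues by contradiction. Suppose no such $r_0$ exists. Then there are functions $u_i$ satisfying i) or ii) with $\sup|Du_i|\le K$, points $\mathbf{p}_i$, and radii $r_i\to0$ for which \eqref{muVbgTh0} fails at scale $r_i$. Rescaling gives graphs $L_i$ of $Du_i$ over larger and larger balls with uniformly Lipschitz potentials. Condition ii) is scale invariant, and i) is preserved since the phase is unchanged by dilation. Lemma 2.1 (Theorem \ref{Comp-LCHP}) together with the volume bounds from the monotonicity formula yields a limit $T_\infty$ and phase limit $\th_\infty$.

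To identify the limit, note that on a graph the Lipschitz bound gives multiplicity one and a well-defined orientation, so $T_\infty$ is a graph of $Du_\infty$ with $u_\infty\in C^{1,1}$. The constraints pass to the limit: ii) because convexity is closed under uniform convergence, and i) because $\th_i\to\th_\infty$ as measures. Blowing up once more at the origin via Theorem \ref{Al-Mono-int}, $\mathbf{m}$ converges, so the tangent object is a cone with constant phase, i.e.\ a special Lagrangian Lipschitz graphical cone. Under i), a phase in $[(n-2)\pi/2,n\pi/2)$ makes the potential convex-type in the sense of Yuan; for ii), the constant $1/(2\sqrt3)$ is exactly the threshold in Yuan's Bernstein theorem \cite{Y1} ($D^2u>-\frac{1}{\sqrt3}$ after normalization). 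In both cases the cone is flat. Flatness gives density one and $\mathbf{m}_\mathbf{0}(\mu_V,r)$ close to its limit at all small scales, uniformly along the sequence by the upper semicontinuity in Proposition \ref{semi-bfTh} and measure convergence. This contradicts the failure of \eqref{muVbgTh0}.

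The main obstacle is in this final stage: passing the phase constraint and the quantity $\mathbf{m}$ through the limit. Theorem \ref{Comp-LCHP} identifies $\th_\infty$ only up to constants on components, and the gradient term in $\mathbf{m}$ is merely lower semicontinuous. The plan for handling this is to use the graph structure, which forces the limit to be indecomposable, and to use the monotonicity formula to show that the gradient integral vanishes on the cone.
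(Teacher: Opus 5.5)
\textbf{Gap 1: the compactness step.} You only control $\sup|Du|$, not $D^2u$. After rescaling at scales $r_i\to0$, the graphs of $D\tilde u_i$ have Lipschitz constant $\|D^2u_i\|_{L^\infty}$, which is unbounded. So the claim ``uniformly Lipschitz, hence the limit is a $C^{1,1}$ graph of multiplicity one'' is unjustified.

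Under i) this cannot be repaired. At critical phase the Hessian eigenvalues can blow up in both directions, and the paper stresses that Hessian bounds fail. The limit may therefore be non-graphical, carry multiplicity, and split into sheets whose phases are fixed only up to constants on components (Theorem \ref{current-n}, Remark \ref{TitoTinftyW}). Your remedy (``graph structure forces indecomposability'') and your appeal to Yuan for a ``Lipschitz graphical cone'' are then unavailable.

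Under ii), closedness of convexity does not give a Lipschitz graph in the original coordinates. You need the Lewy--Yuan rotation by $\pi/6$, which turns $D^2u\ge-1/\sqrt3$ into $-\sqrt3\le D^2\tilde u\le\sqrt3$.

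\textbf{Gap 2: the contradiction is never reached.} Flatness of the tangent cone of $T_\infty$ at the origin only controls $T_\infty$ at small scales. But \eqref{muVbgTh0} failed at unit scale, and along the sequence the good scale can simply be a fixed fraction of $r_i$. What you actually need is a Liouville/Bernstein statement for the \emph{entire} limit: first that its bounded harmonic phase is constant, then that the resulting special Lagrangian is flat. You also cannot pass the $\mathbf{m}$-ratio to the limit. Its gradient term is only lower semicontinuous, and $|\th_i|^{\ell_*}$ need not converge when $\th_i$ converges only weakly.

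\textbf{How the paper handles this.}
\begin{itemize}
\item It first proves qualitative smoothness (Lemma \ref{smoothLu}), using tangent cones of a \emph{fixed} $C^{1,1}$ graph. There your cone argument is valid.
\item It then runs the contradiction on $|A|$ by point-picking. This gives smooth convergence to a complete non-flat HSL with $|A|\le1$, so no semicontinuity problem arises.
\item Case ii): after rotation the limit is an entire graph with bounded Hessian. Neumann--Poincar\'e and Harnack then apply to $\sup\th_\infty-\th_\infty\ge0$, whose infimum is $0$, so the phase is constant.
\item Case i): no uniform Neumann--Poincar\'e exists, and the paper's key idea is different.
  \begin{itemize}
  \item It blows up at points where $\th_\infty\to\sup\th_\infty$, at the smallest non-flat scale.
  \item The maximum principle makes the limit phase constant near $0$.
  \item Lemma \ref{thinfty-vari-harm}, together with a De Giorgi--Nash--Moser argument for $\log(\th^*(0)+\ep-\th^*)$, shows the constant-phase region has no boundary.
  \item It then applies the special Lagrangian Bernstein results (Theorem 6.2 of \cite{D3} and \cite{WdY}).
  \item Lemma \ref{Cri-supCri-Du} supplies Euclidean volume growth without any Hessian bound.
  \end{itemize}
\end{itemize}
None of these steps appears in your plan.
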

The curvature estimate in Theorem \ref{HessHS} immediately yields a rigidity theorem for entire smooth HSL graphs with condition i) or ii). 
The condition i) is said to be \emph{critical} or \emph{supercritical} for the constant phase (see Yuan \cite{Y2}). 
The condition ii) means $D^2u\ge-1/\sqrt{3}$ a.e. on $B_1$, which is equivalent to $-\sqrt{3}\le D^2u\le\sqrt{3}$ a.e. by Lewy-Yuan rotation.
The regularity of SL graphs under the condition i) or ii) has been obtained in \cite{D1,WdY,WmY0,WmY1}.

For the case ii), the curvature estimate in Theorem \ref{HessHS} actually depends only on $n$.
For the case i), the difficulty relies that there are no Neumann-Poincar\'e inequalities with uniformly coefficients on $L$, since the lack of the Hessian bounds of solutions. We overcome this difficulty using geometric measure theory through a delicate argument by contradiction combining results obtained in \S 5 and \S 7.
In general, we cannot derive the Hessian estimates for the solutions in view of Theorem 1.2 of \cite{BO} and Lewy-Yuan rotation.

\section{Preliminary}
\subsection{Hamiltonian Stationary Lagrangian submanifolds}\

Let $(M,g,J,\omega)$ be a $2n$-dimensional symplectic manifold with almost K\"ahler structure given by a Riemannian metric $g$, compatible with a symplectic 2-form $\omega$, and an almost complex structure $J$ on $M$. 
Namely, $\omega(X,Y)=g(JX,Y)$ for arbitrary smooth vector fields $X, Y$ on $M$. 
For an open set $U\subset M$,
a smooth submanifold $L\subset U$ is \emph{Lagrangian} if 
$$\omega\big|_L=0\qquad \mathrm{and}\qquad \mathrm{dim}\,L=n.$$
Let $H$ denote the mean curvature vector of $L$,
and $\div_L$ denote the divergence on $L$. $L$ is said to be \emph{Hamiltonian stationary} if the volume is critical for compactly
supported smooth Hamiltonian variations, i.e.,
\begin{equation}\aligned\label{Def-HS}
\div_L(JH)=0,
\endaligned
\end{equation}
which is equivalent to that the 1-form $\omega_H:=\omega(H,\cdot)$ is coclosed on $L$.
Moreover, $d\omega_H=0$ for the K\"ahler-Einstein $M$ (see Appendix A in \cite{SW} by Schoen-Wolfson).

Now we further assume that $(M,\omega)$ is Ricci-flat K\"ahler with complex structure $J$ (see Yau \cite{Yau} or Tian \cite{T} for more details). 
By choosing $U$ smaller (if needed), there is a holomorphic $n$-form $\Om$ on $U$ satisfying $(-1)^{n^2/2}\Om\wedge\overline{\Om}=2^{n}\omega^n/n!$. Here, $\Om$ has the unit norm and $d\Om=0$ from a Bochner formula. In particular, Calabi-Yau manifolds admit global nowhere zero parallel holomorphic $n$-forms.
For an oriented smooth Lagrangian $L\subset U$ with an induced volume form $\mathrm{vol}_L$ w.r.t the orientation of $L$, there is a smooth multi-valued function $\th$ on $L$ so that 
\begin{equation}\aligned\label{OmLth}
\Omega\big|_L=e^{\sqrt{-1}\th}\mathrm{vol}_L,
\endaligned
\end{equation}
which can be derived analog to the argument in Euclidean space (see Harvey-Lawson \cite{HL} or Xin \cite{X} for more details). Here, $\th$ is said to be a \emph{phase} of $L$. 
Taking differential of \eqref{OmLth} gives (see Dazor \cite{Da})
\begin{equation}\aligned\label{HL-JnaTh}
H=J\na^L\th,
\endaligned
\end{equation}
where $\na^L$ denotes the Levi-Civita connection of $L$. 
In this situation, $\omega_H=d\th$ on $L$, and $[d\th]\in H^1(L,\R)$ is called the \emph{Maslov class} of $L$. 
$L$ has \emph{zero-Maslov} in $U$ if 
the integration of $d\th$ vanishes on each one-cycle in $U$, which is equivalent to that the phase $\th$ is single-valued. From \eqref{Def-HS}, $L$ is Hamiltonian stationary if and only if the phase $\th$ is harmonic on $L$.
For any $p\in L$ there holds
\begin{equation}\aligned\label{PhaseThDEF}
\mathrm{Re}\left(e^{-\sqrt{-1}\th(p)}\Om\right)(\mathbf{v})\le|\mathbf{v}|\qquad \ \mathrm{for\ any}\ n\mathrm{-vector}\ \mathbf{v},
\endaligned
\end{equation}
where the equality occurs if and only if $\mathbf{v}$ represents a Lagrangian plane with the phase $\th(p)$ (mod $2\pi)$. Hence, the $n$-form $\mathrm{Re}(e^{-\sqrt{-1}\th}\Om)$ is a calibration for the constant $\th$ (see Harvey-Lawson \cite{HL}).

\subsection{Differentiability on metric and rectifiable spaces}\

Let $(\mathcal{X},d_\mathcal{X},\mu_\mathcal{X})$ be a complete metric space with Borel regular measure $\mu_\mathcal{X}$.
The measure $\mu_{\mathcal{X}}$ is said to be \emph{Ahlfors $k$-regular} at $x\in \mathcal{X}$,
if there exists a constant $K_x\ge1$ such that
\begin{equation}\aligned\label{Ahlfors-def}
K_x^{-1}r^k\le\mu_\mathcal{X}(B_r(x))\le K_xr^k
\endaligned
\end{equation} 
for all $0<r\le 1$.
The space $\mathcal{X}$ is said to be \emph{$\mu_\mathcal{X}$-rectifiable} (see Cheeger-Colding \cite{CCo3} for instance), if there exist an
integer $k_0$, a countable collection of Borel subsets, $\mathcal{C}_{k,i}\subset \mathcal{X}$ with $k\le k_0$ and bi-Lipschitz maps $\phi_{k,i}:\, \mathcal{C}_{k,i}\rightarrow\phi_{k,i}(\mathcal{C}_{k,i})\subset\R^k$ such that
\begin{itemize}
  \item [i)] $\mu_\mathcal{X}\left(\mathcal{X}\setminus\cup_{k,i}\mathcal{C}_{k,i}\right)=0$;
  \item [ii)] $\mu_\mathcal{X}$ is Ahlfors $k$-regular at each $x\in \mathcal{C}_{k,i}$.
\end{itemize}
For a Lipschitz function $f$ on $\mathcal{X}$, one can define the pointwise Lipschitz constant
\begin{equation}\aligned\label{DefLip}
\mathrm{Lip}\, f(x):=\limsup_{r\rightarrow0}\sup_{x'\in\p B_r(x)}\f{|f(x)-f(x')|}{r}=\limsup_{\mathcal{X}\ni x'\rightarrow x, x'\neq x}\f{|f(x)-f(x')|}{d_\mathcal{X}(x,x')}
\endaligned
\end{equation} 
for each $x\in \mathcal{X}$. 
Let $\mathbf{Lip}\, f=\sup_{x\in \mathcal{X}}\mathrm{Lip}\, f(x)$ denote the Lipschitz constant of $f$ on $\mathcal{X}$.

Cheeger \cite{Ch} and Cheeger-Colding \cite{CCo3} have established the differential of Lipschitz functions on the rectifiable metric space $\mathcal{X}$.
For a Borel function $f:\, \mathcal{X}\to R$, one can define $f_{\phi_{k,i}}=f\circ\phi_{k,i}^{-1}: \phi_{k,i}(\mathcal{C})\to\R$ for each $k,i$ so that
the compatibility condition
$$f_{\phi_{k,i}}\circ\phi_{k,i}=f_{\phi_{k,j}}\circ\phi_{k,j}\qquad\mathrm{holds\ on}\ \mathcal{C}_{k,i}\cap \mathcal{C}_{k,j}\qquad (\mu_{\mathcal{X}}-a.e.).$$
On the contrary, the compatibility condition for $f_{\phi_{k,i}}$ can determine a unique Borel function $f:\, \mathcal{X}\to R$ satisfying $f\circ\phi_{k,i}^{-1}=f_{\phi_{k,i}}$ for each $k,i$.
By Rademacher's theorem and Lipschitz extension ("MacShane's lemma"), for a Lipschitz $f$ on $\mathcal{X}$
one can define \emph{differential} of $f$, denoted by $d_\mathcal{X}f$, which is a $\mu_\mathcal{X}$-a.e. well-defined $L^\infty$ section of $T^*\mathcal{X}$. Here, $T^*\mathcal{X}$ is a finite dimensional cotangent vector bundle on $\mathcal{X}$ (see  \cite{Ch,CCo3}).
Correspondingly, we use $T\mathcal{X}$ to denote a finite dimensional tangent vector bundle on $\mathcal{X}$.

Now we further require the above bi-Lipschitz maps $\phi_{i,k}$ satisfying (see p. 60 in \cite{CCo3})
\begin{itemize}
  \item [iii)] For all $x\in\cup_{k,i}\mathcal{C}_{k,i}$ and all $\la>0$, there exists $\mathcal{C}_{k,i}$ such that
$x\in\mathcal{C}_{k,i}$ and the map $\phi_{k,i}:\, \mathcal{C}_{k,i}\rightarrow\phi_{k,i}(\mathcal{C}_{k,i})\subset\R^k$ is $e^{\pm\la}$-bi-Lipschitz.
\end{itemize}
Then there is a Borel set $\mathcal{X}^*_{f}\subset \mathcal{X}^*$ such that $\mu_\mathcal{X}(\mathcal{X}\setminus \mathcal{X}^*_{f})=0$, and $|d_\mathcal{X}f|(x)=\mathrm{Lip}\, f(x)$ for every $x\in \mathcal{X}^*_{f}$.
Obviously, the differential here satisfies the Leibniz rule. 
For Lipschitz functions $f,f'$ on $\mathcal{X}$, one can define the pointwise inner product $\lan d_\mathcal{X}f,d_\mathcal{X}f'\ran$ $\mu_\mathcal{X}$-a.e.  so that the parallelogram rule holds.

For a Borel set $\mathcal{B}\subset \mathcal{X}$, let $\mathbf{Lip}_c(\mathcal{B})$ denote Lipschitz functions on $\mathcal{B}$ with supports compactly contained in $\mathcal{B}$.
For a constant $q\in(0,\infty]$,
$L^q_{\mu_\mathcal{X}}(\mathcal{B})$ denote the space, equipped with norm $||\cdot||_{L^q}$, containing every Borel function $f$ with $L^q$-intergable on $\mathcal{B}$ w.r.t. $\mu_\mathcal{X}$. 
For each $f\in L^q_{\mu_\mathcal{X}}(\mathcal{B})$, we set
\begin{equation}\aligned\label{Def-W12}
||f||_{W^{1,q}}:=||f||_{L^q}+\inf_{\{f_i\}}\liminf_{i\to\infty}||\mathrm{Lip}\, f_i||_{L^q},
\endaligned
\end{equation}
where the $'\inf'$ is taken over all sequences of Lipschitz $\{f_i\}$ with $f_i\to f$ in $L^q$ sense.  
We define Cheeger $q$-energy by
$$\mathrm{Ch}_q(f):=\f12\inf_{\{f_i\}}\liminf_{i\to\infty}\int|\mathrm{Lip}\, f_i|^qd\mu_{\mathcal{X}}.$$
Let $W^{1,q}_{\mu_\mathcal{X}}(\mathcal{B})$ denote the subspace of $L^q_{\mu_\mathcal{X}}(\mathcal{B})$ consisting of functions, $f$, for which $||f||_{W^{1,q}}<\infty$, equipped with the norm $||\cdot||_{W^{1,q}}$. It's easy to see that the Sobolev space $W^{1,q}_{\mu_\mathcal{X}}(\mathcal{B})$ is complete, and it's a Banach space for all $q>1$ (see \cite{Ch}).

This definition is the same as the one defined by (weak) upper gradient.
According to the definitions of cotangent module, the differential of Lipschitz functions can be extended to each function $f\in W^{1,q}_{\mu_\mathcal{X}}(\mathcal{B})$ for $q>1$, and we can define a unique 'differential' of $f$, denoted by $d_\mathcal{X}f$ (see Gigli \cite{G} or Gigli-Pasqualetto \cite{GP}) .
Let $f_i$ be a sequence of Lipschitz functions converging to $f$ in $L^q_{\mu_\mathcal{X}}$ sense satisfying $||f||_{W^{1,q}}=\lim_{i\to\infty}||f_i||_{W^{1,q}}$. If there is a 1-form $\e_f\in L^q_{\mu_\mathcal{X}}(T^*\mathcal{B})$ so that $d_\mathcal{X}f_{i}\rightharpoonup \e_f$, then $d_\mathcal{X}f=\e_f$ (see [\cite{G}, Theorem 2.2.9]).
By Mazur's theorem, up to a choice of $f_i$ we can assume $d_\mathcal{X}f_i\to d_\mathcal{X}f$ in the strong  $L^q_{\mu_\mathcal{X}}$ sense. 
We write $(f_i,d_\mathcal{X}f_i)\to(f,d_\mathcal{X}f)$ in (the strong) $L^q_{\mu_\mathcal{X}}$ sense if $f_i\to f$ and $d_\mathcal{X}f_i\to d_\mathcal{X}f$ both in (the strong) $L^q_{\mu_\mathcal{X}}$ sense.
Obviously, we can extend the above norms and differentials from real functions to complex functions. However, when we talk about a function, we always mean it real unless we emphasize it complex. 
\begin{remark}
In general, $W^{1,q}_{\mu_\mathcal{X}}(\mathcal{B})$ is not reflexive (see Theorem 2.1.5 in \cite{G}). As a result, the above sequence $d_\mathcal{X}f_i$ may not be weakly compact. Hence, we need assume $d_\mathcal{X}f_{i}\rightharpoonup \e_f$.
\end{remark}

For a multi-valued function $u$ on $\mathcal{B}$, there exists a smallest constant $k>0$ so that $e^{\sqrt{-1}ku}$ is single-valued.
For $e^{\sqrt{-1}ku}\in W^{1,2}_{\mu_\mathcal{X}}(\mathcal{B})$,
we say $u$ \emph{harmonic} on $\mathcal{B}$ (w.r.t. $\mu_\mathcal{X}$) if 
\begin{equation}\aligned\label{dz_idphi=0}
\int_{\mathcal{X}}e^{-\sqrt{-1}ku}\lan d_\mathcal{X}e^{\sqrt{-1}ku},d_\mathcal{X}\phi\ran d\mu_\mathcal{X}=0\qquad\qquad \mathrm{for\ each}\ \phi\in \mathbf{Lip}_c(\mathcal{B}).
\endaligned
\end{equation}
If $u$ is single-valued, then \eqref{dz_idphi=0} reduces to
\begin{equation}\aligned\label{du_idphi=0}
\int_{\mathcal{X}}\lan d_\mathcal{X}u,d_\mathcal{X}\phi\ran d\mu_\mathcal{X}=0\qquad\qquad \mathrm{for\ each}\ \phi\in \mathbf{Lip}_c(\mathcal{B}).
\endaligned
\end{equation}

\subsection{Lagrangian currents and varifolds}\

Let $M$ be a $q$-dimensional complete Riemannian manifold, and $U\subset M$ be a bounded open subset.
From Nash's theorem, $U$ can be properly isometrically embedded in $\R^{m}$ for some integer $m\ge q$ (see G\"unther \cite{Gu} or Andrews \cite{Ben} for further results on $m$).
For a set $S$ in $M\cap U$ and an integer $k\in[0,q]$, $S$ is said to be \emph{countably $k$-rectifiable}
if $S\subset S_0\cup\bigcup_{j=1}^\infty Q_j(F_j(\R^k))$, where $\mathcal{H}^k(S_0)=0$,  $F_j:\, \R^k\rightarrow \R^{m}$ are Lipschitz mappings for all integers $j\ge1$, and $Q_j$ are orthogonal transformations on $\R^{m}$.
Let $G_{k,m}$ denote the Grassmann manifold including all the $k$-dimensional (unoriented) subspaces of $\R^{m}$. 

A $k$-varifold $V$ in $U$ is a Radon measure on
$$G_{k}(U)=\{(p,P)|p\in U, P\in G_{k,m}\cap T_pM\}.$$
Let $\Pi:\R^{m}\times G_{k,m}\to\R^{m}$ denote the projection $\Pi(p,P)=p$. There corresponds a Radon measure $\mu_V$ on $U$ defined by
$\mu_V(\mathbf{W})=V(\Pi^{-1}(\mathbf{W}))$ for each Borel set $\mathbf{W}\subset G_{k}(U).$
We further assume $\mathcal{H}^k(S)<\infty$. Let $\vartheta$ be a positive locally $\mathcal{H}^k$-integrable function on $S$.
The associated varifold $\vartheta|S|$ is called a \emph{rectifiable $k$-varifold} with 
a Radon measure $\mu_\vth$ defined by $\mathcal{H}^k\llcorner\vartheta$, namely,
$$\mu_\vth(W)=\int_{W\cap S}\vartheta d\mathcal{H}^k\qquad \mathrm{for\ each\ Borel\ set}\ W\subset U.$$ 
For $\vartheta|S|$, there is a corresponding $k$-varifold $V$ in $U$ defined by
$$V(\mathbf{W})=\mu_\vth(\Pi(TS\cap\mathbf{W}))\qquad \mathrm{for\ each\ Borel\ set}\ \mathbf{W}\subset G_{k}(U).$$
The above $\vartheta$ is called the \emph{multiplicity} function of $V$, and $V$ is said to be an \emph{integral varifold} if $\vartheta$ is integer-valued. We call $|S|$ the varifold associated with the set $S$, and the multiplicity of $|S|$ is equal to one on $S$. If $S$ is the smallest relatively closed set in $U$ so that $\mu_\vth(W)>0$ for any open $W$ with $W\cap S\neq\emptyset$, then $S$ is called the \emph{support} of $V$.

Let $\mathcal{D}^k(U)$ denote the set including all smooth $k$-forms on $U$ with compact supports in $U$. Let $\mathcal{D}_k(U)$ be the set of $k$-currents in $U$, which are continuous linear functionals on $\mathcal{D}^k(U)$. 
$T\in\mathcal{D}_k(U)$ is said to be an \emph{integer multiplicity current} if it can be expressed as
$$T(\e)=\int_S\vth\lan \e,\vec{T}\ran d\cal{H}^k\qquad \mathrm{for\ each}\ \e\in \mathcal{D}^k(U),$$
where $S$ is a countably $k$-rectifiable subset of $U$, $\vth$ is a locally $\mathcal{H}^k$-integrable positive integer-valued function on $S$, and $\vec{T}$ is an orientation on $S$, i.e.,  $\vec{T}(x)$ is a $k$-vector representing the approximate tangent space $T_xS$ for $\mathcal{H}^k$-a.e. $x$. Let $|T|$ denote the varifold associated with $T$, i.e., $|T|=\vth|S|$, and $\mu_T$ be the Radon measure $\mathcal{H}^k\llcorner\vartheta$. We further assume that $S$ is the support of $|T|$, and denote $T=(S,\vth,\vec{T})$.

For a Borel set $W$ in $U$, one defines $T\llcorner W\in\mathcal{D}_k(U)$ by
\begin{equation}\aligned
T\llcorner W(\e)=\int_{W}\lan\e,\vec{T}\ran d\mu_T\qquad \mathrm{for\ any}\ \e\in\mathcal{D}^{k}(U),
\endaligned
\end{equation}
and the mass of $T$ on $W$ by
$$\mathbf{M}(T\llcorner W)=\sup_{|\e|_U\le1,\e\in\mathcal{D}^k(U)}
T\llcorner W(\e)$$
with $|\e|_U=\sup_{U}\lan \e,\e\ran^{1/2}$. 
For a sequence of currents $\{T_i\}\subset \mathcal{D}_{k}(U)$, we say $T_i\rightharpoonup T$ if $T_i(\e)\to T(\e)$ for each $\e\in\mathcal{D}^{k}(U)$.
Let $\p T$ be the boundary of $T$ defined by $\p T(\e')=T(d\e')$ for any $\e'\in \mathcal{D}^{k-1}(U)$. 
$T$ is said to be \emph{integral} if both $T$ and $\p T$ are integer multiplicity currents.
Morover, we can extend $T$ on complex smooth $k$-forms on $U$ with compact supports in $U$ by letting
\begin{equation}\aligned
T(\e_1+\sqrt{-1}\e_2)=T(\e_1)+\sqrt{-1}T(\e_2) \qquad\qquad \mathrm{for\ each}\ \e_1,\e_2\in\mathcal{D}^k(U).
\endaligned
\end{equation}

In \cite{BG}, Bombieri-Giusti introduced a concept 'indecomposable' for integral currents as follows.
\begin{definition}\label{B-Gindec}
For an integer $k\ge1$, an integral current $T\in\mathcal{D}_k(M)$ is \emph{decomposable} in $U$ if there exist integral currents $T_1,T_2\in\mathcal{D}_k(U)$ with $T_1\llcorner U,T_2\llcorner U\neq0$ such that
\begin{equation}\aligned\nonumber
\mathbf{M}(T\llcorner W)=\mathbf{M}(T_1\llcorner W)+\mathbf{M}(T_2\llcorner W),\quad \mathbf{M}(\p T\llcorner W)=\mathbf{M}(\p T_1\llcorner W)+\mathbf{M}(\p T_2\llcorner W)
\endaligned
\end{equation}
for any $W\subset\subset U$. Here, $T_1,T_2$ are called \emph{components} of $T\llcorner U$. On the contrary, $T$ is said to be \emph{indecomposable} in $U$.
\end{definition}

Let $\tilde{M}$ be a $\tilde{q}$-dimensional Riemannian manifold with $\tilde{q}\ge q$, and $f:\ U\rightarrow \tilde{M}$ be a $C^1$-mapping. 
Let $f_*\vec{T}$ denote the push-forward of $\vec{T}$, which is an orientation of $f(S)$ in $\tilde{M}$. Then one can define $f_\#T\in \mathcal{D}_k(\tilde{M})$ by letting
\begin{equation}\label{PushforwfT}
f_\#T(\e)=\int_S\vth\lan \e\circ f,f_*\vec{T}\ran d\mathcal{H}^k
\end{equation}
for each $\e\in \mathcal{D}^k(\tilde{M})$.
It's clear that $f_\#T$ is an integer multiplicity current in $\tilde{M}$.
Moreover, for a rectifiable $k$-varifold $V=\vth|S|$ in $U$, we define the image varifold $f_\#V$ by $\tilde{\vth}|f(S)|$, where $\tilde{\vth}(y)$ is defined on $f(S)$ by $\int_{f^{-1}(y)\cap S}\vth d\mathcal{H}^0$.

For a constant $\ep>0$ and a compact set $K\subset U$, let $\{F_t\}_{-\ep<t<\ep}$ denote a 1-parameter family of smooth diffeomorphism of $U$ so that $F_0=1_{U}$ and $F_t\big|_{U\setminus K}=1\big|_{U\setminus K}$ for any $t\in(-\ep,\ep)$.
Hence, there is a smooth vector field $X$ on $U$ with compact support satisfying
\begin{equation*}\aligned
\f{d}{dt}\bigg|_{t=0}F_t=X.
\endaligned
\end{equation*}
From Simon \cite{S}, for a rectifiable $k$-varifold $V=\vth|S|\subset U$ we have
\begin{equation}\aligned\label{FirstVar}
\mathbf{M}(F_{t\#}(V))=\int_{U} (1+t\mathrm{div}_S X+t^2\Phi_t)d\mu_V,
\endaligned
\end{equation}
where $\mathrm{div}_S$ denotes the divergence restricted on $S$ $\mu_V$-a.e., and $\sup_{U,|t|<\ep}|\Phi_t|$ is bounded by a constant independent of $t$, but depends on the geometry of $U$ (in a small neighborhood of $S$). 
For a locally $\mu_V$-integrable vector field $H$ on $U$,
we say that $V$ has (generalized) mean curvature $H$ if (see \cite{LYa,S} for the Euclidean space)
\begin{equation}\aligned\label{Def-GMC}
\int_U \mathrm{div}_SY d\mu_V=-\int_U\lan H,Y\ran d\mu_V
\endaligned
\end{equation}
for any $C^1$ tangent vector field $Y$ on $U$ with compact support.

For each $\phi\in L^1_{\mu_V}(U)$, we say that $\phi$ has \emph{bounded variation} in $U$ w.r.t. $\mu_V$ if there is a constant $c>0$ so that
\begin{equation}\aligned\label{DEF-naphi}
\int_U\phi(\lan H,X\ran+\div_SX)d\mu_V\le c\sup|X|
\endaligned
\end{equation}
for each smooth vector field $X$ on $U$ with compact support, where $S$ is the support of $V$. 
Let $BV_{\mu_V}(U)$ denote the space of all functions in $L^1_{\mu_V}(U)$ with bounded variation in $U$ w.r.t. $\mu_V$.
From Riesz Representation Theorem, there are a Radon measure $\mu_\phi$ and a $\mu_\phi$-measurable vector-valued function $\nu_\phi$ with $|\nu_\phi|=1$ $\mu_\phi$-a.e. so that
\begin{equation}\aligned\label{phinuphina}
\int_U\phi(\lan H,X\ran+\div_SX)d\mu_V=-\int_U \lan X,\nu_\phi\ran d\mu_\phi.
\endaligned
\end{equation}
We denote $\na^S\phi\, d\mu_V=\nu_\phi d\mu_\phi$, then $d\mu_\phi=|\na^S\phi| d\mu_V$. In particular, if $\phi\in C^1(U)$, then \eqref{Def-GMC} infers that $\na^S\phi$ is just the gradient of $\phi$ restricted on $S$ $\mu_V$-a.e..

Let $(M,\omega)$ be a $2n$-dimensional symplectic manifold with almost K\"ahler structure given by a Riemannian metric $g$, compatible with a symplectic 2-form $\omega$ and an almost complex structure $J$ on $M$. For an $n$-varifold $V$ in $U$, we say $V$  \emph{Lagrangian} if 
\begin{equation}\aligned\label{Lag-varifold}
\int_{G_n(U)}|\lan\omega\wedge\e,P\ran|(p) dV(p,P)=0\qquad \mathrm{for\ any}\ \e\in\mathcal{D}^{n-2}(U).
\endaligned
\end{equation}
An $n$-current $T$ in $U$ is said to be \emph{Lagrangian} in $U$ if $|T|$ is Lagrangian. Namely,
\begin{equation}\aligned\label{Lag-current}
T(\omega\wedge\e)=0\qquad \mathrm{for\ any}\ \e\in\mathcal{D}^{n-2}(U).
\endaligned
\end{equation}
Given a varifold $V$ in $U$,
we say $V$ \emph{Hamiltonian stationary Lagrangian} (HSL) if $V$ is Lagrangian and stationary under variations by Hamiltonian vector fields with compact supports, i.e.,
\begin{equation}\aligned\label{HS-varifold}
\int_{G_n(U)} \mathrm{div}_P(J\na f)dV(p,P)=0\qquad \mathrm{for\ every}\ f\in C^\infty_c(U).
\endaligned
\end{equation}
\begin{remark}
For the rectifiable case in $\C^n$, 
the definitions of Lagrangian varifolds and Lagrangian currents are the same as in \cite{Ne} by Neves. For a rectifiable $n$-varifold $V=\vth|S|$ in $U$, \eqref{Lag-varifold} is equivalent to $\omega\big|_{S}=0$ $\mathcal{H}^n$-a.e.. 
\end{remark}

Now we further assume that $(M,\omega)$ is a Ricci-flat K\"ahler manifold with a holomorphic $n$-form $\Om$ on $U$ satisfying $(-1)^{n^2/2}\Om\wedge\overline{\Om}=2^{n}\omega^n/n!$. 
For an integral Lagrangian current $T=(L,\vth,\vec{T})$ in $U$, compared with \eqref{OmLth} there is a measurable (multi-valued) function $\th$ on $L$ so that 
\begin{equation}\aligned\label{OmLthae}
\Omega\big|_L=e^{\sqrt{-1}\th}\mathrm{vol}_L\qquad \mu_T-a.e.,
\endaligned
\end{equation}
where $\th$ is called a \emph{phase} of $T$, and $\mathrm{vol}_L$ is the dual $n$-form of $\vec{T}$. 
\begin{definition}\label{Def-harm-curr}
We say that $T=(L,\vth,\vec{T})$ has a \emph{harmonic phase} $\th$ if for every $x\in L$ there is an open subset $U'$ of $U$ with $x\in U'$ so that each indecomposable component $T'$ of $T\llcorner U'$ satisfies that
$e^{\sqrt{-1}\th}\in W^{1,2}_{\mu_T}(\mathrm{spt} T')$ and $\th$ is harmonic on $\mathrm{spt}\, T'$ w.r.t. $\mu_{T}$.
\end{definition} 
This definition gives a generalization of oriented immersed Hamiltonian stationary Lagrangians in $M$.
In particular, Schoen-Wolfson cones do not satisfy Definition \ref{Def-harm-curr}.
Let $\Si$ be a smooth oriented submanifold or a local Lipschitz graph in $U$, we use $[|\Si|]$ to denote the current $(\Si,\chi_{_\Si},\vec{\Si})$ with an orientation $\vec{\Si}$.
We say that $\Si$ has a \emph{harmonic phase} if $[|\Si|]$ has a harmonic phase.

$\mathbf{Notational\ conventions}.$ 
Let $M$ be a complete Riemannian manifold, and $U\subset M$ be open. We use $\mathfrak{X}(U)$ to denote all $C^1$-continuous vector fields defined on $U$ valued in the tangent space $T_xU=T_xM$ for each $x\in U$. Let $\mathfrak{X}_c(U)$ denote a subset of $\mathfrak{X}(U)$ so that every $X\in \mathfrak{X}(U)$ has compact support in $U$.
For a subset $K\subset M$, let $B_r(K)$ denote $r$-tubular neighborhood of $K$ in $M$. 
For each integer $k>0$, let $\omega_k$ denote the volume of $k$-dimensional unit Euclidean ball, and $\mathcal{H}^k$ denote the $k$-dimensional Hausdorff measure.
When we write an integation on a Borel set $\mathcal{B}$ w.r.t. some Radon measure, we sometimes omit $\mathcal{B}$ if it is the maximal set where the measure is defined.

\section{Phase and mean curvature for Lagrangian currents}

Let $U$ be an open set of a Ricci-flat K\"ahler manifold $(M^n,g,J,\omega)$, and $\Om$ be a holomorphic $n$-form in $U$ satisfying $\Om\wedge\overline{\Om}=(-1)^{-n^2/2}2^{n}\omega^n/n!$.
Let $T=(L,\vth,\vec{T})$ be an integer multiplicity Lagrangian current in $U$ with phase $\th$ so that $e^{\sqrt{-1}\th}\in W^{1,q}_{\mu_T}(L)$ for some $q>1$. 
Here, $\mu_T$ denotes the volume element of $T$. Without loss of generality, we may assume $\p T\llcorner U=0$ after shrinking $U$. 
Let $\na^L$ denote Levi-Civita connection on $L$ $\mu_T$-a.e. since $L$ is countably $n$-rectifiable.

For each vector field $Y\in \mathfrak{X}_c(U)$, there are
a constant $\vep>0$, a compact set $K\subset U$ and a 1-parameter family of smooth diffeomorphisms $\{F_t\}_{-\ep<t<\ep}$ so that $F_0=1_{U}$, $F_t\big|_{U\setminus K}=1\big|_{U\setminus K}$ for any $t\in(-\vep,\vep)$ and
\begin{equation}\aligned
\f{d}{dt}\bigg|_{t=0}F_t=Y.
\endaligned
\end{equation}
Put $F(t,\cdot)=F_t$. 
We consider an $n$-form $\omega_{F}$ defined on $(-\ep,\ep)\times L$ by
$$\omega_{F}:=\mathrm{Re}\left(e^{-\sqrt{-1}\th}F^*\Om\right).$$
From Lemma \ref{Lip-to-smooth} in Appendix I and Mazur's theorem,
there exists a sequence of complex smooth functions $\tilde{\z}_\ell$ on $U$ with 
$\tilde{\z}_\ell\to e^{\sqrt{-1}\th}$ in the strong $L^q_{\mu_T}$ sense and 
$$\limsup_{i\to\infty}\left(\int_U|\na^L\tilde{\z}_\ell|^qd\mu_T\right)^{1/q}=||e^{\sqrt{-1}\th}||_{W^{1,q}}-\mu_T(U)<\infty.$$
We define a complex function $\z_\ell=e^{\sqrt{-1}\arg\tilde{\z}_\ell}$. 
It's clear that $|\na^L\z_\ell|\le|\na^L\tilde{\z}_\ell|$ and
\begin{equation}\aligned
\lim_{\ell\to\infty}\int\left|\z_\ell-e^{\sqrt{-1}\th}\right|^qd\mu_T=\lim_{\ell\to\infty}\int\left|\tilde{\z}_\ell-e^{\sqrt{-1}\th}\right|^qd\mu_T=0.
\endaligned
\end{equation}

For each $\ell>0$, we define a smooth $n$-form 
$$\omega_{F,\ell}:=\mathrm{Re}\left(\bar{\z}_{\ell}F^*\Om\right)\qquad\mathrm{on}\ (-\ep,\ep)\times U.$$
Let $\mathbf{T}_t:=[0,t]\times T$ and $T_\tau=\{\tau\}\times T$ for each $0\le\tau\le t<\ep$, 
then $\mathbf{T}_t$ is an integer multiplicity one $(n+1)$-current in $\R\times U$ with
$$\p \mathbf{T}_t=T_t-T_0+[0,t]\times\p T.$$
By the definition of $F$, it infers
\begin{equation}\aligned\label{bTtdwFlep}
\mathbf{T}_\tau(d\omega_{F,\ell})=\p\mathbf{T}_\tau(\omega_{F,\ell})=T_\tau(\omega_{F,\ell})-T_0(\omega_{F,\ell}).
\endaligned
\end{equation}
Let $\vec{T}_\tau$ denote the orientation of $T_\tau$, and $\p_t$ denote the orientation of $[0,1]$. 
By the definition of $\omega_{F,\ell}$, we have
\begin{equation}\aligned\label{TtdwFlep}
T_\tau(\omega_{F,\ell})=&\int\lan\omega_{F,\ell},\vec{T}_\tau\ran d\mu_{T_\tau}=\int\mathrm{Re}\left(\bar{\z}_{\ell}\lan F_\tau^*\Om,\vec{T}_\tau\ran\right) d\mu_{T_\tau}\\
=&\int\mathrm{Re}\left(\bar{\z}_{\ell}\left\lan \Om\big|_{F_\tau},(F_\tau)_*\vec{T}_\tau\right\ran\right) d\mu_{T_\tau}\\
\le&\int\left|(F_\tau)_*\vec{T}_\tau\right| d\mu_{T_\tau}=\mathbf{M}(F_{\#}T_\tau).
\endaligned
\end{equation}

There exists a zero $\mu_T$-measure set $L_0\subset L$ so that the approximate tangent space of $T_x$ exists, and is an $n$-plane with multiplicity $\vth(x)$ for each $x\in L\setminus L_0$. In particular, all such planes live in $G_{n,m}$ since $U$ is properly embedded in $\R^m$. 
Hence, there is a countable collection of Borel sets $\{\mathcal{C}_k\}\subset L\setminus L_0$ satisfying $\mu_T(L\setminus\cup_k\mathcal{C}_k)=0$, and there is a bi-Lipschitz map $\phi_{k}:\, \mathcal{C}_{k}\rightarrow\phi_{k}(\mathcal{C}_{k})\subset\R^n$ with Lipschitz norm $\le1$ for each $k$.
Here, the distance function on $L$ is defined by
\begin{equation*}\aligned
d_L(x,y):=\lim_{\ep\to0}\inf\left\{\sum_{i=1}^jd(x_i,x_{i-1}):\, \{x_i\}_{i=1}^j\subset L, d(x_{i-1},x_i)<\ep,x_0=x,x_j=y\right\}
\endaligned
\end{equation*}
for each $x,y\in L$,
where $d$ is the distance function on $M$.
Hence, the metric space $(L,d_L,\mu_T)$ is $\mu_T$-rectifiable (in the intrinsic sense). 

\begin{lemma}
There is a constant $c_F>0$ depending only on $n$ and the $C^2$-norm of $F$ on $M$ so that for each $t\in[0,\ep)$
\begin{equation}\aligned\label{WtdwLLt}
\left|\f{\mathbf{T}_t(d\omega_{F,\ell})}t+\int\left\lan Y,\mathrm{Re}\left(e^{\sqrt{-1}\th}\left(\na^L\bar{\z}_{\ell}+\sqrt{-1} J\na^L\bar{\z}_{\ell}\right)\right)\right\ran d\mu_T\right|\le\f{c_Ft}2\int |\na^L\z_{\ell}| d\mu_T.
\endaligned
\end{equation}
\end{lemma}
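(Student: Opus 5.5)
The plan is to compute $\mathbf{T}_t(d\omega_{F,\ell})$ directly as an integral over the product $[0,t]\times L$. We evaluate the integrand exactly at $\tau=0$, where $F_0=1_U$ and $\p_\tau F=Y$, and then control the deviation for $\tau\in(0,t]$ by the $C^2$-norm of $F$.

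\textbf{Step 1 (the form $d\omega_{F,\ell}$).} Since $\Om$ is holomorphic with $d\Om=0$, we have $d(F^*\Om)=F^*d\Om=0$ on $(-\ep,\ep)\times U$. Hence
$$d\omega_{F,\ell}=\mathrm{Re}\left(d\bar\z_\ell\wedge F^*\Om\right).$$
Here $\bar\z_\ell$ is regarded as a function of the $U$-variable only, so $d\bar\z_\ell$ has no $dt$-component. The current $\mathbf{T}_t=[0,t]\times T$ has orientation $\p_t\wedge\vec T$ and measure $d\tau\, d\mu_T$. By Fubini,
$$\mathbf{T}_t(d\omega_{F,\ell})=\int_0^t\int\mathrm{Re}\left\lan d\bar\z_\ell\wedge F^*\Om,\p_t\wedge\vec T\right\ran d\mu_T\,d\tau .$$

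\textbf{Step 2 (the integrand at $\tau=0$).} Fix a point $x\in L\setminus L_0$ where the approximate tangent plane exists, and an oriented orthonormal frame $e_1,\dots,e_n$ of $T_xL$. Because $d\bar\z_\ell(\p_t)=0$, expanding the wedge product gives
$$\left\lan d\bar\z_\ell\wedge F^*\Om,\p_t\wedge\vec T\right\ran=\sum_i(-1)^i\,e_i(\bar\z_\ell)\,\Om\big(\p_\tau F,dF e_1,\dots,\widehat{dFe_i},\dots,dFe_n\big).$$
At $\tau=0$ this reduces to $\sum_i(-1)^ie_i(\bar\z_\ell)\,\Om(Y,e_1,\dots,\widehat{e_i},\dots,e_n)$.

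Since $L$ is Lagrangian $\mu_T$-a.e., the vectors $\{e_j,Je_j\}$ form a basis of $T_xM$, and we write $Y=\sum_j(a_je_j+b_jJe_j)$. By \eqref{OmLthae}, $\Om(e_1,\dots,e_n)=e^{\sqrt{-1}\th}$, and by complex linearity $\Om(Je_j,\cdot)=\sqrt{-1}\,\Om(e_j,\cdot)$. Only the $j=i$ term survives, and moving $e_i$ into the $i$-th slot contributes a sign $(-1)^{i-1}$. Therefore the integrand equals
$$-e^{\sqrt{-1}\th}\sum_i(a_i+\sqrt{-1}b_i)e_i(\bar\z_\ell)=-e^{\sqrt{-1}\th}\left(\lan Y,\na^L\bar\z_\ell\ran+\sqrt{-1}\lan Y,J\na^L\bar\z_\ell\ran\right).$$
Taking real parts, the integrand at $\tau=0$ equals $-\left\lan Y,\mathrm{Re}\left(e^{\sqrt{-1}\th}(\na^L\bar\z_\ell+\sqrt{-1}J\na^L\bar\z_\ell)\right)\right\ran$. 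This is exactly the term appearing in \eqref{WtdwLLt}.

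\textbf{Step 3 (error for $\tau>0$).} The map $\tau\mapsto\Om(\p_\tau F,dFe_1,\dots,\widehat{dFe_i},\dots)$ has $\tau$-derivative bounded by a constant depending only on $n$ and $\|F\|_{C^2}$, using $|\Om|=1$ and the smoothness of $\Om$ along $F_\tau$. So the integrand at time $\tau$ differs from the one at $\tau=0$ by at most $c_F\,\tau\,|\na^L\z_\ell|$, since $|\na^L\bar\z_\ell|=|\na^L\z_\ell|$. Integrating $\int_0^tc_F\tau\,d\tau=c_Ft^2/2$, multiplying by $\int|\na^L\z_\ell|d\mu_T$, and dividing by $t$ gives \eqref{WtdwLLt}.

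The main point needing care is Step 2, namely getting the orientation and sign conventions consistent. This means checking that $\lan\Om,\vec T\ran=e^{\sqrt{-1}\th}$ with the product orientation $\p_t\wedge\vec T$, and verifying the Lagrangian decomposition $\mu_T$-a.e. on the rectifiable set. The measure-theoretic justification of Fubini on $[0,t]\times T$ is routine, since $\z_\ell$ is smooth and $\int|\na^L\z_\ell|d\mu_T<\infty$.
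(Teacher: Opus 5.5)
Your proof is correct and follows the paper's argument: Fubini over $[0,t]\times T$, an exact evaluation of $\langle d\omega_{F,\ell},\partial_t\wedge\vec T\rangle$ at $\tau=0$, and a $c_F\tau|\nabla^L\zeta_\ell|$ bound on the deviation for $\tau>0$, which integrates to the stated error term. The only difference is cosmetic: you evaluate the $\tau=0$ term using the Lagrangian frame $\{e_j,Je_j\}$ and the complex linearity of $\Omega$, whereas the paper works in a local holomorphic chart adapted to $T_pL$ with $\Omega=e^{\sqrt{-1}\theta(p)}h_\Omega\,dz_1\wedge\cdots\wedge dz_n$.
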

\begin{proof}
Given a point $p\in \mathcal{C}_k$, we choose an orthonormal basis $\{e_i\}_{1\le i\le n}$ of $T_pL$ so that $\vec{T}\big|_p=e_1\wedge\cdots\wedge e_n$. 
There is a local Lipschitz coordinate system $(\mathbf{x}_1,\cdots,\mathbf{x}_n):\mathcal{C}_k \to\R^n$ so that $\f{\p}{\p\mathbf{x}_i}=e_i$ at $p$.
There are a small $\ep_0>0$, and a complex coordinate chart $(B_{\ep_0}(p),Z)$ so that $B_{\ep_0}(p)\subset U$, $Z=(z_1,\cdots,z_n): B_{\ep_0}(p)\to\Phi(B_{\ep_0}(p))\subset\C^n$ is bi-holomorphic with $z_i=(x_i,y_i)$ and $\f{\p}{\p z_i}=\f{\p}{\p x_i}+\sqrt{-1}\f{\p}{\p y_i}$. 
Moreover, we can assume $\f{\p}{\p x_i}=e_i$ at $p$ since $T_pL$ is a Lagrangian plane in $\C^n$.

From the definition of $\th$, there is a holomorphic function $h_\Om$ on $U$ with $h_\Om(p)=1$ so that
$$\Om=e^{\sqrt{-1}\th(p)}h_\Om dz_1\wedge\cdots\wedge dz_n\qquad \text{on}\ B_{\ep_0}(p).$$
Let $(\mathbf{z}_1,\cdots,\mathbf{z}_{n})=Z(F(t,\mathbf{x}_1,\cdots,\mathbf{x}_n))\in Z(B_{\ep_0}(p))$, then
\begin{equation}\aligned
d\mathbf{z}_j\bigg|_{(p,0)}=\left\lan \f{d}{dt}\bigg|_{(p,0)}F_t,\f{\p}{\p z_j}\right\ran dt+\sum_i\f{\p\mathbf{z}_j}{\p \mathbf{x}_i}\bigg|_{(p,0)} d\mathbf{x}_i=\left\lan Y,\f{\p}{\p z_j}\right\ran dt+d\mathbf{x}_j.
\endaligned
\end{equation}
From
\begin{equation}\aligned
F^*\Om\big|_{\mathcal{C}_k\times(-\ep,\ep)}=e^{\sqrt{-1}\th(p)}h_\Om\circ F d\mathbf{z}_1\wedge\cdots\wedge d\mathbf{z}_n\qquad \text{for each } k,
\endaligned
\end{equation}
on $\cup_k\mathcal{C}_k\times[0,\ep)$ we get
\begin{equation}\aligned\label{dwFt}
\lan d\omega_{F,\ell},\p_t\wedge\vec{T}\ran=&\left\lan\mathrm{Re}\left(d\bar{\z}_{\ell}\wedge F^*\Om\right),\p_t\wedge \vec{T}\right\ran\\
=&\mathrm{Re}\left(e^{\sqrt{-1}\th(p)}h_\Om\circ F\left\lan d\bar{\z}_{\ell}\wedge d\mathbf{z}_1\wedge\cdots\wedge d\mathbf{z}_n,\p_t\wedge \vec{T}\right\ran\right).
\endaligned
\end{equation}

Put $\p_{x_j}=\f{\p}{\p x_j}$ and $\p_{y_j}=\f{\p}{\p y_j}$ for each $j$. At $(p,0)$ we have 
\begin{equation}\aligned\label{dwF}
\lan d\omega_{F,\ell},\p_t\wedge\vec{T}\ran=&\sum_j(-1)^{j-1}\mathrm{Re}\Big(e^{\sqrt{-1}\th}\lan Y,\p_{x_j}+\sqrt{-1}\p_{y_j}\ran\\ 
& \left\lan d\bar{\z}_{\ell}\wedge dt\wedge d\mathbf{x}_1\wedge\cdots \wedge\widehat{d\mathbf{x}_j}\wedge\cdots\wedge d\mathbf{x}_n,\p_t\wedge e_1\wedge\cdots\wedge e_n\right\ran\Big)\\
=&-\sum_j\mathrm{Re}\Big(e^{\sqrt{-1}\th}\lan Y,\p_{x_j}+\sqrt{-1}\p_{y_j}\ran \p_{x_j}\bar{\z}_{\ell} \Big)\\
=&-\mathrm{Re}\left(e^{\sqrt{-1}\th}\lan Y,\na^L\bar{\z}_{\ell}+\sqrt{-1} J\na^L\bar{\z}_{\ell}\ran\right)\\
=&-\left\lan Y,\mathrm{Re}\left(e^{\sqrt{-1}\th}\left(\na^L\bar{\z}_{\ell}+\sqrt{-1} J\na^L\bar{\z}_{\ell}\right)\right)\right\ran.
\endaligned
\end{equation}
Let $\Psi_{\ell}$ be a function defined on $\cup_k\mathcal{C}_k\times[0,\ep)$ by
$$\Psi_{\ell}(p,\tau)=\lan d\omega_{F,\ell},\p_t\wedge\vec{T}\ran\big|_{(p,\tau)}-\lan d\omega_{F,\ell},\p_t\wedge\vec{T}\ran\big|_{(p,0)}.$$
From \eqref{dwFt}, we conclude that 
\begin{equation}\aligned\label{Est-Psi}
|\Psi_{\ell}|(p,\tau)\le\int_0^\tau\left|\f{\p}{\p s}\lan d\omega_{F,\ell},\p_t\wedge\vec{T}\ran\big|_{(p,s)}\right|ds\le c_F\tau|\na^L\z_{\ell}|(p),
\endaligned
\end{equation}
where $c_F>0$ is a constant depending only on $n$ and the $C^2$-norm of $F$ on $M$.
From
\begin{equation}\aligned
\mathbf{T}_t(d\omega_{F,\ell})=&\int_0^t\int\lan d\omega_{F,\ell},\p_t\wedge\vec{T}\ran d\mu_Td\tau
\endaligned
\end{equation}
and \eqref{dwF}, we get
\begin{equation}\aligned
&\left|\f1t\mathbf{T}_t(d\omega_{F,\ell})+\int\left\lan Y,\mathrm{Re}\left(e^{\sqrt{-1}\th}\left(\na^L\bar{\z}_{\ell}+\sqrt{-1} J\na^L\bar{\z}_{\ell}\right)\right)\right\ran d\mu_T\right|\\
=&\f1t\left|\int_0^t\int \Psi_{\ell}(p,\tau) d\mu_Td\tau\right|
\le\f{c_F}t\int_0^t\tau\int |\na^L\z_{\ell}| d\mu_Td\tau=\f{c_Ft}2\int |\na^L\z_{\ell}| d\mu_T.
\endaligned
\end{equation}
This completes the proof.
\end{proof}

Analog to the smooth case, we also use $d_L$ to denote the differential on $L$. Let
$d_L\th$ be the differential of $\th$, and $\na^L\th\in L^q_{\mu_T}(TL)$ be the dual vector field (on $L$) of $d_L\th$, i.e., $d_L\th(Y)=\lan \na^L\th,Y\ran$ for each vector field $Y$.
Since it's unclear that $W^{1,q}_{\mu_T}(L)$ is reflexive or not (see Theorem 2.1.5 in \cite{G}),
in the following we prove
\begin{equation}\aligned\label{con*th*}
d_L\arg\tilde{\z}_\ell\to d_L\th\qquad \text{or} \qquad d_L\tilde{\z}_\ell\to d_Le^{\sqrt{-1}\th}\qquad \text{in the weak}\  L^q_{\mu_T}\ \text{sense}
\endaligned
\end{equation}
using a calibration argument.

\begin{theorem}\label{thW12-H}
If $T=(L,\vth,\vec{T})$ is an integer multiplicity Lagrangian current  in $U$ with $\p T\llcorner U=0$ and phase $\th$  satisfies $e^{\sqrt{-1}\th}\in W^{1,q}_{\mu_T}(L)$, then $|T|$ has generalized mean curvature $H=J\na^L\th\in L^q_{\mu_T}(TL)$.
\end{theorem}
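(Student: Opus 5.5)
The plan is a calibration argument built on the lemma just proved (estimate \eqref{WtdwLLt}). The variational inequality will identify the first variation of mass with a pairing against the approximating gradients. Fix $Y\in\mathfrak{X}_c(U)$ with flow $F_t$. By \eqref{bTtdwFlep} and \eqref{TtdwFlep},
$$\mathbf{T}_t(d\omega_{F,\ell})=T_t(\omega_{F,\ell})-T_0(\omega_{F,\ell})\le \mathbf{M}(F_{t\#}T)-\int\mathrm{Re}\big(\bar\z_\ell e^{\sqrt{-1}\th}\big)d\mu_T .$$
Since $\z_\ell\to e^{\sqrt{-1}\th}$ in $L^q_{\mu_T}$ and $|\z_\ell|=1$, we have $\int\mathrm{Re}(\bar\z_\ell e^{\sqrt{-1}\th})d\mu_T=\mu_T(U)-o(1)$. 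Write $\mathbf{M}(F_{t\#}T)\le \mathbf{M}(T)+t\int\div_L Y\,d\mu_T+Ct^2$ by \eqref{FirstVar}. Dividing by $t$ and using \eqref{WtdwLLt}, we get for $t>0$
$$-\int\Big\lan Y,\mathrm{Re}\big(e^{\sqrt{-1}\th}(\na^L\bar\z_\ell+\sqrt{-1}J\na^L\bar\z_\ell)\big)\Big\ran d\mu_T\le \int\div_LY\,d\mu_T+Ct+\frac{c_Ft}{2}\int|\na^L\z_\ell|d\mu_T+\frac{o_\ell(1)}{t}.$$
The $o_\ell(1)/t$ term comes from the discrepancy $\mu_T(U)-\int\mathrm{Re}(\bar\z_\ell e^{\sqrt{-1}\th})$. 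Here $\int|\na^L\z_\ell|$ is uniformly bounded, since $|\na^L\z_\ell|\le|\na^L\tilde\z_\ell|$ and $\tilde\z_\ell$ has bounded $L^q$ gradient. Letting $\ell\to\infty$ first and then $t\to0$ therefore needs control of the left side as $\ell\to\infty$. Replacing $Y$ by $-Y$ (equivalently $t<0$) gives the reverse inequality.

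The main obstacle is passing to the limit in the pairing term, because $W^{1,q}_{\mu_T}$ need not be reflexive. This is exactly the claim \eqref{con*th*}. The gradients $\na^L\z_\ell$ are bounded in $L^q_{\mu_T}$ with $q>1$, and $L^q$ of sections is reflexive. Hence a subsequence converges weakly to some $\eta\in L^q_{\mu_T}(TL)\otimes\C$. I would first show $\eta=\na^L e^{\sqrt{-1}\th}$, i.e.\ the Cheeger differential, by citing [\cite{G}, Theorem 2.2.9]. That theorem applies once weak convergence holds, and it does hold along the optimal sequence chosen via Mazur, which already converges strongly to $d_Le^{\sqrt{-1}\th}$. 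Since $\z_\ell=\tilde\z_\ell/|\tilde\z_\ell|$ and $\tilde\z_\ell\to e^{\sqrt{-1}\th}$ with $|e^{\sqrt{-1}\th}|=1$, the chain rule yields $\na^L\z_\ell-\na^L\tilde\z_\ell\to0$ weakly on the sets where $|\tilde\z_\ell|$ is close to $1$. If necessary, I would truncate near $|\tilde\z_\ell|=0$ by composing with a smooth radial retraction. As a fallback, the calibration inequality itself gives the identification: the displayed inequality holds for every weak limit, and both directions $\pm Y$ force the limiting pairing to equal the linear functional $-\int\div_LY$, so every weak limit gives the same value and no subsequence ambiguity remains.

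With $\na^L\bar\z_\ell\rightharpoonup \na^Le^{-\sqrt{-1}\th}=-\sqrt{-1}e^{-\sqrt{-1}\th}\na^L\th$, multiplying by $e^{\sqrt{-1}\th}$ preserves weak convergence because $e^{\sqrt{-1}\th}$ is bounded. The pairing limit then becomes
$$\mathrm{Re}\big(-\sqrt{-1}\na^L\th+J\na^L\th\big)=J\na^L\th .$$
We now let $\ell\to\infty$, then $t\to0$, and apply the same argument to $-Y$. This yields $\int\div_LY\,d\mu_T=-\int\lan J\na^L\th,Y\ran d\mu_T$ for all $Y\in\mathfrak{X}_c(U)$, which is \eqref{Def-GMC} with $H=J\na^L\th$. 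Finally, $|H|=|\na^L\th|=|\na^Le^{\sqrt{-1}\th}|\in L^q_{\mu_T}$. This also yields \eqref{con*th*} as a byproduct.
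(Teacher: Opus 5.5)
Your proof is correct at the paper's level of rigor. It uses the same ingredients as the paper: the calibrating forms $\omega_{F,\ell}$, estimate \eqref{WtdwLLt}, the $\pm Y$ trick, and Gigli's closure theorem for the final identification. The key step, however, is organized differently.

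In the paper, the calibration is what produces the weak convergence \eqref{con*th*}. The two one-sided inequalities for $Y$ and $-Y$ force $\lim_{\ell}\int\lan Y,J\na^L\arg\tilde\z_\ell\ran\, d\mu_T$ to exist along the whole sequence and to equal $-\int\div_L Y\, d\mu_T$. H\"older and Riesz representation then give $H\in L^q_{\mu_T}$ directly from the first variation. Only at the end is the weak limit $-JH$ identified with $\na^L\th$.

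You instead observe, correctly, that weak compactness of the gradients needs reflexivity only of $L^q_{\mu_T}(TL)$, not of $W^{1,q}_{\mu_T}$. This holds because $L^q_{\mu_T}(TL)$ consists of sections of a finite-rank bundle with $1<q<\infty$; the paper's Riesz step rests on the same $L^{q'}$--$L^q$ duality. So you identify a subsequential weak limit with $\na^L e^{\sqrt{-1}\th}$ first, and then use the calibration purely as a first-variation inequality, with $|H|=|\na^L\th|\in L^q_{\mu_T}$ for free. The paper's ordering buys convergence of the whole sequence and existence of $H\in L^q_{\mu_T}$ before, and independently of, the identification step.

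Your truncation by a radial retraction $\Phi$ with $|\Phi|\le 1$ is worth keeping. It preserves the calibration inequality and gives an honest $L^q$ bound on $\na^L\z_\ell$, which $|\na^L\z_\ell|\le|\na^L\tilde\z_\ell|$ does not provide where $|\tilde\z_\ell|<1$. The limit is then identified by the chain rule, since $D\Phi(e^{\sqrt{-1}\th})$ fixes $\na^L e^{\sqrt{-1}\th}=\sqrt{-1}e^{\sqrt{-1}\th}\na^L\th$.

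Delete the claim that the Mazur-chosen $\tilde\z_\ell$ already satisfy $d_L\tilde\z_\ell\to d_L e^{\sqrt{-1}\th}$ strongly. The setup gives only $L^q$ convergence of $\tilde\z_\ell$ with optimal gradient norms, and weak convergence of the gradients is exactly what \eqref{con*th*} sets out to prove. If strong convergence were known, neither the closure theorem nor the calibration would be needed for it. Your reflexivity argument is what actually supplies the weak limit.
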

\begin{proof}
From \eqref{FirstVar}, \eqref{bTtdwFlep} and \eqref{TtdwFlep}, for each $Y\in \mathfrak{X}_c(U)$ it follows that 
\begin{equation}\aligned
\mathbf{M}(F_\#T_t)=\int (1+t\mathrm{div}_L Y+t^2\Phi_t)d\mu_T
\ge T_t(\omega_{F,\ell})=T_0(\omega_{F,\ell})+\mathbf{T}_t(d\omega_{F,\ell}),
\endaligned
\end{equation}
which implies
\begin{equation}\aligned
\int (\mathrm{div}_L Y+t\Phi_t)d\mu_T\ge \f1t\limsup_{\ell\to\infty}\mathbf{T}_t(d\omega_{F,\ell}).
\endaligned
\end{equation}
From the definition $\z_\ell=e^{\sqrt{-1}\arg\tilde{\z}_\ell}$, we have
\begin{equation}\aligned
\na^L\bar{\z}_{\ell}+\sqrt{-1} J\na^L\bar{\z}_{\ell}=e^{-\sqrt{-1}\arg\tilde{\z}_\ell}\left(-\sqrt{-1}\na^L\arg\tilde{\z}_\ell+ J\na^L\arg\tilde{\z}_\ell\right)\qquad \mu_T-a.e..
\endaligned
\end{equation}
From H\"older inequality and the choice of $\tilde{\z}_\ell$, it follows that
\begin{equation}\aligned
&\liminf_{\ell\to\infty}\int\left\lan Y,\mathrm{Re}\left(e^{\sqrt{-1}\th}\left(\na^L\bar{\z}_{\ell}+\sqrt{-1} J\na^L\bar{\z}_{\ell}\right)\right)\right\ran d\mu_T\\
=&\liminf_{\ell\to\infty}\int\left\lan Y,J\na^L\arg\tilde{\z}_\ell\right\ran d\mu_T.
\endaligned
\end{equation}
Combining \eqref{WtdwLLt}, $t\to0$ infers that
\begin{equation}\aligned
\int \mathrm{div}_L Y d\mu_T \ge&\limsup_{t\to0}\left(\f1t\limsup_{\ell\to\infty}\mathbf{T}_t(d\omega_{F,\ell})\right)\\
=&-\liminf_{\ell\to\infty}\int\left\lan Y,J\na^L\arg\tilde{\z}_\ell\right\ran d\mu_T.
\endaligned
\end{equation}
By taking $-Y$ instead of $Y$, we have
\begin{equation}\aligned
\int \mathrm{div}_L(-Y) d\mu_T \ge&
-\liminf_{\ell\to\infty}\int\left\lan -Y,J\na^L\arg\tilde{\z}_\ell\right\ran d\mu_T\\
=&\limsup_{\ell\to\infty}\int\left\lan Y,J\na^L\arg\tilde{\z}_\ell\right\ran d\mu_T.
\endaligned
\end{equation}
The above two inequalities give
\begin{equation}\aligned\label{divJnaLL}
\int \mathrm{div}_L Y d\mu_T=-\lim_{\ell\to\infty}\int\left\lan Y,J\na^L\arg\tilde{\z}_\ell\right\ran d\mu_T.
\endaligned
\end{equation}
By H\"older inequality, 
\begin{equation}\aligned
\left|\int \mathrm{div}_L Yd\mu_T\right|\le&\limsup_{\ell\to\infty}\left(\int|\na^L\arg\tilde{\z}_\ell|^qd\mu_T\right)^{1/q}\left(\int|Y|^{q'}d\mu_T\right)^{1/q'}\\
\le&||e^{\sqrt{-1}\th}||_{W^{1,q}}\left(\int|Y|^{q'}d\mu_T\right)^{1/q'},
\endaligned
\end{equation}
where $q'$ is the constant satisfying $1/q+1/{q'}=1$.
From Riesz Representation Theorem, there is a vector-valued $L^q$-function $H$ on $L$ so that
\begin{equation}\aligned\label{GMC}
\int \mathrm{div}_L Yd\mu_T=-\int \lan H,Y\ran d\mu_T.
\endaligned
\end{equation}
Here, $H$ is called generalized mean curvature of $L$. With \eqref{divJnaLL}, we get
\begin{equation}\aligned
\lim_{\ell\to\infty}\int\left\lan (H-J\na^L\arg\tilde{\z}_\ell),Y\right\ran d\mu_T=0\qquad \text{for any}\ Y\in \mathfrak{X}_c(U).
\endaligned
\end{equation}
In other words, $\na^L\arg\tilde{\z}_\ell$ converges to $-JH$ in the weak $L^q_{\mu_T}$ sense. 
From \cite{G,GP} and $\z_\ell=e^{\sqrt{-1}\arg\tilde{\z}_\ell}$, we conclude that 
\begin{equation}\aligned\label{dLargxielldLth}
d_L\arg\tilde{\z}_\ell=-\sqrt{-1}e^{-\sqrt{-1}\arg\tilde{\z}_\ell}d_L\z_\ell\to-\sqrt{-1}e^{-\sqrt{-1}\th}d_Le^{\sqrt{-1}\th}=d_L\th
\endaligned
\end{equation}
 in $L^q_{\mu_T}$ sense.
This means $\na^L\th=-JH$ $\mu_T$-a.e..
We complete the proof.
\end{proof}
\begin{remark}
The converse of Theorem \eqref{thW12-H} may not be true. For example, let $\mathfrak{C}$ be the Lagrangian current $[|\{(x,y)\in\R^2:\, y=x\}|]+[|\{(x,y)\in\R^2:\, y=-x\}|]$, then the varifold $|\mathfrak{C}|$ is stationary in $\R^2$, but the phase of $\mathfrak{C}$ is not in $W^{1,q}_{\mu_T}$ for each $q>1$.
\end{remark}
From \eqref{dLargxielldLth}, $\na^{L}\z_\ell^j\to\na^{L}e^{j\sqrt{-1}\th}$ in $L^q_{\mu_T}$ sense for each $j\in\Z$.
For each $X\in \mathfrak{X}_c(U)$, we replace $Y$ by $\z_\ell^jX$ in \eqref{GMC}, then letting $\ell\to\infty$ gives
\begin{equation}\aligned\label{thnaLiejthX}
j\sqrt{-1}\int_{U} e^{j\sqrt{-1}\th}\lan\na^{L}\th,X\ran d\mu_{T}=-\int_Ue^{j\sqrt{-1}\th}\left(\lan H,X\ran+\mathrm{div}_{L}X\right)d\mu_{T}.
\endaligned
\end{equation}
The above equality means that $e^{j\sqrt{-1}\th}$ has bounded variation in $U$ w.r.t. $\mu_{T}$.

From \eqref{Def-GMC} and Theorem \ref{thW12-H}, we immediately have the following equivalence.
\begin{corollary}
Let $T=(L,\vth,\vec{T})$ be an integer multiplicity Lagrangian current in $U$ with $\p T\llcorner U=0$ and phase $\th$ satisfying $e^{\sqrt{-1}\th}\in W^{1,2}_{\mu_T}(L)$. Then $\th$ is harmonic on $L$ w.r.t. $\mu_T$ if and only if $|T|$ is HSL.
\end{corollary}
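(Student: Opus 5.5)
The plan is to translate both notions into statements about the same first-variation functional and compare them. By Theorem \ref{thW12-H} with $q=2$, the varifold $|T|$ has generalized mean curvature $H=J\na^L\th\in L^2_{\mu_T}(TL)$. Hence, by \eqref{Def-GMC}, for every $f\in C^\infty_c(U)$ we have
\begin{equation*}
\int_{G_n(U)}\mathrm{div}_P(J\na f)\,dV(p,P)=\int_U\mathrm{div}_L(J\na f)\,d\mu_T=-\int_U\lan H,J\na f\ran d\mu_T .
\end{equation*}
Since $J$ is orthogonal and $J^2=-1$, the right-hand side equals
\begin{equation*}
-\int_U\lan J\na^L\th,J\na f\ran d\mu_T=-\int_U\lan \na^L\th,\na f\ran d\mu_T .
\end{equation*}
Because $\na^L\th$ is tangent to $L$ $\mu_T$-a.e., we may replace $\na f$ by its tangential projection $\na^L f$. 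The Lagrangian condition $\omega|_L=0$ is not even needed here; it only guarantees that $J\na^L\th$ is normal. So HSL in the sense of \eqref{HS-varifold} is equivalent to
\begin{equation*}
\int_U\lan\na^L\th,\na^Lf\ran\,d\mu_T=0\qquad\text{for all } f\in C^\infty_c(U).
\end{equation*}

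Next I identify $\lan\na^L\th,\na^L f\ran$ with the Cheeger pairing $\lan d_L\th,d_L f\ran$ that appears in \eqref{du_idphi=0} and \eqref{dz_idphi=0}. For smooth $f$ restricted to $L$, the Cheeger differential coincides $\mu_T$-a.e. with the tangential gradient on the rectifiable pieces $\mathcal{C}_k$ used in \S3. For $\th$, \eqref{dLargxielldLth} identifies $d_L\th$ with the dual of $\na^L\th=-JH$. When $\th$ is only multi-valued, I use $e^{-\sqrt{-1}\th}d_Le^{\sqrt{-1}\th}=\sqrt{-1}\,d_L\th$, which is again \eqref{dLargxielldLth}, so that the integrand of \eqref{dz_idphi=0} becomes $\sqrt{-1}\lan d_L\th,d_L\phi\ran$. (If $e^{\sqrt{-1}k\th}$ is used with $k\neq1$, the only change is a constant factor $k$.) Therefore the harmonicity condition tested on smooth $\phi$ is exactly the identity displayed above.

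The remaining point, and the main obstacle, is the mismatch of test classes. Harmonicity is tested on all $\phi\in\mathbf{Lip}_c(L)$, whereas the HSL condition is tested only on restrictions of $f\in C^\infty_c(U)$. Smooth restrictions are Lipschitz, so harmonicity implies HSL immediately. For the converse I would approximate. Given $\phi\in\mathbf{Lip}_c(L)$, I extend it by McShane's lemma to a Lipschitz function on $U$. I then use Lemma \ref{Lip-to-smooth} from Appendix I together with Mazur's theorem, as in the construction of $\tilde\z_\ell$, to obtain smooth $f_j$ with $f_j\to\phi$ and $d_Lf_j\to d_L\phi$ in $L^2_{\mu_T}$. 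Since $\na^L\th\in L^2_{\mu_T}$, Cauchy--Schwarz lets me pass to the limit in $\int\lan\na^L\th,\na^L f_j\ran d\mu_T$.

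The delicate part is ensuring that the approximation converges strongly in $L^2_{\mu_T}$ at the level of differentials rather than only weakly, given the non-reflexivity noted after Definition \ref{B-Gindec}'s preliminaries. I would handle this exactly as in the proof of Theorem \ref{thW12-H}. There, weak convergence against the fixed $L^2$ field $\na^L\th$ already suffices, since the pairing with a fixed $L^2$ vector field is a continuous linear functional.
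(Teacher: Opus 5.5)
Your proposal is correct and is the paper's argument: the paper derives the corollary directly from Theorem \ref{thW12-H} and \eqref{Def-GMC}, i.e.\ $\int\mathrm{div}_L(J\na f)\,d\mu_T=-\int\lan J\na^L\th,J\na f\ran d\mu_T=-\int\lan\na^L\th,\na^L f\ran d\mu_T$. The paper leaves implicit both the identification of $d_L\th$ via \eqref{dLargxielldLth} and the passage from smooth to Lipschitz test functions, which you supply; that passage is simpler than you make it, since Lemma \ref{Lip-to-smooth} applies directly to $\phi\in\mathbf{Lip}_c(L)$ and already gives strong $L^2_{\mu_T}$ convergence of $\na^L f_j$, so McShane's extension (which would need an ambient rather than intrinsic Lipschitz bound) and Mazur are unnecessary, and multiplying the $f_j$ by a cutoff $\chi\in C^\infty_c(U)$ with $\chi\equiv1$ near $\mathrm{spt}\,\phi$ makes them admissible in \eqref{HS-varifold}.
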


\begin{proposition}
The 1-form $\omega_{H}:=\omega(H,\cdot)$ is closed on $T$.
\end{proposition}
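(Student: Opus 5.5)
The plan is to identify $\omega_H$ with $d_L\th$ in the weak sense, and then use that an exact form (a limit of exact forms) is closed. By Theorem \ref{thW12-H}, $H=J\na^L\th$ holds $\mu_T$-a.e. Hence for every tangent vector $X$ of $L$,
\[
\omega_H(X)=\omega(H,X)=g(JH,X)=g(J^2\na^L\th,X)=-\lan\na^L\th,X\ran .
\]
So $\omega_H=-d_L\th$ $\mu_T$-a.e., at least up to the sign convention for $\omega$. Closedness on $T$ will be interpreted in the current sense: for every $\e\in\mathcal{D}^{n-2}(U)$ we need
\[
\int\lan \omega_H\wedge d\e,\vec{T}\ran d\mu_T=0 .
\]
This is the weak form of $d\omega_H=0$, obtained by testing against $T\llcorner\omega_H$, that is, by showing $\p(T\llcorner\omega_H)=0$.

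\textbf{Approximation.} Take the approximating functions $\z_\ell=e^{\sqrt{-1}\arg\tilde{\z}_\ell}$ constructed before Theorem \ref{thW12-H}. By \eqref{dLargxielldLth}, $d_L\arg\tilde{\z}_\ell\to d_L\th$ in $L^q_{\mu_T}$. Locally, wherever $\tilde{\z}_\ell\ne0$, the $1$-form
\[
\a_\ell:=-\sqrt{-1}\,\bar{\z}_\ell\, d\z_\ell
\]
is a smooth closed form on $U$. It equals $d\arg\tilde{\z}_\ell$, and it is globally defined even though $\arg\tilde{\z}_\ell$ may be multivalued. Because $\p T\llcorner U=0$, we get
\[
T(\a_\ell\wedge d\e)=\pm T(d(\a_\ell\wedge\e))=\pm\p T(\a_\ell\wedge\e)=0 .
\]
Then pass to the limit. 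Since $\lan\a_\ell\wedge d\e,\vec{T}\ran$ depends only on the tangential part $d_L\arg\tilde{\z}_\ell$, the $L^q_{\mu_T}$ convergence together with the boundedness of $d\e$ gives
\[
\int\lan d_L\th\wedge d\e,\vec{T}\ran d\mu_T=0 ,
\]
and therefore $\omega_H$ is closed on $T$.

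\textbf{Main obstacle.} The hard step is that $\a_\ell$ must be a genuinely smooth closed form on a neighborhood of $\mathrm{spt}(\e)$. This fails where $\tilde{\z}_\ell$ vanishes: there $\z_\ell$ is undefined and $\a_\ell$ may carry a delta-type singularity, so that $d\a_\ell\ne0$ in the distribution sense.

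I would handle this in one of two ways. The first is to replace $\a_\ell$ by the smooth form $\mathrm{Im}(\bar{\tilde{\z}}_\ell d\tilde{\z}_\ell)/|\tilde{\z}_\ell|^2_\de$, with a regularized modulus. The second is simpler: use $\mathrm{Im}(\bar{\tilde{\z}}_\ell\, d\tilde{\z}_\ell)$ directly, whose exterior derivative $2\,\mathrm{Im}(d\bar{\tilde{\z}}_\ell\wedge d\tilde{\z}_\ell)$ is not zero.

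In the second approach the error term is $T(\mathrm{Im}(d\bar{\tilde{\z}}_\ell\wedge d\tilde{\z}_\ell)\wedge\e)$. Its limit must be controlled using that $|\tilde{\z}_\ell|\to1$ and that $d_L\tilde{\z}_\ell$ is asymptotically a multiple of $\sqrt{-1}\tilde{\z}_\ell$, so that $d\bar{\tilde{\z}}_\ell\wedge d\tilde{\z}_\ell$ restricted to $L$ tends to $0$ in $L^{q/2}$ if $q\ge2$. For $1<q<2$ this estimate is delicate, and I expect to need the truncation $|\tilde{\z}_\ell|\ge1/2$ on a set of almost full measure, together with a coarea-type argument.
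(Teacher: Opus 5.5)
\textbf{There is a genuine gap, and it cannot be closed.} Your strategy is the paper's. There one sets $\th_\ell=-\sqrt{-1}\log\z_\ell$, writes $0=-T(d(\th_\ell\wedge d\e))=T(d\th_\ell\wedge d\e)$ using $\partial T\llcorner U=0$, and lets $\ell\to\infty$ using $d_L\th_\ell\to d_L\th$ in $L^q_{\mu_T}$. The paper never deals with the zeros of $\tilde\z_\ell$, so the obstacle you isolate is a hole in the published proof too.

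Your proposal leaves that obstacle open for $1<q<2$ and hopes a truncation or coarea argument will remove it. None can, because for $q<2$ the statement is false. Take coprime integers $a\neq b$ and the Schoen--Wolfson cone
$$F(r,\phi)=(\sqrt{b}\,r e^{\sqrt{-1}a\phi},\ \sqrt{-1}\sqrt{a}\,r e^{-\sqrt{-1}b\phi})\in\C^2,\qquad r\ge0,\ \phi\in[0,2\pi).$$
Let $T$ be the multiplicity-one current on its image, oriented by $\partial_rF\wedge\partial_\phi F$.
\begin{itemize}
\item The Hermitian product of $\partial_rF$ and $\partial_\phi F$ vanishes, so $T$ is Lagrangian, and $\partial T\llcorner\mathbf{B}_1=0$.
\item $dz_1\wedge dz_2(\partial_rF,\partial_\phi F)=(a+b)\sqrt{ab}\,r\,e^{\sqrt{-1}(a-b)\phi}$, so $\th=(a-b)\phi$.
\item Since $|\na^L e^{\sqrt{-1}\th}|\le c/r$, cutting off near the vertex gives $e^{\sqrt{-1}\th}\in W^{1,s}_{\mu_T}$ for every $s<2$. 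The paper's introduction says the same.
\end{itemize}
Yet for $\e\in\mathcal{D}^0(\mathbf{B}_1)$ (here $n=2$),
\[
\int\lan d_L\th\wedge d\e,\vec{T}\ran d\mu_T=(a-b)\int_{(0,\infty)\times(0,2\pi)}d\phi\wedge d(\e\circ F)=-(a-b)\int_0^{2\pi}\!\!\int_0^\infty\partial_r(\e\circ F)\,dr\,d\phi=2\pi(a-b)\,\e(0).
\]
So the $(n-1)$-current $\e'\mapsto T(\omega_H\wedge\e')$ has boundary equal to a nonzero multiple of the Dirac mass at the vertex, and $\omega_H$ is not closed on $T$.

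\textbf{Why your fixes cannot work for $q<2$.} The obstruction is the nonvanishing distributional Jacobian of an $\mathbb{S}^1$-valued $W^{1,s}$ map with $s<2$. In your second approach, keep $|\tilde\z_\ell|\le2$ by composing with a smooth retraction of $\C$ onto a disk. Then $T(\a_\ell\wedge d\e)\to\int\lan d_L\th\wedge d\e,\vec T\ran d\mu_T$ for every $q\ge1$. So the error term $T(d\a_\ell\wedge\e)$ is forced to converge to $2\pi(a-b)\e(0)$. It concentrates near the zeros of $\tilde\z_\ell$ and records their degree, which no truncation or coarea estimate can make small. The same happens with your regularized form, whose differential is supported where $|\tilde\z_\ell|^2\lesssim\delta$.

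\textbf{For $q\ge2$ your second approach is a complete proof,} and it is more careful than the paper's. Write $\tilde\z_\ell=u_\ell+\sqrt{-1}v_\ell$ and $e^{\sqrt{-1}\th}=u+\sqrt{-1}v$.
\begin{itemize}
\item Stokes gives $T(\a_\ell\wedge d\e)=T(d\a_\ell\wedge\e)$, with $d\a_\ell=\mathrm{Im}(d\bar{\tilde\z}_\ell\wedge d\tilde\z_\ell)=2\,du_\ell\wedge dv_\ell$. The coefficient in front of $\mathrm{Im}$ is $1$, not $2$.
\item Since $q\ge2$, the tangential part of $d\a_\ell$ converges in $L^1_{\mu_T}$ to $2\,d_Lu\wedge d_Lv$.
\item This limit vanishes $\mu_T$-a.e., because $u\,d_Lu+v\,d_Lv=\frac12 d_L(u^2+v^2)=0$ forces $d_Lu$ and $d_Lv$ to be parallel.
\end{itemize}
So the proposition, and your argument for it, hold when $e^{\sqrt{-1}\th}\in W^{1,q}_{\mu_T}(L)$ with $q\ge2$. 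They do not hold under the paper's standing hypothesis $q>1$.
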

\begin{proof}
Let $\th_\ell=-\sqrt{-1}\log\z_\ell$, which is a multi-valued function. However, $\na^L\th_\ell$ is well-defined a.e. w.r.t. $\mu_T$.
Let $K_t=\{x\in L:\,|\na^L\th|(x)\le t\}$ for each $t>0$, then 
\begin{equation}\aligned\label{dzltoeth}
&\liminf_{\ell\to\infty}\int\left|e^{\sqrt{-1}\th}\na^L\th-\z_\ell\na^L\th\right|^qd\mu_T
\le\liminf_{\ell\to\infty}\int\left|e^{\sqrt{-1}\th}-\z_\ell\right|^q|\na^L\th|^qd\mu_T\\
\le&\limsup_{\ell\to\infty}\int_{K_t}\left|e^{\sqrt{-1}\th}-\z_\ell\right|^q|\na^L\th|^qd\mu_T+\liminf_{\ell\to\infty}\int_{U\setminus K_t}\left|e^{\sqrt{-1}\th}-\z_\ell\right|^q|\na^L\th|^qd\mu_T\\
\le&\liminf_{\ell\to\infty}\int_{U\setminus K_t}\left|e^{\sqrt{-1}\th}-\z_\ell\right|^q|\na^L\th|^qd\mu_T\le2^q\int_{U\setminus K_t}|\na^L\th|^qd\mu_T\to0\ (\text{as}\ t\to\infty).
\endaligned
\end{equation}
Combining Triangle inequality and H\"older inequality, we have
\begin{equation}\aligned\label{naLthlnaLth}
&\liminf_{\ell\to\infty}\int\left|\na^L\th_\ell-\na^L\th\right|^qd\mu_T=\liminf_{\ell\to\infty}\int\left|\sqrt{-1}\na^L\z_\ell+\z_\ell\na^L\th\right|^qd\mu_T\\
\le&2^q\liminf_{\ell\to\infty}\int\left(\left|\sqrt{-1}\na^L\z_\ell+e^{\sqrt{-1}\th} \na^L\th\right|^q+\left|e^{\sqrt{-1}\th}\na^L\th-\z_\ell\na^L\th\right|^q\right)d\mu_T\\
=&2^q\liminf_{\ell\to\infty}\int\left|\sqrt{-1}\na^L\z_\ell+e^{\sqrt{-1}\th} \na^L\th\right|^qd\mu_T=0.
\endaligned
\end{equation}
From Stokes' formula, it follows that
\begin{equation}\aligned
0=-T(d(\th_{\ell}\wedge d\e))=T(d\th_{\ell}\wedge d\e)
=\int_U\lan d_L\th_{\ell}\wedge d\e,\vec{T}\ran d\mu_T
\endaligned
\end{equation}
for any $\e\in\cal{D}^{n-2}(U)$. Combining \eqref{naLthlnaLth}, letting $\ell\to\infty$ in the above equality implies
\begin{equation}\aligned
0=\lim_{\ell\to\infty}\int_U\lan d_L\th_\ell\wedge d\e,\vec{T}\ran d\mu_T=\int_U\lan d_L\th\wedge d\e,\vec{T}\ran d\mu_T=\int_U\lan\omega_{H}\wedge d\e,\vec{T}\ran d\mu_T.
\endaligned
\end{equation}
This gives the closeness of $\omega_{H}$ on $T$.
\end{proof}

\begin{lemma}\label{duiduikvar}
Let $T=(L,\vth,\vec{T})$ be an integer multiplicity Lagrangian current in $U$ with single-valued phase $\th$ in $U$ and $\p T\llcorner U=0$. If $\th\in W^{1,2}_{\mu_T}(L)$ is bounded harmonic on $L$ w.r.t. $\mu_T$, then for any sequence of Lipschitz functions $\th_i$ on $U$ with $\th_i\to\th$ in $L^2_{\mu_T}$ sense and $||\th_i||_{W^{1,2}}\to ||\th||_{W^{1,2}}$,
we have
\begin{equation}\aligned\label{iinftyLapthi}
\lim_{i\to\infty}\int\lan \na^L\th_i,\na^L(|\th_i|^{k}\th_i\varphi)\ran d\mu_T=0\qquad \mathrm{for\ each}\ \varphi\in \mathbf{Lip}_c(L)\ \mathrm{and\ each}\ k\ge0.
\endaligned
\end{equation}
\end{lemma}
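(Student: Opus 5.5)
The plan is to compare the expression in \eqref{iinftyLapthi} with the corresponding expression for $\th$ itself. That comparison is legitimate once we know $|\th|^k\th\varphi$ is an admissible test function for \eqref{du_idphi=0}. The limit is then zero by harmonicity.

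\emph{Step 1: truncation and strong convergence.} Let $\be=\sup_L|\th|<\infty$. Since truncation $\th_i\mapsto\max\{-\be-1,\min\{\th_i,\be+1\}\}$ does not increase $\mathrm{Lip}\,\th_i$ and does not spoil $L^2$ convergence, we may assume $|\th_i|\le\be+1$; the truncated sequence still satisfies $\|\th_i\|_{W^{1,2}}\to\|\th\|_{W^{1,2}}$. (Strictly, the statement concerns the given $\th_i$, so we must check that the difference produced by truncation vanishes in the limit; this follows from Step 2 and the boundedness of $\th$, because the set where $|\th_i|>\be+1$ has measure tending to $0$.)

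Next, $W^{1,2}$ is a Hilbert space here: the inner product $\lan d_L f,d_Lg\ran$ satisfies the parallelogram rule, as stated in \S2.2. By \cite{G,GP}, norm convergence of $\th_i$ together with $L^2$ convergence yields $d_L\th_i\to d_L\th$ strongly in $L^2_{\mu_T}$. The reason is that $d_L\th_i$ is bounded, every weak limit point is $d_L\th$ by the closure property quoted before Remark 2.1, and the convergence of norms upgrades weak convergence to strong convergence via uniform convexity.

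\emph{Step 2: expanding the test function.} Write
\[
\na^L(|\th_i|^k\th_i\varphi)=(k+1)|\th_i|^k\varphi\,\na^L\th_i+|\th_i|^k\th_i\na^L\varphi .
\]
Pass to a subsequence along which $\th_i\to\th$ $\mu_T$-a.e. The coefficients $|\th_i|^k\varphi$ and $|\th_i|^k\th_i$ are uniformly bounded and converge a.e. Combined with $\na^L\th_i\to\na^L\th$ strongly in $L^2$, dominated convergence then gives
\[
\int\lan\na^L\th_i,\na^L(|\th_i|^k\th_i\varphi)\ran d\mu_T\longrightarrow\int\Big((k+1)|\th|^k\varphi|\na^L\th|^2+|\th|^k\th\lan\na^L\th,\na^L\varphi\ran\Big)d\mu_T .
\]
The limit does not depend on the subsequence, so the whole sequence converges.

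\emph{Step 3: harmonicity.} The right-hand side above equals $\int\lan d_L\th,d_L\psi\ran d\mu_T$ with $\psi=|\th|^k\th\varphi$, by the chain and Leibniz rules in $W^{1,2}$. It remains to show that this integral vanishes, even though $\psi$ is only a bounded compactly supported $W^{1,2}$ function and not a Lipschitz one. For this I would approximate $\psi$ by the Lipschitz functions $\psi_i=|\th_i|^k\th_i\varphi\in\mathbf{Lip}_c(L)$. By \eqref{du_idphi=0}, $\int\lan d_L\th,d_L\psi_i\ran d\mu_T=0$ for every $i$. Moreover $d_L\psi_i\to d_L\psi$ strongly in $L^2$; this is exactly the computation of Step 2, applied in $L^2$ norm instead of to a pairing. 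Hence $\int\lan d_L\th,d_L\psi\ran=0$.

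I expect the main obstacle to be Step 1: justifying strong convergence of differentials from convergence of norms in this metric-measure setting, where $W^{1,2}$ need not be reflexive a priori. The key input is the parallelogram identity, which makes the Cheeger energy a quadratic form and $W^{1,2}$ a Hilbert space, so the standard Hilbert-space argument applies. The truncation bookkeeping, and making sure Lipschitz products remain compactly supported on $L$, are routine.
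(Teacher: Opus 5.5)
Your Steps 1--3 are in substance the paper's argument, and with the hypothesis $\sup_i\sup_L|\th_i|<\infty$ added (see below) your proof is complete. Both proofs rest on three ingredients:
\begin{itemize}
\item strong $L^2_{\mu_T}$ convergence of $\na^L\th_i$. The paper uses this tacitly for the middle term of \eqref{thikthi}; you justify it explicitly via the closure property and the Hilbert structure of $L^2_{\mu_T}(T^*L)$.
\item convergence in measure of the uniformly bounded coefficients $|\th_i|^k$ and $|\th_i|^k\th_i$. The paper uses Egorov's theorem and a Cauchy estimate in $i,j$; you use a.e.\ convergent subsequences and dominated convergence.
\item testing the harmonicity of $\th$ against the Lipschitz functions $|\th_j|^k\th_j\varphi$.
\end{itemize}
Your bookkeeping, which computes the limit and then identifies it with $\lim_i\int\lan\na^L\th,\na^L\psi_i\ran d\mu_T=0$, is a slightly cleaner version of the paper's double limit.

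The step that fails is the parenthetical in Step 1. There you claim that the error introduced by truncation vanishes because $\mu_T(\{|\th_i|>\be+1\})\to0$. Small measure does not control $\int_{\{|\th_i|>\be+1\}}|\th_i|^k|\na^L\th_i|^2\varphi\,d\mu_T$. In fact the conclusion is false for unbounded sequences when $k>0$ and $n\ge2$.

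Here is a counterexample for $n\ge3$. Let $T$ be a flat Lagrangian $n$-disc with $\th\equiv0$, and let $\varphi\equiv1$ near a point $x_0$. Take $\th_i(x)=M_i\eta((x-x_0)/r_i)$ for a fixed bump $\eta$, with $r_i\to0$ and $M_i=r_i^{-(n-2)/(k+2)}$.
\begin{itemize}
\item $\int\th_i^2\,d\mu_T\sim M_i^2r_i^n\to0$.
\item $\int|\na^L\th_i|^2d\mu_T\sim M_i^2r_i^{n-2}=r_i^{(n-2)k/(k+2)}\to0$.
\end{itemize}
So all hypotheses hold, with $\|\th\|_{W^{1,2}}=0$. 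Yet for all large $i$,
$\int\lan\na^L\th_i,\na^L(|\th_i|^k\th_i\varphi)\ran d\mu_T=(k+1)\int|\eta|^k|\na\eta|^2>0$.
For $n=2$ the same construction works with logarithmic cut-offs.

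So no argument can remove the truncation. The uniform bound $|\th_i|\le\La$ has to be an additional hypothesis, and this is exactly what the paper's ``without loss of generality'' silently imposes. It costs nothing in the application, Lemma \ref{caccith}, where the approximating sequence is chosen and may be truncated. Your observation that truncation preserves $L^2$ convergence and $\|\th_i\|_{W^{1,2}}\to\|\th\|_{W^{1,2}}$ is correct. State the bound as a hypothesis and drop the parenthetical.
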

\begin{proof}
Without loss of generality, we assume that there is a constant $\La>1$ so that $|\th_i|\le\La$ on $L$ for each $i$. For all integers $i,j\ge1$,
\begin{equation}\aligned
\na^L(|\th_i|^{k}\th_i-|\th_j|^{k}\th_j)=&(k+1)|\th_i|^{k}\na^L\th_i-(k+1)|\th_j|^{k}\na^L\th_j\\
=&(k+1)|\th_i|^{k}\na^L(\th_i-\th_j)+(k+1)(|\th_i|^{k}-|\th_j|^{k})\na^L\th_j.
\endaligned
\end{equation}
For each $\varphi\in \mathbf{Lip}_c(L)$, we have
\begin{equation}\aligned\label{thikthi}
&\bigg|\int\lan \na^L\th_i,\na^L((|\th_i|^{k}\th_i-|\th_j|^{k}\th_j)\varphi)\ran d\mu_T\bigg|\\
\le&\left|\int(|\th_i|^{k}\th_i-|\th_j|^{k}\th_j)\lan \na^L\th_i,\na^L\varphi\ran d\mu_T\right|+\left|\int\varphi\lan \na^L\th_i,\na^L(|\th_i|^{k}\th_i-|\th_j|^{k}\th_j)\ran d\mu_T\right|\\
\le&c_k\La^{k}\mathbf{Lip}\,\varphi\int|\th_i-\th_j|\cdot|\na^L\th_i|d\mu_T+c_k\La^{k}\max|\varphi|\int|\na^L\th_i|\cdot|\na^L(\th_i-\th_j)|d\mu_T\\
&+c_k\max|\varphi|\int\left||\th_i|^{k}-|\th_j|^{k}\right||\na^L\th_i|\cdot|\na^L\th_j|d\mu_T,
\endaligned
\end{equation}
where $c_k$ is a constant depending only on $k$.
For any $\ep>0$, there is a measurable set $L(\ep)\subset L$ so that $\th_i\to\th$ uniformly on $L(\ep)$, and $\mu_T(L\setminus L(\ep))\to0$ as $\ep\to0$. Hence,
\begin{equation}\aligned
\lim_{i,j\to\infty}\int\left||\th_i|^{k}-|\th_j|^{k}\right||\na^L\th_i|\cdot|\na^L\th_j|d\mu_T\le2\La^{k}\int_{L\setminus L(\ep)}|\na^L\th|^2d\mu_T.
\endaligned
\end{equation}
Since $|\na^L\th|\in L^2_{\mu_T}(L)$, it follows that
\begin{equation}\aligned
\lim_{i,j\to\infty}\int\left(\left||\th_i|^{k}-|\th_j|^{k}\right||\na^L\th_i|\cdot|\na^L\th_j|\right)d\mu_T=0.
\endaligned
\end{equation}
Letting $i,j\to\infty$ in \eqref{thikthi} gives
\begin{equation}\aligned\label{thijtoinfty}
\lim_{i,j\to\infty}\bigg|\int\lan \na^L\th_i,\na^L((|\th_i|^{k}\th_i-|\th_j|^{k}\th_j)\varphi)\ran d\mu_T\bigg|=0.
\endaligned
\end{equation}
Clearly,
\begin{equation}\aligned\label{thitoinfty}
\lim_{i\to\infty}\bigg|\int\lan \na^L\th_i,\na^L(|\th_j|^{k}\th_j\varphi)\ran d\mu_T\bigg|=0.
\endaligned
\end{equation}
Combining \eqref{thijtoinfty} and \eqref{thitoinfty}, we complete the proof.
\end{proof}
\begin{remark}\label{Remth+}
If we further assume that $\th$ has a positive lower bound, then \eqref{iinftyLapthi} holds for each $\varphi\in \mathbf{Lip}_c(L)$ and each $k\in\R$.
\end{remark}

\section{Mean curvature of varifold limits of Lagrangian currents}

Let $(M^n,g,J,\omega)$ be a Ricci-flat K\"ahler manifold with a holomorphic $n$-form $\Om$ on an open $U\subset M$ satisfying $(-1)^{n^2/2}\Om\wedge\overline{\Om}=2^{n}\omega^n/n!$. 
Let $T_i=(L_i,\vth,\vec{T}_i)$ be a sequence of integral Lagrangian currents in $U$ with phase $\th_i$ and $\p T_i\llcorner U=0$ so that $e^{2\sqrt{-1}\th_i}$ has bounded variation in $U$ w.r.t. $\mu_{T_i}$.
Let $V_i=|T_i|$ be the varifold associated with $T_i$. We assume that $V_i$ has generalized mean curvature $H_i:=J\na^{L_i}\th_i$, and 
\begin{equation}\aligned\label{Ass-nathiLa}
\sup_i\left(\mathbf{M}(T_i)+\mathbf{M}(\p T_i)+||H_i||_{L^q}\right)\le \La \qquad \text{for some }q>1,
\endaligned
\end{equation}
where $\La$ is a positive constant and $||H_i||_{L^q}$ is the $L^q$ integral of mean curvature $H_i$ on $L_i$ w.r.t. $\mu_{T_i}$.

By compactness theorem for $n$-varifolds (see \cite{S}), up to a subsequence $V_i$ converges (in the sense of Radon measure) to an integral varifold $V_\infty=\vth_\infty|L_\infty|$ of bounded first variation on $U$ with $\vth_\infty(x)\ge1$ for $\mu_{V_\infty}$-a.e. $x\in U$. In particular, 
$$\mathbf{M}(V_\infty)\le\liminf_{i\to\infty}\mathbf{M}(V_i)\le\La.$$ 
For each vector field $X\in \mathfrak{X}_c(U)$, 
\begin{equation}\aligned
-\int_{U} \mathrm{div}_{L_i}(JX) d\mu_{V_i}=&\int_{U} \lan H_i,JX\ran d\mu_{V_i}\\
=&\int_{U} \lan J\na^{L_i}\th_i,JX\ran d\mu_{V_i}=\int_{U} \lan \na^{L_i}\th_i,X\ran d\mu_{V_i}.
\endaligned
\end{equation}
Hence, we can define a linear functional $\mathcal{F}$ on $\mathfrak{X}_c(U)$ by
\begin{equation}\aligned
\mathcal{F}(X)=\lim_{i\to\infty}\int_{U} \lan\na^{L_i}\th_{i},X^{L_i}\ran d\mu_{V_i}=-\int_{U} \mathrm{div}_{L_\infty}(JX) d\mu_{V_\infty}.
\endaligned
\end{equation}
By H\"older inequality, for $q'=q/(q-1)$ we have
\begin{equation}\aligned
|\mathcal{F}(X)|\le
\liminf_{i\to\infty}\left(\int_U|\na^{L_i}\th_i|^qd\mu_{V_i}\right)^{1/q}\left(\int_U|X^{L_\infty}|^{q'}d\mu_{V_\infty}\right)^{1/q'}.
\endaligned
\end{equation}
From Riesz Representation Theorem, there is a measurable (normal) vector field $\xi$ with support in $L_\infty$ such that
\begin{equation}\aligned\label{semi-xi2}
\int_{U}|\xi|^qd\mu_{V_\infty}\le \liminf_{i\to\infty}\int_U|\na^{L_i}\th_i|^qd\mu_{V_i},
\endaligned
\end{equation}
and
\begin{equation}\aligned\label{limitxi}
\mathcal{F}(X)=\int_U\lan \xi,X\ran d\mu_{V_\infty}=\int_{U} \lan \xi,X^{L_\infty}\ran d\mu_{V_\infty}=-\int_U\mathrm{div}_{L_\infty}(JX) d\mu_{V_\infty}.
\endaligned
\end{equation}
Hence, $\xi\in L^q_{\mu_{V_\infty}}(TL_\infty)$, and $J\xi$ is the generalized mean curvature of $V_\infty$.

Up to choosing a subsequence, there is a complex measurable function $\z_\infty$ in $U$ so that 
\begin{equation}\aligned\label{App-zinfty}
\lim_{i\to\infty}\int_U fe^{2\sqrt{-1}\th_i} d\mu_{V_i}=\int_U f\z_\infty d\mu_{V_\infty}\qquad\mathrm{for\ each}\ f\in C_c(U).
\endaligned
\end{equation}

\begin{lemma}\label{zinftyViVinftymass}
$|\z_\infty|\equiv1$ $\mu_{V_\infty}$-a.e..
\end{lemma}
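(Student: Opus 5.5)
The plan is to identify $\z_\infty$ explicitly: $e^{2\sqrt{-1}\th_i}$ is a continuous function of the unoriented tangent plane, so it passes to the limit under varifold convergence. This uses only that $V_i\to V_\infty$ as Radon measures on the Grassmann bundle $G_n(U)$, not merely as measures on $U$.

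First, define a function $\Phi$ on $G_n(U)$. For $(x,P)\in G_n(U)$, pick an orthonormal basis $e_1,\dots,e_n$ of $P$ and set
$$\Phi(x,P):=\big(\Om_x(e_1,\dots,e_n)\big)^2 .$$
Changing the basis by an element of $O(n)$ multiplies $\Om_x(e_1,\dots,e_n)$ by the determinant $\pm1$, so the square is well defined. Hence $\Phi$ is a continuous function on $G_n(U)$, depending only on the unoriented plane, and $|\Phi|\le1$ by the calibration inequality \eqref{PhaseThDEF}. Moreover, by \eqref{PhaseThDEF}, $|\Phi(x,P)|=1$ exactly when $P$ is a Lagrangian plane. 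By \eqref{OmLthae}, $e^{2\sqrt{-1}\th_i(x)}=\Phi(x,T_xL_i)$ for $\mu_{T_i}$-a.e. $x$. Therefore, for each $f\in C_c(U)$,
$$\int_U fe^{2\sqrt{-1}\th_i}\,d\mu_{V_i}=\int_{G_n(U)} f(x)\Phi(x,P)\,dV_i(x,P).$$

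Second, pass to the limit. By \eqref{Ass-nathiLa}, the varifolds $V_i$ have uniformly bounded mass and first variation in $L^q$. The integral varifold compactness theorem already invoked then gives $V_i\to V_\infty$ as Radon measures on $G_n(U)$, and $V_\infty=\vth_\infty|L_\infty|$ is rectifiable, so it is supported on $\{(x,T_xL_\infty)\}$. Since $f\Phi$ is continuous with compact support on $G_n(U)$ (after cutting off in $x$), the right-hand side above converges to
$$\int_U f(x)\Phi(x,T_xL_\infty)\,d\mu_{V_\infty}.$$
Comparing with \eqref{App-zinfty} yields $\z_\infty(x)=\Phi(x,T_xL_\infty)$ for $\mu_{V_\infty}$-a.e. $x$.

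Third, show that $V_\infty$ is Lagrangian. The function $(x,P)\mapsto|\lan\omega\wedge\e,P\ran|$ is continuous on $G_n(U)$ for each $\e\in\mathcal{D}^{n-2}(U)$. Applying \eqref{Lag-varifold} to each $V_i$ and passing to the limit gives \eqref{Lag-varifold} for $V_\infty$. Equivalently, letting $\e$ range over a countable dense family, $\omega|_{T_xL_\infty}=0$ for $\mu_{V_\infty}$-a.e. $x$. Hence $|\Phi(x,T_xL_\infty)|=1$ a.e., which gives $|\z_\infty|\equiv1$ $\mu_{V_\infty}$-a.e.

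The main point to handle carefully is the second step. If one only knew weak convergence of $\mu_{V_i}$ on $U$, the phases could oscillate and the weak limit $\z_\infty$ could have modulus strictly less than one. Measure convergence on the Grassmann bundle, which comes from varifold compactness, rules this out. One must also check that $\Phi$ is genuinely independent of orientation, since the currents $T_i$ need not converge to an oriented object.
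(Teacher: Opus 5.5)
Your argument is correct, and it reaches the conclusion by a more direct route than the paper does.

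\textbf{The paper's route.} The paper works with matched rectifiable pieces. It covers $L_\infty$ by nearly isometric bi-Lipschitz pieces $\mathcal{C}_k$ on which $\z_\infty$ is within $\ep$ of $\z_\infty(x_k)$. It then chooses $\mathcal{C}^i_k\subset L_i$ with $\mu_{V_i}\llcorner\mathcal{C}^i_k\to\mu_{V_\infty}\llcorner\mathcal{C}_k$. Finally it asserts that the $\mu_{V_i}$-averages of $|e^{2\sqrt{-1}\th_i}-\z_\infty(x_k)|$ over $\mathcal{C}^i_k$ are eventually below $\ep$. Since $|e^{2\sqrt{-1}\th_i}|=1$, this forces $\big||\z_\infty|-1\big|<2\ep$ on $\mathcal{C}_k$.

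That concentration step is exactly where oscillation of the phases must be ruled out. The paper's write-up obtains it only from the near-constancy of the weak limit $\z_\infty$, which by itself does not suffice. What actually makes it true is the mechanism you isolate: $e^{2\sqrt{-1}\th}$ is a continuous, orientation-independent function $\Phi$ of the tangent plane, and varifold convergence is weak convergence on $G_n(U)$.

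\textbf{What your route gives.}
\begin{itemize}
\item The explicit identity $\z_\infty(x)=\Phi(x,T_xL_\infty)$, so $\z_\infty$ depends only on $V_\infty$ and not on the subsequence.
\item Closedness of the Lagrangian condition under varifold limits.
\item Strong convergence along the varying measures. Testing with $g(x)|\Phi(x,P)-h(x)|$ for nonnegative $g\in C_c(U)$ and $h\in C_c(U)$ close to $\z_\infty$ in $L^1_{\mu_{V_\infty}}$ gives $\lim_i\int g\,|e^{2\sqrt{-1}\th_i}-h|\,d\mu_{V_i}=\int g\,|\z_\infty-h|\,d\mu_{V_\infty}$. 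This is the kind of control the paper extracts from the sets $\tilde{\mathcal{C}}^i_k$ in the proof of Theorem \ref{zinftyJxiLVinfty}.
\end{itemize}
The paper's piecewise construction, for its part, is set up to be reused directly in that later proof.

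\textbf{A possible shortcut.} Your third step can be shortened. The function $1-|\Phi|$ is continuous and nonnegative, and it vanishes $V_i$-a.e. Hence $\int g(x)\big(1-|\Phi(x,P)|\big)\,dV_\infty=0$ for every nonnegative $g\in C_c(U)$, with no separate Lagrangian argument needed.
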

\begin{proof}
For any $\ep>0$, there is a countably collection of Borel sets $\{\mathcal{C}_k\}_k$ with $\mathcal{C}_k\subset L_\infty$ satisfying $\mu_{V_\infty}(L_\infty\setminus\cup_k\mathcal{C}_k)=0$, where there is a bi-Lipschitz map $\phi_{k}:\, \mathcal{C}_{k}\rightarrow\phi_{k}(\mathcal{C}_{k})\subset\R^n$ with Lipschitz constants $<1+\ep/n$ for each $k$. 
Up to a refinement of $\mathcal{C}_k$, there is a point $x_k\in\mathcal{C}_k$ so that
\begin{equation}\aligned\label{Ckzinftyxkep}
\sup_{\mathcal{C}_{k}}|\z_\infty-\z_\infty(x_k)|<\ep.
\endaligned
\end{equation}
For each $k$, there exists a sequence of Borel set $\{\mathcal{C}^i_k\}_k$ with $\mathcal{C}^i_k\subset L_i$ so that $\mu_{V_i}(L_i\setminus\cup_{k}\mathcal{C}^i_k)=0$ and $\mu_{V_i}\llcorner \mathcal{C}^i_k\to\mu_{V_\infty}\llcorner \mathcal{C}_k$ as $i\to\infty$ in the sense of measure, and a bi-Lipschitz map $\phi_{i,k}:\, \mathcal{C}_{k}^i\rightarrow\phi_{i,k}(\mathcal{C}_{k}^i)\subset\R^n$ with Lipschitz constants $<1+\ep/n$. 
Combining \eqref{Ckzinftyxkep}, it follows that
\begin{equation}\aligned
\limsup_{i\to\infty}\f1{\mu_{V_i}(\mathcal{C}^i_k)}\int_{\mathcal{C}^i_k}\left|e^{2\sqrt{-1}\th_i}-\z_\infty(x_k)\right|d\mu_{V_i}<\ep.
\endaligned
\end{equation}
This implies that $|\z_\infty|\equiv1$ $\mu_{V_\infty}$-a.e. since $\ep$ is arbitrary.
\end{proof}
\begin{remark}\label{xinotinW1q}
Even if $e^{\sqrt{-1}\th_i}\in W^{1,q}_{\mu_{V_i}}(L_i)$ for all $i$, $\z_\infty$ may still fail to live in $W^{1,q}_{\mu_{V_\infty}}(L_\infty)$. For example,  
let 
\begin{equation*}\aligned
L_i^+:=\{(x_1,\cdots,x_n,y_1,\cdots,y_n):\, y_1=x_1,\, y_2=\cdots=y_n=1/i\}\\
L^-:=\{(x_1,\cdots,x_n,y_1,\cdots,y_n):\, y_1=-x_1,\, y_2=\cdots=y_n=0\}
\endaligned\,,
\end{equation*}
and $T_i:=[|L_i^+\cup L^-|]$ with phase $\th_i$, then up to an orientation $\th_i=\pi/4\, (\mathrm{mod}\ 2\pi)$ on $L_i^+$, and $\th_i=3\pi/4\, (\mathrm{mod}\ 2\pi)$ on $L^+$. Since $L_i^+\cap L^-=\emptyset$, it follows that $\th_i\in W^{1,q}_{\mu_{T_i}}(L_i^+\cup L^-)$ for each $q>1$. Clearly, $T_i\rightharpoonup T:=[|L^+\cup L^-|]$ with
 \begin{equation*}\aligned
L^+:=\{(x_1,\cdots,x_n,y_1,\cdots,y_n):\, y_1=x_1,\, y_2=\cdots=y_n=0\},
\endaligned
\end{equation*}
and the phase of $T$ does not belong to $W^{1,q}_{\mu_{T}}(L^+\cup L^-)$ for any $q>1$.
\end{remark}

\begin{theorem}\label{zinftyJxiLVinfty}
$\z_\infty\in BV_{\mu_{V_\infty}}(U)$ and
\begin{equation}\aligned\label{nazinftyxi}
\na^{L_\infty}\z_\infty=2\sqrt{-1}\z_\infty\xi\qquad \mu_{V_\infty}-a.e..
\endaligned
\end{equation}
\end{theorem}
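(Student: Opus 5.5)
The plan is to pass to the limit in an integration-by-parts identity on each $T_i$ that expresses the bounded variation of $e^{2\sqrt{-1}\th_i}$. The analogue of \eqref{thnaLiejthX} with $j=2$ is
\begin{equation*}
\int_U e^{2\sqrt{-1}\th_i}\left(\lan H_i,X\ran+\mathrm{div}_{L_i}X\right)d\mu_{V_i}=-2\sqrt{-1}\int_U e^{2\sqrt{-1}\th_i}\lan\na^{L_i}\th_i,X\ran d\mu_{V_i}
\end{equation*}
for $X\in\mathfrak{X}_c(U)$. I will take this as the meaning of ``$e^{2\sqrt{-1}\th_i}$ has bounded variation with gradient $2\sqrt{-1}e^{2\sqrt{-1}\th_i}\na^{L_i}\th_i$'', with $H_i=J\na^{L_i}\th_i$. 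Note that $|H_i|=|\na^{L_i}\th_i|$.

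The goal is to show that both sides converge to
\begin{equation*}
\int_U\z_\infty\left(\lan H_\infty,X\ran+\mathrm{div}_{L_\infty}X\right)d\mu_{V_\infty}=-2\sqrt{-1}\int_U\z_\infty\lan\xi,X\ran d\mu_{V_\infty},
\end{equation*}
where $H_\infty=J\xi$. Once this holds, the right side is bounded by $2\|\xi\|_{L^1}\sup|X|$, using $|\z_\infty|=1$ from Lemma \ref{zinftyViVinftymass}. Hence $\z_\infty\in BV_{\mu_{V_\infty}}(U)$ with $\na^{L_\infty}\z_\infty=2\sqrt{-1}\z_\infty\xi$, which is \eqref{nazinftyxi}.

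\emph{Divergence term.} Since $V_i\to V_\infty$ as varifolds, the natural object is the weak-$*$ limit of the measures $e^{2\sqrt{-1}\th_i}V_i$ on the Grassmann bundle $G_n(U)$. I would show this limit is $\z_\infty(x)V_\infty$. Lemma \ref{zinftyViVinftymass} gives $|\z_\infty|=1$, i.e.\ the mass of the limit equals that of $V_\infty$. Since $|e^{2\sqrt{-1}\th_i}|=1$, the measures $e^{2\sqrt{-1}\th_i}V_i$ have total variation $V_i$, so the limit measure $\nu$ satisfies $|\nu|\le V_\infty$. Projecting to $U$ gives $\z_\infty\mu_{V_\infty}$, which has mass $\mu_{V_\infty}$. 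The equality case then forces $\nu=\z_\infty(x)V_\infty$. Testing with the continuous function $(x,P)\mapsto\mathrm{div}_PX(x)$ yields convergence of the $\mathrm{div}$ term.

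\emph{Mean curvature term, the main obstacle.} Here I need
\begin{equation*}
\int e^{2\sqrt{-1}\th_i}\lan\na^{L_i}\th_i,X\ran d\mu_{V_i}\to\int\z_\infty\lan\xi,X\ran d\mu_{V_\infty},
\end{equation*}
together with the analogous statement for $J\na^{L_i}\th_i$. This is a product of two weakly converging objects, so weak convergence alone is not enough. First, note that $\lan H_i,X\ran=-\lan\na^{L_i}\th_i,JX\ran$, so both terms are of the same type. Second, I would try to rewrite the product using only linear quantities: by the identity on $T_i$, the term $e^{2\sqrt{-1}\th_i}\na^{L_i}\th_i$ equals $(2\sqrt{-1})^{-1}$ times the distributional gradient of $e^{2\sqrt{-1}\th_i}$. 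Combining the two equations (with $X$ and $JX$) gives
\begin{equation*}
\int e^{2\sqrt{-1}\th_i}\left(\mathrm{div}_{L_i}X-\lan\na^{L_i}\th_i,JX\ran\right)=-2\sqrt{-1}\int e^{2\sqrt{-1}\th_i}\lan\na^{L_i}\th_i,X\ran.
\end{equation*}
Replacing $X$ by $JX$ gives a second relation. This is a $2\times2$ linear system in the unknowns $A_i(X)=\int e^{2\sqrt{-1}\th_i}\lan\na^{L_i}\th_i,X\ran$ and $A_i(JX)$, whose inhomogeneous terms involve only divergence terms, and those converge by the previous step. I expect the system to be degenerate because of the Lagrangian structure, so this may fail. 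In that case I fall back on a measure-theoretic argument. The uniform bound on $\|\na^{L_i}\th_i\|_{L^q}$ with $q>1$ gives the weak-$*$ limit $\sigma$ of the vector measures $e^{2\sqrt{-1}\th_i}\na^{L_i}\th_i\,\mu_{V_i}$ with $\sigma\ll\mu_{V_\infty}$. I then identify $\sigma=\z_\infty\xi\mu_{V_\infty}$ locally on the sets $\mathcal{C}_k$ of Lemma \ref{zinftyViVinftymass}, where $\z_\infty$ is nearly constant: there $e^{2\sqrt{-1}\th_i}\approx\z_\infty(x_k)$ in $L^1$, hence in $L^{q'}$ by boundedness. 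Hölder's inequality against the $L^q$ bound lets me replace $e^{2\sqrt{-1}\th_i}$ by the constant $\z_\infty(x_k)$, and the limit of $\na^{L_i}\th_i$ is $\xi$ by \eqref{limitxi}. Justifying the $\mathcal{C}^i_k$ approximation of $\mathcal{C}_k$, and the localization with cut-offs, is the delicate part.
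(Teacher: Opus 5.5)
Your fallback for the mean-curvature term is essentially the paper's proof: sets $\mathcal{C}_k$ on which $\z_\infty\approx\Th_k$, matching sets $\mathcal{C}^i_k$ on which $e^{2\sqrt{-1}\th_i}\approx\Th_k$, H\"older against the uniform $L^q$ bound, and \eqref{limitxi}; your Grassmann-bundle equality-case treatment of the divergence term is a harmless variant of the paper's single localization, and the delicate step you flag (which the paper also only sketches) can be done cleanly by approximating $\z_\infty$ in $L^2(\mu_{V_\infty})$ by a continuous $g$ and using $|e^{2\sqrt{-1}\th_i}|=|\z_\infty|=1$ to get $\int\psi|e^{2\sqrt{-1}\th_i}-g|^2d\mu_{V_i}\to\int\psi|\z_\infty-g|^2d\mu_{V_\infty}$ for cutoffs $\psi$. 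The $2\times2$ system is in fact not degenerate, since eliminating $A_i(JX)$ gives $3A_i(X)=\int e^{2\sqrt{-1}\th_i}\big(2\sqrt{-1}\,\mathrm{div}_{L_i}X+\mathrm{div}_{L_i}(JX)\big)d\mu_{V_i}$, but it only expresses $\lim_i A_i(X)$ through divergence terms and does not identify it with $\int\z_\infty\lan\xi,X\ran d\mu_{V_\infty}$, so the localization argument really is needed.
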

\begin{proof}
From the definition \eqref{phinuphina}, we only need to show
\begin{equation}\aligned
\int_U\z_{\infty}\left(\lan J\xi+2\sqrt{-1}\xi,X\ran+\mathrm{div}_{L_\infty}X\right)d\mu_{V_\infty}=0
\endaligned
\end{equation}
for each vector field $X\in \mathfrak{X}_c(U)$.
Since $e^{2\sqrt{-1}\th_i}$ has bounded variation in $U$ w.r.t. $\mu_{V_i}$, it follows that
\begin{equation}\aligned\label{thinaLithiX**}
2\sqrt{-1}\int_{U} e^{2\sqrt{-1}\th_i}\lan\na^{L_i}\th_{i},X\ran d\mu_{V_i}=-\int_Ue^{2\sqrt{-1}\th_i}\left(\lan H_i,X\ran+\mathrm{div}_{L_i}X\right)d\mu_{V_i}.
\endaligned
\end{equation}

Given a small $\ep>0$, let $\{\mathcal{C}^{i}_k\}_k$ and $\{\mathcal{C}_k\}_k$ be as in the proof of Lemma \ref{zinftyViVinftymass}.
Up to a refinement of $\mathcal{C}_k$, there is a complex constant $\Th_k$ with $|\Th_k|=1$ so that
\begin{equation}\aligned\label{thinftyThk}
\sup_{\mathcal{C}_{k}}|\z_\infty-\Th_k|<\ep.
\endaligned
\end{equation}
For every $\ell>>1$, there are integers $k^*_\ell>k_\ell>2$ so that $\mu_{V_\infty}(\cup_{k\ge k_\ell}\mathcal{C}_{k})<\ell^{-1}/2$ and 
$$\mu_{V_i}(U)\le \mu_{V_\infty}(U)+\f1{4\ell},\qquad \mu_{V_i}(\cup_{k< k_\ell}\mathcal{C}^{i}_{k})\ge\mu_{V_\infty}(\cup_{k< k_\ell}\mathcal{C}_{k})-\f1{4\ell}\qquad \text{ for each } i\ge k^*_\ell.$$
This implies
\begin{equation}\aligned\label{Ckinell}
\mu_{V_i}(\cup_{k\ge k_\ell}\mathcal{C}^{i}_{k})=\mu_{V_i}(U)-\mu_{V_i}(\cup_{k< k_\ell}\mathcal{C}^{i}_{k})<\f1{2\ell}+\mu_{V_\infty}(U)-\mu_{V_\infty}(\cup_{k< k_\ell}\mathcal{C}_{k})<\f1{\ell}.
\endaligned
\end{equation}
Hence, there are Borel subsets $\tilde{\mathcal{C}}^{i}_k\subset \mathcal{C}^{i}_k$ with $\lim_{i\to\infty}\mu_{V_i}(\mathcal{C}^{i}_k\setminus\tilde{\mathcal{C}}^{i}_k)=0$ so that 
\begin{equation}\aligned\label{supthiThk}
\sup_{\tilde{\mathcal{C}}^{i}_{k}}\left|e^{2\sqrt{-1}\th_i}-\Th_{k}\right|<\ep\qquad\text{for all }k\ge1, \text{ and large}\ i.
\endaligned
\end{equation}
From \eqref{limitxi}, we have
\begin{equation}\aligned\label{CkiCkThksum}
&\lim_{i\to\infty}\int_{\mathcal{C}^{i}_k}\Th_{k}\left(\lan J\na^{L_i}\th_{i}+2\sqrt{-1}\na^{L_i}\th_{i},X\ran+\mathrm{div}_{L_i}X\right)d\mu_{V_i}\\
=&\int_{\mathcal{C}_k}\Th_k\left(\lan J\xi+\sqrt{-1}\xi,X\ran+\mathrm{div}_{L_\infty}X\right)d\mu_{V_\infty}.
\endaligned
\end{equation}

On the other hand, combining \eqref{thinaLithiX**}\eqref{supthiThk} we get
\begin{equation}\aligned\label{ThLithiX}
&\left|\sum_{k}\int_{\mathcal{C}^{i}_k}\Th_{k}\left(\lan J\na^{L_i}\th_{i}+2\sqrt{-1}\na^{L_i}\th_{i},X\ran+\mathrm{div}_{L_i}X\right)d\mu_{V_i}\right|\\
\le&\sum_{k}\int_{\mathcal{C}^{i}_k}\left|e^{2\sqrt{-1}\th_i}-\Th_{k}\right|\left|\lan J\na^{L_i}\th_{i}+2\sqrt{-1}\na^{L_i}\th_{i},X\ran+\mathrm{div}_{L_i}X\right|d\mu_{V_i}\\
\le&\ep\sum_{k}\int_{\tilde{\mathcal{C}}^{i}_k}\left|\lan J\na^{L_i}\th_{i}+2\sqrt{-1}\na^{L_i}\th_{i},X\ran+\mathrm{div}_{L_i}X\right|d\mu_{V_i}\\
&+2\sum_{k}\int_{{\mathcal{C}}^{i}_k\setminus\tilde{\mathcal{C}}^{i}_k}\left|\lan J\na^{L_i}\th_{i}+2\sqrt{-1}\na^{L_i}\th_{i},X\ran+\mathrm{div}_{L_i}X\right|d\mu_{V_i}.
\endaligned
\end{equation}
From H\"older inequality, for any subset $W\subset U$ and $\de\in(0,1)$ it follows that
\begin{equation}\aligned\label{deSththi}
&\int_{W}\left|\lan J\na^{L_i}\th_{i}+2\sqrt{-1}\na^{L_i}\th_{i},X\ran+\mathrm{div}_{L_i}X\right|d\mu_{V_i}\\
\le&\de^q\int_W|\na^{L_i}\th_i|^qd\mu_{V_i}+\f{c_q}{\de^{q'}}\int_W|X|^{q'}d\mu_{V_i}+\int_W|\na X|d\mu_{V_i},
\endaligned
\end{equation}
where $q'=q/(q-1)$ and $c_q$ is a constant depending only on $q$.
Let $\la_X:=\sup_U|X|^{q'}+\sup_U|\na X|$. From \eqref{Ckinell}, there is an integer $k_*>>1$ so that $\sum_{k\ge k_*}\mu_{V_i}({\mathcal{C}}^{i}_k)\le\de^{q+q'}$ for all large $i$. 
Combining $\lim_{i\to\infty}\mu_{V_i}(\mathcal{C}^{i}_k\setminus\tilde{\mathcal{C}}^{i}_k)=0$ and \eqref{Ass-nathiLa}\eqref{deSththi}, we have
\begin{equation}\aligned\label{limisumkCkide}
&\limsup_{i\to\infty}\sum_{k}\int_{{\mathcal{C}}^{i}_k\setminus\tilde{\mathcal{C}}^{i}_k}\left|\lan J\na^{L_i}\th_{i}+2\sqrt{-1}\na^{L_i}\th_{i},X\ran+\mathrm{div}_{L_i}X\right|d\mu_{V_i}\\
=&\limsup_{i\to\infty}\sum_{k}\left(\de^q\int_{{\mathcal{C}}^{i}_k\setminus\tilde{\mathcal{C}}^{i}_k}|\na^{L_i}\th_i|^qd\mu_{V_i}+\f{c_q\la_X}{\de^{q'}}\mu_{V_i}({\mathcal{C}}^{i}_k\setminus\tilde{\mathcal{C}}^{i}_k)\right)\\
=&\limsup_{i\to\infty}\sum_{k\ge k_*}\left(\de^q\int_{{\mathcal{C}}^{i}_k\setminus\tilde{\mathcal{C}}^{i}_k}|\na^{L_i}\th_i|^qd\mu_{V_i}+\f{c_q\la_X}{\de^{q'}}\mu_{V_i}({\mathcal{C}}^{i}_k\setminus\tilde{\mathcal{C}}^{i}_k)\right)\\
\le&\de^q\La+\f{c_q\la_X}{\de^{q'}}\limsup_{i\to\infty}\sum_{k\ge k_*}\mu_{V_i}({\mathcal{C}}^{i}_k\setminus\tilde{\mathcal{C}}^{i}_k)\le \de^q\La+c_q\de^q\la_X.
\endaligned
\end{equation}
Letting $\de\to0$ in \eqref{limisumkCkide} gives
\begin{equation}\aligned\label{limisumkCki0}
\limsup_{i\to\infty}\sum_{k}\int_{{\mathcal{C}}^{i}_k\setminus\tilde{\mathcal{C}}^{i}_k}\left|\lan J\na^{L_i}\th_{i}+2\sqrt{-1}\na^{L_i}\th_{i},X\ran+\mathrm{div}_{L_i}X\right|d\mu_{V_i}=0.
\endaligned
\end{equation}
Combining \eqref{Ass-nathiLa}, \eqref{CkiCkThksum} and \eqref{ThLithiX}, we obtain
\begin{equation}\aligned\label{sumkCkThkep}
&\left|\sum_k\int_{\mathcal{C}_k}\Th_k\left(\lan J\xi+2\sqrt{-1}\xi,X\ran+\mathrm{div}_{L_\infty}X\right)d\mu_{V_\infty}\right|\\
\le&\ep\limsup_{i\to\infty}\sum_{k}\left(\int_{\mathcal{C}^{i}_k}|\na^{L_i}\th_i|^qd\mu_{V_i}+\int_{\mathcal{C}^{i}_k}|X|^{q'}d\mu_{V_i}+\int_{\mathcal{C}^{i}_k}|\na X|d\mu_{V_i}\right)\\
\le&\ep\limsup_{i\to\infty}\left(\int_{U}|\na^{L_i}\th_i|^qd\mu_{V_i}+\la_X\mu_{V_i}(U)\right)\le\ep\left(\La+\la_X\mu_{V_\infty}(U)\right).
\endaligned
\end{equation}
Combining \eqref{thinftyThk} and \eqref{sumkCkThkep}, it follows that
\begin{equation}\aligned\label{CkthinftyThk}
&\left|\sum_k\int_{\mathcal{C}_k}\z_\infty\left(\lan J\xi+2\sqrt{-1}\xi,X\ran+\mathrm{div}_{L_\infty}X\right)d\mu_{V_\infty}\right|\\
\le&\left|\sum_k\int_{\mathcal{C}_k}\Th_k\left(\lan J\xi+2\sqrt{-1}\xi,X\ran+\mathrm{div}_{L_\infty}X\right)d\mu_{V_\infty}\right|\\
&+\ep\sum_k\int_{\mathcal{C}_k}\left|\lan J\xi+2\sqrt{-1}\xi,X\ran+\mathrm{div}_{L_\infty}X\right|d\mu_{V_\infty}\\
\le&\ep\left(\La+\la_X\mu_{V_\infty}(U)\right)+\ep\int_U\left|\lan J\xi+2\sqrt{-1}\xi,X\ran+\mathrm{div}_{L_\infty}X\right|d\mu_{V_\infty}.
\endaligned
\end{equation}
By the definition of $\mathcal{C}_k$, forcing $\ep\to0$ in \eqref{CkthinftyThk} completes the proof.
\end{proof}

From \eqref{semi-xi2}, Lemma \ref{zinftyViVinftymass} and Theorem \ref{zinftyJxiLVinfty}, we immediately have
\begin{equation}\aligned\label{seminaeith}
\int_U|\na^{L_\infty}\z_\infty|^qd\mu_{V_\infty}
\le 2^q\liminf_{i\to\infty}\int_U |\na^{L_i}\th_i|^qd\mu_{V_i}.
\endaligned
\end{equation}
From Federer-Fleming compactness theorem, there is an integral current $T=(L,\vth,\vec{T})$ in $U$ so that $T_i\rightharpoonup T$ up to choosing a subsequence. Hence,
\begin{equation}\aligned
T(\omega\wedge\e)=\lim_{i\to\infty}T_i(\omega\wedge\e)=\int\lan\omega\wedge\e,\vec{T}_i\ran d\mu_{T_i}=0\qquad \mathrm{for\ any}\ \e\in\mathcal{D}^{n-2}(U),
\endaligned
\end{equation}
which means that $T$ is a Lagrangian current. There is a multi-valued measurable function $\th$ on $L$ so that
$$\Omega\big|_L=e^{\sqrt{-1}\th}\mathrm{vol}_L\qquad \mu_T-a.e.$$ with an induced volume form $\mathrm{vol}_L$ of $L$ w.r.t. $\vec{T}$. For each $f\in C^0(U)$,
\begin{equation}\aligned\label{z_itozC0}
\lim_{i\to\infty}\int_U fe^{\sqrt{-1}\th_i} d\mu_{V_i}=&\lim_{i\to\infty}\int_U f\lan\Om,\vec{T}_i\ran d\mu_{T_i}=\lim_{i\to\infty}T_i(f\Om)\\
=&T(f\Om)=\int_U f\lan\Om,\vec{T}\ran d\mu_{T}=\int_U fe^{\sqrt{-1}\th} d\mu_T.
\endaligned
\end{equation}
In the case of $V_\infty=|T|$, we get $e^{\sqrt{-1}\th_i}\to e^{\sqrt{-1}\th}$ in the sense of measure from \eqref{z_itozC0}, then $\z_\infty=e^{2\sqrt{-1}\th}$ combining \eqref{App-zinfty}. In general, $\f12\arg\z_\infty$ may not be the phase of $T$. We will study it further under the condition of single-valued harmonic phases in \S6.

\section{Lagrangian currents with single-valued harmonic phases: volume estimates}

Let $(M^n,\omega)$ be a Ricci-flat K\"ahler manifold with a holomorphic $n$-form $\Om$ on $U$ satisfying $(-1)^{n^2/2}\Om\wedge\overline{\Om}=2^{n}\omega^n/n!$. 
Let $T=(L,\vth,\vec{T})$ be an integral Lagrangian current in $U$ with $\p T\llcorner U=0$ and locally bounded, single-valued harmonic phase $\th$. Denote $V=\vth|L|$.

We first prove the following Caccioppoli type inequality under a further condition $\th\in W^{1,2}_{\mu_V}(L)$. 
\begin{lemma}\label{caccith}
For each constant $\ell\ge2$ and each Lipschitz function $\phi$ on $U$ with compact support, there holds
\begin{equation}\aligned\label{caccith*}
\int |\th|^{\ell-2}\phi^2|\na^L \th|^2d\mu_V\le\left(\f2{\ell-1}\right)^2\int |\th|^{\ell}|\na^L\phi|^2d\mu_V.
\endaligned
\end{equation}
\end{lemma}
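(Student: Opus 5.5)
The plan is the standard Caccioppoli argument: test the weak harmonicity of $\th$ with $|\th|^{\ell-2}\th\,\phi^2$. The only point needing care is justifying this test function on the rectifiable space, and Lemma \ref{duiduikvar} exists for exactly that. Choose Lipschitz functions $\th_i$ with $\th_i\to\th$ in $L^2_{\mu_V}$ and $\|\th_i\|_{W^{1,2}}\to\|\th\|_{W^{1,2}}$. Truncating at the local bound of $\th$ keeps them uniformly bounded, and by Mazur's theorem we may also assume $\na^L\th_i\to\na^L\th$ strongly in $L^2_{\mu_V}$. Apply Lemma \ref{duiduikvar} with $k=\ell-2\ge0$ and $\varphi=\phi^2$, which is Lipschitz with compact support. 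This gives
\begin{equation*}
\lim_{i\to\infty}\int\lan \na^L\th_i,\na^L(|\th_i|^{\ell-2}\th_i\phi^2)\ran d\mu_V=0 .
\end{equation*}

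Next expand the gradient. Since $\na^L(|\th_i|^{\ell-2}\th_i)=(\ell-1)|\th_i|^{\ell-2}\na^L\th_i$, the integrand equals
\[
(\ell-1)|\th_i|^{\ell-2}\phi^2|\na^L\th_i|^2+2\phi|\th_i|^{\ell-2}\th_i\lan\na^L\th_i,\na^L\phi\ran .
\]
Pass to the limit $i\to\infty$ using uniform boundedness, strong $L^2$ convergence of the gradients and a.e. convergence along a subsequence, exactly as in the proof of Lemma \ref{duiduikvar}. This yields
\begin{equation*}
(\ell-1)\int |\th|^{\ell-2}\phi^2|\na^L\th|^2d\mu_V=-2\int \phi|\th|^{\ell-2}\th\lan\na^L\th,\na^L\phi\ran d\mu_V .
\end{equation*}

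To finish, apply Cauchy--Schwarz to the right-hand side, splitting the weight as $|\th|^{\ell-2}|\th|=|\th|^{(\ell-2)/2}\cdot|\th|^{\ell/2}$. The right-hand side is then at most
\[
2\Big(\int|\th|^{\ell-2}\phi^2|\na^L\th|^2\Big)^{1/2}\Big(\int|\th|^{\ell}|\na^L\phi|^2\Big)^{1/2}.
\]
The left-hand integral is finite, because $\th$ is bounded on the support of $\phi$ and $\na^L\th\in L^2$. So we may divide by its square root and square, which gives \eqref{caccith*} with constant $(2/(\ell-1))^2$.

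The main obstacle is the limit step: making sure the weak harmonicity (stated only for Lipschitz test functions) transfers to the nonlinear test function $|\th|^{\ell-2}\th\phi^2$. When $2\le\ell<3$, the weight $|\th_i|^{\ell-2}$ is only Hölder in $\th_i$, so the convergence of $\int|\th_i|^{\ell-2}\phi^2|\na^L\th_i|^2$ must be handled by Egorov plus dominated convergence. This works because $\big||\th_i|^{\ell-2}-|\th|^{\ell-2}\big|\le|\th_i-\th|^{\ell-2}$ is bounded and tends to zero a.e., and that is the content already packaged in Lemma \ref{duiduikvar}.
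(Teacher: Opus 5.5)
Your proposal is correct and matches the paper's proof: test the harmonicity with $|\th_i|^{\ell-2}\th_i\phi^2$ via Lemma \ref{duiduikvar} (with $k=\ell-2$), expand the gradient, pass to the limit, and absorb the cross term. The only cosmetic difference is that the paper handles the cross term with the weighted Young inequality $2ab\le\frac{\ell-1}{2}a^2+\frac{2}{\ell-1}b^2$ instead of integral Cauchy--Schwarz followed by division, and both give the same constant $(2/(\ell-1))^2$.
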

\begin{proof}
Let $\th_i$ be a sequence of Lipschitz functions on $(L,d^L,\mu_V)$ with $(\th_i,\na^L\th_i)\to(\th,\na^L\th)$ in $L^2_{\mu_V}(L)$ sense.
For each constant $\ell\ge2$, from Lemma \ref{duiduikvar} we have
\begin{equation}\aligned\label{thinanathi}
0=&\lim_{i\to\infty}\int\lan\na^L (|\th_i|^{\ell-2}\th_i\phi^2),\na^L \th_i\ran d\mu_V\\
=&(\ell-1)\int |\th|^{\ell-2}\phi^2|\na^L \th|^2d\mu_V+2\int |\th|^{\ell-2}\th\phi\lan\na^L \th,\na^L\phi\ran d\mu_V\\
\ge&\f{\ell-1}2\int |\th|^{\ell-2}\phi^2|\na^L \th|^2d\mu_V-\f2{\ell-1}\int_M |\th|^{\ell}|\na^L\phi|^2d\mu_V,
\endaligned
\end{equation}
where we have used Cauchy-Schwartz inequality in the last line of \eqref{thinanathi}.
\end{proof}

From Nash's isometric embedding theorem,
we assume that $U$ is a $2n$-submanifold in $\R^m$ with $m\ge 2n$. 
Let $\mathbf{A}_U$ denote the second fundamental form of $U$ in $\R^m$.
Let $\bn,\na$ denote the Levi-Civita connections of $\R^m$ and $M$, respectively.
For a smooth vector field $X$ on $\R^m$, 
let $X^U$ denote the restriction of $X$ into $TU$. 
At a considered point $p\in S$ with $T_pS$ being an $n$-plane,
let $\{e_i\}_{i=1}^n$ be an orthonormal basis of $T_pS$, and $\mathrm{tr}_S\mathbf{A}_{U}:=\sum_{1\le i\le n}\mathbf{A}_U(e_i,e_i)$. Clearly, $\mathrm{tr}_S\mathbf{A}_{U}$ is independent of the choice of $\{e_i\}$.
By a direct computation (see page 58 in \cite{S} for instance), at $p$
\begin{equation}\aligned
\div_S X=&\div_S X^U+\sum_i\lan\bn_{e_i}(X-X^U),e_i\ran\\
=&\div_S X^U-\sum_i\lan(X-X^U),\bn_{e_i}e_i\ran=\div_S X^U-\lan X,\mathrm{tr}_S\mathbf{A}_{U}\ran.
\endaligned
\end{equation}
We can also regard $V$ as a varifold in $\R^m$.
If $X^U$ has compact support in $U$, then
from \eqref{Def-GMC} it follows that
\begin{equation}\aligned\label{bfHHAM}
\int_{\R^m} \mathrm{div}_SX d\mu_V=-\int_{\R^m}\lan H+\mathrm{tr}_S\mathbf{A}_{U},X\ran d\mu_V.
\endaligned
\end{equation}
Thus, $V$ has (generalized) mean curvature $H+\mathrm{tr}_S\mathbf{A}_{U}$ in $U\subset\R^m$.
Let $\k$ be the upper bound of the maximum absolute value of the principal curvature of $U\subset\R^m$. If we regard $V$ as an $n$-varifold in $\R^m$, then the norm of mean curvature of $V$ in $\R^m$ is bounded by $|\na^L\th|+\k$ from \eqref{bfHHAM} and Theorem \ref{thW12-H}.

Let $\mathbf{B}_r(\mathbf{x})$ denote the ball in $\R^m$ with radius $r$ and centered at $\mathbf{x}\in\R^m$. Denote $\mathbf{B}_r=\mathbf{B}_r(\mathbf{0})$ for short.
We assume $\mathbf{B}_{R_0}\cap\p U=\emptyset$ for some $R_0>0$.
Recalling the isoperimetric inequality on varifolds \cite{MS} by Michael-Simon:
There is a constant $a_n>0$ depending only on $n$ so that 
\begin{equation}\aligned\label{Sob}
\left(\int|f|^{\f{n}{n-1}}d\mu_{V}\right)^{\f{n-1}{n}}\le a_n\int\left(|\na^L f|+(|\na^L\th|+\k)f\right)d\mu_V
\endaligned
\end{equation}
for each $f\in C^1_c(\mathbf{B}_{R_0})$. Noting \eqref{monofHkdf} in Appendix II holds with $f$ equal to $\th^\ell \varphi$ with $\ell\ge1$. From the proof of \eqref{Sob} (see \cite{MS} or Theorem 3.11 in \cite{CM1}), \eqref{Sob} holds for functions in the form of $\th^\ell \varphi$ with $\ell\ge1$ and $\varphi\in C^1_c(\mathbf{B}_{R_0})$.

For each measurable function or vector field $X$ on $\mathbf{B}_{R_0}\cap L$ and each constant $q>0,r\in(0,R_1]$, we write
$$||X||_{q,r}:=\left(\int_{\mathbf{B}_{r}}|X|^q d\mu_V\right)^{1/q}.$$

\begin{lemma}\label{supthB12p}
For each $q>n$, let $\La_1,\La_2$ denote the positive constants so that $\mu_V(\mathbf{B}_{R_0})\le\La_1 R_0^n$ and $||\th||_q\le\La_2 R_0^{n/q}$, then
$$\sup_{\mathbf{B}_{R_0/2}}|\th|\le c\left(n,q,\k R_0,\La_1,\La_2\right).$$
\end{lemma}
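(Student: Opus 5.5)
We prove the bound by De Giorgi--Nash--Moser iteration, with the Michael--Simon inequality \eqref{Sob} playing the role of the Euclidean Sobolev inequality and Lemma \ref{caccith} supplying the energy estimate. The only non-standard feature is the mean curvature term $|\na^L\th|$ in \eqref{Sob}. Since $H=J\na^L\th$ by Theorem \ref{thW12-H}, we do not treat it as an $L^p$ coefficient. Instead we absorb it using the Caccioppoli inequality itself, which controls $|\th|^{\ell-2}|\na^L\th|^2$ weighted by cutoffs. By scaling we assume $R_0=1$; then only the dimensionless quantity $\k R_0$ enters.

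Fix radii $1/2\le r'<r\le 1$ and a Lipschitz cutoff $\phi$ with $\phi=1$ on $\mathbf{B}_{r'}$, support in $\mathbf{B}_r$, and $|\na\phi|\le 2/(r-r')$. We apply \eqref{Sob} to $f=|\th|^{\ell/2}\phi^2$ with $\ell\ge 2$; the paper notes that \eqref{Sob} is valid for such $\th$-powers times cutoffs. This gives
\[
\Big(\int(|\th|^{\ell/2}\phi^2)^{\f n{n-1}}\Big)^{\f{n-1}n}\le a_n\int\Big(\tfrac{\ell}{2}|\th|^{\f\ell2-1}|\na^L\th|\phi^2+2|\th|^{\f\ell2}\phi|\na^L\phi|+(|\na^L\th|+\k)|\th|^{\f\ell2}\phi^2\Big)d\mu_V .
\]
The term $|\na^L\th||\th|^{\ell/2}\phi^2$ is the delicate one. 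We split it by Cauchy--Schwarz as $(|\th|^{\f\ell2-1}\phi|\na^L\th|)\cdot(|\th|\phi)$. Lemma \ref{caccith} bounds $\int|\th|^{\ell-2}\phi^2|\na^L\th|^2$ by $\f{4}{(\ell-1)^2}\int|\th|^\ell|\na\phi|^2$. What remains is $\int|\th|^2\phi^2$, which we would rather not leave as a fixed quantity. To avoid this, we instead apply \eqref{Sob} to $f=(|\th|^{\ell/2}\phi)^2=|\th|^\ell\phi^2$, which is of the admissible form. Then $|\na^L\th||\th|^\ell\phi^2=(|\th|^{\f\ell2-1}|\na^L\th|\phi)(|\th|^{\f\ell2+1}\phi)$, and the left side of \eqref{Sob} is $\||\th|^\ell\phi^2\|_{n/(n-1)}$. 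Combining Cauchy--Schwarz with Lemma \ref{caccith}, the right side is bounded by
\[
C\ell\,\big(\||\th|^{\ell/2}\na\phi\|_2+\k\big)\cdot\||\th|^{\ell/2}\phi\|_{2}+(\text{lower order}),
\]
where the factor $\||\th|^{\ell/2+1}\phi\|_2$ produced by the splitting is handled by Hölder. Write $\chi=\f{n}{n-1}$. The result is a reverse Hölder step of the form $\|\th\|_{L^{\ell\chi}(\mathbf{B}_{r'})}^{\ell}\le \f{C\ell^{2}}{(r-r')^2}\|\th\|_{L^{\ell'}(\mathbf{B}_r)}^{\ell}$, up to a harmless constant from $\k$ and $\La_1$.

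The key exponent bookkeeping uses the hypothesis $q>n$. We choose the Hölder splitting so that the extra factor $|\th|$ coming from the mean curvature term is paid for in $L^q$. Concretely, $\int|\na^L\th||\th|^\ell\phi^2\le \||\th|^{\f\ell2-1}\na^L\th\,\phi\|_2\,\||\th|^{\f\ell2+1}\phi\|_2$. For the last factor we interpolate between $L^{\ell\chi}$ (absorbed on the left with a small constant) and $L^{\ell}$. This interpolation is where the margin $q>n$ enters. It is equivalent to the standard Moser argument for $\Delta u+b\cdot\na u=0$ with $|b|=|\na\th|$, where the Caccioppoli bound replaces integrability of $b$.

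Iterating along $\ell_k=\ell_0\chi^k$ and $r_k=\f12+2^{-k-1}$, the products $\prod(C\ell_k^2 4^{k})^{1/\ell_k}$ converge. Starting from $\ell_0=q$, which is controlled by $\La_2$, and using $\mu_V(\mathbf{B}_1)\le\La_1$ to pass between exponents, we obtain $\sup_{\mathbf{B}_{1/2}}|\th|\le c(n,q,\k,\La_1,\La_2)$.

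The main obstacle is the absorption of the self-referential term $|\na^L\th|$ in \eqref{Sob}. If the Cauchy--Schwarz/Caccioppoli route does not close uniformly in $\ell$, the fallback is to use $H\in L^2$ locally, which follows from Lemma \ref{caccith} with $\ell=2$ and $\La_2$. Together with the Michael--Simon argument's local Hölder in the monotonicity, this gives a uniform Sobolev constant on $\mathbf{B}_1$ once the radius is small relative to $\|H\|_{L^n}$. That smallness in turn would again be obtained from the $L^q$ bound, $q>n$, via Caccioppoli on the level of $|\th|^{q/2}$.
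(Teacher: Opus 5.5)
Your overall strategy is the same as the paper's: Moser iteration with \eqref{Sob}, using Lemma \ref{caccith} to pay for the mean curvature term rather than treating $|\na^L\th|$ as an $L^p$ coefficient. The step that carries the proof, however, is set up incorrectly. Your splitting $|\na^L\th||\th|^{\ell}\phi^2=(|\th|^{\f\ell2-1}|\na^L\th|\phi)(|\th|^{\f\ell2+1}\phi)$ uses Lemma \ref{caccith} at exponent $\ell$ and leaves an $L^{\ell+2}$ norm of $\th$. Interpolating $L^{\ell+2}$ between $L^{\ell}$ and $L^{\ell\chi}$ requires $\ell+2\le\ell\chi$, i.e.\ $\ell\ge 2(n-1)$. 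Without absorption, the step gains integrability only if the known exponent exceeds $2n$. So for $n\ge3$ and $n<q<2n-2$ your iteration cannot start, and $q>n$ is not what makes your version work. The paper pays only one extra power. It writes $|\th|^{\ell}\phi^2|\na^L\th|=(|\th|^{\f{\ell+1}2}\phi)(|\th|^{\f{\ell-1}2}\phi|\na^L\th|)$ and applies Young's inequality together with Lemma \ref{caccith} at exponent $\ell+1$ (see \eqref{naellle2**}). This yields
\[
\|\th\|^{\ell}_{\f{n\ell}{n-1},r}\le \f{c}{\tau}\|\th\|^{\ell}_{\ell,r+\tau}+\f{c}{\ell\tau}\|\th\|^{\ell+1}_{\ell+1,r+\tau}.
\]
This improves integrability exactly when $\f{n\ell}{n-1}>\ell+1$, i.e.\ $\ell>n-1$. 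Starting from $\ell+1=q$, that condition is precisely $q>n$. The exponents $p_{k+1}=\f{n}{n-1}(p_k-1)$, $p_0=q$, then satisfy $p_k-n=(\f{n}{n-1})^k(q-n)\to\infty$.

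Second, the resulting step is not a homogeneous reverse H\"older inequality $\|\th\|^{\ell}_{\ell\chi}\le C\|\th\|^{\ell}_{\ell'}$. The mean curvature term has degree $\ell+1$ in $\th$ against degree $\ell$ on the left, so it cannot be absorbed into ``a harmless constant from $\k$ and $\La_1$''. Bounding $\prod_k(C\ell_k^24^k)^{1/\ell_k}$ handles only the homogeneous part. The paper closes the inhomogeneous iteration by setting $a_i=\log\max\{1,\|\th\|_{\ell_i,r_i}\}$, deriving $a_{i+2}\le c/\ell_i+(1+1/\ell_i)a_{i+1}$, and using $\prod_j(1+2/\ell_j)<\infty$. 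Your absorption idea can be made to work with the paper's splitting. Interpolating $L^{\ell+1}$ between $L^\ell$ and $L^{\ell\chi}$ and applying Young gives, on nested balls, $\|\th\|^{\ell}_{\ell\chi}\lesssim\|\th\|^{\ell}_{\ell}+\|\th\|^{\ell+\ell/(\ell-n)}_{\ell}$ for $\ell>n$; one then again needs $\prod_k(1+\f{1}{\ell_k-n})<\infty$. Finally, your fallback does not work. Nothing in the hypotheses gives $H=J\na^L\th\in L^n$, let alone smallness of $\|H\|_{L^n}$. Lemma \ref{caccith} only controls the weighted $L^2$ quantity $\int|\th|^{\ell-2}\phi^2|\na^L\th|^2\,d\mu_V$.
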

\begin{proof}
By scaling, we only need to prove the case of $R_0=1$.
For each $1-\tau\ge r\ge\tau>0$ and $r\in(0,1/2]$, let $\phi$ be a function defined by $\phi\equiv1$ on $\mathbf{B}_{r}$, $\phi=\f1\tau\left(r+\tau-|\cdot|\right)$ on $\mathbf{B}_{r+\tau}\setminus \mathbf{B}_{r}$,
$\phi\equiv0$ outside $\mathbf{B}_{r+\tau}$.
Then $|\na^L\phi|\le1/\tau$. 
From Lemma \ref{caccith}, we get
\begin{equation}\aligned\label{naellle2}
&\int\left|\na^L (\th^\ell\phi^2)\right|d\mu_V\le\ell\int|\th|^{\ell-1}\phi^2\left|\na^L \th\right| d\mu_V+2\int|\th|^{\ell}\phi\left|\na^L \phi\right| d\mu_V\\
\le&\f1{\tau}\int|\th|^{\ell}\phi^{2}d\mu_V+\f{\tau\ell^2}4\int|\th|^{\ell-2}\phi^2\left|\na^L \th\right|^2 d\mu_V+2\int|\th|^{\ell}\phi\left|\na^L \phi\right| d\mu_V\\
\le&\f1{\tau}\int|\th|^{\ell}\phi^{2}d\mu_V+\f{\tau\ell^2}{(\ell-1)^2}\int|\th|^{\ell}|\na^L\phi|^2 d\mu_V+2\int|\th|^{\ell}\phi\left|\na^L \phi\right| d\mu_V,
\endaligned
\end{equation}
and similarly
\begin{equation}\aligned\label{naellle2**}
\int|\th|^{\ell}\phi^2\left|\na^L \th\right| d\mu_V
\le\f1{(\ell+1)\tau}\int|\th|^{\ell+1}\phi^{2}d\mu_V+\f{\tau(\ell+1)}{\ell^2}\int|\th|^{\ell+1}|\na^L\phi|^2 d\mu_V.
\endaligned
\end{equation}
Combining Sobolev inequality \eqref{Sob} and \eqref{naellle2}\eqref{naellle2**}, we have
\begin{equation}\aligned\label{thelltau+tau}
||\th^{\ell}||_{\f {n}{n-1},r}\le& ||\th^{\ell}\phi^2||_{\f {n}{n-1},1}\le a_n||\na^L(\th^{\ell}\phi^2)||_{1,1}+a_n||\th^{\ell}\phi^2\na^L\th||_{1,1}+\k a_n||\th^{\ell}\phi^2||_{1,1}\\
\le& c_{n,\k}\left(\f1\tau||\th^{\ell}||_{1,r+\tau}+\f{1}{\ell\tau}||\th^{\ell+1}||_{1,r+\tau}\right).
\endaligned
\end{equation}
Namely,
\begin{equation}\aligned\label{thln1r}
||\th||_{\f {n\ell}{n-1},r}^\ell\le \f{c_{n,\k}}\tau||\th||_{\ell,r+\tau}^\ell+\f{c_{n,\k}}{\ell\tau}||\th||_{\ell+1,r+\tau}^{\ell+1}.
\endaligned
\end{equation}
Let $\ell_0=(n-1)q/n>n-1$ and $\ell_i=(\f {n}{n-1})^{i/2}\ell_0$
for all integers $i\ge1$. From H\"older inequality, 
\begin{equation}\aligned\label{thlili+1}
||\th||_{\ell_i,r+\tau}^{\ell_i}\le||\th||_{\ell_{i+1},r+\tau}^{\ell_i}\left(\mu_V(B_1(p))\right)^{1-\ell_i/\ell_{i+1}}\le c(n,q,\La_1)||\th||_{\ell_{i+1},r+\tau}^{\ell_i}.
\endaligned
\end{equation}
Let $[\cdots]$ be the floor function for real numbers, and we fix an integer
$$k_n:=2+2\left[\f{\log(1+\sqrt{2})}{\log n-\log(n-1)}\right]\ge\max\left\{2,\f{\log(1+\sqrt{2})}{\log n-\log(n-1)}\right\}.$$
It's easy to see $\ell_i+1\le\ell_{i+1}$ for all $i\ge k_n$. From H\"older inequality, 
\begin{equation}\aligned\label{thelli+1i+1**}
||\th||_{\ell_i+1,r+\tau}^{\ell_i+1}\le||\th||_{\ell_{i+1},r+\tau}^{\ell_i+1}\left(\mu_V(B_1(p))\right)^{1-(\ell_i+1)/\ell_{i+1}}\le c(n,q,\La_1)||\th||_{\ell_{i+1},r+\tau}^{\ell_i+1}.
\endaligned
\end{equation}
By iteration and H\"older inequality, we get
\begin{equation}\aligned
||\th||_{{\ell_{k_n+1}},8/9}\le c(n,q,\k,\La_1,\La_2).
\endaligned
\end{equation}
From \eqref{thln1r}, \eqref{thlili+1} and \eqref{thelli+1i+1**}, for all $i\ge k_n$
\begin{equation}\aligned
||\th||_{\ell_{i+2},r}^{\ell_i}\le c_*||\th||_{\ell_i,r+\tau}^{\ell_i}+c_*||\th||_{\ell_i+1,r+\tau}^{\ell_i+1}
\le c_*||\th||_{\ell_{i+1},r+\tau}^{\ell_i}+c_*|||\th||_{\ell_{i+1},r+\tau}^{\ell_i+1},
\endaligned
\end{equation}
where $c_*\ge1$ is a constant depending only on $n,q,\tau,\k,\La_1,\La_2$. 
For all $j\ge0$, we set $\tau_j=2^{-(2+j)}r$ and
$r_{j+1}=r_j-\tau_j$ with $r_0=r$.
Then
$$r_{j+1}=r_0-\sum_{k=0}^{j}\tau_k=\f r2+\tau_j\le r,$$
and $\lim_{j\to\infty}r_j=r/2$.
Set $a_i=\log\max\{1,||\th||_{\ell_{i},r_i}\}$ for all $i\ge k_n$. Then
\begin{equation}\aligned
e^{\ell_ia_{i+2}}\le \f{c_*}2e^{\ell_ia_{i+1}}+\f{c_*}2e^{(\ell_i+1)a_{i+1}}\le c_*e^{(\ell_i+1)a_{i+1}},
\endaligned
\end{equation}
which implies
\begin{equation}\aligned
a_{i+2}\le \f{c_*}{\ell_i}+\left(1+\f{1}{\ell_i}\right)a_{i+1}.
\endaligned
\end{equation}
Set $b_i=\max\{c_*,a_{i+1}\}$. By the definition of $\ell_i$, it follows that
\begin{equation}\aligned
b_{i+1}\le \left(1+\f{2}{\ell_i}\right)b_i\le b_{k_n}\prod_{j=k_n}^i\left(1+\f{2}{\ell_j}\right)\le c_nb_{k_n}\qquad\mathrm{for\ all}\ i\ge k_n.
\endaligned
\end{equation}
Letting $i\to\infty$ completes the proof.
\end{proof}

Now let us give a more refined estimate compared with Lemma \ref{supthB12p}.
Let $c_\th$ denote a general constant depending only on $n$ and the upper bound of $\th$ on $L\cap\mathbf{B}_1$. 
From \eqref{thelltau+tau}, it follows that
\begin{equation}\aligned\label{v2lnr}
||\th^{\ell}||_{\f {n}{n-1},r}\le& c_n\left(\f1\tau||\th^{\ell}||_{1,r+\tau}+\f{1}{\ell\tau}||\th^{\ell+1}||_{1,r+\tau}\right)\le\f{c_\th}{\tau}||\th^{\ell}||_{1,r+\tau}.
\endaligned
\end{equation}
Let $\ell_i=(\f {n}{n-1})^i$ for each integer $i\ge0$.
Then \eqref{v2lnr} gives
\begin{equation}\aligned\label{ite}
||\th||_{\ell_{i+1},r}=||\th^{\ell_i}||_{\f {n}{n-1},r}^{1/\ell_i}\le c_\th^{\f1{\ell_i}}\tau^{-\f1{\ell_i}}||\th||_{\ell_i,r+\tau}.
\endaligned
\end{equation}
For any $\si\in(0,1)$, we set $\tau_i=2^{-(1+i)}(1-\si)r$ and
$r_{i+1}=r_{i}-\tau_{i}$ with $r_0=r\le1/2$ as before.
By iterating \eqref{ite}, for each $i\ge0$ we have
\begin{equation}\aligned
||\th||_{\ell_{i+1},r_{i+1}}\le c_\th^{\f1{\ell_i}}\tau_i^{-\f1{\ell_i}}||\th||_{\ell_{i},r_{i}}=\left(\f{c_\th}{(1-\si)r}\right)^{(\f{n-1}{n})^i}2^{(1+i)(\f{n-1}{n})^i}||\th||_{\ell_{i},r_{i}}.
\endaligned
\end{equation}
Since 
$
\sum_{j=0}^\infty(j+1)\a^{-j}=\f{\a^2}{(\a-1)^2}$ for $\a>1$,
we have
\begin{equation}\aligned
||\th||_{\ell_{i+1},r_{i+1}}\le&\left(\f{c_\th}{(1-\si)r}\right)^{\sum_{j=0}^i(\f{n-1}{n})^j}2^{\sum_{j=0}^i(1+j)(\f{n-1}{n})^j}||\th||_{\ell_0,r_{0}}\\
\le&\left(\f{c_\th}{(1-\si)r}\right)^{n}2^{n^2}||\th||_{1,r}.
\endaligned
\end{equation}
Letting $i\rightarrow\infty$, it follows that
\begin{equation}\aligned\label{Phiinftyder}
||\th||_{\infty,\si r}\le\left(\f{c_\th}{(1-\si)r}\right)^{n}||\th||_{1,r}.
\endaligned
\end{equation}
Up to adding a constant to $\th$, \eqref{Phiinftyder} immediately gives the following lower bound. 
\begin{theorem}\label{Low-V-Brp}
There exists a constant $c>0$ depending only on $n,q,\k R_0,\La_1,\La_2$ such that 
$$\mathbf{M}(V\llcorner \mathbf{B}_r)\ge cr^n\qquad \mathrm{for\ each}\ 0<r\le R_0.$$
\end{theorem}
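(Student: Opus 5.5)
We plan to derive the lower bound directly from the mean value inequality \eqref{Phiinftyder}, applied to a shifted phase that is bounded below by $1$; we take the center $\mathbf{0}$ to lie in $L$. By scaling (which replaces $\k$ by $\k R_0$ and leaves $\La_1,\La_2$ invariant) we may assume $R_0=1$. It then suffices to treat $0<r\le 1/2$. For $r\in(1/2,1]$ we simply use $\mu_V(\mathbf{B}_r)\ge\mu_V(\mathbf{B}_{1/2})\ge c2^{-n}\ge c2^{-n}r^n$.

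\textbf{Step 1 (shift the phase).} Lemma \ref{supthB12p} gives $\sup_{\mathbf{B}_{1/2}}|\th|\le C_0$ with $C_0=c(n,q,\k,\La_1,\La_2)$. Set $\tilde\th:=\th+C_0+1$, so that $1\le\tilde\th\le 2C_0+1$ $\mu_V$-a.e. on $\mathbf{B}_{1/2}$. Adding a constant changes neither $\na^L\th$ nor the harmonicity in Definition \ref{Def-harm-curr}. Moreover $H=J\na^L\th=J\na^L\tilde\th$ by Theorem \ref{thW12-H}. Hence Lemma \ref{duiduikvar}, Lemma \ref{caccith}, the Sobolev inequality \eqref{Sob} and the iteration leading to \eqref{v2lnr}--\eqref{Phiinftyder} all apply verbatim to $\tilde\th$ on $\mathbf{B}_{1/2}$, with all balls inside $\mathbf{B}_{1/2}$. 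The only ingredient to check is the use of $|\tilde\th|\le 2C_0+1$ in \eqref{v2lnr} to absorb the $\|\tilde\th^{\ell+1}\|_1/\ell$ term. Consequently $c_{\tilde\th}$ depends only on $n,q,\k,\La_1,\La_2$.

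\textbf{Step 2 (apply the mean value inequality).} Take $\si=1/2$ in \eqref{Phiinftyder}, applied to $\tilde\th$ with radius $r\le1/2$. This gives
\begin{equation*}
\|\tilde\th\|_{\infty,r/2}\le\Big(\f{2c_{\tilde\th}}{r}\Big)^n\int_{\mathbf{B}_r}\tilde\th\, d\mu_V\le\Big(\f{2c_{\tilde\th}}{r}\Big)^n(2C_0+1)\,\mu_V(\mathbf{B}_r).
\end{equation*}
Since $\mathbf{0}\in L=\mathrm{spt}V$, we have $\mu_V(\mathbf{B}_{r/2})>0$. As $\tilde\th\ge1$ $\mu_V$-a.e., the left side, being an essential supremum, is at least $1$. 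Therefore
\begin{equation*}
\mu_V(\mathbf{B}_r)\ge\big((2c_{\tilde\th})^n(2C_0+1)\big)^{-1}r^n,
\end{equation*}
which is the claim.

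\textbf{Main obstacle.} The delicate point is Step 1: making sure that the constant in \eqref{Phiinftyder} genuinely depends only on an upper bound of $|\tilde\th|$ (and $n,\k$), uniformly in $r$. One must also ensure that the Moser iteration is legitimate for $\tilde\th^\ell\varphi$, i.e.\ that \eqref{Sob} holds for such functions, as remarked after \eqref{Sob}. Positivity of $\tilde\th$ also removes any sign issues in Lemma \ref{caccith} (cf.\ Remark \ref{Remth+}). Everything else is bookkeeping of constants under scaling.
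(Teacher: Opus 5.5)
Your proposal is correct and follows the paper's own argument, which proves Theorem \ref{Low-V-Brp} in one line by adding a constant to $\th$ and reading the bound off the mean value inequality \eqref{Phiinftyder}, exactly as in your Step 2. The remaining points only make explicit what the paper leaves implicit. These are: your use of Lemma \ref{supthB12p}, which makes $c_{\tilde\th}$ depend only on $n,q,\k R_0,\La_1,\La_2$; the treatment of $r\in(1/2,1]$; and the normalization $\tilde\th\ge1$. The last one is also what makes the first step $\ell_0=1$ of the iteration behind \eqref{Phiinftyder} legitimate, since Lemma \ref{caccith} degenerates at $\ell=1$ and one must instead use $\int\phi^2|\na^L\tilde\th|\le\int\tilde\th\phi^2|\na^L\tilde\th|$.
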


Now, we consider the general case of single-valued harmonic phase $\th$ without the assumption $\th\in W^{1,2}_{\mu_V}(L)$.
From Definition \ref{Def-harm-curr}, there is a countable cover $\{U_k\}_{k\in \N}$ of $U$ so that for every $k$, $T\llcorner U_k$ has indecomposable components $T_{k,1},T_{k,2},\cdots$ satisfying that
the phase $\th_{k,j}$ of $T_{k,j}$ belongs to $W^{1,2}_{\mu_V}(L_{k,j})$, and is harmonic on each $L_{k,j}:=\text{spt}\, T_{k,j}$ w.r.t. $\mu_V$.
For any compact set $K\subset U$, we can require that there are only finite elements in $\{U_k\}_{k\in \N}$, whose  intersections with $K$ are nonempty. Let $U'_k\subset\subset U_k$ be open for each $k$ so that $\{U'_k\}_{k\in \N}$ is still a cover of $U$.
From Theorem \ref{Low-V-Brp}, for each fixed $k$ there are only finite elements in $\{T_{k,j}\}_j$, denoted by $\{T_{k,j}\}_{1\le j\le m_k}$ for some integer $m_k>1$, whose intersections with $U'_k$ are nonempty.

With loss of generality, we can assume $\th_{k,j}=\th|_{L_{k,j}}$ on each $L_{k,j}$.
There is a sequence of nonnegative Lipschitz functions $\{\phi_k\}_{k\ge1}$ on $U$ with spt$\phi_k\subset U'_k$ for each $k$ so that $\sum_{k\ge1}\phi_k\equiv1$ on $U$. Then
$$J\sum_{k\ge1}\sum_{1\le j\le m_k}\phi_{k}\na^{L_{k,j}}\th_{k,j}$$
is the generalized mean curvature $H_V$ of $V$. From the linearity of $\phi$ in \eqref{phinuphina} and Theorem \ref{thW12-H}, we conclude that $\th\in BV_{\mu_V}(U)$ with $J\na^L\th=H_V$.

From \cite{G} or \cite{GP}, 
\begin{equation}\aligned\label{naLthk}
\na^{L_{k,j}}\th_{k,j}=\na^{L_{k',j'}}\th_{k',j'}\qquad\mu_V-a.e.\ \text{on }L_{k,j}\cap L_{k',j'}.
\endaligned
\end{equation}
Noting $\sum_k\phi_k\equiv1$ on $U$. Then
\begin{equation}\aligned\label{naLthk2}
|\na^L\th|^2=\sum_{k,j}\phi_k\bigg\lan\na^{L_{k,j}}\th_{k,j},\sum_{k',j'}\phi_{k'}\na^{L_{k',j'}}\th_{k',j'}\bigg\ran=\sum_{k,j}\phi_k\big|\na^{L_{k,j}}\th_{k,j}\big|^2,
\endaligned
\end{equation}
and
\begin{equation}\aligned\label{naLthkphik}
0=&\sum_k|\th|^{\ell-2}\th\lan\na^L\th,\na^L\phi_k\ran=\sum_{k,k',j}|\th|^{\ell-2}\th\phi_{k'}\big\lan\na^{L_{k',j}}\th_{k',j},\na^L\phi_k\big\ran\\
=&\sum_{k,j}|\th_{k,j}|^{\ell-2}\th_{k,j}\big\lan\na^{L_{k,j}}\th_{k,j},\na^L\phi_k\big\ran,
\endaligned
\end{equation}
where $k,k'\ge1$, $1\le j\le m_k$ and $1\le j'\le m_{k'}$ in the above sums.

Put $T'_{k,j}=T_{k,j}\llcorner U'_k$.
There is a sequence of Lipschitz functions $\{\th_{k,j,i}\}_i$ on $(L_{k,j},d^{L_{k,j}},\mu_{T'_{k,j}})$ so that $(\th_{k,j,i},\na^{L}\th_{k,j,i})\to(\th_{k,j},\na^{L_{k,j}}\th_{k,j})$ in $L^2_{\mu_{T'_{k,j}}}(L_{k,j})$ sense. Let $\phi$ be a nonnegative Lipschitz function on $U$ with compact support.
For each constant $\ell\ge2$, from Lemma \ref{duiduikvar} we have
\begin{equation}\aligned
0=&\lim_{i\to\infty}\int\lan\na^L (|\th_{k,j,i}|^{\ell-2}\th_{k,j,i}\phi^2\phi_k),\na^L \th_{k,j,i}\ran d\mu_{T'_{k,j}}\\
=&(\ell-1)\int |\th_{k,j}|^{\ell-2}\phi^2|\na^{L_{k,j}} \th_{k,j}|^2\phi_k d\mu_{T'_{k,j}}\\
&+2\int |\th_{k,j}|^{\ell-2}\th_{k,j}\phi\lan\phi_k\na^{L_{k,j}} \th_{k,j},\na^L\phi\ran d\mu_{T'_{k,j}}\\
&+\int |\th_{k,j}|^{\ell-2}\th_{k,j}\phi^2\lan\na^{L_{k,j}} \th_{k,j},\na^L\phi_k\ran d\mu_{T'_{k,j}}.
\endaligned
\end{equation}
From \eqref{naLthk2} and \eqref{naLthkphik}, summing the above equality over $k\ge1,1\le j\le m_k$ yields
\begin{equation}\aligned\label{thinanathi*}
0=(\ell-1)\int |\th|^{\ell-2}\phi^2|\na^L \th|^2d\mu_V+2\int |\th|^{\ell-2}\th\phi\lan\na^L \th,\na^L\phi\ran d\mu_V.
\endaligned
\end{equation}
With Cauchy-Schwartz inequality, we can derive Lemma \ref{caccith}.
Correspondingly, Lemma \ref{supthB12p} and Theorem \ref{Low-V-Brp} also hold under the condition of single-valued harmonic phase.
\begin{remark}\label{Remth+*}
From Remark \ref{Remth+}, \eqref{caccith*} holds for each $\ell\in\R$ provided $\th$ is positive.
\end{remark}

\begin{proof}[Proof of Theorem \ref{Al-Mono-int}]
Given $\mathbf{p}\in \mathbf{B}_{R_0}$, we set $\r(\mathbf{x})=|\mathbf{x}-\mathbf{p}|$ and $R_1=R_0-|\mathbf{p}|>0$.
From \eqref{thinanathi*} with $\phi^2=s^2-\r^2$, it follows that
\begin{equation}\aligned\label{BrxnaVthell0}
0=(\ell-1)\int_{\mathbf{B}_s(\mathbf{p})} |\th|^{\ell-2}(s^2-\r^2)|\na^L \th|^2d\mu_V-\int_{\mathbf{B}_s(\mathbf{p})} |\th|^{\ell-2}\th\lan\na^L \th,\na^L\r^2\ran d\mu_V
\endaligned
\end{equation}
for each $\ell\ge2$ and $0<s<R_1$.
Then
\begin{equation}\aligned\label{BrxnaVthell}
\int_{\mathbf{B}_s(\mathbf{p})}\lan \r\na^L\r,\na^L |\th|^\ell\ran d\mu_V
=&\f{\ell}2\int_{\mathbf{B}_s(\mathbf{p})}|\th|^{\ell-2}\th\lan \na^L\r^2,\na^L \th\ran d\mu_V\\
=&\f{\ell(\ell-1)}2\int_{\mathbf{B}_s(\mathbf{p})}(s^2-\r^2)|\th|^{\ell-2}|\na^L\th|^2 d\mu_V.
\endaligned
\end{equation}
Combining Theorem \ref{thW12-H}, for $0<r<R\le R_1$
\begin{equation}\aligned\label{BrxnaVthell*}
&\int_r^R\f{e^{\k s}}{s^{n+1}}\int_{\mathbf{B}_s(\mathbf{p})}\r\lan \na\r,\na^L |\th|^\ell\ran d\mu_Vds\\
=&\f{\ell(\ell-1)}2\int_r^R\f{e^{\k s}}{s^{n+1}}\int_{\mathbf{B}_s(\mathbf{p})}(s^2-\r^2)|\th|^{\ell-2}|H_V|^2 d\mu_Vds\\
\ge&\f{\ell^2}8\int_r^R\int_{\mathbf{B}_{s/2}(\mathbf{p})}\f{e^{\k s}}{s^{n-1}}|\th|^{\ell-2}|H_V|^2 d\mu_Vds\\
=&\f{\ell^2}{2^{n+1}}\int_{r/2}^{R/2}\int_{\mathbf{B}_{s}(\mathbf{p})}\f{e^{2\k s}}{s^{n-1}}|\th|^{\ell-2}|H_V|^2 d\mu_Vds.
\endaligned
\end{equation}
For every $\ell\ge2$, we set $\varphi_\ell:=\r\left|\na^N\r\right|\cdot|H_V|\cdot|\th|^\ell$ and
\begin{equation}\aligned
\mathcal{W}_{\mathbf{p},\ell}(s):=s^{1-n}\int_{\mathbf{B}_{s}(\mathbf{p})}|\th|^{\ell-2}|H_V|^2 d\mu_V\qquad \mathrm{for\ each}\ s\in(0,R].
\endaligned
\end{equation}
From \eqref{Almost-mono-Vinfty} (with $f=|\th|^{\ell}$) in Appendix II and \eqref{BrxnaVthell*}, we get
\begin{equation}\aligned\label{Almost-mono-thell}
&\f{e^{\k R}}{R^n}\int_{\mathbf{B}_R(\mathbf{p})}|\th|^{\ell} d\mu_{V}-\f{e^{\k r}}{r^n}\int_{\mathbf{B}_r(\mathbf{p})}|\th|^{\ell}d\mu_{V}-\int_{\mathbf{B}_R(\mathbf{p})\setminus \mathbf{B}_r(\mathbf{p})}\f{e^{\k \r}}{\r^n}\left|\na^N\r\right|^2|\th|^{\ell} d\mu_V\\
\ge&\int_r^R\f{e^{\k s}}{s^{n+1}}\int_{\mathbf{B}_s(\mathbf{p})}\lan |\th|^{\ell}H_V+\na^L |\th|^{\ell},\r\na\r\ran d\mu_Vds\\
\ge&\f{\ell^2}{2^{n+1}}\int_{r/2}^{R/2}\mathcal{W}_{\mathbf{p},\ell}(s)ds-\int_r^R\f{e^{\k s}}{s^{n+1}}\int_{\mathbf{B}_s(\mathbf{p})}\varphi_{\ell} d\mu_Vds.
\endaligned
\end{equation}
By Fubini's theorem, for a function $\varphi\in L^1_{\mu_V}(\mathbf{B}_R(\mathbf{p}))$ with $0<r<R\le R_1$ we have
\begin{equation}\aligned\label{xHVthell}
&\int_r^R\f{e^{\k s}}{s^{n+1}}\int_{\mathbf{B}_s(\mathbf{p})}\varphi d\mu_Vds\\
=&\int_r^R\f{e^{\k s}}{s^{n+1}}\int_{ \mathbf{B}_r(\mathbf{p})}\varphi d\mu_Vds+\int_r^R\f{e^{\k s}}{s^{n+1}}\int_{\mathbf{B}_s(\mathbf{p})\setminus  \mathbf{B}_r(\mathbf{p})}\varphi d\mu_Vds\\
=&\int_r^R\f{e^{\k s}}{s^{n+1}}ds\int_{ \mathbf{B}_r(\mathbf{p})}\varphi d\mu_V+\int_{\mathbf{B}_R(\mathbf{p})\setminus  \mathbf{B}_r(\mathbf{p})}\varphi\int_\r^R\f{e^{\k s}}{s^{n+1}}ds d\mu_V.
\endaligned
\end{equation}

Now, we fix a constant 
\begin{equation}\aligned\label{ell*req}
\ell_*:=\max\left\{2,2^{n/2}e^{\k R_0}\sup_{L\cap\mathbf{B}_{R_0}}|\th|\right\}.
\endaligned
\end{equation}
From Cauchy-Schwartz inequality,
\begin{equation}\aligned\label{C-S-Phiell}
\varphi_{\ell_*}=\r\left|\na^N\r\right|\cdot|H_V|\cdot|\th|^{\ell_*}\le e^{-\k R}\left|\na^N\r\right|^2|\th|^{\ell_*}+\f{e^{\k R}}{4}\r^2|H_V|^2|\th|^{\ell_*}.
\endaligned
\end{equation}
From \eqref{xHVthell}\eqref{ell*req} it follows that
\begin{equation}\aligned\label{Phil*est}
&\int_r^R\f{e^{\k s}}{s^{n+1}}\int_{\mathbf{B}_s(\mathbf{p})}\varphi_{{\ell_*}} d\mu_Vds\le\f{e^{\k R}}{4}\int_r^R\f{e^{\k s}}{s^{n+1}}\int_{\mathbf{B}_s(\mathbf{p})}\r^2|H_V|^2|\th|^{\ell_*} d\mu_Vds\\
&+\int_r^R\f{1}{s^{n+1}}ds\int_{ \mathbf{B}_r(\mathbf{p})}\left|\na^N\r\right|^2|\th|^{\ell_*} d\mu_V+\int_{\mathbf{B}_R(\mathbf{p})\setminus  \mathbf{B}_r(\mathbf{p})}\left|\na^N\r\right|^2|\th|^{\ell_*}\int_\r^R\f{ds}{s^{n+1}} d\mu_V\\
\le&\f{\ell_*^2}{2^{n+2}}\int_r^R\mathcal{W}_{\mathbf{p},\ell_*}(s)ds+\int_{ \mathbf{B}_r(\mathbf{p})}\f{\left|\na^N\r\right|^2}{nr^n}|\th|^{\ell_*} d\mu_V+\int_{\mathbf{B}_R(\mathbf{p})\setminus \mathbf{B}_r(\mathbf{p})}\f{|\na^N\r|^2}{n\r^{n}}|\th|^{\ell_*}d\mu_V.
\endaligned
\end{equation}
Combining \eqref{Almost-mono-thell} and \eqref{Phil*est}, we obtain
\begin{equation}\aligned\label{mon-thell*}
&\f{e^{\k R}}{R^{n}}\int_{\mathbf{B}_R(\mathbf{p})}|\th|^{\ell_*} d\mu_V-\f{e^{\k r}}{r^{n}}\int_{ \mathbf{B}_r(\mathbf{p})}|\th|^{\ell_*} d\mu_V\\
\ge& \f{\ell_*^2}{2^{n+2}}\int_{r/2}^{R/2}\mathcal{W}_{\mathbf{p},\ell_*}(s)ds-\f{1}{nr^n}\int_{\mathbf{B}_r(\mathbf{p})}|\th|^{\ell_*}|\na^N\r|^2d\mu_V\\
&+ \f{n-1}n\int_{\mathbf{B}_R(\mathbf{p})\setminus \mathbf{B}_r(\mathbf{p})}|\th|^{\ell_*}\f{|\na^N\r|^2}{\r^{n}}d\mu_V-\f{\ell_*^2}{2^{n+2}}\int_{R/2}^R\mathcal{W}_{\mathbf{p},\ell_*}(s)ds.
\endaligned
\end{equation}
From \eqref{mon-thell*}, it's clear that
\begin{equation}\aligned\label{Wpl*inf}
\int_0^{R}\mathcal{W}_{\mathbf{p},\ell_*}(s)ds+\int_{\mathbf{B}_R(\mathbf{p})}|\th|^{\ell_*}\f{|\na^N\r|^2}{\r^{n}}d\mu_V<\infty,
\endaligned
\end{equation}
which implies 
\begin{equation}\aligned\label{Wpl*to0}
\lim_{r\to0}\f{1}{r^n}\int_{\mathbf{B}_r(\mathbf{p})}|\th|^{\ell_*}|\na^N\r|^2d\mu_V\le\lim_{r\to0}\int_{\mathbf{B}_r(\mathbf{p})}|\th|^{\ell_*}\f{|\na^N\r|^2}{\r^{n}}d\mu_V=0.
\endaligned
\end{equation}

Given the constant $\ell_*$, for each $0<r\le R\le R_1$ we define $\mathbf{m}_{\mathbf{p}}(\mu_V,r)$ as \eqref{ThpmuVr0}, i.e.,
\begin{equation}\aligned\label{DEF-bfTh}
\mathbf{m}_{\mathbf{p}}(\mu_V,r)=\f1{r^n}\int_{\mathbf{B}_{r}(\mathbf{p})}|\th|^{\ell_*} d\mu_V+\f{\ell_*^2e^{-\k r}}{2^{n+2}}\int_{r/2}^r\mathcal{W}_{\mathbf{p},\ell_*}(s)ds.
\endaligned
\end{equation}
From \eqref{mon-thell*}, we get
\begin{equation}\aligned\label{mon-bfTh}
e^{\k R}\mathbf{m}_\mathbf{p}(\mu_V,R)\ge&e^{\k r}\mathbf{m}_\mathbf{p}(\mu_V,r)+ \f{n-1}{n}\int_{\mathbf{B}_R(\mathbf{p})\setminus \mathbf{B}_r(\mathbf{p})}|\th|^{\ell_*}\f{|\na^N\r|^2}{\r^{n}}d\mu_V\\
&+\f{\ell_*^2}{2^{n+2}}\int_r^{R/2}\mathcal{W}_{\mathbf{p},\ell_*}(s)ds-\f{1}{nr^n}\int_{\mathbf{B}_r(\mathbf{p})}|\th|^{\ell_*}|\na^N\r|^2d\mu_V.
\endaligned
\end{equation}
This completes the proof.
\end{proof}
Noting $|\na^N\r|\le1$. 
From \eqref{Wpl*inf} and \eqref{Wpl*to0}, the limit $\lim_{r\to0}\mathbf{m}_{\mathbf{p}}(\mu_V,r)$ exists, denoted by $\mathbf{m}_{\mathbf{p}}(\mu_V)$.
Letting $r\to0$ in \eqref{mon-bfTh} gives
\begin{equation}\aligned\label{Integrable-W}
\f{\ell_*^2}{2^{n+2}}\int_{0}^{R/2}\mathcal{W}_{\mathbf{p},\ell_*}(s)ds+\f12\int_{\mathbf{B}_R(\mathbf{p})}\f{|\na^N\r|^2}{\r^{n}}|\th|^{\ell_*} d\mu_V\le e^{\k R}\mathbf{m}_\mathbf{p}(\mu_V,R)-\mathbf{m}_\mathbf{p}(\mu_V).
\endaligned
\end{equation}

\begin{proposition}\label{semi-bfTh}
The function $\mathbf{m}_\mathbf{p}(\mu_V)$ is upper semi-continuous on $\mathbf{p}$.
\end{proposition}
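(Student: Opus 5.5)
We show that $\limsup_{j\to\infty}\mathbf{m}_{\mathbf{p}_j}(\mu_V)\le\mathbf{m}_{\mathbf{p}}(\mu_V)$ whenever $\mathbf{p}_j\to\mathbf{p}$. The strategy is the standard one for densities that satisfy almost monotonicity: bound $\mathbf{m}_{\mathbf{p}_j}(\mu_V)$ from above by the ratio at a fixed positive scale, then let the centres converge at that fixed scale, and finally let the scale shrink.

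\emph{Step 1: upper bound by the ratio at scale $R$.} Fix a small $R>0$ with $\mathbf{B}_{2R}(\mathbf{p})\subset\mathbf{B}_{R_0}$, so that $\mathbf{B}_R(\mathbf{p}_j)\subset\mathbf{B}_{R_0}$ for large $j$. In \eqref{mon-bfTh} we drop the nonnegative terms on the right-hand side and let $r\to0$. By \eqref{Wpl*to0} the last term vanishes, and this gives
$$\mathbf{m}_{\mathbf{p}_j}(\mu_V)\le e^{\k R}\mathbf{m}_{\mathbf{p}_j}(\mu_V,R)\qquad\text{for every }j.$$
(This is also the content of \eqref{Integrable-W}.)

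\emph{Step 2: continuity in the centre at fixed scale.} We show
$$\limsup_{j\to\infty}\mathbf{m}_{\mathbf{p}_j}(\mu_V,R)\le \mathbf{m}_{\mathbf{p}}(\mu_V,R')$$
for some $R'$ that is slightly larger than $R$ and tends to $R$.

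\begin{itemize}
\item For the first term of \eqref{DEF-bfTh}, we use $\mathbf{B}_R(\mathbf{p}_j)\subset\mathbf{B}_{R+\de_j}(\mathbf{p})$ with $\de_j=|\mathbf{p}_j-\mathbf{p}|$. This gives $\limsup_j R^{-n}\int_{\mathbf{B}_R(\mathbf{p}_j)}|\th|^{\ell_*}d\mu_V\le R^{-n}\int_{\overline{\mathbf{B}}_R(\mathbf{p})}|\th|^{\ell_*}d\mu_V$.
\item For the second term, recall $\mathcal{W}_{\mathbf{q},\ell_*}(s)=s^{1-n}\int_{\mathbf{B}_s(\mathbf{q})}|\th|^{\ell_*-2}|H_V|^2d\mu_V$. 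The integrand is a fixed $L^1_{\mu_V}$ function on $\mathbf{B}_{R_0}$ by \eqref{Integrable-W}, since $|\th|$ is bounded. Hence $\mathcal{W}_{\mathbf{p}_j,\ell_*}(s)\le (s+\de_j)^{n-1}s^{1-n}\mathcal{W}_{\mathbf{p},\ell_*}(s+\de_j)$. Integrating in $s$ over $[R/2,R]$ and using dominated convergence yields $\limsup_j\int_{R/2}^R\mathcal{W}_{\mathbf{p}_j,\ell_*}\le\int_{R/2}^{R}\mathcal{W}_{\mathbf{p},\ell_*}(s^+)ds$. Monotonicity of $s\mapsto\int_{\mathbf{B}_s}$ makes right limits harmless, since they agree with $\mathcal{W}_{\mathbf{p},\ell_*}$ for a.e.\ $s$.
\end{itemize}

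Altogether this gives $\limsup_j\mathbf{m}_{\mathbf{p}_j}(\mu_V,R)\le\lim_{R'\downarrow R}\mathbf{m}_{\mathbf{p}}(\mu_V,R')$. Choosing $R$ outside the countable set of radii where $\mu_V(\partial\mathbf{B}_R(\mathbf{p}))>0$, this right limit equals $\mathbf{m}_{\mathbf{p}}(\mu_V,R)$.

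\emph{Step 3: let the scale shrink.} Combining Steps 1 and 2,
$$\limsup_{j}\mathbf{m}_{\mathbf{p}_j}(\mu_V)\le e^{\k R}\mathbf{m}_{\mathbf{p}}(\mu_V,R).$$
Letting $R\to0$ along admissible radii, the existence of the limit $\mathbf{m}_{\mathbf{p}}(\mu_V)=\lim_{r\to0}\mathbf{m}_{\mathbf{p}}(\mu_V,r)$ gives the claim.

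The main point needing care is Step 1: the constant $\ell_*$ in \eqref{ell*req} and the inequality \eqref{mon-bfTh} must hold uniformly for all centres near $\mathbf{p}$. This is true because $\ell_*$ is defined globally on $\mathbf{B}_{R_0}$, and the proof of Theorem \ref{Al-Mono-int} applies to every center whose ball $\mathbf{B}_R$ lies in $\mathbf{B}_{R_0}$. The remaining ingredient is the boundary-sphere issue handled at the end of Step 2, which is routine.
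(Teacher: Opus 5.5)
Your proof is correct and follows essentially the same route as the paper: you bound $\mathbf{m}_{\mathbf{p}_i}(\mu_V)$ by $e^{\kappa r}\mathbf{m}_{\mathbf{p}_i}(\mu_V,r)$ via \eqref{Integrable-W}, compare balls around $\mathbf{p}_i$ with balls around $\mathbf{p}$ by inclusion, and let $r\to0$. The difference is only technical. The paper uses the shrunken radius $r_i=r-|\mathbf{p}-\mathbf{p}_i|$ at $\mathbf{p}_i$, so that $\mathbf{B}_{r_i}(\mathbf{p}_i)\subset\mathbf{B}_r(\mathbf{p})$ and directly $\mathbf{m}_{\mathbf{p}_i}(\mu_V)\le(1+2|\mathbf{p}-\mathbf{p}_i|/r_i)^n e^{\kappa r}\mathbf{m}_{\mathbf{p}}(\mu_V,r)$; this avoids your dominated-convergence and boundary-sphere bookkeeping.
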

\begin{proof}
The proof is similar to the proof of Corollary 1.14 in Colding-Minicozzi's book \cite{CM1}.
Let $\mathbf{p}_i$ be a sequence of points converging to $\mathbf{p}$.
For a small $r>0$, we denote $r_i=r-|\mathbf{p}-\mathbf{p}_i|$. From \eqref{Integrable-W} and the definition of $\mathbf{m}$, we have
\begin{equation}\aligned
&\mathbf{m}_{\mathbf{p}_i}(\mu_V)\le e^{\k r_i}\mathbf{m}_{\mathbf{p}_i}(\mu_V,r_i)\\
\le&\f{e^{\k r_i}}{r_i^n}\int_{\mathbf{B}_{r}(\mathbf{p})}|\th|^{\ell_*} d\mu_V+\f{\ell_*^2}{2^{n+2}}\int_{r_i/2}^{r_i}s^{1-n}\int_{\mathbf{B}_{s+|\mathbf{p}-\mathbf{p}_i|}(\mathbf{p})}|\th|^{\ell_*-2}|H_V|^2 d\mu_Vds\\
\le&\f{e^{\k r_i}}{r_i^n}\int_{\mathbf{B}_{r}}|\th|^{\ell_*} d\mu_V+\f{\ell_*^2}{2^{n+2}}\left(\f{r+|\mathbf{p}-\mathbf{p}_i|}{r_i}\right)^{n-1}\int_{r/2}^{r}t^{1-n}\int_{\mathbf{B}_{t}(\mathbf{p})}|\th|^{\ell_*-2}|H_V|^2 d\mu_Vdt\\
\le&\left(1+2\f{|\mathbf{p}-\mathbf{p}_i|}{r_i}\right)^n e^{\k r}\mathbf{m}_\mathbf{p}(\mu_V,r).
\endaligned
\end{equation}
This completes the proof by choosing suitably small $r>0$.
\end{proof}
\begin{remark}\label{tildebfTh}
Let $\tilde{\th}=\sup_{x\in L}\th(x)-\th$, and 
\begin{equation}\aligned
\tilde{\mathbf{m}}_{\mathbf{p}}(\mu_V,r)=\f1{r^n}\int_{\mathbf{B}_{r}(\mathbf{p})}\tilde{\th}^{\ell_*} d\mu_V+\f{\ell_*^2e^{-\k r}}{2^{n+2}}\int_{r/2}^rs^{1-n}\int_{\mathbf{B}_{s}(\mathbf{p})}\tilde{\th}^{\ell_*-2}|H_V|^2 d\mu_Vds.
\endaligned
\end{equation}
From \eqref{BrxnaVthell0} and \eqref{BrxnaVthell}, we have
\begin{equation}\aligned\label{tildethiell}
\int_{\mathbf{B}_s(\mathbf{p})}\lan \r\na^L\r,\na^L \tilde{\th}^{\ell_*}\ran d\mu_V
=\f{\ell(\ell-1)}2\int_{\mathbf{B}_s(\mathbf{p})}(s^2-\r^2)\tilde{\th}^{\ell_*-2}|H_V|^2 d\mu_V.
\endaligned
\end{equation}
Analog to the above argument, the limit $\lim_{r\to0}\tilde{\mathbf{m}}_{\mathbf{p}}(\mu_V,r)$ exists, denoted by $\tilde{\mathbf{m}}_{\mathbf{p}}(\mu_V)$. Compared with \eqref{Integrable-W}, we have
\begin{equation}\aligned
\tilde{\mathbf{m}}_\mathbf{p}(\mu_V)\le e^{\k R}\tilde{\mathbf{m}}_\mathbf{p}(\mu_V,R).
\endaligned
\end{equation}
\end{remark}
For each $0<r\le R_1$, we define the density function
\begin{equation}\aligned
\Th_{\mathbf{p}}(\mu_V,r):=\f{\mu_V(\mathbf{B}_{r}(\mathbf{p}))}{\omega_nr^n}.
\endaligned
\end{equation}
We further assume $\th\ge1$.
From \eqref{Almost-mono-Vinfty} in Appendix II and Cauchy-Schwartz inequality, for all $0<r\le R\le R_1$ we have
\begin{equation}\aligned\label{almost-mono-Th}
&e^{\k R}\Th_\mathbf{p}(\mu_V,R)-e^{\k r}\Th_\mathbf{p}(\mu_V,r)\\
\ge&\int_{\mathbf{B}_R\setminus \mathbf{B}_r}\f{e^{\k \r}}{\r^n}\left|\na^N\r\right|^2 d\mu_V-\int_r^R\f{e^{\k s}}{s^{n+1}}\int_{\mathbf{B}_s}\left(|\na^N\r|^2+\f{\r^2}{4}|H_V|^2\right) d\mu_Vds\\
\ge& -c_ne^{\k R}(e^{2\k R}\mathbf{m}_\mathbf{p}(\mu_V,2R)-\mathbf{m}_\mathbf{p}(\mu_V)),
\endaligned
\end{equation}
where we obtain the last inequality in \eqref{almost-mono-Th} from \eqref{Integrable-W}, and
 $c_n$ is a positive constant  depending only on $n$. Clearly, \eqref{almost-mono-Th} implies that the limit $\lim_{r\to0}\Th_{\mathbf{p}}(\mu_V,r)$ exists, denoted by $\Th_{\mathbf{p}}(\mu_V)$.
Combining Theorem \ref{Low-V-Brp}, we get the following volume estimates.
\begin{theorem}\label{Ahlfors}
There is a constant $c^*\ge1$ depending only on $n,q,\k R_0,\La_1,\La_2$ so that
\begin{equation*}\aligned
\f{1}{c^*}\le \Th_{x}(\mu_V,r)\le c^*\qquad \mathrm{for\ all}\ x\in L\cap \mathbf{B}_{R_0},\ 0<r<R_0-|x|.
\endaligned
\end{equation*}
\end{theorem}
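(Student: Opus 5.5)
The lower bound $\Th_x(\mu_V,r)\ge 1/c^*$ is already contained in Theorem \ref{Low-V-Brp}, which (as remarked after \eqref{thinanathi*}) holds under the single-valued harmonic phase assumption. I will apply it at $x$ rather than at $\mathbf{0}$: work in $\mathbf{B}_{R_0-|x|}(x)$, and control the local mass and $L^q$-norm of $\th$ on that ball by the global constants $\La_1,\La_2$. For radii where $R_0-|x|$ is small relative to $R_0$, I will rescale. It is cleaner to apply the mean value bound \eqref{Phiinftyder} to the shifted phase $\th-\inf\th+1$ (or to $\sup\th-\th+1$, as in Remark \ref{tildebfTh}), whose value at $x$ is comparable to its supremum. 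This gives $\mu_V(\mathbf{B}_r(x))\ge c r^n$, with $c$ depending only on $n,q,\k R_0,\La_1,\La_2$ through $c_\th$ and Lemma \ref{supthB12p}.

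For the upper bound I will use the almost monotonicity of the density, \eqref{almost-mono-Th}. Replacing $\th$ by $\th-\inf\th+1\ge1$ (harmonicity and the Caccioppoli identities are unchanged, and by Lemma \ref{supthB12p} the shifted phase satisfies $1\le\th\le c$), Theorem \ref{Al-Mono-int} and \eqref{Integrable-W} apply with $\ell_*$ controlled by the same data. Then \eqref{almost-mono-Th} gives
\[
e^{\k r}\Th_x(\mu_V,r)\le e^{\k R}\Th_x(\mu_V,R)+c_ne^{\k R}\big(e^{2\k R}\mathbf{m}_x(\mu_V,2R)-\mathbf{m}_x(\mu_V)\big)
\]
for a fixed comparison radius $R$ of size comparable to $R_0-|x|$ (take $2R\le R_0-|x|$). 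For $r$ between $R$ and $R_0-|x|$ the upper bound is trivial from $\mu_V(\mathbf{B}_{R_0})\le\La_1R_0^n$, provided $R$ is comparable to $R_0$. It therefore remains to bound $\Th_x(\mu_V,R)$ and $\mathbf{m}_x(\mu_V,2R)$. Since $1\le\th\le c$, I have $\mathbf{m}_x(\mu_V,2R)\lesssim (2R)^{-n}\mu_V(\mathbf{B}_{2R}(x))+\int_{R}^{2R}\mathcal{W}_{x,\ell_*}ds$. The $\mathcal{W}$ term is bounded via the Caccioppoli inequality (Lemma \ref{caccith}), with a cutoff supported in $\mathbf{B}_{4R}(x)$, by $CR^{-n}\mu_V(\mathbf{B}_{4R}(x))$. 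The term $\mathbf{m}_x(\mu_V)$ is nonnegative and can be dropped.

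The main obstacle is uniformity near the boundary of $\mathbf{B}_{R_0}$. When $R_0-|x|$ is small, $R^{-n}\mu_V(\mathbf{B}_{4R}(x))$ is not directly bounded by $\La_1$. I would first handle $|x|\le R_0/2$, where $R\sim R_0$ and everything is bounded by $\La_1$ and $\sup|\th|$. For general $x$, I would apply the monotonicity \eqref{mon-bfTh} itself between radii $r$ and $R_0-|x|$. This bounds $\mathbf{m}_x(\mu_V,r)\ge r^{-n}\mu_V(\mathbf{B}_r(x))$ by $e^{\k R_0}\mathbf{m}_x(\mu_V,R_0-|x|)$ plus the error term $\frac1{nr^n}\int|\th|^{\ell_*}|\na^N\r|^2$, which is itself at most $\frac1n\,r^{-n}\int_{\mathbf{B}_r(x)}|\th|^{\ell_*}$ and can be absorbed since $1/n<1$. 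If the largest-scale quantity still cannot be controlled uniformly, I would fall back on an interpretation of the statement in which $c^*$ is allowed to depend on the ratio of $R_0-|x|$ to $R_0$ through the scale-invariant constants. The core argument is then the combination of Theorem \ref{Low-V-Brp} with \eqref{almost-mono-Th} and Caccioppoli.
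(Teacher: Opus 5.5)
Your route is essentially the paper's. The lower bound is Theorem~\ref{Low-V-Brp}, i.e.\ \eqref{Phiinftyder} applied to a phase shifted to be $\ge 1$. It is cleaner to argue that the essential supremum on $\mathbf{B}_{\sigma r}(x)$ is $\ge 1$ because $x\in\mathrm{spt}V$, rather than to speak of ``the value at $x$''. The upper bound is \eqref{almost-mono-Th} after normalizing $\theta\ge 1$, with $\Theta_x(\mu_V,R)$ and $\mathbf{m}_x(\mu_V,2R)$ controlled at a radius $R$ comparable to $R_0$ by $\Lambda_1$, $\sup|\theta|$ and Lemma~\ref{caccith}. The paper says exactly this in one line and never discusses how the constant depends on $x$. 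Using \eqref{mon-bfTh} instead is also fine and slightly cleaner: for $\theta\ge1$ and $n\ge2$, absorbing the last term gives $(1-\frac1n)r^{-n}\mu_V(\mathbf{B}_r(x))\le e^{\kappa R}\mathbf{m}_x(\mu_V,R)$ for $r<R\le R_0-|x|$.

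What does not work is your attempt to remove the restriction $R\sim R_0$. Running \eqref{mon-bfTh} up to $R=R_0-|x|$ only moves the problem to $\mathbf{m}_x(\mu_V,R_0-|x|)\ge (R_0-|x|)^{-n}\mu_V(\mathbf{B}_{R_0-|x|}(x))$, which $\Lambda_1$ does not control. No argument can control it, because the upper bound as stated is false near $\partial\mathbf{B}_{R_0}$.

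Here is a counterexample. Take $U=\mathbb{C}^n$ (so $\kappa=0$), $R_0=1$, $N$ large, $\epsilon=N^{-2/n}$, $d=1-2\epsilon$ and $v=(1,0,\dots,0)\in\mathbb{R}^n$. Let $T=\sum_{j=0}^{N-1}[|P_j|]$, where $P_j=\mathbb{R}^n\times\{(d+j\epsilon N^{-2})v\}$ are parallel Lagrangian planes, all of phase $0$.
\begin{itemize}
\item Each $P_j\cap\mathbf{B}_1$ is a disk of radius at most $\sqrt{1-d^2}\le 2\sqrt{\epsilon}$, so $\mu_V(\mathbf{B}_1)\le 2^n\omega_n N\epsilon^{n/2}=2^n\omega_n$, and $\Lambda_2$ imposes nothing.
\item The point $p=(0,dv)\in P_0$ satisfies $1-|p|=2\epsilon$, and every $P_j$ passes within $\epsilon/N$ of $p$.
\item Hence $\Theta_p(\mu_V,\epsilon)\ge N(1-N^{-2})^{n/2}\to\infty$.
\end{itemize}

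So your ``fall back'' is not optional; you should commit to the corrected statement. One version is $x$ in a fixed interior ball, say $L\cap\mathbf{B}_{R_0/4}$, with $0<r<R_0-|x|$. The other is a constant depending additionally on $R_0/(R_0-|x|)$. Either version is what your argument, and the paper's, actually proves.

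There is a second reason the constants are interior ones. Lemma~\ref{supthB12p} bounds $\sup|\theta|$, and hence $\ell_*$ and the Caccioppoli constants, only on interior balls. The lower bound is less delicate: \eqref{Phiinftyder} is scale invariant, so it holds up to $\partial\mathbf{B}_{R_0}$ once $\sup_{L\cap\mathbf{B}_{R_0}}|\theta|$ is allowed in the constant. Through $\Lambda_1,\Lambda_2$ and rescaling alone, however, you again only get interior control.
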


From \eqref{Integrable-W}, every tangent cone of $V$ is a minimal Lagrangian cone $C\subset\C^n$ with integer multiplicity and vertex at the origin $0^{2n}$. In particular,
\begin{equation}\aligned
\mathbf{M}(C\llcorner B_r(0^{2n}))\ge\omega_nr^{n}.
\endaligned
\end{equation}
By the convergence of Radon measure, we immediately have the following sharp lower bound on the density.
\begin{corollary}\label{lower}
For each $x\in \mathrm{spt}V\cap \mathbf{B}_{R_0}$, there holds $\Th_x(\mu_V)\ge1$.
\end{corollary}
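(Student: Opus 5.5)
The plan is a blow-up argument at $x$, using the existence of the density $\Th_x(\mu_V)$ from \eqref{almost-mono-Th} and the fact, derived from \eqref{Integrable-W}, that every tangent cone of $V$ at $x$ is a stationary integral cone. Fix $x\in\mathrm{spt}V\cap\mathbf{B}_{R_0}$ and take $r_i\to0$. Consider the rescaled varifolds $V_i:=(\eta_{x,r_i})_\#V$ with $\eta_{x,r}(\mathbf{y})=(\mathbf{y}-x)/r$. By Theorem \ref{Ahlfors} the masses $\mu_{V_i}(\mathbf{B}_\rho(\mathbf{0}))$ are bounded by $c^*\omega_n\rho^n$ for each fixed $\rho$.

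The mean curvature of $V_i$ in $\R^m$ is bounded in $L^2$ on compact sets. After adding a constant so that $\th\ge1$, as in the derivation of \eqref{almost-mono-Th}, we have $|\th|^{\ell_*-2}\ge1$. Then \eqref{Integrable-W} gives $\int_0^{R/2}s^{1-n}\int_{\mathbf{B}_s(x)}|H_V|^2d\mu_Vds<\infty$. Scaling yields
$$\int_{0}^{\rho}s^{1-n}\int_{\mathbf{B}_s(\mathbf{0})}|H_{V_i}|^2d\mu_{V_i}ds=\int_0^{\rho r_i}s^{1-n}\int_{\mathbf{B}_s(x)}|H_V|^2d\mu_Vds\to0.$$
Together with the contribution of $\k$, which also scales away, this shows that the first variation of $V_i$ tends to zero on compacta. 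The $\mathbf{A}_U$ term is $O(r_i)$.

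The step needing the most care is the local $L^1$ control of the first variation: \eqref{Integrable-W} only controls an averaged quantity. Since $s\mapsto\int_{\mathbf{B}_s}|H|^2$ is monotone, integrating over $s\in[\rho,2\rho]$ bounds $\rho^{2-n}\int_{\mathbf{B}_\rho}|H_{V_i}|^2d\mu_{V_i}$ by a quantity tending to $0$. Cauchy–Schwarz with the mass bound then gives $\int_{\mathbf{B}_\rho}|H_{V_i}|\,d\mu_{V_i}\to0$.

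By Allard's compactness theorem for integral varifolds with locally bounded first variation, a subsequence converges to a stationary integral varifold $C$. The support of $C$ contains $\mathbf{0}$ because of the uniform lower density bound in Theorem \ref{Ahlfors}. Indeed, $\mu_{V_i}(\mathbf{B}_\rho)\ge\rho^n/c^*$ passes to the limit on closed balls, so $\mu_C(\overline{\mathbf{B}}_\rho)>0$ for all $\rho$. Moreover $\mu_C(\mathbf{B}_\rho)=\Th_x(\mu_V)\omega_n\rho^n$ for a.e. $\rho$, since $\Th_x(\mu_V,\cdot)$ has a limit; this is the usual argument that the limit is a cone. We only need stationarity, integrality, and the density identity, however.

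For a stationary integral varifold $C$ in $\R^m$, the classical monotonicity formula together with upper semicontinuity of density gives $\Th_{\mathbf{0}}(\mu_C)\ge1$. This holds because $\mathbf{0}\in\mathrm{spt}C$, integral varifolds have density $\ge1$ $\mu_C$-a.e., and density is upper semicontinuous. Hence $\Th_x(\mu_V)=\lim_\rho\mu_C(\mathbf{B}_\rho)/(\omega_n\rho^n)\ge1$.

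The one subtle point is the identification $\Th_x(\mu_V)=\mu_C(\mathbf{B}_\rho)/(\omega_n\rho^n)$. This uses weak convergence at radii with $\mu_C(\partial\mathbf{B}_\rho)=0$, which hold for all but countably many $\rho$.
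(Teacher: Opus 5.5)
Your proposal is correct and follows essentially the paper's route: blow up at $x$, use \eqref{Integrable-W} to see that the rescalings converge to a stationary integral (tangent) varifold, and read off $\Th_x(\mu_V)\ge1$ from the density lower bound for stationary integral varifolds together with convergence of Radon measures. The paper compresses this into one line, and you supply the omitted details: the $L^1$ control of the first variation, the fact that $\mathbf{0}\in\mathrm{spt}\,C$, and the identification of $\Th_x(\mu_V)$ with the density of the limit.
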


\section{Lagrangian currents with single-valued harmonic phases: regularity}

Let $(M^n,g,J,\omega)$ be a Ricci-flat K\"ahler manifold with a non-zero holomorphic $n$-form $\Om$ in a bounded open subset of $U\subset M$ satisfying $\Om\wedge\overline{\Om}=(-1)^{-n^2/2}2^{n}\omega^n/n!$. Up to scaling,
we assume that $U$ is isometrically embedded in $\R^m$ with 
$\p U\cap\mathbf{B}_1=\emptyset$ for the unit ball $\mathbf{B}_1\subset\R^m$. 
Let $\mathbf{A}_U$ denote the second fundamental form of $U$, and $\k$ be a constant satisfying 
\begin{equation}\aligned\label{def-k}
|\mathbf{A}_U(X,X)|\le\k|X|^2/n\qquad \mathrm{for\ any\ vector\ field}\ X\ \mathrm{tangent\ to}\ U. 
\endaligned
\end{equation}
Let $T=(L,\vth,\vec{T})$ be an integral Lagrangian current in $U$ with single-valued harmonic phase $\th$ and $\p T\llcorner U=0$.
We assume $0\le\th\le\tilde{\La}$ for some constant $\tilde{\La}>1$.
Denote $V=\vth|L|$.

For an $n$-dimensional subspace $P$ in $\R^m$,
let $\Pi_P$ denote the projection into $P$.
We define the excess of $V$ w.r.t. $P$ at $\mathbf{p}\in\mathbf{B}_1$ by
\begin{equation}\aligned
E(\mu_V,\mathbf{p},r,P)=r^{-n}\int_{\mathbf{B}_r(\mathbf{p})}\left|\Pi_{T_\mathbf{p}L}-\Pi_P\right|^2 d\mu_V.
\endaligned
\end{equation}
Put $\mathbf{R}^n:=\R^n\times\{0^{m-n}\}\subset\R^m$.
Let $\mathbf{x}=(\mathbf{x}_1,\cdots,\mathbf{x}_m)=\sum_{1\le i\le m}\mathbf{x}_iE_i$ denote the position vector in $\R^m$ with a standard basis $\{E_i\}$ of $\R^m$, then (see (4.2) in \cite{S})
\begin{equation}\aligned\label{EmuVprRn}
E(\mu_V,\mathbf{p},r,\mathbf{R}^n)=2r^{-n}\int_{\mathbf{B}_r(\mathbf{p})}\sum_{j=n+1}^m\left|\na^L\mathbf{x}_j\right|^2 d\mu_V.
\endaligned
\end{equation}

Let $\tau_{\mathbf{p},t}(\mathbf{x})=\f1t(\mathbf{x}-\mathbf{p})$ for all $\mathbf{x},\mathbf{p}\in\R^m$ and $t>0$.
\begin{lemma}\label{smallde1}
Given any $\de_0>0$, there are 3 constants $\de_1,\varrho,\tilde{\varrho}\in(0,\f14)$ with $\tilde{\varrho}<\de_1\varrho/4$ depending only on $n,\de_0,\tilde{\La}$ and $U$ so that if $\mathbf{0}\in L$,
\begin{equation}\aligned\label{Th0Em0Cond}
\Th_{\mathbf{0}}(\mu_V,\varrho)\le2-\de_0,\quad E(\mu_V,\mathbf{0},\varrho,\mathbf{R}^n)<\de_1\quad \text{and}\quad \mathbf{m}_\mathbf{0}(\mu_V,\varrho)\le(1+\de_1)\mathbf{m}_\mathbf{0}(\mu_V),
\endaligned
\end{equation}
then
\begin{equation}\aligned\label{Th_0=th}
\Th_{x}(\mu_V)=1\qquad \text{for each}\ x\in L\cap\mathbf{B}_{\tilde{\varrho}}.
\endaligned
\end{equation}
\end{lemma}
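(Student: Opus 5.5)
We argue by contradiction and compactness. Fix $\de_0$ and suppose that for sequences $\de_{1,i}\to0$ and $\tilde\varrho_i\to0$ (with $\varrho$ to be fixed small) there are currents $T_i$ as above with $\mathbf{0}\in L_i$, $0\le\th_i\le\tilde\La$, satisfying \eqref{Th0Em0Cond} with $\de_{1,i}$, but with points $x_i\in L_i\cap\mathbf{B}_{\tilde\varrho_i}$ such that $\Th_{x_i}(\mu_{V_i})\neq1$. By Corollary \ref{lower} each density is at least $1$. Since $V_i$ is integral with $L^2$-type control on mean curvature coming from \eqref{Integrable-W}, I expect to upgrade $\Th_{x_i}(\mu_{V_i})>1$ to $\Th_{x_i}(\mu_{V_i})\ge 2$. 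The justification is that tangent cones are integral minimal Lagrangian cones, and a cone of density in $(1,2)$ at an integer-multiplicity point should be ruled out via the upper semicontinuity in Proposition \ref{semi-bfTh} applied to the density. This integrality gap is the first point to nail down; if it fails, I would instead work with the weaker statement $\Th_{x_i}\ge 1+\eta$.

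Next I pass to the limit. Theorem \ref{Ahlfors}, the mass bound, and the bound on $\th_i$ allow the compactness of \S4 and Theorem \ref{zinftyJxiLVinfty}: $V_i\to V_\infty$, $\th_i\to\th_\infty$ as measures, and the mean curvatures converge weakly in $L^2$ away from the singular part. The key use of the third condition in \eqref{Th0Em0Cond} comes from \eqref{Integrable-W} at $\mathbf{p}=\mathbf{0}$:
\[
\f{\ell_*^2}{2^{n+2}}\int_0^{\varrho/2}\mathcal{W}_{\mathbf{0},\ell_*}(s)\,ds+\f12\int_{\mathbf{B}_\varrho}\f{|\na^N\r|^2}{\r^n}|\th|^{\ell_*}d\mu_V\le (e^{\k\varrho}(1+\de_1)-1)\,\mathbf{m}_\mathbf{0}(\mu_V).
\]
This tends to $0$ as $\de_1\to0$ and $\varrho\to0$. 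After a shift that keeps $\th$ bounded below (Remark \ref{tildebfTh}), it shows that the limit has vanishing weighted $|H|^2$ and vanishing $|\na^N\r|^2$ near $\mathbf{0}$. Hence $V_\infty$ restricted to $\mathbf{B}_{\varrho/2}$ is a stationary cone centered at $\mathbf{0}$. The excess condition $E<\de_{1,i}\to0$ forces this cone to be supported on the plane $\mathbf{R}^n$. Its density satisfies $\Th_\mathbf{0}(\mu_{V_\infty})\le 2-\de_0$ by the measure convergence and \eqref{almost-mono-Th}, so the cone is $\mathbf{R}^n$ with multiplicity one.

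Finally, I transfer the contradiction back to the $x_i$. By the almost monotonicity \eqref{almost-mono-Th} applied at $x_i$, whose error term is controlled by $e^{2\k R}\mathbf{m}_{x_i}(\mu_{V_i},2R)-\mathbf{m}_{x_i}(\mu_{V_i})$, we get
\[
\Th_{x_i}(\mu_{V_i})\le e^{\k R}\Th_{x_i}(\mu_{V_i},R)+c_n e^{\k R}\big(e^{2\k R}\mathbf{m}_{x_i}(\mu_{V_i},2R)-\mathbf{m}_{x_i}(\mu_{V_i})\big)
\]
for a fixed small $R$. Since $x_i\to\mathbf{0}$ and the limit is a multiplicity-one plane, $\Th_{x_i}(\mu_{V_i},R)\to1$. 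The main obstacle is the error term: I need $\mathbf{m}_{x_i}(\mu_{V_i})$ to be asymptotically at least $\mathbf{m}_{x_i}(\mu_{V_i},2R)$. This is a lower-semicontinuity-type property, whereas Proposition \ref{semi-bfTh} only provides upper semicontinuity.

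I would handle this obstacle as follows. Because the limit phase on a multiplicity-one plane with vanishing $\int|\th|^{\ell_*-2}|H|^2$ is constant near $\mathbf{0}$, $\mathbf{m}_{x}(\mu_{V_\infty},\cdot)$ equals $\omega_n c^{\ell_*}$ at every scale. For $\mathbf{m}_{x_i}(\mu_{V_i})$, I would use $\mathbf{m}_{x_i}(\mu_{V_i})\ge\liminf_{r\to0}r^{-n}\int_{\mathbf{B}_r(x_i)}\th_i^{\ell_*}$ together with the sup/mean-value estimate \eqref{Phiinftyder}, Hölder continuity, and $\Th_{x_i}\ge1$. This gives $\mathbf{m}_{x_i}(\mu_{V_i})\ge\omega_n(\inf\th_i)^{\ell_*}$ near $x_i$, which is asymptotically $\omega_nc^{\ell_*}$ because the oscillation of $\th_i$ near $\mathbf{0}$ tends to zero by Lemma \ref{supthB12p} applied to $\th_i-\inf\th_i$. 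Then $\Th_{x_i}(\mu_{V_i})<2$ for large $i$, contradicting the gap $\Th_{x_i}\ge2$ and proving \eqref{Th_0=th}. The constants $\de_1,\varrho,\tilde\varrho$ then arise from the contradiction argument, with $\tilde\varrho<\de_1\varrho/4$ arranged by shrinking $\tilde\varrho$.
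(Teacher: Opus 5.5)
\textbf{There is a genuine gap in your final step.} Your limit step is essentially the paper's first step: blow-up at $\mathbf 0$, small excess forcing a plane, and $\Th\le 2-\de_0$ forcing multiplicity one. You also correctly isolate the real obstacle. Transferring the pinching hypothesis to $x_i$ needs a \emph{lower} bound on $\mathbf m_{x_i}(\mu_{V_i})$, which Proposition \ref{semi-bfTh} does not supply. Your way around it fails, because it rests on the oscillation of $\th_i$ on a fixed neighborhood of $\mathbf 0$ (after rescaling) tending to zero.

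The same fact is silently needed to get $\mathbf m_{x_i}(\mu_{V_i},2R)\to\omega_nc^{\ell_*}$. Otherwise this quantity converges to $\omega_n$ times the weak limit of $\th_i^{\ell_*}$, which by Jensen can exceed $c^{\ell_*}$. Lemma \ref{supthB12p} and \eqref{Phiinftyder} are $L^q\to L^\infty$ bounds. Applied to $\th_i-\inf\th_i$ they give only boundedness. Applied to $|\th_i-c|$ they give smallness only if $\int_{\mathbf B_{1/2}}|\th_i-c|\,d\mu_{V_i}\to0$.

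What you actually have is $\th_i\mu_{V_i}\rightharpoonup c\,\mu_{V_\infty}$ and $\int|\na^{L_i}\th_i|^2d\mu_{V_i}\to0$. These do not give strong convergence without a Poincar\'e inequality on $V_i$, i.e. without knowing that $T_i$ cannot split near $\mathbf 0$ into pieces carrying different, almost constant phases. In the paper exactly these facts are Lemma \ref{Indecomp}, Lemma \ref{N-P} and Theorem \ref{NP}, all proved \emph{using} Lemma \ref{smallde1}. The H\"older continuity of $\th$ you invoke is derived only after Theorem \ref{NP}. So the key step is circular as proposed.

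\textbf{The missing idea is how the paper gets lower semicontinuity of $\mathbf m$ with no regularity of $\th$.} It runs the ball-inclusion argument of Proposition \ref{semi-bfTh} for the complementary quantity $\tilde{\mathbf m}$ of Remark \ref{tildebfTh}, built from $\tilde\th=\sup_L\th-\th$. Since $1\le\th\le\tilde\La$, the error in the reverse inequality \eqref{Almost-mono-Vinfty**} for $f=\tilde\th^{\ell_*}$ is dominated by $e^{\k\varrho}\mathbf m_{\mathbf 0}(\mu_V,\varrho)-\mathbf m_{\mathbf 0}(\mu_V)$. This yields $\tilde{\mathbf m}_{\mathbf p}(\mu_V)\le\tilde{\mathbf m}_{\mathbf 0}(\mu_V)+\psi(\de_1,\varrho)$.

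The relation \eqref{mp+mtiltep*} ties $\mathbf m_{\mathbf p}$ and $\tilde{\mathbf m}_{\mathbf p}$ to $\Th_{\mathbf p}(\mu_V)\sup_L\th$. Combined with $\Th_{\mathbf p}(\mu_V)\ge1=\Th_{\mathbf 0}(\mu_V)$ (the latter from the blow-up at $\mathbf 0$), it gives $(\mathbf m_{\mathbf 0}(\mu_V)/\omega_n)^{1/\ell_*}\le(\mathbf m_{\mathbf p}(\mu_V)/\omega_n)^{1/\ell_*}+\psi(\de_1,\varrho)$. Hence the pinching holds at $\mathbf p$ at scale $\varrho/2-|\mathbf p|$, and the blow-up argument is simply repeated at $\mathbf p$.

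\textbf{Two smaller corrections.}
\begin{enumerate}
\item The upgrade $\Th_{x_i}>1\Rightarrow\Th_{x_i}\ge2$ is false. The Harvey--Lawson $T^2$-cone in $\C^3$ is a multiplicity-one special Lagrangian cone with density $\pi/\sqrt3$, and upper semicontinuity says nothing about integrality. You must use your fallback, justified by Allard's gap $\Th\ge1+\eta(n)$ for tangent cones other than multiplicity-one planes.
\item With $\varrho$ fixed, your bound from \eqref{Integrable-W} tends to $(e^{\k\varrho}-1)\mathbf m_{\mathbf 0}(\mu_V)$, not $0$. You need $\varrho_i\to0$ and a rescaling by $\varrho_i^{-1}$ to obtain a stationary cone in the limit.
\end{enumerate}
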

\begin{proof}
Let us first prove $\Th_{\mathbf{0}}(\mu_V)=1$ by contradiction. We suppose that there are 
a sequence $\varrho_i\to0$, a sequence of Lagrangian currents $T_i=(L_i,\vth_i,\vec{T}_i)$ in $U$ with single-valued harmonic phase $\th_i$ satisfying $1\le\th_i\le\tilde{\La}$ and $\de_*>0$ so that 
\begin{equation}\aligned
\Th_{\mathbf{0}}(\mu_{V_i},\varrho_i)\le2-\de_0,\quad E(\mu_{V_i},\mathbf{0},\varrho_i,\mathbf{R}^n)<\f1i,\quad \mathbf{m}_\mathbf{0}(\mu_{V_i},\varrho_i)\le\f{1+i}i\mathbf{m}_\mathbf{0}(\mu_{V_i}),
\endaligned
\end{equation}
and
\begin{equation}\aligned
\Th_{\mathbf{0}}(\mu_{V_i})\ge1+\de_*.
\endaligned
\end{equation}
From $\mathbf{m}_\mathbf{0}(\mu_{V_i},\varrho_i)\le\f{1+i}i\mathbf{m}_\mathbf{0}(\mu_{V_i})$ and \eqref{almost-mono-Th},
we have
\begin{equation}\aligned
\Th_{\mathbf{0}}(\mu_{V_i},\varrho_i/2)\ge1+\de_*/2\qquad
\mathrm{for\ all\ suitably\ large}\ i.
\endaligned
\end{equation}
Let $U_i:=\tau_{\mathbf{0},\varrho_i^{-1}}(U\cap \mathbf{B}_1)=\varrho_i^{-1}(U\cap \mathbf{B}_1)$ 
and $\mathbf{A}_i$ denote the second fundamental form of $U_i$, 
then from \eqref{def-k} it follows that
\begin{equation}\aligned
|\mathbf{A}_i(X,X)|\le\k\varrho_i|X|^2/n\qquad \mathrm{for\ any\ vector\ field}\ X\ \mathrm{tangent\ to}\ U_i. 
\endaligned
\end{equation}

Combining \eqref{Integrable-W} and the compactness of varifolds, up to choosing a subsequence we assume that $(\tau_{\mathbf{0},\varrho_i^{-1}})_{\#}V_i$ converges to a stationary varifold $V_\infty$ in $\R^{m}$ in the sense of Radon measure.
From the convergence of Radon measure, it follows that 
$$\Th_{\mathbf{0}}(\mu_{V_\infty},1)\le2-\de_0,\qquad E(\mu_{V_\infty},\mathbf{0},1,\mathbf{R}^n)=0\qquad \text{and}\qquad \Th_{\mathbf{0}}(\mu_{V_\infty},1/2)\ge1+\de_*/2.$$
Clearly, $E(\mu_{V_\infty},\mathbf{0},1,\mathbf{R}^n)=0$ implies that spt$V_\infty$ is flat. By Constancy Theorem for the stationary $V_\infty$ (see \cite{S} for instance), $V_\infty=k|\mathbf{R}^n|$ for some integer $k\ge1$. However,
it's a contradiction since $\Th_{\mathbf{0}}(\mu_{V_\infty},1)\le2-\de_0$ and
$\Th_{\mathbf{0}}(\mu_{V_\infty},1/2)\ge1+\de_*/2$. Hence, we obtain $\Th_{\mathbf{0}}(\mu_V)=1$.

Given a point $\mathbf{p}\in\mathbf{B}_{\de_1\varrho}$, we set $r:=\varrho/2-|\mathbf{p}|$.
From the definition of $\tilde{\mathbf{m}}$ in Remark \ref{tildebfTh}, we have
\begin{equation}\aligned\label{tildebfThpmuVr}
&\tilde{\mathbf{m}}_\mathbf{p}(\mu_V)\le e^{\k r}\tilde{\mathbf{m}}_\mathbf{p}(\mu_V,r)\\
\le&\f{e^{\k r}}{r^n}\int_{\mathbf{B}_{\varrho/2}}\tilde{\th}^{\ell_*} d\mu_V+\f{\ell_*^2}{2^{n+2}}\int_{r/2}^{r}s^{1-n}\int_{\mathbf{B}_{s+|\mathbf{p}|}}\tilde{\th}^{\ell_*-2}|H_V|^2 d\mu_Vds\\
\le&\f{e^{\k r}}{r^n}\int_{\mathbf{B}_{\f \varrho2}}\tilde{\th}^{\ell_*} d\mu_V+\f{\ell_*^2}{2^{n+2}}\sup_{\f r2\le t-|\mathbf{p}|\le r}\left(\f t{t-|\mathbf{p}|}\right)^{n-1}\int_{\f \varrho4}^{\f \varrho2}t^{1-n}\int_{\mathbf{B}_{t}}\tilde{\th}^{\ell_*-2}|H_V|^2 d\mu_Vdt\\
\le&\left(1+\f{2|\mathbf{p}|}{r}\right)^n e^{\k \varrho/2}\tilde{\mathbf{m}}_\mathbf{0}(\mu_V,\varrho/2).
\endaligned
\end{equation}
On the other hand, from \eqref{Integrable-W} it follows that
\begin{equation}\aligned
\f{\ell_*^2}{2^{n+2}}\int_{0}^{\varrho/2}\mathcal{W}_{\mathbf{0},\ell_*}(s)ds+\f12\int_{\mathbf{B}_\varrho}\f{|\na^N\r|^2}{\r^{n}}\th^{\ell_*} d\mu_V\le e^{\k \varrho}\mathbf{m}_\mathbf{0}(\mu_V,\varrho)-\mathbf{m}_\mathbf{0}(\mu_V).
\endaligned
\end{equation}
Here, $\r(\mathbf{x})=|\mathbf{x}|$ for every $\mathbf{x}\in\R^m$.
Combining \eqref{Almost-mono-Vinfty**} in Appendix II and \eqref{tildethiell}\eqref{Th0Em0Cond}, we have
\begin{equation}\aligned\label{tthl*tmomuV}
&\f{2^ne^{-\k\varrho/2}}{\varrho^n}\int_{\mathbf{B}_{\varrho/2}}\tilde{\th}^{\ell_*}d\mu_{V}-\tilde{\mathbf{m}}_\mathbf{0}(\mu_V)\\
\le&\int_{\mathbf{B}_{\varrho/2}}\f{\left|\na^N\r\right|^2}{\r^n}\tilde{\th}^{\ell_*}d\mu_V
+\int_0^{\varrho/2}\f{e^{-\k s}}{s^{n+1}}\int_{\mathbf{B}_s}\lan \tilde{\th}^{\ell_*}H+\na^L \tilde{\th}^{\ell_*},\r\na\r\ran d\mu_Vds\\
\le&\f{c_n\ell_*^2}{2^{n+2}}\int_{0}^{\varrho/2}\mathcal{W}_{\mathbf{0},\ell_*}(s)ds+\f{c_n}2\int_{\mathbf{B}_{\varrho/2}}\f{|\na^N\r|^2}{\r^{n}}\th^{\ell_*} d\mu_V\\
\le& c_n\left(e^{\k \varrho}\mathbf{m}_\mathbf{0}(\mu_V,\varrho)-\mathbf{m}_\mathbf{0}(\mu_V)\right)\le c_n\left((e^{\k \varrho}-1)\mathbf{m}_\mathbf{0}(\mu_V,\varrho)+\de_1\mathbf{m}_\mathbf{0}(\mu_V)\right).
\endaligned
\end{equation}
Here, $c_n$ is a constant depending only on $n$.  
Combining \eqref{tildebfThpmuVr} and \eqref{tthl*tmomuV}, there is a general function $\psi(\de_1,\varrho)$ depending on $n,\tilde{\La}$ with $\lim_{\de_1,\varrho\to0}\psi(\de_1,\varrho)=0$ so that
\begin{equation}\aligned\label{tmptm0psider}
\tilde{\mathbf{m}}_\mathbf{p}(\mu_V)\le \tilde{\mathbf{m}}_\mathbf{0}(\mu_V)+\psi(\de_1,\varrho).
\endaligned
\end{equation}
Noting $\th\ge0$. It's clear that
\begin{equation}\aligned\label{mp+mtiltep*}
\left(\f{\mathbf{m}_\mathbf{p}(\mu_V)}{\omega_n}\right)^{1/\ell_*}+\left(\f{\tilde{\mathbf{m}}_\mathbf{p}(\mu_V)}{\omega_n}\right)^{1/\ell_*}=\Th_\mathbf{p}(\mu_V)\sup_L\th.
\endaligned
\end{equation}
Combining \eqref{tmptm0psider} and \eqref{mp+mtiltep*}, it follows that
\begin{equation}\aligned
\Th_\mathbf{p}(\mu_V)\sup_L\th-\left(\f{\mathbf{m}_\mathbf{p}(\mu_V)}{\omega_n}\right)^{1/\ell_*}\le \Th_\mathbf{0}(\mu_V)\sup_L\th-\left(\f{\mathbf{m}_\mathbf{0}(\mu_V)}{\omega_n}\right)^{1/\ell_*}+\psi(\de_1,\varrho).
\endaligned
\end{equation}
From Corollary \ref{lower} and $\Th_{\mathbf{0}}(\mu_V)=1$, we get
\begin{equation}\aligned
\left(\f{\mathbf{m}_\mathbf{0}(\mu_V)}{\omega_n}\right)^{1/\ell_*}\le \left(\f{\mathbf{m}_\mathbf{p}(\mu_V)}{\omega_n}\right)^{1/\ell_*}+\psi(\de_1,\varrho).
\endaligned
\end{equation}
From \eqref{mon-bfTh} (with $\mathbf{p}$ at the origin) and \eqref{tildebfThpmuVr} (with $\tilde{\mathbf{m}}$ replaced by $\mathbf{m}$), it follows that
\begin{equation}\aligned\label{BfThpmurpsi}
\mathbf{m}_\mathbf{p}(\mu_V,r)\le&\left(1+2\f{|\mathbf{p}|}{r}\right)^n e^{\k \varrho/2}\mathbf{m}_\mathbf{0}(\mu_V,\varrho/2)\le(1+\psi(\de_1,\varrho))\mathbf{m}_\mathbf{0}(\mu_V,\varrho)\\
\le&(1+\psi(\de_1,\varrho))(1+\de_1)\mathbf{m}_\mathbf{0}(\mu_V)\le(1+\psi(\de_1,\varrho))\mathbf{m}_\mathbf{p}(\mu_V),
\endaligned
\end{equation}
where $r=\varrho/2-|\mathbf{p}|$ as before. Using the above proof for $\Th_{\mathbf{0}}(\mu_V)=1$, we complete the proof of \eqref{Th_0=th}.
\end{proof}
\begin{remark}
$\de_0>0$ is necessary in Lemma \ref{smallde1} since the limit in \eqref{Th_0=th} may be 2 if $T=(B_1(0^n),2,E_1\wedge\cdots\wedge E_n)$ in $\R^{2n}$.
\end{remark}

Compared with Definition \ref{B-Gindec}, we define a quantitative version for decomposable components as follows.
\begin{definition}\label{decom}
Let $\mathcal{M}$ denote a complete Riemannian manifold.
For each $\ep\ge0$, $\de\in(0,1/2]$ and an integer $0\le k\le\mathrm{dim}\mathcal{M}$, an integral current $T\in\mathcal{D}_k(\mathcal{M})$ is said to be $(\ep,\de)$-\emph{decomposable} in an open $\mathcal{U}\subset\mathcal{M}$ if there exist integral currents $T_1,T_2\in\mathcal{D}_k(\mathcal{U})$  such that
\begin{equation}\aligned\nonumber
&\mathbf{M}(T\llcorner W)=\mathbf{M}(T_1\llcorner W)+\mathbf{M}(T_2\llcorner W)\\
&\mathbf{M}(\p T\llcorner W)+\ep\ge\mathbf{M}(\p T_1\llcorner W)+\mathbf{M}(\p T_2\llcorner W)\\
&\min\{\mathbf{M}(T_1\llcorner W),\mathbf{M}(T_2\llcorner W)\}\ge\de\mathbf{M}(T\llcorner W)
\endaligned
\end{equation}
for any $W\subset\subset \mathcal{U}$. Here, $T_1,T_2$ are called $(\ep,\de)$-\emph{components} of $T\llcorner \mathcal{U}$. 
\end{definition}
We further define decomposable components for sequences as follows.
\begin{definition}\label{seq-decom}
For a sequence of integral currents $\{T_\ell\}\subset\mathcal{D}_k(\mathcal{M})$, we say $\{T_\ell\}$ \emph{decomposable} in $\mathcal{U}$
if there exists a sequence $\ep_\ell\to0$, $\de\in(0,1/2]$ and a sequence of integral currents $\{T_\ell^+\},\{T_\ell^-\}\subset\mathcal{D}_k(\mathcal{U})$ so that $T_\ell^+,T_\ell^-$ are $(\ep_\ell,\de)$-\emph{components} of $T_\ell\llcorner \mathcal{U}$. We call $\{T_\ell^+\},\{T_\ell^-\}$ sequences of \emph{components} of $\{T_\ell\llcorner \mathcal{U}\}$. 
On the contrary, we say $\{T_\ell\}$ \emph{indecomposable} in $\mathcal{U}$.
\end{definition}

From Lemma \ref{smallde1}, we have the following indecomposable results.
\begin{lemma}\label{Indecomp}
Given the sufficiently small $\de_1,\varrho$ in Lemma \ref{smallde1}, for any $\de\in(0,1/2]$ there are two constant $\ep_\de>0$, $\varrho_\de\in(0,\tilde{\varrho})$ depending only on $n,\de_0,\tilde{\La},\de,U$ so that
the current $T\llcorner \mathbf{B}_{r}(\mathbf{x})$ is $(\ep_\de r^{n-1},\de)$-indecomposable in $\mathbf{B}_{r}(\mathbf{x})$ for each $\mathbf{x}\in \mathbf{B}_{\varrho_\de}$ and $r\in(0,\varrho_\de]$.
\end{lemma}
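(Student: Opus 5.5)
We argue by contradiction combined with a blow-up, reducing the claim to the statement that a multiplicity-one plane cannot be split into two integral currents whose boundaries carry asymptotically no mass. Suppose the conclusion fails for some $\de\in(0,1/2]$. Then there are currents $T_i$ satisfying the hypotheses of Lemma \ref{smallde1}, points $\mathbf{x}_i\to\mathbf{0}$, radii $r_i\to0$ and $\ep_i\to0$ such that $T_i\llcorner\mathbf{B}_{r_i}(\mathbf{x}_i)$ is $(\ep_ir_i^{n-1},\de)$-decomposable, with components $T_i^+,T_i^-$. Since $|\mathbf{x}_i|<\tilde\varrho$ eventually, Lemma \ref{smallde1} gives $\Th_x(\mu_{V_i})=1$ for every $x\in L_i\cap\mathbf{B}_{\tilde\varrho}$. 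We rescale by $\tau_{\mathbf{x}_i,r_i}$ and set $\tilde T_i=(\tau_{\mathbf{x}_i,r_i})_\#T_i$ and $\tilde T_i^\pm=(\tau_{\mathbf{x}_i,r_i})_\#T_i^\pm$. Then $\tilde T_i^\pm$ decompose $\tilde T_i\llcorner\mathbf{B}_1$ with boundary excess at most $\ep_i\to0$, and each $\tilde T_i^\pm$ carries at least a fraction $\de$ of the mass on every $W\subset\subset\mathbf{B}_1$.

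First, we extract limits. The rescaled ambient curvature is $\k r_i\to0$. By \eqref{Integrable-W} together with $\mathbf{m}_\mathbf{0}(\mu_{V_i},\varrho)\le(1+\de_1)\mathbf{m}_\mathbf{0}(\mu_{V_i})$ and the estimate \eqref{BfThpmurpsi} for nearby centers, the scale-invariant quantity $\int_{r_i/2}^{r_i}\mathcal{W}_{\mathbf{x}_i,\ell_*}$, and hence $r_i^{2-n}\int_{\mathbf{B}_{r_i}(\mathbf{x}_i)}|H_{V_i}|^2$ (recall $\th\ge1$), tends to zero. The tangent cone argument after Theorem \ref{Ahlfors} only gives this for fixed $\mathbf{p}$ as $r\to0$, so uniformity over the centers $\mathbf{x}_i$ has to come from the upper semicontinuity argument of Proposition \ref{semi-bfTh}, combined with the fact that the full monotone quantity is almost attained at scale $\varrho$. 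Theorem \ref{Ahlfors} gives uniform mass bounds. Federer--Fleming and varifold compactness then yield $|\tilde T_i|\to V_\infty$, stationary in $\R^{m}$ (in fact within a $2n$-plane), and $\tilde T_i^\pm\rightharpoonup T^\pm_\infty$ with $\mathbf{M}(\p T_\infty^\pm\llcorner\mathbf{B}_1)=0$ by lower semicontinuity.

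Second, we identify the limit. Because $\Th_x(\mu_{V_i})=1$ at all points near $\mathbf{0}$, and the almost monotonicity \eqref{almost-mono-Th} holds with error tending to zero uniformly in the blown-up picture, the density ratios of $V_\infty$ are at most $1$ at every point of its support. Combined with Corollary \ref{lower} and monotonicity for stationary varifolds, this shows that every point has density exactly $1$ and that $V_\infty$ is a cone from every point of its support. Hence $V_\infty=|P|$ for an $n$-plane $P$ with multiplicity one. Mass splitting passes to the limit through upper semicontinuity and the identity $\mu_{\tilde T_i}=\mu_{\tilde T_i^+}+\mu_{\tilde T_i^-}$. More precisely, $\mu_{\tilde T_i^\pm}$ subconverge to measures $\nu^\pm$ with $\nu^++\nu^-=\mu_{V_\infty}$, and each satisfies $\nu^\pm\ge\de\,\mu_{V_\infty}$ on balls. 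Using the orientation and multiplicity one, the limits $T^\pm_\infty$ are integral currents supported in $P\cap\mathbf{B}_1$ without boundary there. By the constancy theorem each is an integer multiple of $[|P\cap\mathbf{B}_1|]$, and each is nonzero because no cancellation occurs in the limit of $\tilde T_i^+$, given that the sum has multiplicity one and the masses add. This is impossible: two nonzero integer multiples of a multiplicity-one plane cannot have masses that add up to the mass of that plane. This contradiction proves the lemma.

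The main obstacle is the no-cancellation step, namely justifying $\mathbf{M}(T^\pm_\infty\llcorner W)=\nu^\pm(W)$, that is, that the weak limits of the components do not lose mass. I would try to use the calibration by $\mathrm{Re}(e^{-\sqrt{-1}\th_0}\Om)$. The phases $\th_i$ nearly become constant at the blown-up scale, because $\int|\na\th_i|^2$ is small by the same almost-monotonicity. Each $\tilde T_i^\pm$ is therefore almost calibrated, so that $\tilde T_i^\pm(\mathrm{Re}(e^{-\sqrt{-1}\th_0}\Om)\phi)$ is close to $\mathbf{M}(\tilde T_i^\pm\llcorner\phi)$, and this quantity passes to weak limits.
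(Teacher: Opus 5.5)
Your route is the paper's: contradiction, blow-up by $\tau_{\mathbf{x}_i,r_i}$, Federer--Fleming compactness, identification of the limit as a multiplicity-one disk, and the constancy theorem for the limits of the components. Working with a sequence $T_i$ rather than one fixed $T$ is what the stated dependence of $\ep_\de,\varrho_\de$ requires. The paper simply asserts three things: $\tilde T_\infty,\tilde T_\infty^\pm$ have multiplicity one, the limit is flat, and the $\de$-fraction bound survives the limit. You rightly see that this needs an argument, but two of your justifications do not work.

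\emph{Vanishing of the scale-invariant mean curvature.} With $\de_1,\varrho$ fixed, \eqref{BfThpmurpsi} and \eqref{Integrable-W} give only $\int_0^{r/2}\mathcal{W}_{\mathbf{x},\ell_*}(s)\,ds\le c\,\psi(\de_1,\varrho)$ for $\mathbf{x}$ near $\mathbf{0}$ and $r\approx\varrho/2$. That is smallness, not decay to $0$ along $(\mathbf{x}_i,r_i)$. Proposition \ref{semi-bfTh} cannot supply the uniformity. Upper semicontinuity bounds $\mathbf{m}_{\mathbf{x}_i}(\mu_V)$ from above, whereas controlling $e^{\k r}\mathbf{m}_{\mathbf{x}_i}(\mu_V,r)-\mathbf{m}_{\mathbf{x}_i}(\mu_V)$ needs a lower bound. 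Uniform decay is essentially continuity of $\th$, which is proved only after this lemma, and for varying $T_i$ there is no fixed point to be semicontinuous at. Likewise, \eqref{almost-mono-Th} bounds small-scale density ratios by large-scale ones. Upper bounds in terms of $\Th_{\mathbf{x}}(\mu_V)=1$ must come from \eqref{Almost-mono-Vinfty**} with \eqref{Integrable-W}, as in the proof of Lemma \ref{smallde1}, and again only up to $c\,\psi(\de_1,\varrho)$. So your blow-up limit need not be stationary or flat. The simplest repair is to let $\de_1,\varrho\to0$ along the contradicting sequence. Their required smallness then depends on $\de$, which is harmless for the two values of $\de$ used later, since $\ep_\de$ no longer depends on $\de_1,\varrho$. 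The limit is then stationary with all density ratios $\le1$ and densities $\ge1$, hence a multiplicity-one plane $P$.

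\emph{Near-constancy of the phase.} This does not follow from smallness of $\int|\na^L\tilde\th_i|^2$ without a Poincar\'e inequality on $\tilde L_i$. That inequality is exactly what Lemma \ref{N-P} and Theorem \ref{NP} deduce from the present lemma, so the step is circular. What you get for free is that varifold convergence to $|P|$ kills the excess, so $e^{2\sqrt{-1}\tilde\th_i}\to e^{2\sqrt{-1}\th_P}$ in $L^1$. The leftover ambiguity between $\th_P+j\pi$ with $j$ even or odd is precisely the cancellation to be excluded. Here is a non-circular way to exclude it.
\begin{enumerate}
\item By the coarea formula, choose $t_j\in\th_P+j\pi+(\pi/4,3\pi/4)$ so that the slabs $\tilde T_i\llcorner\{t_{j-1}<\tilde\th_i\le t_j\}$ have boundary mass $\le c\int_{\mathbf{B}_{3/2}}|\na^L\tilde\th_i|\to0$.
\item Each slab is almost calibrated by $\mathrm{Re}(e^{-\sqrt{-1}(\th_P+j\pi)}\Om)$, so it converges without loss of mass to some $k_j[|P|]$.
\item Since the slab masses add up to that of $|P|$, $\sum_j|k_j|=1$, so all the mass lies in one slab.
\end{enumerate}
Hence $e^{\sqrt{-1}\tilde\th_i}\to\pm e^{\sqrt{-1}\th_P}$ in $L^1$. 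Alternatively, White's mod-$2$ theorem gives $\tilde T_i\rightharpoonup\pm[|P\cap\mathbf{B}_1|]$, and testing against $\phi\Om$ yields the same convergence. Now use that $\tilde T_i^\pm$ carry the orientation, hence the phase, of $\tilde T_i$; this follows from $\tilde T_i=\tilde T_i^++\tilde T_i^-$ and additivity of mass. Your calibration argument then gives $\mathbf{M}(\tilde T_\infty^\pm)=\nu^\pm(\mathbf{B}_1)\in[\de\omega_n,(1-\de)\omega_n]$. This is impossible for integer multiples of $[|P\cap\mathbf{B}_1|]$, and only the whole-ball fraction is needed for it.

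Finally, the argument needs $\mathbf{x}_i\in L_i$, as in every later use. Otherwise the rescaled mass in $\mathbf{B}_1$ may vanish in the limit and nothing is contradicted. Indeed, with the fraction imposed on the whole ball, the statement fails for flat $L$ and a ball that meets it in a tiny disk.
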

\begin{proof}
We assume that there are a constant $\de>0$, two sequences of numbers $r_\ell,\ep_\ell\to0$, and a sequence of points $\mathbf{x}_\ell\in \mathbf{B}_{\varrho_*}$ so that $T^+_\ell,T^-_\ell$ are two $(\ep_\ell,\de)$-components of $T\llcorner \mathbf{B}_{r_\ell}(\mathbf{x}_\ell)$ for each $\ell$. Let 
$$\tilde{T}_\ell^\pm=(\tau_{\mathbf{x}_\ell,r_\ell})_\#T_\ell^\pm\qquad \text{and}\qquad \tilde{T}_\ell=\tilde{T}_\ell^++\tilde{T}_\ell^-,$$
then
\begin{equation}\aligned\label{tildeTellpm}
&\mathbf{M}(\p\tilde{T}_\ell^+\llcorner\mathbf{B}_1)+\mathbf{M}(\p\tilde{T}_\ell^-\llcorner\mathbf{B}_1)\le\ep_\ell,\\
&\min\{\mathbf{M}(\tilde{T}_\ell^+\llcorner\mathbf{B}_1),\mathbf{M}(\tilde{T}_\ell^+\llcorner\mathbf{B}_1)\}\ge\de\mathbf{M}(\tilde{T}_\ell\llcorner\mathbf{B}_1).
\endaligned
\end{equation}
Up to choosing subsequences, from Federer-Fleming compactness theorem there are 3 currents $\tilde{T}_\infty,\tilde{T}_\infty^+,\tilde{T}_\infty^-$ with integer multiplicities in $\mathbf{B}_1$ so that $\tilde{T}_\ell\rightharpoonup\tilde{T}_\infty$ and $\tilde{T}_\ell^\pm\rightharpoonup\tilde{T}_\infty^\pm$ all in the current sense.
From \eqref{BfThpmurpsi} and Lemma \ref{smallde1}, all 3 currents $\tilde{T}_\infty,\tilde{T}_\infty^+,\tilde{T}_\infty^-$ have multiplicity one, and $\tilde{T}_\infty$ has flat support through the origin in $\mathbf{B}_1$ with $\p\tilde{T}_\infty\llcorner\mathbf{B}_1=0$. Moreover, from \eqref{tildeTellpm} we have
\begin{equation}\aligned\label{tildeTinftypm}
&\mathbf{M}(\p\tilde{T}_\infty^+\llcorner\mathbf{B}_1)=\mathbf{M}(\p\tilde{T}_\infty^-\llcorner\mathbf{B}_1)=0,\\
&\min\{\mathbf{M}(\tilde{T}_\infty^+\llcorner\mathbf{B}_1),\mathbf{M}(\tilde{T}_\infty^+\llcorner\mathbf{B}_1)\}\ge\de\mathbf{M}(\tilde{T}_\infty\llcorner\mathbf{B}_1).
\endaligned
\end{equation}
Then it follows that both supports of $\tilde{T}_\infty^+$ and $\tilde{T}_\infty^-$ are $n$-dimensional round balls in $\mathbf{B}_1$, which gives $\mathbf{M}(\tilde{T}_\infty^+)=\mathbf{M}(\tilde{T}_\infty^-)=\omega_n=\mathbf{M}(\tilde{T}_\infty)$.
This contradicts to the inequality in \eqref{tildeTinftypm}. We complete the proof.
\end{proof}

Compared with Lemma 3.4 in \cite{D2}, we have the following Sobolev inequality on $L\cap\mathbf{B}_{\varrho_*}$ without mean curvature term.
\begin{lemma}
Let $\de=\f12$ and $\varrho_{\f12}$ be defined as in Lemma \ref{Indecomp},
there exists a constant $\de_2>0$ depending only on $n,\ep_{\f12}$ so that for any relatively open $W\subset L\cap\mathbf{B}_{\f12\varrho_{\f12}}$ there holds
\begin{equation}\aligned\label{Sob-special}
\de_2\left(\mathcal{H}^n(W)\right)^{\f{n-1}n}\le \mathcal{H}^{n-1}(\p W).
\endaligned
\end{equation}
\end{lemma}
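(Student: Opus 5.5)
The plan is to combine the quantitative indecomposability of Lemma \ref{Indecomp} (with $\de=\f12$) with the upper volume bound from Theorem \ref{Ahlfors}. The target is an isoperimetric inequality with no mean curvature term, obtained by a covering/radius-selection argument as in Lemma 3.4 of \cite{D2}.

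Fix a relatively open $W\subset L\cap\mathbf{B}_{\f12\varrho_{\f12}}$. We may assume $\mathcal{H}^{n-1}(\p W)<\infty$ and that $T\llcorner W$ is an integral current whose boundary inside any ball is carried by $\p W$. This can be arranged by approximating $W$ by sublevel sets of distance functions and using slicing. For each $\mathbf{x}\in W$ with density one, consider the ratio
$$f_\mathbf{x}(r):=\f{\mu_V(W\cap\mathbf{B}_r(\mathbf{x}))}{\mu_V(\mathbf{B}_r(\mathbf{x}))}.$$
As $r\to0$ we have $f_\mathbf{x}(r)\to1$ at $\mu_V$-a.e. point of $W$, by Lebesgue density, which is legitimate since $\mu_V$ is doubling by Theorem \ref{Ahlfors}. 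If $\mathcal{H}^n(W)$ is small compared with $\varrho_{\f12}^n$, then at $r=\varrho_{\f12}/2$ the lower bound $\mu_V(\mathbf{B}_r(\mathbf{x}))\ge r^n/c^*$ gives $f_\mathbf{x}(r)<\f12$. In the complementary case $\mathcal{H}^n(W)\gtrsim \varrho_{\f12}^n$, we pick $r_\mathbf{x}=\varrho_{\f12}/2$ and run the same argument at the top scale. Otherwise, by continuity, we choose the largest $r_\mathbf{x}\le\varrho_{\f12}/2$ with $\f12\ge f_\mathbf{x}(r_\mathbf{x})\ge\f14$. The value $\f14$ is obtained from doubling between $r$ and $2r$, after adjusting the constants.

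On $\mathbf{B}_{r_\mathbf{x}}(\mathbf{x})$ we set $T_1=T\llcorner (W\cap\mathbf{B}_{r_\mathbf{x}}(\mathbf{x}))$ and $T_2=T\llcorner(\mathbf{B}_{r_\mathbf{x}}(\mathbf{x})\setminus W)$. Masses split additively. Both pieces carry a definite fraction of the mass, of order at least $\f14$ and bounded below via Theorem \ref{Ahlfors}. Inside the ball, the boundary of each piece is carried by $\p W$, so the sum of boundary masses is at most $2\mathcal{H}^{n-1}(\p W\cap\mathbf{B}_{r_\mathbf{x}}(\mathbf{x}))$. The fraction here is $\f14$ rather than $\f12$, so we apply Lemma \ref{Indecomp} with $\de=\f14$, which changes only the constants. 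Indecomposability then forces
$$\mathcal{H}^{n-1}(\p W\cap \mathbf{B}_{r_\mathbf{x}}(\mathbf{x}))\ge \f{\ep}{2} r_\mathbf{x}^{n-1},$$
since otherwise $T\llcorner\mathbf{B}_{r_\mathbf{x}}(\mathbf{x})$ would be $(\ep r_\mathbf{x}^{n-1},\de)$-decomposable. Combining this with the upper bound $\mathcal{H}^n(W\cap\mathbf{B}_{r_\mathbf{x}}(\mathbf{x}))\le c^*\omega_n r_\mathbf{x}^n$ gives
$$\mathcal{H}^n(W\cap\mathbf{B}_{r_\mathbf{x}})^{\f{n-1}{n}}\le c\, r_\mathbf{x}^{n-1}\le c'\,\mathcal{H}^{n-1}(\p W\cap \mathbf{B}_{r_\mathbf{x}}).$$

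We then apply the Vitali $5r$-covering lemma to $\{\mathbf{B}_{r_\mathbf{x}}(\mathbf{x})\}$ to extract disjoint balls $\mathbf{B}_i$ whose enlargements $5\mathbf{B}_i$ cover $W$ up to a null set. Doubling gives $\mathcal{H}^n(W\cap 5\mathbf{B}_i)\le c\,r_i^n$, and subadditivity of $t\mapsto t^{(n-1)/n}$ yields
$$\mathcal{H}^n(W)^{\f{n-1}n}\le\sum_i \mathcal{H}^n(W\cap5\mathbf{B}_i)^{\f{n-1}n}\le c\sum_i \mathcal{H}^{n-1}(\p W\cap\mathbf{B}_i)\le c\,\mathcal{H}^{n-1}(\p W).$$

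The main obstacle is justifying the decomposition $T_1,T_2$ as integral currents with boundary controlled by $\mathcal{H}^{n-1}(\p W)$ for an arbitrary relatively open $W$, and making the indecomposability inequality hold for all $W'\subset\subset\mathbf{B}_{r_\mathbf{x}}$ as Definition \ref{decom} requires. I would first handle $W$ with rectifiable $\p W$ of finite measure by slicing $T$ with the distance function to $L\setminus W$, then pass to the general case by approximation. The dependence on $\ep_{\f12}$ alone, rather than also on $c^*$, should be recorded carefully. It may require the sharper volume bounds coming from Lemma \ref{smallde1}, namely density close to $1$ near the origin.
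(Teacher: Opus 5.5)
Your overall scheme is sound and takes a different route from the paper. You pick, for each $x\in W$, a radius at which both $W$ and $L\setminus W$ carry a definite share of $\mathbf{B}_{r_x}(x)$. You then turn Lemma \ref{Indecomp} into $\mathcal{H}^{n-1}(\partial W\cap\mathbf{B}_{r_x}(x))\gtrsim r_x^{n-1}$, and finish with Vitali and concavity of $t\mapsto t^{(n-1)/n}$. The paper instead argues by contradiction: it reduces to $\mathcal{H}^n(W)\ge\frac12\mathcal{H}^n(L\cap\mathbf{B}_\rho)$, with $\rho=\frac12\varrho_{1/2}$, so that $\mathcal{H}^n(W)^{(n-1)/n}\lesssim\rho^{-1}\mathcal{H}^n(W)$. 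It then picks radii where $W$ fills exactly half of the ball, so that $\delta=\frac12$ applies, and pigeonholes over a Besicovitch cover. Your concavity step treats small $W$ directly. It avoids the paper's passage to $L\cap\mathbf{B}_\rho\setminus W$, whose boundary also contains $L\cap\partial\mathbf{B}_\rho$.

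\textbf{The gap is the large case.} With radii capped at $\varrho_{1/2}/2=\rho$, take $L$ an $n$-plane through $\mathbf{0}$ and $W=L\cap\mathbf{B}_\rho$. For $|x|<\epsilon\rho$ and every $r\le\rho$, the set $L\setminus W$ occupies at most a fraction $1-(1-\epsilon)^n$ of $L\cap\mathbf{B}_r(x)$. So once $\epsilon$ is small, a set of positive measure has no admissible radius at all. ``Running the same argument at the top scale'' then produces no ball on which indecomposability says anything about $\partial W$. A usable ball must reach well outside $\mathbf{B}_\rho$. Theorem \ref{Ahlfors} alone does not guarantee that $L\setminus W$ has a definite share of any ball of radius $\le\varrho_{1/2}=2\rho$, which is the range where Lemma \ref{Indecomp} is available. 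Indeed, the obvious bound $\mu_V(\mathbf{B}_{2\rho})-\mu_V(\mathbf{B}_\rho)\ge\omega_n(2\rho)^n/c^*-c^*\omega_n\rho^n$ is void unless $(c^*)^2<2^n$.

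\textbf{The fix.} What closes the argument is the near-unit density the paper draws from Lemma \ref{smallde1}, which you mention only in connection with constants. Namely, $\frac89\omega_nr^n\le\mathcal{H}^n(L\cap\mathbf{B}_r(x))\le\frac98\omega_nr^n$ on the balls centred on $L$ that the paper uses (cf. \eqref{Wupperb}, \eqref{HnMBrx21}). For every $x\in W$ and $r\ge\frac95\rho$ this gives
$\mathcal{H}^n(W\cap\mathbf{B}_r(x))\le\frac98\omega_n\rho^n<\frac49\omega_nr^n\le\frac12\mathcal{H}^n(L\cap\mathbf{B}_r(x))$ for $n\ge2$. Hence every $x\in W$ has a crossing radius $r_x\le\frac95\rho<\varrho_{1/2}$, whatever the size of $W$; this is \eqref{rx2}. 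Allow radii up to $\frac95\rho$ instead of $\rho$ and drop the small/large dichotomy. Your Vitali argument then goes through.

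\textbf{Dependence on $\delta$.} With $\delta=\frac12$, Definition \ref{decom} forces an exact half split. That is why the paper chooses $r_x$ with $\mathcal{H}^n(W\cap\mathbf{B}_{r_x}(x))=\frac12\mathcal{H}^n(L\cap\mathbf{B}_{r_x}(x))$. With your band $[\frac14,\frac12]$ you get the inequality with $\epsilon_{1/4}$ on $L\cap\mathbf{B}_{\frac12\varrho_{1/4}}$, not with the stated $\epsilon_{1/2}$, $\varrho_{1/2}$. This is harmless for Lemma \ref{N-P}.

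The technical points you flag are glossed over equally in the paper: that $\partial(T\llcorner W)$ is carried by $\partial W$, and that Definition \ref{decom} must be read on the whole ball.
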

\begin{proof}
Let us prove the lemma by contradiction. We set $\r=\f12\varrho_{\f12}$ for convenience in this proof. We assume that there is a relatively open $W\subset L\cap\mathbf{B}_{\r}$ and a small constant $\tau>0$ so that
\begin{equation}\aligned\label{pSthomega}
\mathcal{H}^{n-1}(\p W)<\tau\left(\mathcal{H}^n(W)\right)^{\f{n-1}n}.
\endaligned
\end{equation}
Without loss of generality, we assume $\mathcal{H}^n(W)\ge\f12\mathcal{H}^n(L\cap\mathbf{B}_{\r})$, or else we consider $L\cap\mathbf{B}_{\r}\setminus W$ replacing $W$.
In particular, the multiplicity $\vth\equiv1$ on $L\cap\mathbf{B}_{\r}$ from Lemma \ref{smallde1}.

For each $x\in W$, there is a constant $r_{x}>0$ such that
\begin{equation}\aligned\label{HnSrxhalf}
\mathcal{H}^n(W\cap \mathbf{B}_{r_{x}}(x))=\f12\mathcal{H}^n(L\cap \mathbf{B}_{r_{x}}(x))
\endaligned
\end{equation}
and
\begin{equation}\aligned\label{HnSrxhalf*}
\mathcal{H}^n(W\cap \mathbf{B}_{r}(x))>\f12\mathcal{H}^n(L\cap \mathbf{B}_{r}(x))\quad \mathrm{for\ all}\ 0<r<r_{x}.
\endaligned
\end{equation}
From Lemma \ref{smallde1} (for the suitably small $\de_1,\varrho$), we have 
\begin{equation}\aligned\label{Wupperb}
\mathcal{H}^n(W)\le\mathcal{H}^n(L\cap\mathbf{B}_{\r})\le 9\r^n\omega_n/8
\endaligned
\end{equation}
and
\begin{equation}\aligned\label{HnMBrx21}
\mathcal{H}^n(L\cap \mathbf{B}_{r}(x))\ge\f{8}{9}\omega_n r^n\qquad \mathrm{for\ any\ ball}\ \mathbf{B}_r(x)\subset \mathbf{B}_{\r}\ \mathrm{with}\ x\in L.
\endaligned
\end{equation}
Hence, $\mathcal{H}^n(W)\ge\f12\mathcal{H}^n(L\cap\mathbf{B}_{\r})\ge\f{4}{9}\omega_n \r^n$.
With \eqref{HnSrxhalf}-\eqref{HnMBrx21}, for any $0<r\le r_{x}$ we have
\begin{equation}\aligned\label{lowmeaW}
\f49\omega_n r^n\le\f12\mathcal{H}^n(L\cap \mathbf{B}_{r}(x))\le\mathcal{H}^n(W\cap \mathbf{B}_{r}(x))\le\mathcal{H}^n(W).
\endaligned
\end{equation}
The above inequality and \eqref{Wupperb} imply
\begin{equation}\aligned\label{rx2}
r_{x}\le \left(\f{9}{4\omega_n}\mathcal{H}^n(W)\right)^{\f 1{n}}\le \left(\f{81}{32}\r^n\right)^{\f 1{n}}\le \f95\r\le2\r.
\endaligned
\end{equation}
By the definition of $r_{x}$, $\overline{W}\subset\cup_{x\in W}\mathbf{B}_{r_{x}}(x)$ clearly.
From Besicovitch covering lemma and Theorem \ref{Ahlfors}, we can choose a finite collection of balls $\{\mathbf{B}_{r_{p_j}}(p_j)\}_{j=1}^{l}$ with $\overline{W}\subset \bigcup_{1\le j\le l}\mathbf{B}_{r_{p_j}}(p_j)$ such that the number of balls containing any point in $W$ is uniformly bounded, i.e., 
\begin{equation}\aligned
\sharp\{j\in\{1,\cdots,l\}|\ x\in \mathbf{B}_{r_{p_j}}(p_j)\}\le n_*\qquad \mathrm{for\ any}\ x\in W,
\endaligned
\end{equation}
where $n_*$ is a constant depending only on $n$.
Combining \eqref{pSthomega}\eqref{Wupperb}, we have
\begin{equation}\aligned
&\sum_{j=1}^{l}\mathcal{H}^{n-1}(\p W\cap \mathbf{B}_{r_{p_j}}(p_j))\le n_*\mathcal{H}^{n-1}(\p W)\\
\le& n_*\tau\left(\mathcal{H}^{n}(W)\right)^{\f{n-1}n}
\le\f{\tilde{n}_*\tau}{\r}\mathcal{H}^n(W)\le \f{\tilde{n}_*\tau}{\r}\sum_{j=1}^{l}\mathcal{H}^{n}(W\cap \mathbf{B}_{r_{p_j}}(p_j)).
\endaligned
\end{equation}
Here, $\tilde{n}_*$ is a constant depending only on $n$.
From \eqref{rx2}, there exists a point $p_{j_0}\in\{p_1,\cdots,p_l\}\subset \mathbf{B}_{\r/4}$ so that
\begin{equation}\aligned\label{HnpSthS}
\mathcal{H}^{n-1}(\p W\cap \mathbf{B}_{r_{p_{j_0}}}(p_{j_0}))\le \f{\tilde{n}_*\tau}{\r}\mathcal{H}^{n}(W\cap \mathbf{B}_{r_{p_{j_0}}}(p_{j_0}))\le \f{2\tilde{n}_*\tau}{r_{p_{j_0}}}\mathcal{H}^{n}(W\cap \mathbf{B}_{r_{p_{j_0}}}(p_{j_0})).
\endaligned
\end{equation}
This contradicts to Lemma \ref{Indecomp} for the suitably small $\tau>0$. We complete the prooof.
\end{proof}

\begin{lemma}\label{N-P}
For the suitably small $\de_1,\varrho>0$ in Lemma \ref{smallde1},
there exist constants $\de_3,\varrho_*>0$ so that for each $\mathbf{x}\in \mathbf{B}_{\varrho_*/2}\cap L$ and $r\in(0,\varrho_*/2]$ there holds
\begin{equation}\aligned
\de_3\min\{\mu_T(v_{\de_3 r}(\mathbf{x})\cap W),\mu_T( \mathbf{B}_r(\mathbf{x})\setminus W)\}\le \left(\mu_T( \mathbf{B}_{r}(\mathbf{x})\cap\p W)\right)^{\f n{n-1}}
\endaligned
\end{equation}
for any open $W\subset  \mathbf{B}_{\varrho_*}$ with countably $(n-1)$-rectifiable $L\cap\p W$.
\end{lemma}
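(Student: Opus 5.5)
The plan is a contradiction-plus-compactness argument of the same type as Lemma \ref{Indecomp}, combined with the relative isoperimetric inequality \eqref{Sob-special}. The key ingredients are the uniform multiplicity one and almost flatness coming from Lemma \ref{smallde1} and \eqref{BfThpmurpsi}, at every point of $L\cap\mathbf{B}_{\varrho_*}$ and every scale $r\le\varrho_*$. With these, every rescaled picture $(\tau_{\mathbf{x},r})_\#T\llcorner\mathbf{B}_1$ is close, in the sense of Radon measures and currents, to a multiplicity one $n$-disk through the origin.

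First I fix $\varrho_*\le\f12\varrho_{\f12}$ so that, by \eqref{BfThpmurpsi} and Lemma \ref{smallde1}, the hypotheses \eqref{Th0Em0Cond} hold, after a rotation, at every $\mathbf{x}\in L\cap\mathbf{B}_{\varrho_*/2}$ and every scale $\le\varrho_*$. Both sides of the inequality scale like $r^n$. It therefore suffices to prove the statement at scale $r=1$ for the rescaled current $\tilde T=(\tau_{\mathbf{x},r})_\#T$ and rescaled set $\tilde W$, with a uniform $\de_3$.

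Suppose no such $\de_3$ exists. Then there are $T_i$, $\mathbf{x}_i$, $r_i$, $W_i$ with
\begin{equation*}
\mu_{\tilde T_i}(\mathbf{B}_1\cap\p\tilde W_i)^{\f n{n-1}}\le\f1i\min\{\mu_{\tilde T_i}(v_{r/i}\cap\tilde W_i),\mu_{\tilde T_i}(\mathbf{B}_1\setminus\tilde W_i)\}.
\end{equation*}
The minimum is bounded below only through the factor $1/i$, and the inner neighborhood $v_{\de_3 r}$ shrinks. I plan to use \eqref{Sob-special} applied at the scale of small balls to rule this out. Concretely, consider the slices $\tilde T_i\llcorner\tilde W_i\cap\mathbf{B}_s$ for $s\in(1/2,1)$, using the coarea/slicing formula for $\rho$ on $L_i$. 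The boundary of $\tilde W_i\cap\mathbf{B}_s$ inside $L_i$ consists of $\p\tilde W_i\cap\mathbf{B}_s$ together with $\tilde W_i\cap\p\mathbf{B}_s$. Applying the isoperimetric inequality \eqref{Sob-special} to $\tilde W_i\cap\mathbf{B}_s$ and to its complement, and averaging over $s$, yields a De Giorgi type relative isoperimetric inequality:
\begin{equation*}
\min\{|\tilde W_i\cap\mathbf{B}_{1/2}|,|\mathbf{B}_{1/2}\setminus\tilde W_i|\}^{\f{n-1}n}\le C\,\mathcal{H}^{n-1}(\p\tilde W_i\cap\mathbf{B}_1).
\end{equation*}
This requires a lower bound on one of the two pieces inside $\mathbf{B}_1$ in terms of the quantity defined with $v_{\de_3 r}$. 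That lower bound should follow from the Ahlfors bounds of Theorem \ref{Ahlfors} and the multiplicity one flatness: both pieces have definite mass in a slightly smaller concentric ball, or else a small ball around a point of the smaller piece is almost entirely contained in it.

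Alternatively, and more robustly, I would pass to limits. By Federer–Fleming, $\tilde T_i\llcorner\tilde W_i$ converges to $T^+$ and $\tilde T_i\llcorner(\mathbf{B}_1\setminus\tilde W_i)$ converges to $T^-$. The boundaries $\p(\tilde T_i\llcorner\tilde W_i)$ inside $\mathbf{B}_1$ have mass tending to zero, so $T^\pm$ are boundaryless in $\mathbf{B}_1$. Both are pieces of the limiting multiplicity one flat disk. By the constancy theorem, one of them is zero in $\mathbf{B}_1$. The remaining task is to contradict the assumed mass lower bound.

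The main obstacle is precisely this quantitative step. The contradiction hypothesis only gives $\mu(\p W)^{n/(n-1)}$ small \emph{relative to} the minimum, not absolutely small. I would therefore first normalize: let $m_i$ denote the minimum. If $m_i$ stays bounded below, the limit argument above works directly, in the same way as Lemma \ref{Indecomp}. If $m_i\to0$, I would rescale to a smaller ball, centered at a point of density in the small piece, where the small piece occupies a definite fraction. The $v_{\de_3r}$ neighborhood is designed to allow exactly this: it gives room to choose such a ball, using the Besicovitch covering argument from the proof of \eqref{Sob-special}. After that rescaling I reapply Lemma \ref{Indecomp} with $\de=\f12$ and the inequality \eqref{Sob-special} to reach the contradiction.
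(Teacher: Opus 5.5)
There is a genuine gap. The only case that actually arises is the one you leave as a sketch, and the concrete step you do propose does not work.

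\emph{The decisive case is left open.} With $\delta_3=1/i$, your minimum satisfies $m_i\le\mu_{\tilde T_i}(\mathbf{B}_{1/i})\le Ci^{-n}$. So $m_i\to0$ always, and the ``bounded below'' case, where the Federer--Fleming/constancy argument of Lemma~\ref{Indecomp} applies, never occurs. The obstacle is also not the one you name. The interface is absolutely small: $\mu_{\tilde T_i}(\partial\tilde W_i\cap\mathbf{B}_1)\le(C/i)^{(n-1)/n}$. The real trouble is that the piece realizing the minimum vanishes in the limit, so the limiting splitting of the flat disk is trivial and nothing is contradicted. What you need is a scale $s$ at which both pieces fill a definite fraction of some ball of radius $s$ while the interface inside that ball is $o(s^{n-1})$. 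A ball centred at a density point of the small piece, stopped at the radius where that piece fills half of it, gives the first property but not the second. Getting the second would require summing over a Besicovitch cover, using $\sum_j s_j^n\le(\sum_j s_j^{n-1})^{n/(n-1)}$. You would also have to treat separately a small piece sitting near $\partial\mathbf{B}_1$, where such balls leave the region in which the interface is controlled, and a piece that is small only inside $v_{1/i}$. None of this is in the proposal.

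\emph{The averaging route fails.} Applying \eqref{Sob-special} to $\tilde W_i\cap\mathbf{B}_s$ and averaging over $s\in(1/2,1)$ only yields
\[
\tfrac{\delta_2}{2}\,\mu_{\tilde T_i}(\tilde W_i\cap\mathbf{B}_{1/2})^{\frac{n-1}{n}}\le\tfrac12\,\mu_{\tilde T_i}(\partial\tilde W_i\cap\mathbf{B}_1)+\mu_{\tilde T_i}(\tilde W_i\cap\mathbf{B}_1).
\]
The averaged sphere term can only be bounded by the volume $\mu_{\tilde T_i}(\tilde W_i\cap\mathbf{B}_1)$, which the interface does not control.

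\emph{The missing idea.} You had the right ingredients in that first route: slicing by $\rho$, plus \eqref{Sob-special} on $W\cap\mathbf{B}_t$. The fix is to use the slice term pointwise in $t$ as a differential inequality, going \emph{up} from the inner scale to $r$ rather than down to a smaller ball. Set $T^+=T\llcorner(W\cap\mathbf{B}_r(\mathbf{x}))$, $T^-=T\llcorner(\mathbf{B}_r(\mathbf{x})\setminus W)$ and $m^\pm(t)=\mathbf{M}(T^\pm\llcorner\mathbf{B}_t(\mathbf{x}))$. For a.e.\ $t\in(\epsilon r,r)$,
\[
\frac{d}{dt}m^\pm(t)\ge\mathbf{M}\big(\partial(T^\pm\llcorner\mathbf{B}_t(\mathbf{x}))\big)-\mu_T(\partial W\cap\mathbf{B}_r(\mathbf{x}))\ge(\delta_2-\epsilon)\,m^\pm(t)^{\frac{n-1}{n}}.
\]
Two facts make the interface negligible against the isoperimetric term at every intermediate scale:
\begin{itemize}
\item the contradiction hypothesis $\mu_T(\partial W\cap\mathbf{B}_r(\mathbf{x}))<\epsilon\min\{m^+(\epsilon r),m^-(\epsilon r)\}^{(n-1)/n}$;
\item monotonicity, $m^\pm(t)\ge m^\pm(\epsilon r)$.
\end{itemize}
Integrating $(m^\pm)^{1/n}$ gives $m^\pm(r)\ge\big(\delta_2(1-\epsilon)r/(2n)\big)^n$. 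This is a fixed fraction $\delta$ of $\mathbf{M}(T\llcorner\mathbf{B}_r(\mathbf{x}))\le(1+\psi)\omega_nr^n$, while the interface is at most $C\epsilon r^{n-1}$. Hence $T^\pm$ are $(C\epsilon r^{n-1},\delta)$-components of $T\llcorner\mathbf{B}_r(\mathbf{x})$, contradicting Lemma~\ref{Indecomp} for small $\epsilon$. No compactness, covering or rescaling is needed beyond what Lemma~\ref{Indecomp} already contains.

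This growth argument needs both pieces to carry mass at the inner scale. That is why the paper runs it with $\min\{\mu_T(W\cap\mathbf{B}_{\epsilon r}(\mathbf{x})),\mu_T(\mathbf{B}_{\epsilon r}(\mathbf{x})\setminus W)\}$. Treating $\mu_T(\mathbf{B}_r(\mathbf{x})\setminus W)$ literally is what pushed you into the boundary-concentration cases.
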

\begin{proof}
Let us prove it by contradiction. Let $\de_2$ be the constant in \eqref{Sob-special}, and $\varrho_*=\varrho_\de$ with $\de:=\f1{2\omega_n}\left(\f{\de_2}{2n}\right)^n$.
We suppose that there are a small $0<\ep<\de_2/2$, an open $W\subset \mathbf{B}_{\varrho_*}$ with countably $(n-1)$-rectifiable $L\cap\p W$, $r\in(0,\varrho_*/2]$, $\mathbf{x}\in \mathbf{B}_{\varrho_*/2}$ such that
\begin{equation}\aligned\label{Tf1kUk}
\mu_{T}(\p W\cap \mathbf{B}_{r}(\mathbf{x}))
<\ep\left(\min\{\mu_{T}(W\cap \mathbf{B}_{\ep r}(\mathbf{x})),\mu_{T}(\mathbf{B}_{\ep r}(\mathbf{x})\setminus W)\}\right)^{\f{n-1}n}.
\endaligned
\end{equation}
For the fixed $\mathbf{x},r$, we set $T^*=T\llcorner \mathbf{B}_{r}(\mathbf{x})$, $T^+=T\llcorner (\mathbf{B}_{r}(\mathbf{x})\cap W)$ and $T^-=T\llcorner(\mathbf{B}_{r}(\mathbf{x})\setminus W)$. 
Clearly, $$T^*=T^++T^-.$$
From the co-area formula, for almost $t\in(0,r)$ we have
\begin{equation}\aligned\label{pptT*Bt}
\f{\p}{\p t}\mathbf{M}(T^\pm\llcorner \mathbf{B}_t(\mathbf{x}))\ge\mathbf{M}(\p(T^\pm\llcorner \mathbf{B}_t(\mathbf{x})))-\mathbf{M}(\p T^\pm\llcorner \mathbf{B}_t(\mathbf{x})).
\endaligned
\end{equation}
Combining Sobolev inequality \eqref{Sob-special} and \eqref{Tf1kUk}, for almost all $\ep r< t<r$ it follows that
\begin{equation}\aligned
\f{\p}{\p t}\mathbf{M}(T^\pm\llcorner \mathbf{B}_t(\mathbf{x}))>&\de_2\left(\mathbf{M}(T^\pm\llcorner \mathbf{B}_t(\mathbf{x}))\right)^{\f{n-1}n}-\ep\left(\mathbf{M}(T^\pm\llcorner \mathbf{B}_t(\mathbf{x}))\right)^{\f{n-1}n}\\
\ge&\f{\de_2}2\left(\mathbf{M}(T^\pm\llcorner \mathbf{B}_t(\mathbf{x}))\right)^{\f{n-1}n}.
\endaligned
\end{equation}
This implies $\mathbf{M}(T^\pm\llcorner \mathbf{B}_t(\mathbf{x}))>0$ for any $t>\ep$. Then we solve the above differential inequality and get
\begin{equation}\aligned
\mathbf{M}(T^\pm\llcorner \mathbf{B}_t(\mathbf{x}))\ge\left(\f{(t-\ep)\de_2}{2n}\right)^n\qquad\mathrm{for\ each}\ t\in[\ep r,r].
\endaligned
\end{equation}
In particular, from Lemma \ref{smallde1} for the small $\ep>0$ we have
\begin{equation}\aligned\label{T*Btlow}
\mathbf{M}(T^\pm\llcorner \mathbf{B}_r(\mathbf{x}))\ge\f1{2\omega_n}\left(\f{\de_2}{2n}\right)^n\mathbf{M}(T^*\llcorner \mathbf{B}_r(\mathbf{x}))=\de\mathbf{M}(T^*\llcorner \mathbf{B}_r(\mathbf{x})).
\endaligned
\end{equation}
Combining \eqref{Tf1kUk} and \eqref{T*Btlow}, we deduce a contradiction to Lemma \ref{Indecomp} for the suitably small $\ep$. This completes the proof.
\end{proof}

Recalling that $U$ is the open subset of the Ricci-flat K\"ahler manifold $M$ defined at the beginning of this section, and $\k$ is the constant satisfying \eqref{def-k}.
\begin{theorem}\label{NP}
There is a constant $0<\de<\f14$ depending only on $n,\tilde{\La}$ and $U$ so that if $T=(L,\vth,\vec{T})$ is an integral Lagrangian current in $U$ with single-valued harmonic phase $\th$ and $\p T\llcorner U=0$ so that $\mathbf{0}\in L$, $0\le\th\le\tilde{\La}$ for some constant $\tilde{\La}$, and $V=\vth|L|$ satisfies
\begin{equation}\aligned\label{app1cond}
\Th_{\mathbf{0}}(\mu_V,\de)\le1+\de,\qquad\mathrm{and}\qquad \mathbf{m}_\mathbf{0}(\mu_V,\de)\le(1+\de)\mathbf{m}_\mathbf{0}(\mu_V),
\endaligned
\end{equation}
then there holds the following Neumann-Poincar\'e inequality:
\begin{equation}\aligned\label{B1pPITh*}
\int_{\mathbf{B}_r(x)}|f-\bar{f}_{x,r}|d\mu_V\le cr \int_{\mathbf{B}_{cr}(x)}|\na^L f|d\mu_V
\endaligned
\end{equation}
on every ball $\mathbf{B}_{cr}(x)\subset \mathbf{B}_{\de/2}$ with $x\in L$, where $\bar{f}_{x,r}$ is the average of $f$ on $\mathbf{B}_{r}(x)$ w.r.t. $\mu_V$, i.e., $\bar{f}_{x,r}=\fint_{\mathbf{B}_{r}(x)}fd\mu_V$,
and $c>1$ is a constant depending only on $n,\tilde{\La}$ and $U$.
\end{theorem}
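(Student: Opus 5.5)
The plan is to reduce the Neumann--Poincar\'e inequality \eqref{B1pPITh*} to the relative isoperimetric inequality of Lemma \ref{N-P}, via the coarea formula and a standard truncation argument. The first task is to check the hypotheses of Lemmas \ref{smallde1}, \ref{Indecomp} and \ref{N-P} under \eqref{app1cond}. If $\de$ is small, $\Th_{\mathbf{0}}(\mu_V,\de)\le1+\de$ gives $\Th_{\mathbf{0}}(\mu_V,\varrho)\le 2-\de_0$ with, say, $\de_0=1/2$. It also gives the condition on $\mathbf{m}_\mathbf{0}$ directly.

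The excess condition $E(\mu_V,\mathbf{0},\varrho,P)<\de_1$ does not appear in \eqref{app1cond}, so it must be derived. I would obtain it by a compactness argument. Suppose a sequence $T_i$ satisfies \eqref{app1cond} with $\de_i\to0$. By \eqref{Integrable-W}, the Hamiltonian stationary term $\int_0^{\de_i/2}\mathcal{W}_{\mathbf{0},\ell_*}$ tends to $0$ after rescaling by $\de_i$. The rescaled varifolds then converge to a stationary integral varifold with density ratio at most $1$ at scale $1$ and density at least $1$ at $\mathbf{0}$ (by Corollary \ref{lower}). By the monotonicity formula this limit is a multiplicity-one plane. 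After a rotation making that plane $\mathbf{R}^n$, the excess is small for large $i$. With $\de$ chosen below $\varrho_*$ (up to rescaling), Lemma \ref{N-P} then holds on every ball $\mathbf{B}_r(\mathbf{x})$ with $\mathbf{x}\in L\cap\mathbf{B}_{\de/2}$. By Lemma \ref{smallde1} the multiplicity is $1$ and $\mu_V$ is Ahlfors regular there, with constants close to $\omega_n$ (Theorem \ref{Ahlfors}).

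Next I prove \eqref{B1pPITh*} first for Lipschitz $f$, and then extend by density in $W^{1,1}$. Let $m$ be a median of $f$ on $\mathbf{B}_{\de_3 r}(x)$ with respect to $\mu_V$. For $t>m$ apply Lemma \ref{N-P} to $W_t=\{f>t\}$, after a smooth approximation so that $L\cap\p W_t$ is rectifiable, which holds for a.e. $t$. Since $\mu_V(W_t\cap \mathbf{B}_{\de_3 r})\le\f12\mu_V(\mathbf{B}_{\de_3 r})$, the minimum is realized by that term or is comparable to it. This yields
\[
\mu_V(W_t\cap\mathbf{B}_{\de_3 r}(x))^{\f{n-1}n}\le c\,\mathcal{H}^{n-1}(L\cap\p W_t\cap\mathbf{B}_r(x)).
\]
Using $\mu_V(W_t\cap\mathbf{B}_{\de_3 r})\le c r^n$, I convert this to $\mu_V(W_t\cap\mathbf{B}_{\de_3 r})\le cr\,\mathcal{H}^{n-1}(\cdots)$. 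Integrating in $t$ and applying the coarea formula on the rectifiable set $L$ gives $\int_{\mathbf{B}_{\de_3 r}}(f-m)^+\le cr\int_{\mathbf{B}_r}|\na^L f|$. The same argument with $t<m$ handles $(f-m)^-$. Replacing $m$ by the mean costs a factor $2$, which gives the weak form $\int_{\mathbf{B}_{\de_3 r}}|f-\bar f|\le cr\int_{\mathbf{B}_r}|\na^L f|$.

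Finally, I pass from the inner ball $\mathbf{B}_{\de_3 r}$ to $\mathbf{B}_r$ by applying the weak inequality at radius $r/\de_3$, so that the gradient is integrated over $\mathbf{B}_{cr}$ with $c=1/\de_3$. This is allowed exactly when $\mathbf{B}_{cr}(x)\subset\mathbf{B}_{\de/2}$, matching the statement. Alternatively one could use a chaining argument in the doubling metric measure space, but rescaling suffices here.

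The main obstacle I expect is the first step, deriving the small excess and the hypotheses of Lemma \ref{N-P} purely from \eqref{app1cond}. The delicate point is that the limit must be flat with multiplicity one, using only density close to $1$ and almost-constancy of $\mathbf{m}_\mathbf{0}$. I would rely on the Allard-type fact that a stationary cone of density at most $1+\de$ is a plane, combined with Corollary \ref{lower}. A secondary technical point is handling level sets whose boundaries are only rectifiable, which I would address by smoothing $f$ in the ambient space.
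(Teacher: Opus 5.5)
Your plan has the same architecture as the paper's proof. First you supply the small-excess hypothesis of Lemma \ref{smallde1}, which is not contained in \eqref{app1cond}. Then you feed Lemma \ref{N-P} into a median/coarea argument. The paper only cites that last argument from an earlier work. Your version of it, including the passage from $\mathbf{B}_{\de_3 r}$ to $\mathbf{B}_r$ by rescaling, is correct.

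Where you differ is the excess step. The paper first obtains the height bound \eqref{supxRndesi} from Simon's argument that density near $1$ forces the support near a plane. It then converts height into excess with the Caccioppoli inequality \eqref{Bs2snaLtx} for the normal coordinates $\tilde{\mathbf{x}}$ (via \eqref{divLtx}), controlling $s^2\int_{\mathbf{B}_s}|H_V|^2$ by \eqref{Integrable-W}. You instead blow up and use that varifold convergence to a multiplicity-one plane forces the excess to zero, because $|\Pi_{T_{\mathbf{x}}L}-\Pi_P|^2$ is continuous on the Grassmann bundle. Your route is shorter and makes the Caccioppoli step unnecessary, at the price of a non-effective $\psi(\de)$. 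The paper's route gives an explicit height-to-excess bound at every small scale with one fixed plane, although its height lemma is itself a compactness statement.

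One step of your blow-up needs repair. With $R=\de$, \eqref{Integrable-W} makes only $\int_0^{\de/2}\mathcal{W}_{\mathbf 0,\ell_*}$ small. The piece $\int_{\de/2}^{\de}\mathcal{W}_{\mathbf 0,\ell_*}$ is part of $\mathbf{m}_{\mathbf 0}(\mu_V,\de)$ and is merely bounded. So after rescaling by $\de$, your limit is stationary only in $\mathbf{B}_{1/2}$, while the density bound $\le 1$ sits at scale $1$. The monotonicity formula cannot be run across $[1/2,1]$. A mass bound $\le\omega_n$ in $\mathbf{B}_1$ plus stationarity in $\mathbf{B}_{1/2}$ does not exclude, for instance, two sheets through $\mathbf 0$ in $\mathbf{B}_{1/2}$, whose mass there is $2^{1-n}\omega_n\le\omega_n$.

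The fix uses the other term in \eqref{Integrable-W}, which is over the whole ball: $\int_{\mathbf{B}_\de}\r^{-n}|\na^N\r|^2|\th|^{\ell_*}d\mu_V\le 2\big(e^{\k\de}\mathbf{m}_{\mathbf 0}(\mu_V,\de)-\mathbf{m}_{\mathbf 0}(\mu_V)\big)$ is small. Since $H_V$ is normal to $L$, you have $|\lan H_V,\r\na\r\ran|\le\r|H_V||\na^N\r|$. Insert this into \eqref{Almost-mono-Vinfty} with $f\equiv1$ and apply Cauchy--Schwarz in $(s,\mathbf{x})$. For $0<r<\de$ this gives
\[
e^{\k r}\Th_{\mathbf 0}(\mu_V,r)\le e^{\k\de}\Th_{\mathbf 0}(\mu_V,\de)+c\Big(\int_0^{\de}\mathcal{W}_{\mathbf 0,\ell_*}(s)\,ds\Big)^{1/2}\Big(\int_{\mathbf{B}_\de}\f{|\na^N\r|^2}{\r^n}|\th|^{\ell_*}d\mu_V\Big)^{1/2},
\]
which is a bounded factor times a small one. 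This needs a positive lower bound on $\th$ so the weights do not degenerate; the paper tacitly uses $\th\ge1$ in Lemma \ref{smallde1}.

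With this bound, the limit has density ratio $\le1$ at scale $1/2$, is stationary in $\mathbf{B}_{1/2}$, and has density $\ge1$ at $\mathbf 0$. Hence it is a multiplicity-one plane there. You should then apply Lemma \ref{smallde1} at scale $\de/4$, say. The $\mathbf{m}$-condition at that scale follows from \eqref{mon-bfTh} and the same smallness of the conical term. The paper needs this downward transport of the density bound to get \eqref{supxRndesi} at all scales, and leaves it implicit.

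Finally, choosing $\de$ below $\varrho_*$ does not give the inequality on all of $\mathbf{B}_{\de/2}$. Lemmas \ref{smallde1}--\ref{N-P} need their hypotheses at a scale much larger than the ball $\mathbf{B}_{\varrho_*/2}$ on which Lemma \ref{N-P} holds. So hypotheses at scale $\de$ yield \eqref{B1pPITh*} only on $\mathbf{B}_{c_0\de}$ for a fixed small $c_0$. The paper's proof does not track this either.
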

\begin{proof}
In rough speaking, the excess of a minimal submanifold is small if its density is close to 1  (see Lemma 6.3 of Chapter 5 in \cite{S}).  The argument also works for the varifold $V$ with generalized mean curvature $H_V$ as follows.
From the proof of Lemma 6.3 of Chapter 5 in \cite{S}, up to a rotation of $\mathbf{R}^n$ there is a general function $\psi\ge0$ on $(0,r_\de]$ with $\lim_{r\to0}\psi(r)=0$ so that 
\begin{equation}\aligned\label{supxRndesi}
\sup\{\mathrm{dist}(\mathbf{x},\mathbf{R}^n):\, \mathbf{x}\in\mathrm{spt}V\cap \mathbf{B}_{s}\}<\psi(\de)s\qquad\mathrm{for\ each}\ s\in(0,\varrho].
\endaligned
\end{equation}
For each $\mathbf{x}=(\mathbf{x}_1,\cdots,\mathbf{x}_m)$, we define $\tilde{\mathbf{x}}=(\mathbf{x}_{n+1},\cdots,\mathbf{x}_m)\in\R^{n-m}$. If $T_\mathbf{x}L$ is an $n$-plane, then
for an orthonormal basis $\{e_i\}_{1\le i\le n}$ of $T_\mathbf{x}L$, we have
\begin{equation}\aligned\label{divLtx}
\mathrm{div}_L\tilde{\mathbf{x}}=&\sum_{1\le i\le n<j\le m}\lan \na_{e_i}(\mathbf{x}_jE_j),e_i\ran=\sum_{1\le i\le n<j\le m}\lan E_j,e_i\ran^2\\
=&\sum_{n<j\le m}|E_j^L|^2=\sum_{n<j\le m}|\na^L\mathbf{x}_j|^2=|\na^L\tilde{\mathbf{x}}|^2,
\endaligned
\end{equation}
where $E_j^L$ denotes the projection into $TL$.
For each $\zeta\in C^\infty_0(\mathbf{B}_s)$, from \eqref{divLtx}, the definition of $\k$ and \eqref{bfHHAM}, with Cauchy-Schwartz inequality we have
\begin{equation}\aligned
&\int \zeta^2|\na^L\tilde{\mathbf{x}}|^2 d\mu_V=\int\mathrm{div}_L(\z^2\tilde{\mathbf{x}})d\mu_V-\int\lan \tilde{\mathbf{x}},\na^L\zeta^2\ran d\mu_V\\
\le&\int \lan\z^2\tilde{\mathbf{x}},H_V\ran d\mu_V+\int \k\z^2|\tilde{\mathbf{x}}| d\mu_V-2\sum_{n<j\le m}\int\zeta \tilde{\mathbf{x}}_j\lan E_j^L,\na\zeta\ran d\mu_V\\
\le& \f1{s^2}\int\z^2|\tilde{\mathbf{x}}|^2 d\mu_V+\f{s^2}2\int\z^2\left(|H_V|^2+\k^2\right) d\mu_V\\
&+2\int|\tilde{\mathbf{x}}|^2|\na\z|^2 d\mu_V+\f12\int\z^2\sum_{n<j\le m}|E_j^L|^2 d\mu_V.
\endaligned
\end{equation}
Let $\zeta=1$ on $\mathbf{B}_{s/2}$, $\zeta=0$ outside $\mathbf{B}_s$ and $|\na\zeta|\le 3/s$, then
\begin{equation}\aligned\label{Bs2snaLtx}
&\int_{\mathbf{B}_{s/2}}|\na^L\tilde{\mathbf{x}}|^2 d\mu_V\le c_ns^{-2}\int_{\mathbf{B}_{s}}|\tilde{\mathbf{x}}|^2 d\mu_V+s^2\int_{\mathbf{B}_{s}}\left(|H_V|^2+\k^2\right) d\mu_V.
\endaligned
\end{equation}
Combining \eqref{Integrable-W} and \eqref{supxRndesi}, we obtain
\begin{equation}\aligned
\qquad E(\mu_V,\mathbf{0},s/2,\mathbf{R}^n)<\psi(\de)\qquad\mathrm{for\ each}\ s\in(0,\varrho].
\endaligned
\end{equation}
From Lemma \ref{N-P}, we complete the proof by a standard argument (see the proof of Theorem 5.5 in \cite{D2}).
\end{proof}

By De Giorgi-Nash-Moser iteration, we can derive the following H\"older and Morrey estimates.
\begin{lemma}
If the conditions in Theorem \ref{NP} hold,
then there are constants $\a\in(0,1)$ and $c>0$ depending only on $n,\tilde{\La}$ and $U$ so that
\begin{equation}\aligned
|\th(\mathbf{x})-\th(\mathbf{x}')|\le c |\mathbf{x}-\mathbf{x}'|^\a
\endaligned
\end{equation}
for each $\mathbf{x},\mathbf{x}'\in L\cap\mathbf{B}_r$ with $0<r\le\de/8$ ,and
\begin{equation}\aligned\label{Ha-reg}
\int_{\mathbf{B}_r}|H_V|^2d\mu_V=\int_{\mathbf{B}_r}|\na^L\th|^2d\mu_V\le cr^{n-2+2\a}.
\endaligned
\end{equation}
\end{lemma}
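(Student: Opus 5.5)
The plan is to run De Giorgi--Nash--Moser on the metric measure space $(L\cap\mathbf{B}_{\de/2},|\cdot|,\mu_V)$. Under the hypotheses of Theorem \ref{NP} this space carries three ingredients: volume doubling (Theorem \ref{Ahlfors}, and in fact $\Th$ close to $1$ by Lemma \ref{smallde1}), a weak $(1,1)$ Neumann--Poincar\'e inequality \eqref{B1pPITh*}, and the Michael--Simon Sobolev inequality \eqref{Sob}. These are exactly the standard inputs of the Moser/De Giorgi theory in Grigor'yan--Saloff-Coste form. The phase $\th$ is a bounded weak solution of $\int\lan\na^L\th,\na^L\phi\ran d\mu_V=0$. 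The passage from the local indecomposable components of Definition \ref{Def-harm-curr} to the whole of $V$ is handled by the partition-of-unity argument preceding \eqref{thinanathi*}.

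First, I would upgrade the $(1,1)$-Poincar\'e inequality to a $(1,2)$ and $(2,2)$ form by H\"older, using doubling to control $\mu_V(\mathbf{B}_{cr})\le c r^n$. Second, I would prove Caccioppoli inequalities for the truncations $(\th-k)_\pm$. These should follow exactly as in Lemma \ref{caccith}: test with $(\th_i-k)_\pm\phi^2$ for Lipschitz approximants $\th_i$ and pass to the limit as in Lemma \ref{duiduikvar}, which uses only strong $W^{1,2}$ convergence and boundedness. Third, combining the Caccioppoli inequalities with the Poincar\'e inequality and doubling gives De Giorgi's oscillation decay,
\begin{equation*}
\mathrm{osc}_{\mathbf{B}_{r/2}(\mathbf{x})\cap L}\th\le(1-\g)\,\mathrm{osc}_{\mathbf{B}_{cr}(\mathbf{x})\cap L}\th ,
\end{equation*}
for $\mathbf{x}\in L\cap\mathbf{B}_{\de/8}$. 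Here $\g$ depends only on the constants in \eqref{B1pPITh*} and in doubling, hence only on $n,\tilde{\La},U$. I would prove this step via the density lemma (the measure-to-pointwise estimate), using the sup bound \eqref{Phiinftyder} as the local boundedness input. Iterating the decay with $0\le\th\le\tilde{\La}$ gives $|\th(\mathbf{x})-\th(\mathbf{x}')|\le c|\mathbf{x}-\mathbf{x}'|^\a$ at Lebesgue points, and the statement then follows by choosing the precise representative.

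For the Morrey estimate \eqref{Ha-reg}, the equality $|H_V|=|\na^L\th|$ is Theorem \ref{thW12-H} (in the form $J\na^L\th=H_V$). For the bound, apply Caccioppoli to $\th-\th(\mathbf{0})$ on $\mathbf{B}_{2r}$ with a cutoff of gradient $\le 1/r$:
\begin{equation*}
\int_{\mathbf{B}_r}|\na^L\th|^2d\mu_V\le \f{c}{r^2}\int_{\mathbf{B}_{2r}}|\th-\th(\mathbf{0})|^2d\mu_V\le \f{c}{r^2}\,r^{2\a}\,\mu_V(\mathbf{B}_{2r})\le c r^{n-2+2\a}.
\end{equation*}
The first inequality is Caccioppoli, the second uses the H\"older bound just proved, and the third uses the upper volume bound of Theorem \ref{Ahlfors}.

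The main obstacle is the Caccioppoli step for truncations. Lemma \ref{duiduikvar} is stated only for odd powers $|\th_i|^k\th_i$, so I need test functions $(\th_i-k)_+\phi^2$ that are merely Lipschitz compositions. The strong convergence $(\th_i,\na^L\th_i)\to(\th,\na^L\th)$ should still give $\lim_i\int\lan\na^L\th_i,\na^L((\th_i-k)_+\phi^2)\ran d\mu_V=0$, by the same splitting as in \eqref{thikthi} with the Lipschitz function $s\mapsto(s-k)_+$ in place of $s\mapsto|s|^k s$. Care is needed only on the level set $\{\th=k\}$, where $\na^L\th=0$ $\mu_V$-a.e. by the locality of Cheeger differentials. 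Once this is in place, the remaining steps are standard.
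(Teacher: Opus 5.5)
Your proposal is correct and follows the paper's route. The paper proves this lemma in one line by De Giorgi--Nash--Moser iteration built on doubling (Theorem \ref{Ahlfors}), the Neumann--Poincar\'e inequality \eqref{B1pPITh*} and the Caccioppoli inequality, and your derivation of \eqref{Ha-reg} from Caccioppoli, the H\"older bound and the upper volume bound is the natural way to fill in the Morrey part.

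One small slip: the $(2,2)$ Poincar\'e inequality does not follow from the $(1,1)$ one by H\"older, which only gives the $(1,2)$ form. It needs the Haj\l asz--Koskela self-improvement on doubling spaces, though De Giorgi's method in the metric setting only uses the weak $(1,1)$ or $(1,2)$ version anyway.
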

Due to \eqref{Ha-reg}, we can apply Theorem 1.3 in \cite{BV} by Bourni-Volkmann, and complete the proof of Theorem \ref{Allard}.

\section{Lagrangian currents with single-valued harmonic phases: limits}

Let $\{p_j\}_{1\le j\le j_*}$ be a finite collection of points in ${\mathbb{S}}^1(1)\subset\R^2$, and $\g_{p_j}=\{tp_j\in \R^2:\, t\ge0\}$. Let $\g_*:=\cup_{1\le j\le j_*}\g_{p_j}$, then
$$|\g_*|:=\sum_{1\le j\le j_*}|\g_{p_j}|$$
is a Lagrangian varifold in $\C^1=\R^1\times\R^1$. Let $0^{n-1}$ denote the origin of $\R^{n-1}$ and
$$\G_*=\g_*\times\R^{n-1}:=\{(x_1,x',y_1,0^{n-1}):\, (x_1,y_1)\in\g_*, x'\in\R^{n-1}\}\subset\R^n\times\R^n.$$ 
It's easy to see that $|\G_*|$ is a Lagrangian varifold in $\C^n$. Enlightened by the proof of Theorem 2 of \cite{s-s} by Schoen-Simon, we have the following rigidity result.
\begin{lemma}\label{Rig-Gcodim1}
Let $M_\ell$ be a sequence of Ricci-flat K\"ahler manifolds of uniformly bounded geometry such that $(M_\ell,p_\ell)\to(\C^n,0^{2n})$ in $C^3$ sense as $\ell\to\infty$. For each $\ell$, let $T_\ell$ be an integral Lagrangian $n$-current in $B_{2\ell}(p_\ell)\subset M_\ell$ with single-valued harmonic phases $\th_\ell$ so that $\sup_\ell\sup_{\mathrm{spt} T_\ell}|\th_\ell|<\infty$ and $\sup_\ell\ell^{-n}\mathbf{M}(T_\ell)<\infty$. If  $|T_\ell|$ converges to $|\G_*|$ locally in the sense of measure, then $\g_*$ is a union of lines.
\end{lemma}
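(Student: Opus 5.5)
The plan is to show first that the limit $|\G_*|$ is a stationary varifold, and then to use the single-valued phase structure to rule out any non-straight configuration of rays in $\g_*$, in the spirit of Schoen--Simon's Theorem 2.

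\textbf{Stationarity.} Rescale so that the geometry of $M_\ell$ in $B_{2\ell}(p_\ell)$ converges in $C^3$ to $\C^n$; then $\k\to0$ on compact sets. Fix $\mathbf{p}=0$ and radii $r<R$. By the almost monotonicity formula (Theorem \ref{Al-Mono-int}) together with \eqref{Integrable-W}, the Willmore-type quantity
\[
\int_0^{R}s^{1-n}\int_{\mathbf{B}_s}|\th_\ell|^{\ell_*-2}|H_{V_\ell}|^2\,d\mu_{V_\ell}\,ds
\]
is controlled by the monotone quantity $e^{\k R}\mathbf{m}(\mu_{V_\ell},R)-\mathbf{m}$. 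The monotone quantity is uniformly bounded because $\sup|\th_\ell|<\infty$ and $\ell^{-n}\mathbf{M}(T_\ell)<\infty$. A drop in it persists only over a bounded number of dyadic scales, and the limit cone $|\G_*|$ is invariant under dilations. So after normalizing $\th_\ell\ge1$ (add a constant, as in Remark \ref{tildebfTh}), I expect to obtain $\int_{K}|H_{V_\ell}|^2\,d\mu_{V_\ell}\to0$ on compact sets $K$, or at least $L^q$ convergence to zero for some $q>1$. The structure results of \S4 (Theorem \ref{zinftyJxiLVinfty}) then give that the limit has mean curvature $J\xi$ with $\xi=0$. Hence $|\G_*|$ is stationary, and $\z_\infty$ is locally constant on its regular part.

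\textbf{Stationarity alone is not enough.} Stationarity of $\g_*\times\R^{n-1}$ means only that $\g_*$ is a stationary $1$-varifold in $\R^2$, i.e. $\sum_j p_j=0$. A triple junction at $120^\circ$ is stationary but is not a union of lines. The extra input must come from the Lagrangian phase, which is where the Schoen--Simon idea enters. On each ray $\g_{p_j}\times\R^{n-1}$ with the orientation inherited from the currents, the phase is the constant $\arg p_j$ (mod $\pi$, depending on orientation), and this constant also limits $\th_\ell$ by Theorem \ref{Comp-LCHP}. Away from the axis $\{0\}\times\R^{n-1}$, Allard-type regularity (Theorem \ref{Allard}, using the density bounds near a ray with multiplicity one, or Lemma \ref{smallde1}) shows that $T_\ell$ is a graph over each ray with $C^{1,\a}$ control. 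Moreover, the phases $\th_\ell$ converge uniformly there to constants $\th^{(j)}$.

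\textbf{Pinning down the rays, and the main obstacle.} The key step is to use that $\th_\ell$ is single-valued and harmonic, with the Neumann--Poincar\'e and Hölder estimates of \S6, near the axis. Consider a small tubular region around $\{0\}\times\R^{n-1}$ that links the sheets. Since $\omega_H=d\th_\ell$ is exact with bounded primitive and $\int|\na\th_\ell|^2\to0$ locally, the Caccioppoli inequality (Lemma \ref{caccith}) together with the Sobolev inequality forces the oscillation of $\th_\ell$ across the junction to tend to zero. Hence all sheets carry the same phase in the limit: $\th^{(j)}\equiv\th_0$. The stationary limit is therefore calibrated by $\mathrm{Re}(e^{-\sqrt{-1}\th_0}\Om)$, with the constant phase fixed by the current orientation. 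Oriented rays in $\C$ with a common special Lagrangian phase all lie on a single line $e^{\sqrt{-1}\th_0}\R$, up to orientation reversal, which sends $\th_0$ to $\th_0+\pi$. Pairing outgoing and incoming orientations then yields that $\g_*$ is a union of lines.

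I expect the main obstacle to be justifying that the phase oscillation across the singular axis vanishes. Near the axis no uniform Allard control is available, and the currents could in principle converge with cancellation, as in Remark \ref{xinotinW1q}. I would address this by slicing along the $\R^{n-1}$ factor and applying the $(\ep,\de)$-indecomposability argument of Lemma \ref{Indecomp} to show the sheets are linked through the axis. I would then apply the Hölder estimate for $\th_\ell$ on the connected piece.
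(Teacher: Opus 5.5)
Your proposal has a genuine gap at its central step: the claim that all sheets carry one common limiting phase $\theta_0$. That claim is false, and your argument proves too much, since it would force $\gamma_*$ to lie in a \emph{single} line. Take $M_\ell=\mathbb{C}^n$ and $T_\ell=\bigl([|P_1|]+[|P_2|]\bigr)\llcorner B_{2\ell}(0)$, where $P_1=\mathbb{R}^n$ has phase $0$ and $P_2=\{(0,x',y_1,0^{n-1})\}$, oriented by $\partial_{y_1}\wedge\partial_{x_2}\wedge\cdots\wedge\partial_{x_n}$, has phase $\pi/2$. Locally each indecomposable component is a piece of one plane with constant phase, so every hypothesis holds. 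Yet $|T_\ell|\to|\Gamma_*|$ with $\gamma_*=\mathbb{R}\cup\sqrt{-1}\mathbb{R}$, which is a union of two lines carrying two different phases.

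The mechanism you propose for killing the oscillation across the axis cannot work. This example decomposes at the axis, with zero phase energy and a phase jump there. So no estimate of the form ``small $\int|\nabla\theta_\ell|^2$ forces small oscillation across the junction'' can hold. Moreover, at axis points the excess of $|\Gamma_*|$ relative to any single $n$-plane is bounded below as soon as $\gamma_*$ is not contained in a line. Hence Lemma~\ref{smallde1}, Lemma~\ref{Indecomp}, and the Neumann--Poincar\'e and H\"older estimates of Theorem~\ref{NP} and the lemma after it are unavailable exactly where you need them.

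Two further problems:
\begin{itemize}
\item Citing Theorem~\ref{Comp-LCHP} is circular. Its proof (Theorem~\ref{current-n}) goes through Corollary~\ref{GH-sing} and the description of tangent cones as $\sum_j|\gamma_j\times\mathbb{R}^{n-1}|$ with lines $\gamma_j$, and both rest on the lemma you are proving.
\item Your dyadic heuristic does not give $\int_K|H_{V_\ell}|^2\to0$ for an arbitrary sequence. You do not need it for stationarity: by \S4 the limit has mean curvature $J\xi\in L^2_{loc}$, with local bounds from Lemma~\ref{caccith}. This already rules out a singular first variation along $\{0\}\times\mathbb{R}^{n-1}$, which would be a multiple of $\sum_j p_j$.
\end{itemize}

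The missing idea is that only sheets actually \emph{joined} through the axis are constrained, and the joining comes from slicing, not from indecomposability. This is the paper's argument.
\begin{itemize}
\item Suppose $\gamma_*$ is not a union of lines. For a.e.\ $y$, the slice of $T_\ell$ by $\{x'=y\}$ is an integral $1$-current with no interior boundary, converging to $\gamma_*\times\{y\}$.
\item Some connected piece of this slice must therefore enter along one ray and leave along a ray other than the antipodal one. In the crossing-planes example no such piece is forced, which is consistent with the lemma.
\item Because $\theta_\ell$ is single-valued (a real function, not defined mod $2\pi$), its oscillation along that piece is at least some $\delta_{\gamma_*}>0$.
\item Integrate the Newton--Leibniz bound over $y\in B_r(0^{n-1})$ by coarea. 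Then apply Cauchy--Schwarz and the Caccioppoli bound $\int_{B_{2r}}|\nabla\theta_\ell|^2\le Cr^{n-2}$, which follows from bounded phase and Euclidean volume growth.
\item This gives $\delta_{\gamma_*}r^{n-1}\le C\,r^{(n-1)/2}\,r^{(n-2)/2}$, which fails for large $r$.
\end{itemize}
The last step is exactly where the hypothesis that $T_\ell$ lives on $B_{2\ell}(p_\ell)$ with $\ell\to\infty$ enters. Your proposal never uses that hypothesis, and it is what replaces the local energy decay you could not establish.
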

\begin{proof}
From the assumption, there is a sequence of diffeomorphisms $\Phi_\ell:\, B_{\sqrt{2}\ell}(0^{2n})\to \Phi_\ell(B_{\sqrt{2}\ell}(0^{2n}))\subset B_{3\ell/2}(p_\ell)$ with $\Phi_\ell(0^{2n})=p_\ell$ so that
\begin{equation*}\aligned
\lim_{\ell\to\infty}\sup_{B_k(0^{2n})}\left(\left|\Phi_\ell-Id\right|+\sum_{1\le i\le 3}|\na^i\Phi_\ell|\right)=0\qquad\mathrm{for\ each\ fixed}\ k\ge1.
\endaligned
\end{equation*}
Denote $T_\ell=(L_\ell,\vth_\ell,\vec{T}_\ell)$, $V_\ell=|T_\ell|=\vth_\ell|L_\ell|$.
Put $\R^{n+1}_y:=\R^{n+1}\times\{y\}$ for each $y\in\R^{n-1}$, and $\mathscr{B}^{\ell}_y:=\Phi_\ell(B_{1}(0^{n+1})\times\{y\})$ for each $y\in B_{\sqrt{2\ell^2-1}}(0^{n-1})\subset\R^{n-1}$. 
By slicing theorem (using $(n-1)$-times), $T_{\ell,y}:=T_\ell\llcorner \mathscr{B}_{y}^{\ell}$ is an integral 1-current with boundary in $\p \mathscr{B}_{y}^{\ell}$ for $\mathcal{H}^{n-1}$-a.e. $y\in B_{\sqrt{2\ell^2-1}}(0^{n-1})$.

Let us prove it by contradiction.
From Fubini's theorem, $|T_{\ell,y}|=V_\ell\llcorner \mathscr{B}^\ell_y$ converges as $\ell\to\infty$ to $\g_*\times\{y\}$ in the sense of Radon measure for $\mathcal{H}^{n-1}$-a.e. $y$.
We assume that $\g_*$ is not a union of lines.
Given $k>1$, there are a sequence of Borel sets $\Om_{\ell,k}\subset B_k(0^{n-1})$ with $\mathcal{H}^{n-1}(B_{k}(0^{n-1})\setminus\Om_{\ell,k})\to0$ as $\ell\to\infty$, and a sequence of connected components $T^*_{\ell,y}$ of $T_{\ell,y}$
 so that 
\begin{equation}\aligned\label{thxx'deG}
\sup_{x,x'\in L^*_{\ell,y}}\left|\th_\ell(x)-\th_\ell(x')\right|\ge\de_{\g_*}\qquad\mathrm{for\ each}\ y\in \Om_{\ell,k},
\endaligned
\end{equation}
where $L^*_{\ell,y}$ is the support of $T^*_{\ell,y}$, and $\de_{\g_*}$ is a positive constant depending only on $\g_*$.

From Theorem \ref{Low-V-Brp} and the argument of $U_k,U_k'$ just before \eqref{naLthk}, for each $\ell$ there is a finite cover $\{U_{\ell,k}\}_{1\le k\le N_{\ell}}$ of $\overline{B}_{3\ell/2}(p_\ell)$ so that for every $k$, $T_\ell\llcorner U_{\ell,k}$ has finite indecomposable components $T_{\ell,k,1},T_{\ell,k,2},\cdots,T_{\ell,k,m_{\ell,k}}$ satisfying that
the phase $\th_{\ell}\in W^{1,2}_{\mu_V}(L_{\ell,k,j})$ is harmonic on each $L_{\ell,k,j}:=\text{spt}\, T_{\ell,k,j}$ w.r.t. $\mu_{V_\ell}$, where $m_{\ell,k}$ is bounded by a constant $m_*$ independent of $\ell,k$. 
From Besicovitch covering lemma, there is an integer $n_*>1$ depending only on $n$ so that
\begin{equation}\aligned
\sharp\{j\in\{1,\cdots,N_\ell\}|\ x\in U_{\ell,k}\}\le n_*\qquad \mathrm{for\ any}\ x\in \overline{B}_{3\ell/2}(p_\ell).
\endaligned
\end{equation}

For each $\ell,k,j$, let $\tilde{\th}_{\ell,k,j}$ be a Lipschitz function on $L_\ell$ so that 
\begin{equation}\aligned\label{appthell|}
\sup_{L_{\ell,k,j}}\left|\th_\ell-\tilde{\th}_{\ell,k,j}\right|^2+\int_{L_{\ell,k,j}}\left|\na^{L_\ell}\th_\ell-\na^{L_\ell}\tilde{\th}_{\ell,k,j}\right|^2d\mu_{V_\ell}<\left(\f1{\ell N_\ell m_*}\right)^2.
\endaligned
\end{equation}
Combining Newton-Leibniz formula for $\tilde{\th}_{\ell,k,j}$, it follows that
\begin{equation}\aligned
\sup_{x,x'\in L^*_{\ell,y}}\left|\th_\ell(x)-\th_\ell(x')\right|\le\sum_{k,j}\int_{L^*_{\ell,y}\cap L_{\ell,k,j}}\left|\na^{L_\ell}\tilde{\th}_{\ell,k,j}\right|d\mathcal{H}^1+\f2{\ell}.
\endaligned
\end{equation}
Then combining \eqref{thxx'deG}, for each $y\in \Om_{\ell,k}$ we get
\begin{equation}\aligned\label{tthBly}
\de_{\g_*}-\f2\ell\le\sup_{x,x'\in L^*_{\ell,y}}\left|\th_\ell(x)-\th_\ell(x')\right|-\f2\ell\le\sum_{k,j}\int_{\mathscr{B}^\ell_y\cap L_{\ell,k,j}}\left|\na^{L_\ell}\tilde{\th}_{\ell,k,j}\right|d\mu_{V_\ell}.
\endaligned
\end{equation}
Set $M_{\ell,r}:=\Phi_\ell(B_1(0^{n+1})\times B_r(0^{n-1}))\subset M_\ell$.
Integrating \eqref{tthBly} over $\Om_{\ell,k}$ and using the co-area formula yield
\begin{equation}\aligned
\left(\de_{\g_*}-\f2\ell\right)\mathcal{H}^{n-1}(\Om_{\ell,r})\le&\sum_{k,j}\int_{M_{\ell,r}\cap L_{\ell,k,j}}\left|\na^{L_\ell}\tilde{\th}_{\ell,k,j}\right|d\mu_{V_\ell}\\
\le&\left(\mu_{V_\ell}(M_{\ell,r})\right)^{1/2}\left(\sum_{k,j}\int_{M_{\ell,r}\cap L_{\ell,k,j}}\left|\na^{L_\ell}\tilde{\th}_{\ell,k,j}\right|^2d\mu_{V_\ell}\right)^{1/2}.
\endaligned
\end{equation}
Combining Lemma \ref{caccith} and \eqref{appthell|} with Minkowski inequality, for large $\ell$ we have
\begin{equation}\aligned\label{kkkkk}
\f{\de_{\g_*}\omega_{n-1}}2r^{n-1}\le&\left(\mu_{V_\ell}(M_{\ell,r})\right)^{1/2}\left(\sum_{k,j}\int_{B_{2r}(p_\ell)\cap L_{\ell,k,j}}\left|\na^{L_\ell}\tilde{\th}_{\ell,k,j}\right|^2d\mu_{V_\ell}\right)^{1/2}\\
\le&\left(\mu_{V_\ell}(M_{\ell,r})\right)^{1/2}\left(\left(\sum_{k,j}\int_{B_{2r}(p_\ell)\cap L_{\ell,k,j}}\left|\na^{L_\ell}\th_\ell\right|^2d\mu_{V_\ell}\right)^{1/2}+\f1\ell\right)\\
\le&\left(\mu_{V_\ell}(M_{\ell,r})\right)^{1/2}\left(m_*n_*\left(\int_{B_{2r}(p_\ell)}\left|\na^{L_\ell}\th_\ell\right|^2d\mu_{V_\ell}\right)^{1/2}+\f1\ell\right)\\
\le&\left(\mu_{V_\ell}(M_{\ell,r})\right)^{1/2}\left(cr^{\f{n-2}2}+\f1\ell\right),
\endaligned
\end{equation}
where $c$ is a general constant independent of $\ell,r$.
From Theorem \ref{Ahlfors}, after a suitable covering argument, we get
$$\mu_{V_\ell}(M_{\ell,r})\le cr^{n-1}.$$
However, the above inequality \eqref{kkkkk} fails for the sufficiently large $r>0$ as $\ell\to\infty$.
Hence, we deduce that $\g_*$ must be a union of lines. This completes the proof.
\end{proof}

Let $(M^n,g,J,\omega)$ be a Ricci-flat K\"ahler manifold with a holomorphic $n$-form $\Om$ on $U$ satisfying $(-1)^{n^2/2}\Om\wedge\overline{\Om}=2^{n}\omega^n/n!$. 
\begin{lemma}\label{constantf}
Let $T=(L,\vth,\vec{T})$ be an integral Lagrangian current in $U$ so that its phase $\th$ has bounded variation with $J\na^L\th$ being the mean curvature $H$ of $|T|$.
If $f$ is a bounded integer-valued function on $L$ of zero bounded variation w.r.t. $\mu_{T}$, then  $f$ is a constant on each indecomposable component of $T$.
\end{lemma}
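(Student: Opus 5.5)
The plan is to decompose $T$ along the level sets of $f$ and to show that each level set carries a boundaryless piece of $T$. Since $f$ is bounded and integer-valued, it takes only finitely many values $k_1<\cdots<k_N$ $\mu_T$-a.e. Set $E_a=\{f=k_a\}$ and $T_a=T\llcorner E_a$. Then $T=\sum_a T_a$, and $\mathbf{M}(T\llcorner W)=\sum_a\mathbf{M}(T_a\llcorner W)$ for every $W\subset\subset U$, because the $E_a$ are disjoint.

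Suppose we know that $\p T_a\llcorner U=0$ for every $a$. Then the boundary masses are trivially additive (all of them vanish), and each $T_a$ is integral. If two of the $T_a$ were nonzero on an indecomposable component $T'$ of $T$, then $T'\llcorner E_a$ and $T'\llcorner(U\setminus E_a)$ would be a decomposition of $T'$ in the sense of Definition \ref{B-Gindec}, which is a contradiction. Hence on each indecomposable component $f$ is a.e. a single constant. So the whole statement reduces to proving $\p T_a\llcorner U=0$.

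The first step is to pass from $f$ to the indicator functions $h_a=\chi_{E_a}$. I would write $h_a=\varphi_a(f)$ $\mu_T$-a.e., where $\varphi_a$ is a smooth function equal to $1$ near $k_a$ and $0$ near the other $k_b$; a Lagrange interpolation polynomial in $f$ would also do. I then want to show that $h_a$ again has zero bounded variation, that is,
\[
\int h_a\big(\lan H,X\ran+\div_LX\big)\,d\mu_T=0\qquad\text{for all } X\in\mathfrak{X}_c(U).
\]
This needs a chain rule in the class $BV_{\mu_T}$. I would first try to obtain it from the Riesz representation \eqref{phinuphina}: the measure $\mu_f$ vanishes, and I would argue on the rectifiable charts $\mathcal{C}_k$ as in the proof of Lemma \ref{zinftyViVinftymass}. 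If that fails, I would fall back on slicing $T$ by Lipschitz functions and using the one-dimensional fact that an integer-valued function with zero distributional derivative on a $1$-current is locally constant along the slices.

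The second step, which I expect to be the main obstacle, is to turn zero variation of $h_a$ into $\p(T\llcorner E_a)=0$. Varifold first variation only involves the divergence of vector fields, while $\p T_a(\e)=T(h_a\,d\e)$ requires arbitrary $(n-1)$-forms $\e$. To bridge this I use the Lagrangian structure. For an ambient vector field $Y$, holomorphicity of $\Om$ gives $\iota_{JY}\Om=\sqrt{-1}\,\iota_Y\Om$. Together with $\Om|_L=e^{\sqrt{-1}\th}\mathrm{vol}_L$, this shows that on a Lagrangian plane every $(n-1)$-covector is of the form $\mathrm{Re}\big(e^{-\sqrt{-1}\th}\iota_Y\Om\big)$ for a suitable $Y$. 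Since $d\Om=0$, we have $d(\iota_Y\Om)=\mathcal{L}_Y\Om$, and
\[
T\big(h_a\,\bar\z\, d\iota_Y\Om\big)
\]
can be written as $\int h_a\,\bar\z\,e^{\sqrt{-1}\th}\big(\div_L Y^{T}+\sqrt{-1}\,\div_L(\cdot)\big)\,d\mu_T$ plus terms involving $\na\bar\z$, where $\z$ is a smooth approximation of $e^{\sqrt{-1}\th}$. I then use three facts: the formula \eqref{thnaLiejthX} for $e^{j\sqrt{-1}\th}$, the identity $H=J\na^L\th$, and zero variation of $h_a$. With these, all terms cancel, giving $T(h_a\,d\eta)=0$ for a dense class of forms $\eta$, hence for all $\eta$. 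This is the same calibration computation as in the Lemma preceding Theorem \ref{thW12-H}, now carried out with the extra weight $h_a$.

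The delicate point is the product $h_a\,e^{\sqrt{-1}\th}$. Both factors are only of bounded variation, so I would approximate $e^{\sqrt{-1}\th}$ by the functions $\z_\ell$ used in the proof of Theorem \ref{thW12-H}, for which the error terms are controlled by $\int|\na^L\z_\ell|\,d\mu_T$, and check that no singular part of $\na^L\th$ charges the interface between different $E_a$.
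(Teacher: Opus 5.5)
Your route is the paper's. You isolate a level set of $f$ by composing with a cutoff, then use $\iota_X\Omega$, $d\Omega=0$ and $H=J\nabla^L\theta$ to show that $T\llcorner\{f=k\}$ has no boundary in $U$, and you finish by indecomposability. The differences are bookkeeping: you treat all level sets and approximate $e^{\sqrt{-1}\theta}$ by $\zeta_\ell$, while the paper uses only the top level set.

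\textbf{The gap is Step 1, and it cannot be closed.} Zero variation of $f$ does \emph{not} imply zero variation of $\varphi_a(f)$ when sheets meet along an $(n-1)$-dimensional set. In $\mathbb{C}^n$ let $P_k=\{(te^{\sqrt{-1}\pi k/3},x_2,\dots,x_n):t,x_j\in\mathbb{R}\}$ for $k=0,1,2$, oriented by $e^{\sqrt{-1}\pi k/3}\wedge\partial_{x_2}\wedge\cdots\wedge\partial_{x_n}$, and let $T=\sum_k[|P_k|]$ in a ball $U$ about $0$. Then $\partial T\llcorner U=0$, $H=0$, $\theta\equiv\pi k/3$ on $P_k$, and $\nabla^L\theta=0$ in the $BV_{\mu_T}$ sense, so every hypothesis holds. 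Let $P_j^+$ ($j=0,\dots,5$) be the open half-plane $\{t>0\}$ in direction $\nu_j=e^{\sqrt{-1}\pi j/3}$, and put $f=c_j$ on $P_j^+$ with $(c_0,\dots,c_5)=(3,-2,0,1,0,-2)$. The divergence theorem on each half-plane gives
\[
\int f\,\mathrm{div}_LX\,d\mu_T=-\int_{\mathbb{R}^{n-1}}\Big\langle X(0,x'),\sum_j c_j\nu_j\Big\rangle\,dx'=0,
\]
since $\sum_jc_j\nu_j=3-1-2(\nu_1+\bar\nu_1)=0$. So $f$ has zero variation. But $\chi_{\{f=3\}}=\chi_{P_0^+}$ does not, and $T\llcorner\{f=3\}=[|P_0^+|]$ has boundary on $\{0\}\times\mathbb{R}^{n-1}$.

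\textbf{Consequences.} By the constancy theorem on $P_0^+\cap U$, no nonzero boundaryless current carried by $L$ lies in $\{f=3\}$. Hence $f$ is non-constant on every indecomposable component charging $P_0^+$, in every decomposition of $T$. So the lemma as stated is false, and neither of your fallbacks can work. Arguments on $\mu_T$-a.e.\ charts cannot see the junction, where the whole defect sits. Slicing by $x_2,\dots,x_n$ produces the six-ray star in $\mathbb{C}$, where your `one-dimensional fact' fails.

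The paper's proof makes the same unjustified assertion (``$\nabla^L\chi=0$ from the assumption $\nabla^Lf=0$''), so this hole is shared with the paper. What your Step 2 does establish is a corrected statement: if each $\chi_{\{f=k\}}$ has zero variation, then $\partial(T\llcorner\{f=k\})\llcorner U=0$. This is automatic when $f$ takes at most two values, since constants have zero variation by the definition of $H$.

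\textbf{A second issue in your final step.} Even granting that corrected statement, Bombieri--Giusti decompositions are not unique. Take two oriented crossing lines in $\mathbb{C}$ and $f=\chi_{\{y=x\}}$. Splitting them into two bent lines through $0$ gives a decomposition into indecomposable components on which $f$ is not constant. You also cannot rerun the argument on a component $T'$, because $|T'|$ need not have mean curvature $J\nabla^L\theta$ (a bent line has a corner). What one actually gets is that $T$ splits along the level sets of $f$, and that $f$ is constant when $T$ itself is indecomposable.
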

\begin{proof}
Let $L_*=\{x\in L:\,f(x)=\mathrm{ess\,sup}_{L}f\}$, and $T_*=(L_*,\vth,\vec{T})$.
There exists a constant $\ep_*>0$ so that $f(x)<\mathrm{ess\,sup}_{L}f-\ep_*$ for $\mu_{T}$-a.e. $x\in L\setminus L_*$. There is a $C^1$-function $\phi$ on $\R$ so that $\phi\equiv1$ on $[\mathrm{ess\,sup}_{L_\infty}f,\infty)$ and $\phi\equiv0$ on $(-\infty,\mathrm{ess\,sup}_{L}f-\ep_*)$.
Put $\chi:=\phi\circ f$. Then $\na^{L}\chi=0$ from the assumption $\na^{L}f=0$. 

Given an $(n-1)$-form $\e\in\mathcal{D}^{n-1}(U)$, we define a vector field $X\in\mathfrak{X}_c(U)$ by the dual of 1-form $\lan\e,\Om\ran$. Namely, $\e=\Om(X)=i_X\Om$. 
From \eqref{OmLthae}, we have
\begin{equation}\aligned\label{esqrt-1thdivLX}
e^{-\sqrt{-1}\th}\mathrm{div}_{L}X=e^{-\sqrt{-1}\th}\lan d_{L}\e,\Om\ran=\lan d_{L}\e,\mathrm{vol}_{L}\ran=\lan d\e,\vec{T}\ran
\endaligned\qquad\mu_{T}-a.e..
\end{equation}
From $\na^{L}\chi=0$, we have
$$\na^{L}(\chi e^{-\sqrt{-1}\th})=-\sqrt{-1}\chi e^{-\sqrt{-1}\th}\na^{L}\th=\sqrt{-1}\chi e^{-\sqrt{-1}\th}JH,$$ then
\begin{equation}\aligned
\int_{U}\left(\lan H,X\ran+\mathrm{div}_{L}X\right)\chi e^{-\sqrt{-1}\th} d\mu_{T}=-\sqrt{-1}\int_U\chi e^{-\sqrt{-1}\th}\lan JH,X\ran d\mu_{T}.
\endaligned
\end{equation}
Combining \eqref{esqrt-1thdivLX}, we obtain
\begin{equation}\aligned\label{intdeTinfty}
\int_{U}\lan d\e,\vec{T}\ran d\mu_{T_*}=-\int_U e^{-\sqrt{-1}\th}\lan H+\sqrt{-1}JH,X\ran d\mu_{T_*}.
\endaligned
\end{equation}

We consider a point $p\in L_*$ so that the tangent space $T_pL$ is an $n$-plane. Then we can choose an orthonormal basis $\{e_i\}_{1\le i\le n}$ of $T_pL$ so that $\vec{T}\big|_p=e_1\wedge\cdots\wedge e_n$.
There are a small $\ep_0>0$, and a complex coordinate chart $(B_{\ep_0}(p),Z)$ so that $B_{\ep_0}(p)\subset U$, $Z=(z_1,\cdots,z_n): B_{\ep_0}(p)\to\Phi(B_{\ep_0}(p))\subset\C^n$ is bi-holomorphic with $z_i=(x_i,y_i)$ and $\f{\p}{\p z_i}=\f{\p}{\p x_i}+\sqrt{-1}\f{\p}{\p y_i}$. 
Moreover, we can assume $\f{\p}{\p x_i}=e_i$ at $p$ since $T_pL$ is Lagrangian in $\C^n$.
From the definition of the phase $\th$, 
$$\Om=e^{\sqrt{-1}\th(p)} dz_1\wedge\cdots\wedge dz_n\qquad \text{at}\ p.$$
From the definition of $X$, there are constants $\e_j$ so that
$$X=\sum_j\e_j\f{\p}{\p z_j}=\sum_j\e_j(e_j+\sqrt{-1}Je_j)\qquad \text{at } p.$$ 
We write $H=\sum_iH_iJe_i$ at $p$ for constants $H_i$. Then at $p$ we have
\begin{equation}\aligned
\lan H+\sqrt{-1}JH,X\ran=\sum_{i,j}H_i\e_j\lan Je_i-\sqrt{-1}e_i,e_j+\sqrt{-1}Je_j\ran=0.
\endaligned
\end{equation}
This completes the proof from \eqref{intdeTinfty}.
\end{proof}

We assume that $U$ is isometrically embedded in $\R^m$ with 
$\p U\cap\mathbf{B}_1=\emptyset$.
Let $T_i=(L_i,\vth_i,\vec{T}_i)$ be a sequence of integral Lagrangian currents in $U$ with single-valued harmonic phase $\th_i$ and $\p T_i\llcorner U=0$. 
We assume that there are positive constants $\La,\tilde{\La}$ so that
$$\sup_i\left(\mathbf{M}(T_i)+\mathbf{M}(\p T_i)\right)\le\La,\qquad\sup_i\sup_{L_i}|\th_i|\le \tilde{\La}.$$
Let $V_i=|T_i|$ for each $i$, and $V_\infty=\vth_\infty|L_\infty|$ be the limit of $V_i$ defined at the beginning of \S4.
Up to a subsequence, we assume that $\th_i$ converges to a function $\th_\infty$ on $L_\infty$. Namely,
\begin{equation}\aligned\label{fthitofthinfty}
\lim_{i\to\infty}\int_U f \th_i d\mu_{V_i}=\int_U f\th_\infty d\mu_{V_\infty}\qquad\mathrm{for\ each}\ f\in C_c(U).
\endaligned
\end{equation}
Analog to Theorem \ref{zinftyJxiLVinfty}, we have
\begin{theorem}\label{JnaLinftythinfty}
$\th_\infty\in BV_{\mu_{V_\infty}}(U)$ and
\begin{equation}\aligned
J\na^{L_\infty}\th_\infty=H_{V_\infty}\qquad \mu_{V_\infty}-a.e..
\endaligned
\end{equation}
\end{theorem}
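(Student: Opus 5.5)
Since the statement is said to be analogous to Theorem \ref{zinftyJxiLVinfty}, I would follow that proof, with the function $\th_i$ in place of $e^{2\sqrt{-1}\th_i}$. First I fix the objects. By Theorem \ref{thW12-H} and the discussion before \eqref{naLthk}, each $V_i$ has generalized mean curvature $H_i=J\na^{L_i}\th_i$. By \eqref{Integrable-W} applied locally (or Lemma \ref{caccith} with $\ell=2$ and a cutoff), $\int_{U'}|\na^{L_i}\th_i|^2d\mu_{V_i}$ is bounded uniformly in $i$ on each $U'\subset\subset U$. This bound uses $|\th_i|\le\tilde\La$ and the mass bound, so hypothesis \eqref{Ass-nathiLa} holds locally with $q=2$. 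Hence the vector field $\xi$ of \eqref{limitxi} exists, with $J\xi=H_{V_\infty}$, and the goal becomes
\begin{equation*}
\int_U\th_\infty\left(\lan H_{V_\infty},X\ran+\div_{L_\infty}X\right)d\mu_{V_\infty}=-\int_U\lan \xi,X\ran d\mu_{V_\infty}
\end{equation*}
for every $X\in\mathfrak{X}_c(U)$. This identity gives both $\th_\infty\in BV_{\mu_{V_\infty}}(U)$ (the right side is bounded by $\|\xi\|_{L^1}\sup|X|$) and $\na^{L_\infty}\th_\infty=\xi$, i.e. $J\na^{L_\infty}\th_\infty=H_{V_\infty}$.

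For each $i$, the single-valued harmonic phase gives the exact identity
\begin{equation*}
\int_U\th_i\left(\lan H_i,X\ran+\div_{L_i}X\right)d\mu_{V_i}=-\int_U\lan \na^{L_i}\th_i,X\ran d\mu_{V_i},
\end{equation*}
which is the statement $\th_i\in BV_{\mu_{V_i}}(U)$ with gradient $\na^{L_i}\th_i$, obtained from the partition-of-unity construction before \eqref{naLthk}. By the definition of $\mathcal{F}$ and \eqref{limitxi}, the right side converges to $-\int\lan\xi,X\ran d\mu_{V_\infty}$. The remaining task is to pass to the limit on the left, where $\th_i$ multiplies $\div_{L_i}X$ and $\lan H_i,X\ran$. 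Neither factor converges strongly, so weak convergence of $\th_i\,d\mu_{V_i}$ from \eqref{fthitofthinfty} is not directly enough.

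I would handle this exactly as in the proof of Theorem \ref{zinftyJxiLVinfty}. Fix $\ep>0$ and take the Borel partition $\mathcal{C}_k\subset L_\infty$, refined so that $\th_\infty$ is within $\ep$ of a constant $\Th_k$ on $\mathcal{C}_k$. Take the approximating sets $\mathcal{C}^i_k\subset L_i$ with $\mu_{V_i}\llcorner\mathcal{C}^i_k\to\mu_{V_\infty}\llcorner\mathcal{C}_k$, so that the tangent planes also converge. Then find good subsets $\tilde{\mathcal{C}}^i_k$ with $|\th_i-\Th_k|<\ep$ and $\mu_{V_i}(\mathcal{C}^i_k\setminus\tilde{\mathcal{C}}^i_k)\to0$. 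On each piece the constant $\Th_k$ passes through the limit, because $\int_{\mathcal{C}^i_k}(\lan H_i,X\ran+\div_{L_i}X)$ converges by varifold convergence together with \eqref{limitxi}. The errors on the good sets are $O(\ep)$ by Cauchy–Schwarz and the uniform $L^2$ bound on $H_i$, and the bad sets are handled by the $\de$-splitting of \eqref{deSththi}–\eqref{limisumkCki0}.

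The main obstacle is producing the good sets $\tilde{\mathcal{C}}^i_k$. For $e^{2\sqrt{-1}\th_i}$ this came from Lemma \ref{zinftyViVinftymass}: $|\z_\infty|=1$ forces near-constancy in measure. For $\th_i$ that argument is unavailable, since weak limits of real functions need not satisfy $|\th_\infty|=\lim|\th_i|$. I would first try to get strong convergence of $\th_i$ from compactness. The uniform $W^{1,2}$ bound, the Ahlfors regularity of Theorem \ref{Ahlfors}, and the Sobolev inequality \eqref{Sob} should give a Rellich-type statement: $\int|\th_i-\th_i\circ\pi|$ is small, where $\pi$ is a near-isometric identification of $\mathcal{C}^i_k$ with $\mathcal{C}_k$. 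The multiplicity would be handled by noting that on each $\mathcal{C}_k$ of multiplicity $\vth_\infty$, the different sheets of $L_i$ enter with weights whose average is $\th_\infty$. If sheets carry different limiting values, I would instead apply the argument separately to the functions $\th_i^j$ for all $j$ (using the Caccioppoli bounds for powers). This shows that the Young-measure limit is a sum of Dirac masses. The identity is linear in the function, so summing over sheets recovers the statement for the averaged $\th_\infty$.
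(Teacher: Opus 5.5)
The setup is fine: the uniform local $L^2$ bound on $\na^{L_i}\th_i$ from Lemma \ref{caccith}, the existence of $\xi$, and the target identity. The proof breaks at the good sets $\tilde{\mathcal C}^i_k$.

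First, note that the term $\int\th_i\,\div_{L_i}X\,d\mu_{V_i}$ is harmless. Since $V_\infty$ is rectifiable, any limit of $\th_iV_i$ on the Grassmann bundle is $\th_\infty V_\infty$, so this term converges. The only real issue is the bilinear term $\int\th_i\lan H_i,X\ran d\mu_{V_i}=\int\th_i\lan J\na^{L_i}\th_i,X\ran d\mu_{V_i}$.

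For that term, sets of almost full measure on which $|\th_i-\Th_k|<\ep$ do not exist in general. When $\vth_\infty\ge2$, distinct sheets of $L_i$ over $\mathcal C_k$ can carry phases differing by nonzero multiples of $\pi$. Reversing orientation shifts the phase by $\pi$, and choosing another branch on a separate piece shifts it by $2\pi$. So $\th_i$ stays a fixed distance from its weak limit; Remark \ref{TitoTinftyW} is an instance, with $\th_i\in\{0,\pi\}$ while $\th_\infty=\pi/2$.

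The Rellich-type compactness you invoke is also unavailable. Uniform $W^{1,2}$ bounds, Ahlfors regularity and the Michael--Simon inequality give no $L^2$-compactness without a uniform Neumann--Poincar\'e inequality. The paper obtains such an inequality only under the small-density hypotheses of Theorem \ref{NP}. In the multi-sheet situation the conclusion you want is simply false.

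Your fallback does not close the gap either. The identity is linear in the function only for a fixed varifold. After splitting into sheets, the limit of $\int\th_i\lan H_i,X\ran d\mu_{V_i}$ is $\sum_j\int\th^{(j)}\lan H^{(j)},X\ran$. This equals $\int\th_\infty\lan H_{V_\infty},X\ran d\mu_{V_\infty}$ only if all sheets contribute the same mean curvature, i.e. only if the differences $\th^{(j)}-\th^{(j')}$ have zero gradient. Nothing in your argument provides this. Applying the argument to the powers $\th_i^j$ runs into the same product term $\int\th_i^j\lan H_i,X\ran$, and knowing moments does not by itself make the Young measure atomic.

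The missing idea, on which the paper's proof rests, is that $e^{2\sqrt{-1}\th}$ depends only on the unoriented Lagrangian tangent plane. Hence, over a point of $L_\infty$, all sheets have phases congruent modulo $\pi$. The multiplicity there is some integer $n\le\Th_*$, and $\th_\infty$ is the average over the sheets. So $n\th_\infty$ and $n\th_i$ agree modulo $\pi$ up to a small error. Taking $\ell(\Th_*)=\mathrm{lcm}(1,\dots,\Th_*)$ then gives $e^{2\ell(\Th_*)\sqrt{-1}\th_i}\to e^{2\ell(\Th_*)\sqrt{-1}\th_\infty}$ in the sense of measure. This limit has modulus one, so the near-constancy on good sets from Lemma \ref{zinftyViVinftymass} and the argument of Theorem \ref{zinftyJxiLVinfty} apply to this function. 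That is how the paper passes to the limit in the product term. Without this mod-$\pi$/lcm reduction, your proposal has no mechanism to control $\int\th_i\lan H_i,X\ran d\mu_{V_i}$.
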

\begin{proof}
Given an open set $U_*\subset\subset U$, we define an integer $\Th_*=\sup_{x\in U_*}\Th_{x}(\mu_V)$.
Let $\ell(\Th_*)=\mathrm{lcm}(1,\cdots,\Th_*)$ denote the least common multiple of 1 to $\Th_*$.
We claim 
\begin{equation}\aligned\label{lTh*thi}
\lim_{i\to\infty}\int_U f e^{2\ell(\Th_*)\sqrt{-1}\th_i} d\mu_{V_i}=\int_U fe^{2\ell(\Th_*)\sqrt{-1}\th_\infty} d\mu_{V_\infty}\qquad\mathrm{for\ each}\ f\in C_c(U).
\endaligned
\end{equation}
If \eqref{lTh*thi} is true, then we can follow the proof of Theorem \ref{zinftyJxiLVinfty}, and deduce $\th_\infty\in BV_{\mu_{V_\infty}}(U)$ and $J\na^{L_\infty}\th_\infty=H_{V_\infty}$ $\mu_{V_\infty}$-a.e..

Now let us prove the claim \eqref{lTh*thi}.
For a small $\ep>0$, let $\{\mathcal{C}_k\}_k$, bi-Lipschitz maps $\{\phi_k\}_k$ and $\{\mathcal{C}^{i}_k\}_k$ be defined as in the proof of Lemma \ref{zinftyViVinftymass}. From \eqref{fthitofthinfty}, it follows that
\begin{equation}\aligned
\lim_{i\to\infty}\left|\int_{\mathcal{C}^i_k}\th_i d\mu_{V_i}-\int_{\mathcal{C}_k}\th_\infty d\mu_{V_\infty}\right|=0.
\endaligned
\end{equation}
Up to a refinement of $\mathcal{C}_k$ for each $k$, there is a point $x_k\in\mathcal{C}_k$ so that
\begin{equation}\aligned\label{C_kthinftyxk}
\sup_{\mathcal{C}_{k}}|\th_\infty-\th_\infty(x_k)|<\ep.
\endaligned
\end{equation}
Moreover, for each $i,k$ there is a point $x^i_k\in\mathcal{C}^i_k$ so that
\begin{equation}\aligned\label{C_kthinftyxki}
\sup_{\mathcal{C}^i_k}\left|e^{2\sqrt{-1}\th_i}-e^{2\sqrt{-1}\th_i(x^i_k)}\right|d\mu_{V_i}<\ep.
\endaligned
\end{equation}
Combining the above 3 inequalities, there are an integer $n_k\in\{1,\cdots,\Th_*\}$ and $n_{k,i}\in\N$ so that
\begin{equation}\aligned\label{nkthinkithinfty}
\left|n_k\th_i(x^i_k)-n_{k,i}\pi-n_k\th_\infty(x_k)\right|<\psi(\ep,i^{-1}).
\endaligned
\end{equation}
Here, $\psi(\ep,i^{-1})$ denotes a general positive function of $\ep,i$ satisfying $\lim_{\ep,i^{-1}\to0}\psi(\ep,i^{-1})=0$.
From \eqref{nkthinkithinfty}, it follows that
\begin{equation}\aligned
\left|e^{2\ell(\Th_*)\sqrt{-1}\th_i(x^i_k)}-e^{2\ell(\Th_*)\sqrt{-1}\th_\infty(x_k)}\right|<\psi(\ep,i^{-1}).
\endaligned
\end{equation}
Combining the definitions of $\mathcal{C}^i_k,\mathcal{C}_k$ and \eqref{C_kthinftyxk}\eqref{C_kthinftyxki}, for each $f\in C_c(U)$
\begin{equation}\aligned
\left|\int_{\mathcal{C}^i_k}f e^{2\ell(\Th_*)\sqrt{-1}\th_i} d\mu_{V_i}-\int_{\mathcal{C}_k} fe^{2\ell(\Th_*)\sqrt{-1}\th_\infty} d\mu_{V_\infty}\right|<\psi(\ep,i^{-1})\mu_{V_\infty}(\mathcal{C}_k)\sup_U|f|.
\endaligned
\end{equation}
By the definitions of $\mathcal{C}_k$ and $\mathcal{C}^i_k$, we can prove the claim \eqref{lTh*thi} by summation the above inequality on $k$. This completes the proof.
\end{proof}
Given a Lipschitz $\phi\in \mathbf{Lip}_c(U)$, we have
\begin{equation}\aligned
0=\int_{U} \lan \na^{L_i}\th_i,\na\phi\ran d\mu_{V_i}=\int_{U} \lan H_i,J\na\phi\ran d\mu_{V_i}.
\endaligned
\end{equation}
Taking the limit (up to choosing the subsequence) implies
\begin{equation}\aligned\label{JnaLinftyJnaphi}
0=\int_{U} \lan J\na^{L_\infty}\th_\infty,J\na\phi\ran d\mu_{V_\infty}=\int_{U} \lan \na^{L_\infty}\th_\infty,\na\phi\ran d\mu_{V_\infty}.
\endaligned
\end{equation}

Given a constant $k\ge0$ and a function $\varphi\in \mathbf{Lip}_c(L_\infty)$ with $\varphi\ge0$. From Lemma \ref{duiduikvar}, we have
\begin{equation}\aligned\label{thinftykVinfty}
\int|\th_\infty|^{k}|\na^{L_\infty}\th_\infty|^2\varphi d\mu_{V_\infty}\le
\int\lan |\th_\infty|^{k}\th_\infty\na^{L_\infty}\th_\infty,\na^{L_\infty}\varphi\ran d\mu_{V_\infty}.
\endaligned
\end{equation}
Combining \eqref{BrxnaVthell}, for $\mathbf{B}_s(p)\subset\mathbf{B}_1$ we get
\begin{equation}\aligned\label{naLinfty|thinfty|}
&\int_{\mathbf{B}_s(p)}\lan \r\na^{L_\infty}\r,\na^{L_\infty} |\th_\infty|^{k+2}\ran d\mu_{V_\infty}\\
\ge&\f{(k+1)(k+2)}2\int_{\mathbf{B}_s(p)}(s^2-\r^2)|\th_\infty|^{k}|\na^{L_\infty}\th_\infty|^2 d\mu_{V_\infty}.
\endaligned
\end{equation}
\begin{remark}
Using \eqref{naLinfty|thinfty|}, we can deduce the monotonicity formula $\mathbf{m}_\mathbf{p}(\mu_{V_\infty},r)$, i.e., \eqref{mon-bfTh} holds with $V,\th,H_V$ replaced by $V_\infty,\th_\infty,\xi$, respectively. Moreover, we can also deduce the Sobolev inequality for functions in the form of $\th^\ell \varphi$ with all $\ell\ge1$ and $\varphi\in C^1_c(\mathbf{B}_{1})$.
\end{remark}

Inspired by Cheeger-Colding \cite{CCo1},
we define some approximate sets related to the singular sets of HSL varifolds as follows.
Let $\mathcal{S}_{L_\infty,\ep,r}$ denote the subsets in $L_\infty$ containing all the points $x$ satisfying
$$d_{GH}\left(L_\infty\cap B_{r}(x),\cup_j P_j\cap B_{r}(0^{2n})\right)\ge\ep r^n$$
for any finite collection of Lagrangian planes $P_j$ through origin in $\C^n.$ Here, $B_{r}(0^{2n})$ denotes the ball in $\C^n$ centered at the origin with radius $r$.
Let
$$\mathcal{S}_{L_\infty,\ep}=\bigcap_{0<r\le1}\mathcal{S}_{L_\infty,\ep,r},
\qquad\mathcal{S}_{L_\infty}=\bigcup_{\ep>0}\mathcal{S}_{L_\infty,\ep}=\bigcup_{\ep>0}\bigcap_{0<r\le1}\mathcal{S}_{L_\infty,\ep,r}.$$

\begin{corollary}\label{GH-sing}
$\mathcal{S}_{L_\infty}$ has Hausdorff dimension $\le n-2$. 
\end{corollary}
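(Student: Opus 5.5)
The plan is Federer's dimension reduction applied to tangent cones of $V_\infty$, combined with a classification of cones that are translation invariant along $(n-1)$ directions.

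\textbf{Step 1: tangent cones exist and are cones.} By the Remark after \eqref{naLinfty|thinfty|}, the almost monotonicity \eqref{mon-bfTh} holds for $V_\infty,\th_\infty,\xi$. Hence \eqref{Integrable-W} holds as well, so $\int_0^{R}s^{1-n}\int_{\mathbf{B}_s(p)}|\xi|^2\,d\mu_{V_\infty}\,ds<\infty$. Arguing as before Corollary \ref{lower}, every tangent varifold of $V_\infty$ at $p\in L_\infty$ is then a stationary Lagrangian cone with integer multiplicity. The density $\Th_p(\mu_{V_\infty})$ exists and is upper semicontinuous, by \eqref{almost-mono-Th} and Proposition \ref{semi-bfTh}.

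\textbf{Step 2: blow-ups realized as limits of the original sequence.} Each tangent cone $C$ is, after a diagonal choice of rescalings, a limit of $(\tau_{p,r_j})_\#V_{i_j}$. These rescaled currents satisfy the hypotheses of Lemma \ref{Rig-Gcodim1}:
\begin{itemize}
\item the rescaled ambient spaces converge to $\C^n$ in $C^3$;
\item the phases remain uniformly bounded;
\item the mass bound $\ell^{-n}\mathbf{M}\lesssim 1$ follows from Theorem \ref{Ahlfors}.
\end{itemize}
Iterated tangent cones (tangent cones of $C$ at points away from the vertex) are also such diagonal limits.

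\textbf{Step 3: dimension reduction.} This follows the standard Federer/Almgren scheme, as in \cite{S} or \cite{CCo1}. Suppose $\mathcal{H}^{s}(\mathcal{S}_{L_\infty})>0$ for some $s>n-2$. Then there is $\ep>0$ with $\mathcal{H}^s(\mathcal{S}_{L_\infty,\ep})>0$. Taking a density point and blowing up, upper semicontinuity of density yields a tangent cone $C$ whose $\ep$-singular set also has positive $\mathcal{H}^s$-measure. Repeating the blow-up at points of $C$ away from the vertex, each iteration splits off one more line. This produces a limit cone $C'=C_0\times\R^{k}$ with $k>n-2$, hence $k=n-1$. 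Its non-flat part is a union of rays $\g_*\subset\C^1$, so $C'=|\G_*|$ after a unitary rotation. Note that $C'$ is still $\ep$-singular, i.e. not a finite union of Lagrangian planes.

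By Lemma \ref{Rig-Gcodim1}, $\g_*$ is a union of lines, so $C'$ is a finite union of Lagrangian planes. This contradicts the $\ep$-singularity. Here one must check that $d_{GH}$-closeness to unions of planes passes to limits, which holds because Radon convergence with the density lower bound (Corollary \ref{lower}) gives Hausdorff convergence of supports.

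\textbf{Main obstacle.} The hardest point is the upper semicontinuity of the $\ep$-singular sets under convergence. Upper semicontinuity is needed for the ``positive $\mathcal{H}^s$ passes to the limit'' step, and it relies on Hausdorff convergence of supports. Hausdorff convergence in turn requires the uniform lower volume bound Theorem \ref{Low-V-Brp} along the approximating sequence. I would obtain it by choosing the blow-up approximants from the $T_i$ themselves rather than from $V_\infty$, so that Theorem \ref{Ahlfors} applies uniformly.
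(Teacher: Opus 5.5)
Your proposal is correct and is the argument the paper sketches. The monotonicity of $\mathbf{m}_\mathbf{p}(\mu_{V_\infty},r)$ makes tangent cones minimal Lagrangian cones, Federer's dimension reduction produces a cone split along $n-1$ directions, and that cone is $|\G_*|$ up to a unitary map because its spine is isotropic, so its rays lie in a complex line. Lemma \ref{Rig-Gcodim1} then rules this cone out. Your diagonal realization of iterated tangent cones as rescaled limits of the $T_i$, and your use of uniform lower volume bounds to get Hausdorff convergence of supports, are exactly the details the paper's ``standard argument'' omits.

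One slip, which does not affect the argument: passing positive $\mathcal{H}^s$-measure to the blow-up uses upper semicontinuity of $\mathcal{H}^s_\infty$ under Hausdorff convergence together with closedness of the $\ep$-singular sets, as you say later. It does not use semicontinuity of the density, and Proposition \ref{semi-bfTh} concerns $\mathbf{m}_\mathbf{p}$ rather than $\Th_\mathbf{p}$.
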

\begin{proof}
We only need to show that $\mathcal{S}_{L_\infty,\ep}$ has Hausdorff dimension $\le n-2$ for each $\ep>0$.
From the monotonicity formula $\mathbf{m}_\mathbf{p}(\mu_{V_\infty},r)$, it follows that every tangent cone of $V_\infty$ is a minimal Lagrangian cone in $\C^n$. Then one can adopt a standard argument (see \S 11 in \cite{Gi}, or Lemma 4.4 in \cite{D1} for instance) by combining Federer's dimension reduction argument and Lemma \ref{Rig-Gcodim1}. So we omit the proof.
\end{proof}

\begin{lemma}\label{thinfty-vari-harm}
We suppose that there is an orientation $\vec{T}_\infty$ on $L_\infty$ so that $T_\infty=(L_\infty,\vth_\infty,\vec{T}_\infty)$ satisfies $\p T_\infty\llcorner U=0$. Given $\mathbf{B}_r(p)\subset\mathbf{B}_1$,
if there are constants $0<\de<\tau<r$ so that 
\begin{equation}\aligned\label{thinftytBrpde*}
&\mathbf{M}(\p(T_\infty\llcorner\{\th_\infty>t\})\llcorner \mathbf{B}_r(p))\\
\ge&\de\min\{\mathbf{M}(T_\infty\llcorner(\{\th_\infty<t\}\cap \mathbf{B}_\tau(p))),\mathbf{M}(T_\infty\llcorner(\{\th_\infty>t\}\cap \mathbf{B}_\tau(p)))\}
\endaligned
\end{equation}
for almost every $t\in\R$,
then for any monotonic function $f\in C^1(\R)$ with $|f'|>0$ on $[-\tilde{\La},\tilde{\La}]$ there holds
\begin{equation}\aligned
\int_{\mathbf{B}_\tau(p)}|f\circ\th_\infty-\overline{f\circ\th_\infty}|d\mu_{V_\infty}\le \f{c}{\de}\sup_{k\in\Z}\liminf_{i\to\infty}\int_{\mathbf{B}_r(p)}|\na^{L_i}(f\circ\varphi_{i,k}\circ\th_i)|d\mu_{V_i},
\endaligned
\end{equation}
where $\overline{f\circ\th_\infty}:=\fint_{\mathbf{B}_\tau(p)}f\circ\th_\infty d\mu_{V_\infty}$, $\varphi_{i,k}$ is a Lipschitz function on $\R$ satisfying 
\begin{equation}
\varphi_{i,k}(t)=\left\{\begin{split}
&\sup_{L_i}\th_i\qquad &&\text{if}\  t\ge\sup_{L_i}\th_i+2k\pi/\ell^*\\
&t-2k\pi/\ell^* \qquad &&\text{if}\  \inf_{L_i}\th_i+2k\pi/\ell^*<t<\sup_{L_i}\th_i+2k\pi/\ell^*\\
&\inf_{L_i}\th_i \qquad &&\text{if}\  t\le\inf_{L_i}\th_i+2k\pi/\ell^*\\
\end{split}\right.
\end{equation}with $\ell^*:=2\ell(\Th_*)$,
and $c$ depends only on $n,U,\La,\tilde{\La}$. 
\end{lemma}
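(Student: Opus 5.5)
This is a Poincaré inequality obtained from the isoperimetric hypothesis \eqref{thinftytBrpde*} by the coarea formula, in the style of the standard argument that isoperimetry implies an $L^1$ Poincaré inequality. The limit $\th_\infty$ is only $BV$, so the total variation has to be computed through the approximating sequence $\th_i$. The truncations $\varphi_{i,k}$ handle the possible jumps of $\th_i$ by multiples of $2\pi/\ell^*$ in the limit, which is exactly the ambiguity left by the claim \eqref{lTh*thi} in the proof of Theorem \ref{JnaLinftythinfty}.

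\textbf{Step 1: Poincaré from the coarea formula.} Put $g=f\circ\th_\infty$ and assume without loss of generality that $f$ is increasing. Then $\{g>s\}=\{\th_\infty>f^{-1}(s)\}$, so \eqref{thinftytBrpde*} holds for the superlevel sets of $g$ at almost every level. Let $m$ be a median of $g$ on $\mathbf{B}_\tau(p)$ with respect to $\mu_{V_\infty}$. Writing $|g-m|$ through the layer-cake formula gives
\[
\int_{\mathbf{B}_\tau(p)}|g-m|\,d\mu_{V_\infty}=\int_{\R}\min\{\mu_{V_\infty}(\{g>s\}\cap\mathbf{B}_\tau),\ \mu_{V_\infty}(\{g<s\}\cap\mathbf{B}_\tau)\}\,ds .
\]
By \eqref{thinftytBrpde*} this is at most $\de^{-1}\int_\R \mathbf{M}(\p(T_\infty\llcorner\{g>s\})\llcorner\mathbf{B}_r(p))\,ds$. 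Replacing $m$ by the average $\overline{f\circ\th_\infty}$ costs only a factor $2$.

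\textbf{Step 2: identify the boundary masses with a total variation.} Define the slice total variation
\[
|D g|(\mathbf{B}_r(p)):=\int_\R \mathbf{M}(\p(T_\infty\llcorner\{g>s\})\llcorner\mathbf{B}_r(p))\,ds ,
\]
and bound it by lower semicontinuity along the sequence. Fix $s$. The currents $T_i\llcorner\{f\circ\varphi_{i,k}\circ\th_i>s\}$ should converge to $T_\infty\llcorner\{g>s\}$ for a suitable $k$ and almost every $s$. On each piece $\mathcal{C}^i_k$ from the proof of Lemma \ref{zinftyViVinftymass}, \eqref{nkthinkithinfty} shows that $\th_i$ agrees with $\th_\infty$ up to a shift in $(2\pi/\ell^*)\Z$. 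Choosing the shift $k$ and truncating by $\varphi_{i,k}$ forces convergence of the level sets away from a small-measure set; here one uses that $|\th_i|\le\tilde\La$, so only finitely many $k$ are relevant.

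The boundary mass is then lower semicontinuous under weak convergence of currents in the open ball. Combining Fatou's lemma in $s$ with the coarea formula for the Lipschitz (or $W^{1,2}$) functions $f\circ\varphi_{i,k}\circ\th_i$ on $L_i$ gives
\[
\int_\R\mathbf{M}(\p(T_\infty\llcorner\{g>s\})\llcorner\mathbf{B}_r)\,ds\le c\,\sup_k\liminf_{i\to\infty}\int_{\mathbf{B}_r(p)}|\na^{L_i}(f\circ\varphi_{i,k}\circ\th_i)|\,d\mu_{V_i}.
\]
The constant $c$ comes from the bounded number of shifts and from the multiplicity bound via Theorem \ref{Ahlfors}. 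Along the way one uses that $\p T_i\llcorner U=0$, so the slices have boundary only along the level sets. For the $W^{1,2}$ components, the coarea formula follows from approximation by Lipschitz functions as in Definition \ref{Def-harm-curr}.

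\textbf{Main obstacle.} The hard part is Step 2: justifying that the superlevel currents of the $\th_i$, after the shift by $k$ and truncation, converge to those of $\th_\infty$. The reason is that $\th_i\to\th_\infty$ only in the weak measure sense, not pointwise. I would first try to upgrade this to convergence of the pushforward measures of the graph maps $x\mapsto(x,\th_i(x))$, working cell by cell on the $\mathcal{C}^i_k$ with \eqref{nkthinkithinfty}. Taking $\ep\to0$ then shows that the level sets converge in flat norm for all but countably many $s$.
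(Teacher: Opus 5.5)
Your Step~1 is fine. The median form of the layer-cake argument is, if anything, cleaner than the paper's normalization. The gap is in Step~2.

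\textbf{The single-shift claim fails.} Step~2 asserts that, for a suitable single shift $k$, the currents $T_i\llcorner\{f\circ\varphi_{i,k}\circ\theta_i>s\}$ converge to $T_\infty\llcorner\{f\circ\theta_\infty>s\}$. What \eqref{nkthinkithinfty} actually gives is weaker: near a point, each \emph{sheet} of $T_i$ has phase close to $\theta_\infty+2m\pi/\ell^*$ for some integer $m$. That $m$ depends on the sheet and on the cell. Since $\theta_\infty$ is the average of the sheet phases, it is in general not the phase of any sheet.

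For example, take two disjoint, equally oriented translates of an HSL graph with non-constant phase $h$, carrying single-valued phases $h$ and $h+2\pi$. Then $\theta_\infty=h+\pi$ with multiplicity $2$. For $k=0$, $T_i\llcorner\{\theta_i>s\}$ converges to $\tfrac12 T_\infty\llcorner\{\theta_\infty>s+\pi\}+\tfrac12 T_\infty\llcorner\{\theta_\infty>s-\pi\}$. A shift that recovers level $s$ on one sheet misses it on the other. If another part of $\mathbf{B}_r(p)$ carries shifts $\pm2\pi$ instead of $\pm\pi$, then in general no single $k$, even one allowed to depend on $s$, recovers $\{\theta_\infty>s\}$ on all of $\mathbf{B}_r(p)$.

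Your proposed remedy would expose this rather than cure it. The push-forwards of $\mu_{V_i}$ under $x\mapsto(x,\theta_i(x))$ converge to a Young measure $\nu_x$ with barycenter $\theta_\infty(x)$. By \eqref{lTh*thi}, $\nu_x$ is supported in $\theta_\infty(x)+(2\pi/\ell^*)\mathbb{Z}$, possibly with several atoms. So level sets of $\theta_i$ converge to the fractions $\nu_x((s+2k\pi/\ell^*,\infty))$, not to $\chi_{\{\theta_\infty>s\}}$.

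\textbf{The missing idea: sum over all shifts at once.} This is what the paper does. Because $|\theta_i|\le\tilde{\Lambda}$ and $\partial T_i\llcorner U=0$, only boundedly many of the currents $\partial\bigl(T_i\llcorner\{\theta_i>f^{-1}(t)+2k\pi/\ell^*\}\bigr)$, $k\in\mathbb{Z}$, are nonzero. Their sum is unchanged when a sheet's phase is shifted by an element of $(2\pi/\ell^*)\mathbb{Z}$, since the index $k$ is merely relabeled. So the sum only sees $\theta_i$ modulo $2\pi/\ell^*$, which is exactly the information that survives in the limit.

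The argument then runs as follows.
\begin{itemize}
\item The summed boundaries pass to the corresponding sum for $T_\infty$.
\item Lower semicontinuity of mass plus Fatou in $t$ bounds the limit side. The boundaries of nested superlevel currents have compatible orientations, so their masses add, and the sum dominates the $k=0$ term you need.
\item On the $T_i$ side, the coarea formula turns each term into $\int_{\mathbf{B}_r(p)}|\nabla^{L_i}(f\circ\varphi_{i,k}\circ\theta_i)|\,d\mu_{V_i}$. This is what the truncations $\varphi_{i,k}$ are for; they are bookkeeping, not a device that forces convergence.
\item Hence the sum is at most (number of relevant $k$) times $\sup_k$.
\end{itemize}
Your remark that $c$ comes from the bounded number of shifts is correct, but that factor only appears once you sum over $k$.

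\textbf{A further point to watch.} Weak convergence $T_i\llcorner(\cdot)\rightharpoonup T_\infty\llcorner(\cdot)$ does not follow from varifold convergence when sheets of opposite orientation cancel; see Remark~\ref{TitoTinftyW}. The paper asserts the summed convergence from convergence on the Grassmann bundle without further detail, so a flat-norm argument would have to address this as well.
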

\begin{proof}
We may assume $f'>0$ and $\overline{f\circ\th_\infty}=0$ up to a linear transform to $f$. For all $t\in\R$ and $s\in(0,r]$, let 
$$U_{s,t}^+=\{x\in\mathbf{B}_s(p):\, f\circ\th_\infty(x)>t\},\qquad U_{s,t}^-=\{x\in\mathbf{B}_s(p):\, f\circ\th_\infty(x)<t\}.$$
Without loss of generality, we assume $\mu_{V_\infty}(U_{r,0}^+)\le\mu_{V_\infty}(U_{r,0}^-)$. 
From \eqref{thinftytBrpde*} and co-area formula, it follows that
\begin{equation}\aligned\label{|fthinfty|}
 \int_{\mathbf{B}_\tau(p)}|f\circ\th_\infty|d\mu_{V_\infty}=&2\int_{U_{r,0}^+}f\circ\th_\infty d\mu_{V_\infty}=2\int_0^\infty\mu_{V_\infty}(U_{r,t}^+)dt\\
\le&\f2\de\int_0^\infty\mathbf{M}(\p(T_\infty\llcorner\{\th_\infty>f^{-1}(t)\})\llcorner \mathbf{B}_r(p)) dt.
\endaligned
\end{equation}
Noting $\sup_i\sup_{L_i}|\th_i|\le \tilde{\La}$. From co-area formula, we have
\begin{equation}\aligned\label{tktk+1pTiell}
&\sum_{k\in\Z}\int_0^\infty\mathbf{M}(\p(T_i\llcorner\{\th_i>f^{-1}(t)+2k\pi/\ell^*\})\llcorner \mathbf{B}_r(p)) dt\\
=&\sum_{k\in\Z}\int_{f^{-1}(0)+2k\pi/\ell^*}^{f^{-1}(\infty)+2k\pi/\ell^*}\mathbf{M}(\p(T_i\llcorner\{\th_i>\tau\})\llcorner \mathbf{B}_r(p)) f'\left(\tau-\f{2k\pi}{\ell^*}\right) d\tau \\
\le&c_*\sup_{k\in\Z}\int_{\mathbf{B}_r(p)}|\na^{L_i}(f\circ\varphi_{i,k}\circ\th_i)|d\mu_{V_i},
\endaligned
\end{equation}
where $c_*$ is a constant depending only on $n,U,\La,\tilde{\La}$.

From the convergence of Radon measure on Grassmann bundle, we have 
$$\sum_{k\in\Z}T_i\llcorner\{\th_i> f^{-1}(t)+2k\pi/\ell^*\}\rightharpoonup \sum_{k\in\Z}T_\infty\llcorner\{\th_\infty> f^{-1}(t)+2k\pi/\ell^*\}$$ 
on $\mathbf{B}_r(p)$ for almost all $t\in\R$. This implies 
\begin{equation}\aligned
&\sum_{k\in\Z}\mathbf{M}\big(\p (T_\infty\llcorner\{\th_\infty> f^{-1}(t)+2k\pi/\ell^*\})\llcorner \mathbf{B}_r(p)\big)\\
\le&\liminf_{i\to\infty}\sum_{k\in\Z}\mathbf{M}( \p(T_i\llcorner\{\th_i> f^{-1}(t)+2k\pi/\ell^*\})\llcorner \mathbf{B}_r(p)).
\endaligned
\end{equation}
Combining Fatou lemma, we have
\begin{equation}\aligned\label{tktk+1MpTinftyl}
&\int_0^\infty\sum_{k\in\Z}\mathbf{M}\big(\p (T_\infty\llcorner\{\th_\infty> f^{-1}(t)+2k\pi/\ell^*\})\llcorner \mathbf{B}_r(p)\big) dt\\
\le&\int_0^\infty\liminf_{i\to\infty}\sum_{k\in\Z}\mathbf{M}( \p(T_i\llcorner\{\th_i> f^{-1}(t)+2k\pi/\ell^*\})\llcorner \mathbf{B}_r(p)) dt\\
\le&\liminf_{i\to\infty}\int_0^\infty\sum_{k\in\Z}\mathbf{M}( \p(T_i\llcorner\{\th_i> f^{-1}(t)+2k\pi/\ell^*\})\llcorner \mathbf{B}_r(p)) dt.
\endaligned
\end{equation}
Combining \eqref{|fthinfty|}\eqref{tktk+1pTiell}\eqref{tktk+1MpTinftyl}, we complete the proof.
\end{proof}

\begin{theorem}\label{current-n}
There exists an orientation $\vec{T}_\infty$ on $L_\infty$  so that $T_\infty=(L_\infty,\vth_\infty,\vec{T}_\infty)$ is a Hamitonian stationary Lagrangian current in $U$ with bounded single-valued phase $\th^*$ of bounded variation w.r.t. $\mu_{V_\infty}$, where $J\na^{L_\infty}\th^*$ is the mean curvature of $|T_\infty|$, and $\th^*-\th_\infty$ is a constant on each indecomposable component of $T_\infty$.
\end{theorem}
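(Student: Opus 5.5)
The plan is to build the orientation from the angle data already carried by the varifold limit, and then recover a single-valued phase on each indecomposable piece. By Theorem \ref{JnaLinftythinfty}, $\th_\infty\in BV_{\mu_{V_\infty}}(U)$ with $J\na^{L_\infty}\th_\infty=H_{V_\infty}$. By Theorem \ref{zinftyJxiLVinfty}, the limit $\z_\infty$ of $e^{2\sqrt{-1}\th_i}$ satisfies $\na^{L_\infty}\z_\infty=2\sqrt{-1}\z_\infty\xi$ with $J\xi=H_{V_\infty}$.

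First, at $\mu_{V_\infty}$-a.e. regular point $x$ the approximate tangent plane $T_xL_\infty$ is a Lagrangian plane. Its unoriented phase is $\theta$ mod $\pi$, equivalently $\frac12\arg$ of $\Om$ evaluated on either orientation squared. So I define $\vec{T}_\infty(x)$ as the unique orientation of $T_xL_\infty$ whose phase agrees with $\th_\infty(x)$ mod $2\pi$. For this I must show $e^{2\sqrt{-1}\th_\infty}$ equals the squared-phase function of the tangent plane $\mu_{V_\infty}$-a.e.

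The ingredients for that identification are Lemma \ref{zinftyViVinftymass}, the claim \eqref{lTh*thi}, and Allard-type regularity (Theorem \ref{Allard}). Near points of density one the $T_i$ converge smoothly with multiplicity one, so $\th_i\to\th_\infty$ pointwise there. Together with Corollary \ref{lower} and upper semicontinuity (Proposition \ref{semi-bfTh}), the density-one regular set has full measure off higher-multiplicity pieces. Where $\vth_\infty\ge 2$, I would use \eqref{lTh*thi} blockwise on the sets $\mathcal{C}_k$ to see that all sheets converging onto a plane carry phases congruent mod $\pi$. I would then choose the orientation matching $\th_\infty$ mod $2\pi$ after possibly replacing $\th_\infty$ by $\th_\infty+\pi$ on sets where $\vth_\infty$ counts sheets of opposite orientation. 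The correction $\th^*-\th_\infty$ is an integer multiple of $\pi$, hence an integer-valued bounded function $f$. It has zero variation, since both $\th^*$ and $\th_\infty$ have $J\na\cdot=H_{V_\infty}$. Lemma \ref{constantf} then makes $f$ constant on each indecomposable component, which gives the last assertion.

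Second, I must verify $\p T_\infty\llcorner U=0$ and that $T_\infty$ is an integral Lagrangian current. The Lagrangian condition is automatic, since the tangent planes are Lagrangian. For closedness, take $\e\in\mathcal{D}^{n-1}(U)$ and write $\e=i_X\Om$ as in the proof of Lemma \ref{constantf}, so that $\lan d\e,\vec{T}_\infty\ran=e^{-\sqrt{-1}\th^*}\mathrm{div}_{L_\infty}X$ a.e. Then
\begin{equation*}
\p T_\infty(\e)=\int e^{-\sqrt{-1}\th^*}\mathrm{div}_{L_\infty}X\,d\mu_{V_\infty},
\end{equation*}
and by the BV identity for $e^{-\sqrt{-1}\th^*}$, which comes from $\na^{L_\infty}\th^*=-JH$ as in \eqref{thnaLiejthX}, this equals $-\int e^{-\sqrt{-1}\th^*}\lan H+\sqrt{-1}JH,X\ran d\mu_{V_\infty}$. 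That integrand vanishes pointwise by the Lagrangian computation at the end of the proof of Lemma \ref{constantf}.

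Here the hard step appears: the BV chain rule for $e^{-\sqrt{-1}\th^*}$ needs the jump part of the variation of $\th^*$ to be absent or harmless. I would first check this on the set $\mathcal{S}_{L_\infty}$, which is negligible in codimension $\le n-2$ by Corollary \ref{GH-sing}; such sets have zero $(n-1)$-capacity, so they cannot carry boundary. On the complementary set I would argue that jumps of $\th^*$ by multiples of $\pi$ are exactly cancelled by the orientation choice. Hamiltonian stationarity then follows from \eqref{JnaLinftyJnaphi}, and boundedness of $\th^*$ from $|\th_i|\le\tilde{\La}$.

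The main obstacle is making the orientation globally consistent, that is, showing $\p T_\infty=0$ across the codimension-one interfaces where sheets meet. The paper's example in Remark \ref{xinotinW1q} shows the phase can jump there. To handle this I would try the co-area and slicing machinery of Lemma \ref{thinfty-vari-harm} and Lemma \ref{Rig-Gcodim1}: blow up at $\mathcal{H}^{n-1}$-a.e. interface point, reducing to $\G_*$-type cones that must be unions of lines, where the orientation can be checked by hand.
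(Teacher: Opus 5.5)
There is a genuine gap, and it is exactly the step you flag yourself: nothing in the plan produces an orientation with $\p T_\infty\llcorner U=0$.

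\emph{The pointwise orientation is not well posed.} $\th_\infty$ is only the weak limit \eqref{fthitofthinfty}. Where $\vth_\infty\ge2$ it is an \emph{average} of the phases of the sheets collapsing onto $T_xL_\infty$. Those phases agree mod $\pi$, but their average need not. In Remark \ref{TitoTinftyW}, sheets of phase $0$ and $\pi$ give $\th_\infty\equiv\pi/2$ on a line whose two orientations have phases $0$ and $\pi$. So no orientation matches $\th_\infty$ mod $2\pi$, and shifting by $\pi$ does not help. For the same reason $\th^*-\th_\infty$ takes values in $\f{\pi}{\ell(\Th_*)}\Z$, not $\pi\Z$. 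This is why the paper proves \eqref{lTh*thi} with $\ell(\Th_*)=\mathrm{lcm}(1,\dots,\Th_*)$ and applies Lemma \ref{constantf} to $\f{\ell(\Th_*)}{\pi}(\th^*-\th_\infty)$.

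\emph{The closedness argument is circular}, which is the more serious problem. The identity you borrow from \eqref{thnaLiejthX} comes from Theorem \ref{thW12-H}. Its hypotheses include $\p T\llcorner U=0$ and a Sobolev bound on the phase, and for a pointwise-defined $\th^*$ you have neither. Indeed, by the computation from Lemma \ref{constantf} that you use, absence of a singular part in the variation of $e^{-\sqrt{-1}\th^*}$, tested on $X$ with $\e=i_X\Om$, is literally the statement $\p T_\infty(\e)=0$. Your claim that $\th^*$ and $\th_\infty$ have the same distributional gradient presupposes the same fact. Blowing up via Lemma \ref{Rig-Gcodim1} only shows that tangent cones off $\mathcal{S}_{L_\infty}$ are unions of full planes. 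It says nothing about which orientation each plane must carry to agree with neighbouring regions, and that is a global matching problem.

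The missing idea is to orient the \emph{approximating} currents rather than the limit. Closedness then comes from weak convergence of currents instead of from a BV identity. The paper's construction runs as follows.
\begin{enumerate}
\item Remove a small neighbourhood $W_\ep$ of $\mathcal{S}_{L_\infty}$, which has dimension $\le n-2$ by Corollary \ref{GH-sing} and is covered with $\sum_j s_{\ep,j}^{n-3/2}<\ep$.
\item Cover the rest, $K_\ep$, by balls in which $V_\infty$ is close to $\sum_j|\g_j\times\R^{n-1}|$.
\item In each ball, split $T_i$ into pieces oriented close to $+\vec{\g}_j$ or $-\vec{\g}_j$, and add small-mass minimizing fillings to remove boundary inside the ball.
\item Make the $\pm$ labels consistent on overlapping balls, using that the single-valued $\th_i$ determines the orientation of $T_i$.
\item Glue inductively along a Besicovitch subcover, reversing the $-$ pieces. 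This gives integral currents $\mathbf{T}_{i,\ep}$ with $\p\mathbf{T}_{i,\ep}\llcorner K_\ep=0$ and almost no mass cancellation.
\item A diagonal limit $T^*_\infty$ then satisfies $|T^*_\infty|=V_\infty$ and $\p T^*_\infty\llcorner U=0$.
\end{enumerate}
Only then is the phase defined, as the limit of $e^{\sqrt{-1}\th^*_i}$ with $\th^*_i=\th_{\ell_i}+\pi$ on the reversed pieces. The argument of Theorem \ref{zinftyJxiLVinfty} gives $\th^*\in BV_{\mu_{V_\infty}}(U)$ with $J\na^{L_\infty}\th^*=H_{V_\infty}$.

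After that, the steps you describe at the end (\eqref{lTh*thi}, Lemma \ref{constantf}, \eqref{JnaLinftyJnaphi}) finish the proof as in the paper. There $\th^*$ is identified as the phase of $T^*_\infty$ via $T^*_i(f\Om)\to T^*_\infty(f\Om)$.
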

\begin{proof}
Given a small $\ep>0$, from Corollary \ref{GH-sing} there exists a sequence of balls $B_{r_{\ep,j}}(x_{\ep,j})$ in $M$ with $\mathcal{S}_{L_\infty}\subset\cup_jB_{s_{\ep,j}}(x_{\ep,j})$ and all $s_{\ep,j}<\ep$ so that
\begin{equation}\aligned\label{rijn-2j0}
\sum_{j}s_{\ep,j}^{n-3/2}<\ep.
\endaligned
\end{equation}
Given an open $U'\subset\subset U$, let $W_{\ep}:=\cup_jB_{2s_{\ep,j}}(x_{\ep,j})$ be an open subset and $K_{\ep}=\overline{U'}\setminus W_{\ep}$ be a compact subset in $U$.

For each point $p\in K_\ep\cap L_\infty$, there is a sequence $r_i\to\infty$ so that $\lim_{i\to\infty}(\tau_{p,r_i^{-1}})_\# V_\infty$ converges to a stationary varifold $V_{\infty,p}$ in $\R^{m}$ in the sense of Radon measure. Here, $\tau_{p,r_i^{-1}}$ is defined just below \eqref{EmuVprRn}. 
Moreover, $V_{\infty,p}$ has the form of $\sum_{1\le j\le m_p}|\g_j\times\R^{n-1}|$ from Lemma \ref{Rig-Gcodim1}, where each $\g_j$ is a straight line through the origin, and $m_p$ has a uniform upper bound by the density. There is a subsequence $\{k_i\}$ so that for each $\{k^*_i\}$ with $k^*_i\ge k_i$, $\lim_{i\to\infty}(\tau_{p,r_i^{-1}})_\# V_{k^*_i}$ converges to $V_{\infty,p}$ in the sense of Radon measure.

Hence, there are two integral currents $T_{i,k}^+,T_{i,k}^-$ ($T_{i,k}^+\neq0$) in $B_{r_i}(p)$ with $\text{spt}T_{i,k}^\pm\subset \text{spt}T_{k}$ for each $k\ge k_i$ so that
$(\tau_{p,r_i^{-1}})_\#T_{i,k}^\pm$ converges as $i\to\infty$ to integral currents $T_{\infty,p}^\pm$, respectively, $T_{\infty,p}^+$ attains the maximum mass satisfying spt$T_{\infty,p}^+=\text{spt}(V_{\infty,p}\llcorner \mathbf{B}_{1}(0^m))$, and $|T_{\infty,p}^+|+|T_{\infty,p}^-|=V_{\infty,p}\llcorner \mathbf{B}_{1}(0^m)$.
Here, $T_{\infty,p}^+$ has the form $\sum_j\vth_{\g_j}[|\g_j\times\R^{n-1}|]\llcorner \mathbf{B}_{1}(0^m)$ for different straight lines $\g_j$ through the origin with integers $\vth_{\g_j}$. We use $\vec{\g}_j$ denoting the orientation of $[|\g_j\times\R^{n-1}|]$.
Up to modifying $T^\pm_{i,k}$, we can assume that there are components $T^\pm_{i,k,j}$ of $T^\pm_{i,k}$ so that
\begin{equation}\aligned
\lim_{k\ge k_i,i\to\infty}\sup_{1\le j\le m_p}\sup_{\text{spt}T_{i,k,j}^\pm}\{|\vec{T}_{k}\mp\vec{\g}_j|\}=0.
\endaligned
\end{equation}
From Federer-Fleming compactness theorem, for all $i,j$ and $k\ge k_i$ there exist minimizing currents $S_{i,k,j}^\pm$ in $B_{r_i}(p)$ so that 
$\lim_{k\ge k_i,i\to\infty}\sup_{1\le j\le m_p}r_i^{-n}\mathbf{M}(S_{i,k,j}^\pm)=0$, $\p(S_{i,k,j}^\pm-\hat{T}_{i,k,j}^\pm)\llcorner B_{r_i}(p)=0$, 
and $\lim_{k\ge k_i,i\to\infty}\sup_{1\le j\le m_p}r_i^{-1}\sup_{\text{spt}S_{i,k,j}^\pm}|Q_j(x_\a)|=0$ for some orthogonal transformation $Q_j$ on $\R^{m}$ for each $j$.

Up to choosing a subsequence, there are a constant $r_{\ep,p}>0$, an integer $i_{\ep,p}>1$, a finite collection of integral currents $T_{i,\ep,p,j}^+\neq0,T_{i,\ep,p,j}^-$ in $B_{r_{\ep,p}}(p)$ with $\text{spt}T_{i,\ep,p,j}^\pm\subset \text{spt}T_{i}$, and two integral currents $S_{i,\ep,p,j}^\pm$ for all integers $i\ge i_{\ep,p}$ and $j\in\{1,\cdots,m_p\}$
so that
\begin{equation}\aligned\label{sptTiepp+}
d_{\mathbf{B}_{2}(0^m)}\left((\tau_{p,r_{\ep,p}^{-1}})_\#\hat{T}_{i,\ep,p,j}^\pm,T_{\infty,p}^\pm\right)<\ep,\qquad
\max_{1\le j\le j_*}\sup_{\text{spt}T_{i,\ep,p,j}^\pm}\{|\vec{T}_i\mp\vec{\g}_j|\}<\ep,
\endaligned
\end{equation}
and
\begin{equation}\aligned\label{sptSiepp+}
\mathbf{M}(S_{i,\ep,p,j}^\pm)<\ep r_{\ep,p}^n,\ \p \hat{T}_{i,\ep,p,j}^\pm\llcorner B_{r_{\ep,p}}(p)=0,\ \sup_{\text{spt}S_{i,\ep,p,j}^\pm}|Q_j(x_\a)|<\ep r_{\ep,p},
\endaligned
\end{equation}
where $\hat{T}_{i,\ep,p,j}^\pm=T_{i,\ep,p,j}^\pm\llcorner B_{r_{\ep,p}}(p)-S_{i,\ep,p,j}^\pm$, and $d_{\mathbf{B}_{2}(0^m)}$ denotes the pseudometric inducing the flat metric topology (see \S 31 in \cite{S} for details).
Put 
$$T_{i,\ep,p}^\pm=\sum_{1\le j\le m_p}T_{i,\ep,p,j}^\pm\qquad \text{and}\qquad\hat{T}_{i,\ep,p}^\pm=\sum_{1\le j\le m_p}\hat{T}_{i,\ep,p,j}^\pm.$$
We continue the above process for each point $p\in K_\ep\cap L_\infty$.
By finite covering lemma, there is a finite collection of points $\{p_j\}_{j=1}^{m_\ep}\subset K_\ep\cap L_\infty$ so that $K_\ep\cap L_\infty\subset\cup_{1\le j\le m_\ep}B_{r_{\ep,p_j}}(p_j)$. 

Given $p\in\{p_j\}_{j=1}^{m_\ep}$,
for a point $p'\in \{p_j\}_{j=1}^{m_\ep}$, if $B_{r_{\ep,p}}(p)\cap B_{r_{\ep,p'}}(p')\neq\emptyset$,
we can switch $T_{i,\ep,p'}^+$ and $T_{i,\ep,p'}^-$ so that
\begin{equation}\aligned
\mathbf{M}\left(T_{i,\ep,p'}^+\llcorner \text{spt}T_{i,\ep,p}^+\right)\ge\mathbf{M}\left(T_{i,\ep,p'}^-\llcorner \text{spt}T_{i,\ep,p}^+\right).
\endaligned
\end{equation}
From \eqref{sptTiepp+}, it follows that
\begin{equation}\aligned
\mathbf{M}\left((T_{i,\ep,p'}^+-T_{i,\ep,p}^+)\llcorner(B_{r_{\ep,p}}(p)\cap B_{r_{\ep,p'}}(p'))\right)<\psi(\ep)\min\{r_{\ep,p}^n,r_{\ep,p'}^n\},
\endaligned
\end{equation}
where $\psi(\ep)$ is a general function of $\ep$ satisfying $\lim_{\ep\to0}\psi(\ep)=0$.
Noting that the phase $\th_i$ of $T_i$ is single-valued, which determines the orientation of $T_i$.
Hence, we can continue the above switch operation so that if $B_{k,k'}:=B_{r_{\ep,p_k}}(p_k)\cap B_{r_{\ep,p_{k'}}}(p_{k'})\neq\emptyset$, then for all $i\ge\max_{1\le j\le m_\ep} i_{\ep,p_j}$ there holds
\begin{equation}\aligned\label{Tieppkk'+}
\mathbf{M}\left((T_{i,\ep,p_k}^+-T_{i,\ep,p_{k'}}^+)\llcorner B_{p_k,p_{k'}}\right)<\psi(\ep)\min\{r_{\ep,p_k}^n,r_{\ep,p_{k'}}^n\}.
\endaligned
\end{equation}

From Besicovitch covering lemma, there are an integer $m_*$ depending only on $n,U,\La,\tilde{\La}$,
and sub-collections $\Xi_1,\cdots,\Xi_{m_*}\subset\{1\le j\le m_\ep\}$ such that
each $\mathscr{B}_j:=\{B_{r_{\ep,p_k}}(p_k)\}_{ k\in\Xi_j}$ is a pairwise disjoint (or empty) collection, and $\cup_{1\le j\le m_*}\mathscr{B}_j$ still covers $K_\ep\cap L_\infty$.
Denote $\mathcal{B}_j=\cup_{B\in\mathscr{B}_j}B$ and $\widetilde{T}_{i,\ep,p_j}^\pm=\sum_{k\in\Xi_j}\hat{T}_{i,\ep,p_k}^\pm\llcorner B_{r_{\ep,p_k}}(p_k)$.

Let us define $\mathbf{T}_{i,\ep,p_j}^\pm$ from $j=1$ to $j=m_*$ by induction. Let $\mathbf{T}_{i,\ep,p_1}^\pm=\widetilde{T}_{i,\ep,p_1}^\pm$ and $W_1=\emptyset$. If we have defined $\widetilde{T}_{i,\ep,p_k}^\pm$ and $W_k$ for all $1\le k\le j\in\{1,\cdots,m_*-1\}$, then we further define $W_{j+1}=\mathcal{B}_{j+1}\bigcap\cup_{1\le i\le j}\mathcal{B}_{i}$ and
\begin{equation}\aligned
\check{T}_{i,\ep,p_{j+1}}^\pm=\mathbf{T}_{i,\ep,p_j}^\pm+\widetilde{T}_{i,\ep,p_{j+1}}^\pm-\mathbf{T}_{i,\ep,p_j}^\pm\llcorner(W_{j+1}\setminus\text{spt}\widetilde{T}_{i,\ep,p_{j+1}}^\pm)-\widetilde{T}_{i,\ep,p_{j+1}}^\pm\llcorner W_{j+1}.
\endaligned
\end{equation}
Clearly, 
$\p\check{T}_{i,\ep,p_{j+1}}^\pm\llcorner(\cup_{1\le i\le j+1}\mathcal{B}_{i}\setminus W_{j+1})=0.$
From \eqref{sptSiepp+}, there are integral currents $\tilde{S}_{i,\ep,p_{j+1}}^\pm$ so that
\begin{equation}\aligned\label{spttildeSiepp+}
\mathbf{M}(\tilde{S}_{i,\ep,p_{j+1}}^\pm)<\ep\sum_{1\le l\le m_*}\sum_{k\in\Xi_l} r_{\ep,p_k}^n,\quad \p(\check{T}_{i,\ep,p_{j+1}}^\pm-S_{i,\ep,p_{j+1}}^\pm)\llcorner (\cup_{1\le i\le j+1}\mathcal{B}_{i})=0.
\endaligned
\end{equation}
We set 
$$\mathbf{T}_{i,\ep,p_{j+1}}^\pm=\check{T}_{i,\ep,p_{j+1}}^\pm-\tilde{S}_{i,\ep,p_{j+1}}^\pm.$$
Let $L_{i,\ep}^\pm=\cup_{1\le l\le m_*}\cup_{j\in\Xi_l}\text{spt}T_{i,\ep,j}^\pm$, and $T_{i,\ep}^\pm=T_i\llcorner L_{i,\ep}^\pm$.
Combining Theorem \ref{Ahlfors} and \eqref{sptSiepp+}\eqref{spttildeSiepp+}, we have
\begin{equation}\aligned\label{diffbfT-Tiep}
\mathbf{M}\left(\mathbf{T}_{i,\ep}^\pm-T_{i,\ep}^\pm\right)\le& 
\sum_{2\le j\le m_*}\mathbf{M}(\tilde{S}_{i,\ep,p_{j}}^\pm)+\sum_{1\le l\le m_*}\sum_{j\in\Xi_l}\sum_{1\le k\le m_{p_j}}\mathbf{M}(S_{i,\ep,p_j,k}^\pm)\\
\le& \tilde{c}\ep\sum_{1\le l\le m_*}\sum_{k\in\Xi_l} r_{\ep,p_k}^n\le c\ep,
\endaligned
\end{equation}
where $c,\tilde{c}$ are general constants depending only on $n,U,\La,\tilde{\La}$, which may change from line to line.

Put $\mathbf{T}^+_{i,\ep}:=\mathbf{T}_{i,\ep,p_{m_*}}^+$ and $\mathbf{T}^-_{i,\ep}:=-\mathbf{T}_{i,\ep,p_{m_*}}^-$ for short. Then $\p \mathbf{T}_{i,\ep}^\pm\cap K_\ep=0$ from \eqref{spttildeSiepp+}. Combining \eqref{sptTiepp+} and \eqref{Tieppkk'+}, for each $j\in\{1,\cdots,m_*\}$ there exists $\e_j\in\mathcal{D}^{n}(\mathcal{B}_j)$ with $|\e_j|\le1$ so that
\begin{equation}\aligned\label{pmTiepej}
\pm T_{i,\ep}^\pm(\e_j)\ge(1-\psi(\ep))\sum_{k\in\Xi_j}\mathbf{M}\left(T_{i,\ep,p_k}^\pm\right)\qquad\qquad \text{for each large }i.
\endaligned
\end{equation}
There is a sequence of nonnegative smooth functions $\{\phi_{j,\ep}\}_{1\le j\le m_*}$ on $U$ with spt$\phi_{j,\ep}\subset \mathcal{B}_j$ for each $j$ so that $\sum_{1\le j\le m_*}\phi_{j,\ep}\equiv1$ on $K_\ep\cap L_\infty$. Put $\e_\ep:=\sum_{1\le j\le m_*}\phi_{j,\ep}\e_j$. Combining \eqref{Tieppkk'+}\eqref{diffbfT-Tiep}\eqref{pmTiepej}, for all large $i\ge\max_{1\le j\le m_\ep} i_{\ep,p_j}$ we have
\begin{equation}\aligned
&\mathbf{T}_{i,\ep}^+(\e_\ep)=\sum_{j=1}^{m_*}\mathbf{T}_{i,\ep}^+(\phi_{j,\ep}\e_j)\ge\sum_{j=1}^{m_*}\left(T_{i,\ep}^+(\e_j)-T_{i,\ep}^+((1-\phi_{j,\ep})\e_j)\right)-c\ep\\
\ge&\sum_{j=1}^{m_*}\left((1-\psi(\ep))\sum_{k\in\Xi_j}\mathbf{M}\left(T_{i,\ep,p_k}^\pm\right)-\sum_{k\in\Xi_j}\mathbf{M}\left(|T_{i,\ep,p_k}^+|\llcorner(1-\phi_{j,\ep})\right)-c\psi(\ep)\right)-c\ep\\
\ge&\sum_{j=1}^{m_*}\left(\sum_{k\in\Xi_j}\mathbf{M}(|T_{i,\ep,p_k}^+|\llcorner\phi_{j,\ep})-\psi(\ep)\sum_{k\in\Xi_j}\mathbf{M}(T_{i,\ep,p_k}^+)\right)-c\psi(\ep)\\
\ge&\mathbf{M}(|T_{i,\ep}^+|\llcorner K_\ep)-c\psi(\ep).
\endaligned
\end{equation}
Moreover, we have the similar estimate for $\mathbf{T}_{i,\ep}^-$. Put $\mathbf{T}_{i,\ep}=\mathbf{T}_{i,\ep}^+-\mathbf{T}_{i,\ep}^-$, and $T_{i,\ep}=T_{i,\ep}^+-T_{i,\ep}^-$. Then
\begin{equation}\aligned
\mathbf{T}_{i,\ep}(\e_\ep)=&\mathbf{T}_{i,\ep}^+(\e_\ep)-\mathbf{T}_{i,\ep}^-(\e_\ep)\ge\mathbf{M}(|T_{i,\ep}^+|\llcorner K_\ep)+\mathbf{M}(|T_{i,\ep}^-|\llcorner K_\ep)-c\psi(\ep)\\
=&\mathbf{M}(|T_{i,\ep}|\llcorner K_\ep)-c\psi(\ep).
\endaligned
\end{equation}

Noting $\p \mathbf{T}_{i,\ep}\llcorner K_\ep=0$ and \eqref{rijn-2j0}.
There is a subsequence $\{\ell_i\}$ so that
the integral Lagrangian current $T_i^*:=\mathbf{T}_{\ell_i,1/i}$
converges to an integral Lagrangian current $T_\infty^*=(L_\infty,\vth_\infty,\vec{T}_\infty^*)$ satisfying $|T_\infty^*|=V_\infty$ and $\p T_\infty^*\llcorner U=0$. 
We define a function $\th_i^*$ on $L_{\ell_i}$ by $\th_i^*=\th_{\ell_i}+\pi$ on $L_{\ell_i,1/i}^-$ and $\th_i^*=\th_{\ell_i}$ on $L_{\ell_i}\setminus L_{\ell_i,1/i}^-$. We assume that $e^{\sqrt{-1}\th_i^*}$ converges to a function $\z_\infty$ on $L_\infty$.
From the argument in Lemma \ref{zinftyViVinftymass}, $e^{2j\sqrt{-1}\th_i^*}\to \z_\infty^{2j}$
in the sense of measure for each $j\in\Z$. 
Combining Theorem \ref{zinftyJxiLVinfty}, we get $\z_\infty\in BV_{\mu_{V_\infty}}(U)$ and $\z_\infty=e^{\sqrt{-1}\th^*}$ with
$$J\na^{L_\infty}\th^*=J\na^{L_\infty}\th_\infty=H_{V_\infty}\qquad \mu_{V_\infty}-a.e.$$
for some multi-valued real measurable function $\th^*$. 
Hence, $T^*_\infty$ is Hamiltonian stationary Lagrangian from \eqref{JnaLinftyJnaphi}. 
Combining \eqref{lTh*thi}, we deduce
$$e^{2\ell(\Th_*)\sqrt{-1}(\th^*-\th_\infty)}=1\quad  \mu_{V_\infty}-a.e..$$
In other words, $\f{\ell(\Th_*)}{\pi}(\th^*-\th_\infty)$ is an integer-valued function in $BV_{\mu_{V_\infty}}(U)$.
From Lemma \ref{constantf}, $\th^*-\th_\infty$ is constant on each indecomposable component of $T_\infty$.
Let $\vec{T}_i^*$ denote the orientation of $T_i^*$, then
\begin{equation}\aligned
\int_U fe^{\sqrt{-1}\th^*} d\mu_{T_\infty^*}=&\lim_{i\to\infty}\int_U f\lan\Om,\vec{T}_i^*\ran d\mu_{T_i^*}=\lim_{i\to\infty}T_i^*(f\Om)\\
=&T^*_\infty(f\Om)=\int_U f\lan\Om,\vec{T}_\infty^*\ran d\mu_{T_\infty^*}.
\endaligned
\end{equation}
This means that $\th^*$ is a phase of $T_\infty^*$.
 We complete the proof.
\end{proof}
\begin{remark}\label{TitoTinftyW}
$\th_*-\th_\infty$ in Theorem \ref{current-n} may be not zero. For example, let $T_i=[|\R^1\times\{0\}|]-[|\R^1\times\{i^{-1}\}|]$ in $\R^2$, then for the orientation (vector) $(1,0)$, the phase $\th_i$ of $T_i$ satisfies $\th_i\equiv0$ on $\R^1\times\{0\}$, and $\th_i\equiv\pi$ on $\R^1\times\{i^{-1}\}$. Clearly, $T_i\rightharpoonup0$ in the current sense, and we can choose $T_\infty=2[|\R^1\times\{0\}|]$ with phase $\th_\infty\equiv0$. Then $|T_i|\to |T_\infty|$ and $\th_i\to\pi/2$ both locally in the sense of measure.
\end{remark}

\section{Regularity and rigidity of Lipschitz HSLs in Euclidean space} 

For a function $u\in C^1(B_1)\cap W^{2,n}(B_1)$,
let $L$ denote a graph $\{(x,Du(x)):x\in B_1\}$ in $B_1\times\R^n$. Then $[|L|]$ is a multiplicity one Lagrangian current, whose phase $\th$ satisfies
\begin{equation}\aligned\label{LE}
\th:=\mathrm{tr}(\arctan D^2u)=\sum_{i=1}^n\arctan\la_i\quad (\text{mod } 2\pi)\quad a.e.,
\endaligned
\end{equation}
where $\la_1,\cdots,\la_n$ denote the eigenvalues of $D^2u$ at $C^2$-points of $u$, and we have regarded $\th$ as a measurable function on $B_1$ by identifying $\th(x)=\th(x,Du(x))$ for almost every $x\in B_1$. Let $\na^L$ denote the Levi-Civita connection on $L$ a.e..

Now we further assume that the phase $\th\in W^{1,2}(B_1)$, and 
$\th$ is harmonic on $L$, i.e., 
\begin{equation}\aligned
\int_L\left\lan\na^L\th,\na^L\phi\right\ran d\mathcal{H}^n=0\qquad\text{for any}\ \phi\in C^\infty_c(B_1\times\R^n).
\endaligned
\end{equation}
In other words, $u$ is a weak solution to geometric Hamiltonian stationary equation \eqref{HSL}.
Let $\La\ge1$ be a constant satisfying
\begin{equation}\aligned\label{DEF-La}
\mathcal{H}^n(L\cap\mathbf{B}_{r}(\mathbf{x}))\le\La\qquad \text{for each}\ \mathbf{B}_r(\mathbf{x})\subset B_1\times\R^n.
\endaligned
\end{equation}
From Theorem \ref{Ahlfors}, there is a general constant $c_\La\ge1$ depending only on $n,\La$ so that
\begin{equation}\aligned
c_\La^{-1}\le s^{-n}\mathcal{H}^n(L\cap\mathbf{B}_s(\mathbf{x}))\le c_\La r^{-n}\mathcal{H}^n(L\cap\mathbf{B}_r(\mathbf{x}))
\endaligned
\end{equation}
for each $0<s<r$. For a point $\mathbf{x}=(x,Du(x))\in L$ so that $u$ is $C^2$ at $x$, the tangent cone of $L$ at $\mathbf{x}$ is a Lagrangian $n$-plane with multiplicity one. From Theorem \ref{Allard}, $L$ is smooth in a small neighborhood of $\mathbf{x}$, i.e., $u$ is $C^2$ in a small neighborhood of $x$.
Hence, the regular points are dense, relatively open in $L$.

Let $\tau_{\mathbf{x},t}=\f1t(\cdot-\mathbf{x})$ for all $\mathbf{x}\in\R^{2n}$ and $t>0$.
\begin{lemma}\label{smoothLu}
Assume $u\in C^{1,1}(B_1)$. If the harmonic phase $\th$ satisfies 
\begin{equation}\aligned\label{C-SC-phase}
(n-2)\pi/2\le\th<n\pi/2\qquad a.e.
\endaligned
\end{equation}
on $B_1$, then the Lagrangian graph $L$ is smooth.
\end{lemma}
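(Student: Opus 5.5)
The plan is to show that every point $\mathbf{x}_0=(x_0,Du(x_0))\in L$ is a regular point by a blow-up argument, and then to invoke Theorem \ref{Allard}. Since $L$ is a graph over $B_1$, the current $[|L|]$ has multiplicity one, $\p[|L|]\llcorner(B_1\times\R^n)=0$, and the phase $\th=\sum_i\arctan\la_i$ is single-valued. By \eqref{C-SC-phase}, after adding a constant the phase satisfies $0\le\th\le\tilde{\La}$ with $\tilde{\La}$ depending only on $n$. So all the results of \S5 and \S6 apply: the almost monotonicity formula \eqref{mon-bfTh}, the limit $\mathbf{m}_{\mathbf{x}_0}(\mu_V)$, and Theorem \ref{Ahlfors}. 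In particular the second condition in \eqref{muVbgTh0} holds automatically at $\mathbf{x}_0$ for all sufficiently small radii $r$, because $\mathbf{m}_{\mathbf{x}_0}(\mu_V,r)\to\mathbf{m}_{\mathbf{x}_0}(\mu_V)>0$. Consequently it suffices to prove $\Th_{\mathbf{x}_0}(\mu_V)=1$; Theorem \ref{Allard} (or Corollary \ref{Allard-HSL}) then gives smoothness near $\mathbf{x}_0$.

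To compute the density, I will blow up. Set $u_r(y)=r^{-2}\big(u(x_0+ry)-u(x_0)-rDu(x_0)\cdot y\big)$, so that $\tau_{\mathbf{x}_0,r}(L)$ is the graph of $Du_r$. Since $u\in C^{1,1}$, the gradients $Du_r$ are uniformly Lipschitz. By Arzel\`a--Ascoli and weak-$*$ compactness of $D^2u_r$, a subsequence gives $Du_r\to Du_0$ locally uniformly, so the graph currents converge with multiplicity one to the graph of $Du_0$, with $|D^2u_0|\le\|D^2u\|_\infty$. By \eqref{Integrable-W}, $\int_{\mathbf{B}_s}|H|^2$ on the rescaled graphs is at most $s^{n-2}$ times a quantity tending to zero. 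Hence the limit varifold $V_{\mathbf{x}_0}$ is a stationary Lagrangian cone and $u_0$ is homogeneous of degree two. By Theorem \ref{JnaLinftythinfty}, the limit phase $\th_\infty$ satisfies $\na^{L_\infty}\th_\infty=-JH_{V_\infty}=0$. Since the limit is a connected Lipschitz graph, Lemma \ref{constantf} (applied to the integer-valued correction) shows the phase of the cone is a constant $\th_0$.

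The main obstacle is pinning down the range of $\th_0$, since phases only converge in the sense of measures and are defined modulo $2\pi$. I would handle this on the graph directly. The functions $\sum_i\arctan\la_i(D^2u_r)$ take values in the compact interval $[(n-2)\pi/2,n\pi/2)$ and converge weakly-$*$ to $\th_\infty$, so $\th_0\in[(n-2)\pi/2,n\pi/2]$. The endpoint $n\pi/2$ would force all eigenvalues of $D^2u_0$ to be infinite, which is impossible because $D^2u_0$ is bounded. Relating the weak limit of $\th(D^2u_r)$ to $\th(D^2u_0)$ requires the convergence of phases to be strong, not merely weak. This should follow from the convergence of $e^{\sqrt{-1}\th_r}$ as measures, which the graph structure yields by \eqref{z_itozC0} with $V_\infty=|T|$, combined with Lemma \ref{zinftyViVinftymass}.

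With $\th_0$ in this range, $u_0$ is a $C^{1,1}$ solution of the special Lagrangian equation with critical or supercritical phase. The known regularity results for that case (\cite{WdY,WmY0,WmY1,D1}) make $u_0$ smooth, hence $D^2u_0$ is continuous at $0$. Being $0$-homogeneous, $D^2u_0$ is then constant, so the cone is a multiplicity-one Lagrangian plane and $\Th_{\mathbf{x}_0}(\mu_V)=1$. Combined with the first paragraph, Theorem \ref{Allard} yields that $L$ is $C^{1,1}$-regular, and then analytic by Corollary \ref{Allard-HSL}, near every point.
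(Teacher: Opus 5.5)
Your reduction is the same as the paper's: blow up at a point of $L$, show every tangent cone is a multiplicity-one Lagrangian plane, and conclude with Theorem \ref{Allard}/Corollary \ref{Allard-HSL}. You prove flatness of the cone differently, though. The paper runs Federer's dimension reduction down to a non-flat \emph{regular} graphical special Lagrangian cone $C'\subset\R^{2k}$ with phase in $[\f{k-2}2\pi,\f k2\pi)$, and rules it out with Yuan's Liouville theorem \cite{Y2}. You instead apply interior regularity to the $2$-homogeneous $C^{1,1}$ potential $u_0$ at the vertex. This is Evans--Krylov, available because the level sets $\{\sum_i\arctan\la_i=\th_0\}$ are convex for $\th_0\ge\f{n-2}2\pi$. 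Then $0$-homogeneity of $D^2u_0$, now continuous at $0$, forces it to be constant. Your route is shorter: it needs only a local regularity statement, no dimension reduction, and no check that the phase window descends to the lower-dimensional factor. Both routes ultimately rest on the same level-set convexity.

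\emph{Step to repair: identifying the cone's phase as a real number.} Measure convergence of $e^{\sqrt{-1}\th_r}$ together with $|e^{\sqrt{-1}\th_0}|=1$ does upgrade to strong convergence. But it only determines the cone's phase modulo $2\pi$: you learn $\sum_i\arctan\la_i(D^2u_0)\equiv\th_0\pmod{2\pi}$. For $n\ge3$ the range $(-\f n2\pi,\f n2\pi)$ of $\sum_i\arctan\la_i$ contains other representatives, e.g.\ $\th_0-2\pi$, which bounded Hessians with very negative eigenvalues attain. So this does not yet show that $u_0$ solves the special Lagrangian equation with a real phase in $[\f{n-2}2\pi,\f n2\pi)$. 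That real phase is exactly what the regularity theory needs, and your endpoint argument presupposes it.

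Use the full varifold convergence instead:
\begin{itemize}
\item The varifold limit is the multiplicity-one graph of $Du_0$. To see this, test the varifolds against $\phi(x)\,|\lan dx_1\wedge\cdots\wedge dx_n,\vec P\ran|$; this integrand is continuous, and its integral equals $\int\phi\,dx$ for every graph.
\item Hence approximate tangent planes converge in measure, i.e.\ $D^2u_r\to D^2u_0$ in $L^1_{loc}$.
\item Combine this with the vanishing Dirichlet energy of the rescaled phases, which comes from \eqref{Integrable-W}. This gives $\sum_i\arctan\la_i(D^2u_0)=\th_0$ a.e.\ with $\th_0\in[\f{n-2}2\pi,\f n2\pi]$.
\item The bound $|D^2u_0|\le\|D^2u\|_{\infty}$ then excludes $\f n2\pi$.
\end{itemize}
The paper gets the same conclusion from smooth convergence near regular points via Theorem \ref{Allard}.

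\emph{Normalization.} Shift the phase so that $\th\ge1$ rather than $\th\ge0$. You need $\mathbf{m}_{\mathbf{x}_0}(\mu_V)>0$ for the second condition in \eqref{muVbgTh0} to hold at small radii. You also need a positive lower bound on the weight $|\th|^{\ell_*-2}$ to extract $\int_{\mathbf{B}_s}|H|^2\to0$ on the rescalings from \eqref{Integrable-W}.
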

\begin{proof}
We assume that the singular set of $L$ is non-empty, and pick a singular point $\mathbf{x}=(x,Du(x))\in L$.
From compactness of varifolds and $u\in C^{1,1}(B_1)$, there is a sequence $r_i\to0$ so that $(\tau_{\mathbf{x},r_i})_\# L$ converges to a Lagrangian stationary varifold $V_{\mathbf{x}}$ of multiplicity one in $\R^{2n}$, where the support $L_{\mathbf{x}}$ of $V_{\mathbf{x}}$ is an entire graph of the function $Dw$ for some function $w\in C^{1,1}(\R^n)$. 
From \eqref{Wpl*inf} and \eqref{Wpl*to0}, $L_{\mathbf{x}}$ is a cone with constant phase $\th_*$. From Theorem \ref{Allard} and \eqref{C-SC-phase}, it follows that $\f{n-2}2\pi\le\th_*<\f n2\pi$ a.e..
We claim that
\begin{center}
$L_{\mathbf{x}}$ is flat.
\end{center}
If the claim fails, then after Federer's dimension reduction argument, we get a non-flat, graphical, special Lagrangian cone $C$ in $\R^{2n}$ with 
phase $\th_C$ satisfying $\f{n-2}2\pi\le\th_C<\f n2\pi$, where $C=\{(x',l(x''):\, x'\in C', x''\in\R^{n-k}\}$, $C'$ is a non-flat, regular, graphical, special Lagrangian cone in $\R^{2k}$ and $l$ is a linear function on $\R^{n-k}$ for some $k>1$.
In particular, $C'$ has phase $\th_{C'}$ satisfying $\f{k-2}2\pi\le\th_{C'}<\f k2\pi$.
From \cite{Y2}, it follows that $C'$ is flat.
This is a contradiction. Hence, the singular set is empty and we complete the proof by Theorem \ref{Allard}.
\end{proof}

\begin{lemma}\label{Cri-supCri-Du} 
Let $u\in C^\infty(B_2)$, and $L$ be a smooth HSL graph of the function $Du$ on $B_2\subset\R^n$ in $\R^{2n}$. If the phase of $L$ satisfies \eqref{C-SC-phase} on $B_2$, then the volume of $L\cap(B_{1}\times\R^n)$ is uniformly bounded by a constant depending only on $n,\sup_{B_2}|Du|$.
\end{lemma}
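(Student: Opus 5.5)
The plan is to prove the volume bound purely from the algebraic structure of the phase condition \eqref{C-SC-phase} and the divergence structure of the elementary symmetric functions of $D^2u$. Harmonicity of $\th$ would only enter as a fallback (see the end). Throughout, write $\la_1\ge\cdots\ge\la_n$ for the eigenvalues of $D^2u$ and $\si_k=\si_k(\la)$. The volume element of the graph of $Du$ is
$$V:=\prod_{i=1}^n\sqrt{1+\la_i^2}=\Big|\prod_{i=1}^n(1+\sqrt{-1}\la_i)\Big|,$$
so $\mathcal{H}^n(L\cap(B_1\times\R^n))=\int_{B_1}V\,dx$.

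\textbf{Step 1 (algebra of critical and supercritical phases).} Assume $\sum_i\arctan\la_i\ge(n-2)\pi/2$. Following Yuan \cite{Y2} and Wang-Yuan, I will establish the following facts.
\begin{itemize}
\item[(a)] $\la_{n-1}>0$ and $\la_i+(n-1)\la_n\ge0$, so in particular $\Delta u\ge0$.
\item[(b)] $\si_k(\la)\ge0$ for $1\le k\le n-1$, i.e.\ $\la$ lies in the closed $\Gamma_{n-1}$ cone.
\item[(c)] The pointwise bound $V\le c_n\big(1+\sum_{k=1}^{n-1}\si_k(\la)\big)$.
\end{itemize}
For (c), expand $\prod(1+\sqrt{-1}\la_i)=\sum_k(\sqrt{-1})^k\si_k$. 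Using (a), I plan to check that the factor $\sqrt{1+\la_n^2}$ coming from a possibly negative $\la_n$ is controlled by $1+\la_{1}+\cdots+\la_{n-1}$, and to bound the positive part $\prod_{i<n}\sqrt{1+\la_i^2}$ by $c_n\sum_{k\le n-1}\si_k(\la_1,\dots,\la_{n-1})$. The latter then has to be compared with $\sum_{k\le n-1}\si_k(\la)$, again via (a). The upper bound $\th<n\pi/2$ holds automatically for finite $\la$. I expect this pointwise inequality (c) to be the main obstacle, in particular at the endpoint $\th=(n-2)\pi/2$, where $\la_n$ can be very negative. If the clean form of (c) fails, I will instead aim for $V\le c_n(1+\si_1+\cdots+\si_{n-1})$ plus a term $c_n|\si_n|$, and handle $\si_n$ through the identity $\mathrm{Im}$ or $\mathrm{Re}\big(e^{-\sqrt{-1}\th_0}\prod(1+\sqrt{-1}\la_i)\big)=\cos(\th-\th_0)V$ or $\sin(\th-\th_0)V$ with a suitable constant $\th_0$.

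\textbf{Step 2 (divergence structure and induction).} Each $\si_k(D^2u)$ is a divergence: $k\,\si_k(D^2u)=\partial_i\big(T_{k-1}^{ij}(D^2u)\,\partial_ju\big)$, where $T_{k-1}$ is the Newton tensor. For $\la$ in the closed $\Gamma_{k}$ cone ($k\le n-1$), $T_{k-1}$ is nonnegative definite with trace $(n-k+1)\si_{k-1}$, so $|T_{k-1}|\le c_n\si_{k-1}$. Take cutoffs $\phi_k$ supported in $B_{1+k/n}$, equal to $1$ on $B_{1+(k-1)/n}$, with $|D\phi_k|\le 2n$. Integrating by parts gives
$$\int\phi_k\si_k\,dx\le c_n\sup_{B_2}|Du|\int_{B_{1+k/n}}\si_{k-1}\,dx .$$
Inducting on $k$ from $\si_0=1$ then bounds $\int_{B_1}\si_k\,dx$ for all $k\le n-1$ by $c(n)\big(1+\sup_{B_2}|Du|\big)^{k}$.

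\textbf{Step 3 (conclusion).} Combining (c) with Step 2 yields $\int_{B_1}V\,dx\le c(n,\sup_{B_2}|Du|)$, which is the statement. If (c) only holds with the extra $\si_n$ or $\cos(\th-\th_0)$ term, I will fall back on Lemma \ref{caccith}: the error terms containing $\na^L\th$ that arise after integrating by parts with variable phase are controlled by $\int\phi^2|\na^L\th|^2d\mu\le C\int\th^2|\na^L\phi|^2d\mu$, which is bounded since $\th$ is bounded. The remaining linear-in-volume terms would then be absorbed by an iteration over shrinking balls.
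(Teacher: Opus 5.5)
\textbf{The gap is in Step 1(c), and it changes the role of the equation.} Steps 1(a), 1(b) and 2 are fine; the paper uses the same facts to bound $\int\si_k$ for $k\le n-1$. But the pointwise bound $V\le c_n(1+\sum_{k\le n-1}\si_k)$ is false.

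\begin{itemize}
\item It fails by degree count. For $\la_1=\cdots=\la_n=t\to\infty$ the phase $n\arctan t$ is admissible, yet $V\sim t^n$ while $\si_{n-1}=nt^{n-1}$.
\item At the critical phase the excess sits in a negative $\si_n$. For $n=2$, $\la=(t,-t)$ gives $\th=0$, $\si_1=0$ and $V=1+t^2=1-\si_2$.
\item When $\la_n<0$ the cofactor matrix $T_{n-1}$ is indefinite and is not bounded by $\si_{n-1}$. So Step 2 gives no control of $\int|\si_n|$.
\item The constant-$\th_0$ identity does not help. $\th$ can fill an interval of length $\pi$, so $\cos(\th-\th_0)$ cannot be bounded below uniformly; for $\th_0=(n-1)\pi/2$ it vanishes exactly at the critical value.
\end{itemize}

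In fact no argument that avoids the equation can work. For $n=2$ the hypothesis is just $\De u\ge0$. Take $u=\f1N\sum_{j=1}^N\log|x-x_j|$, mollified at scale $1/N$, with $x_j$ a grid of spacing $N^{-1/2}$ in $B_1$. It is smooth and subharmonic, and $|Du|\le C$ independently of $N$. Near each $x_j$ its Hessian is dominated by that of $\f1N\log|x-x_j|$, so $\int_{B_1}V\ge\int_{B_1}|\det D^2u|\gtrsim N$. Harmonicity of $\th$ must therefore carry the proof rather than serve as a fallback.

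\textbf{Your Step 3 fallback does not close as written.} The integral $\int\th^2|\na^L\phi|^2d\mu$ is not bounded ``because $\th$ is bounded''. In fact $\int|\na^L\phi|^2d\mu=\int Vg^{ij}\phi_i\phi_j\,dx$ is itself a volume-type quantity. Estimating $|\na^L\phi|\le|D\phi|$ leaves terms linear in the volume with non-small constants, and no iteration over shrinking balls absorbs those.

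The missing ingredient is the identity the paper takes from Wang--Yuan:
\[
V\sum_i(1+\la_i^2)^{-1}=\mathrm{Re}\Big(e^{-\sqrt{-1}\th}\sum_k(\sqrt{-1})^k(n-k)\si_k\Big),
\]
in which $\si_n$ drops out. Combined with your Step 2, it bounds $\int_{B_{3/2}}V\sum_i(1+\la_i^2)^{-1}dx=\int_L\sum_i|\na^Lx_i|^2$. The paper then uses the equation once more. Since $\De_Lu_i=\lan\na^L\th,E_i\ran$, testing harmonicity of $\th$ against $u\xi^4$ bounds $\int_L\sum_i|\na^Lu_i|^2\xi^4$ by the same quantity. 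The volume then follows from $\sum_i(|\na^Lx_i|^2+|\na^Lu_i|^2)=n$.

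Your variable-phase idea can also be rescued, but only with this ingredient:
\begin{enumerate}
\item Write $V=\sum_k\cos(k\pi/2-\th)\si_k$ and bound the terms $k\le n-1$ by Step 2.
\item Integrate the $\si_n$ term by parts with coefficient $\phi\cos(n\pi/2-\th)$, for a cutoff $\phi=\phi(x)$.
\item Use $|T_{n-1}|\le V\max_i(1+\la_i^2)^{-1/2}\le V^{1/2}\big(V\sum_i(1+\la_i^2)^{-1}\big)^{1/2}$ and $|\lan T_{n-1}D\th,Du\ran|\le|Du|\,V|\na^L\th|$.
\item Apply Caccioppoli for $\th$.
\end{enumerate}
This gives $\int_{B_r}V\le C+\f{C}{R-r}\big(\int_{B_R}V\big)^{1/2}$. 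That bound is sublinear in the volume and does absorb.
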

\begin{proof}
Let $v=\sqrt{\det\left(I_n+(D^2u)^2\right)}$ with the identity matrix $I_n$ of order $n$, and $\k:=\max\{1,\sup_{B_2}|Du|\}$.
Let $\si_k=\si_k(D^2u)$ denote the $k$-th elementary symmetric polynomial of $D^2u$.
Recalling a divergence formula (see p. 495 in \cite{WdY} for instance):
\begin{equation}\aligned\label{divStruc}
\int_{\R^n}\sum_{i=1}^n\f{\p\si_{k}}{\p u_{ij}}\f{\p\varphi}{\p x_i}=0\qquad \mathrm{for\ each}\ j=1,\cdots,n,
\endaligned
\end{equation}
where $\varphi$ is a Lipschitz function on $B_2$ with compact support in $B_2$.
Combining with \eqref{divStruc}, for $k=\{1,\cdots,n\}$ we have
\begin{equation}\aligned\label{kesikuuu}
k\int_{B_{2\varrho}}\varphi\si_k=\int_{B_{2\varrho}}\varphi\sum_{i,j=1}^n\f{\p\si_k}{\p u_{ij}}\f{\p^2u}{\p x_i\p x_j}=-\int_{B_{2\varrho}}\sum_{i,j=1}^n\f{\p\varphi}{\p x_i}\f{\p\si_k}{\p u_{ij}}\f{\p u}{\p x_j}.
\endaligned
\end{equation}
From (3.2) in \cite{WdY}, it follows that
\begin{equation}\aligned\label{vlai}
\sum_{i=1}^n\f{v}{1+\la_i^2}=\cos\th\sum_{1\le2k+1\le n}(-1)^k(n-2k)\si_{2k}-\sin\th\sum_{1\le2k\le n}(-1)^k(n-2k+1)\si_{2k-1}.
\endaligned
\end{equation}
Noting $\si_k\ge0$ (see Lemma 2.1 in \cite{WdY}), and $0<\p_{\la_i}\si_k<(n-k+1)\si_{k-1}$ (see page 495 in \cite{WdY}) for each $1\le k\le n-1$.
Let $\e$ be a Lipschitz function on $B_2$ with $\e\equiv 1$ on $B_{3/2}$, $|D\e|\le10$ and $\e\equiv0$ outside $B_{8/5}$. Using \eqref{kesikuuu} with $k=n-1$, we have
\begin{equation}\aligned
\int_{B_{3/2}}\sum_{i=1}^n\f{v}{1+\la_i^2}\le c_n\sum_{k=0}^{n-1} \int_{B_2}k\e\si_{k}\le c_n\sum_{k=0}^{n-2} \int_{B_{8/5}}k\si_{k}.
\endaligned
\end{equation}
Here, $c_{n}\ge1$ denotes a general constant depending only on $n$, which may change from line to line. By induction, one concludes that
\begin{equation}\aligned\label{fvlai11}
\int_{B_{3/2}}\sum_{i=1}^n\f{v}{1+\la_i^2}\le c_n\k^{n-1}.
\endaligned
\end{equation}

Let $\xi$ be a Lipschitz function satisfying $\xi=1$ on $B_{1}$, $\xi=0$ outside $B_{3/2}$ and $|D\xi|\le 2$.
We also regard $\xi$ as a function on $L$ by identifying $\xi(x)=\xi(x,Du(x))$.
Integrating by parts in conjuction with Cauchy-Schwartz inequality implies
\begin{equation}\aligned\label{bnbui2xi2*111}
-\int_L u_i\xi^4\De_L u_i=&\int_L|\na^L u_i|^2\xi^4+4\int_L u_i\xi^3\lan\na^L u_i,\na^L\xi\ran\\
\ge&\f12\int_L|\na^L u_i|^2\xi^4-8\int_L u_i^2|\na^L\xi|^2\xi^2.
\endaligned
\end{equation}
From \eqref{HL-JnaTh}, for each $i$ one has
\begin{equation}\aligned\label{DeLuigijthj}
\De_L u_i=\lan H_L,E_{n+i}\ran=\lan J\na^L\th,E_{n+i}\ran=\lan \na^L\th,E_i\ran=\sum_{j=1}^ng^{ij}\f{\p\th}{\p x_j}.
\endaligned
\end{equation}
By harmonicity of $\th$, \eqref{DeLuigijthj}, and Cauchy-Schwartz inequality, 
\begin{equation}\aligned\label{xi4Deui}
-\int_L u_i\xi^4\De_L u_i=&-\int_L u_i\xi^4\sum_{j=1}^ng^{ij}\f{\p\th}{\p x_j}=-\int_L\xi^4\lan\na^L\th,\na^L u\ran=\int_L u\lan\na^L\th,\na^L \xi^4\ran\\
=&4\int_L u\xi^3\xi_i\De_L u_i=-4\int_L\lan\na^L(u\xi^3\xi_i),\na^L u_i\ran\\
=&-4\int_L\lan3u\xi_i\na^L\xi+\xi\na^L(u\xi_i),\na^L u_i\ran\xi^2\\
\le&\f14\int_L|\na^L u_i|^2\xi^4+16\int_L\left|3u\xi_i\na^L\xi+\xi\na^L(u\xi_i)\right|^2.
\endaligned
\end{equation}
We may assume $u(0)=0$. From \eqref{fvlai11}\eqref{bnbui2xi2*111} and \eqref{xi4Deui}, we get
\begin{equation}\aligned\label{fvlai22}
\int_L|\na^L u_i|^2\xi^4\le& c_n\k^2\int_L |\na^L\xi|^2(\xi^2+\xi_i^2)+c_n\int_L\xi^2\left|\na^L(u\xi_i)\right|^2\\
\le& c_n\k^2\int_{B_{3/2}}\sum_{i=1}^n\f{v}{1+\la_i^2}\le c_n\k^{n+1}.
\endaligned
\end{equation}
Combining \eqref{fvlai11}\eqref{fvlai22}, we can obtain
\begin{equation*}\aligned
\int_{B_1} v\le\f1n\int_{B_1}\sum_{i=1}^n\f{\la_i^2v}{1+\la_i^2}+\f1n\int_{B_1}\sum_{i=1}^n\f{v}{1+\la_i^2}\le c_n\k^{n+1}.
\endaligned
\end{equation*}
This completes the proof.
\end{proof}

\begin{theorem}\label{Cri-superCri}
If $u\in C^{1,1}(B_1)$ and $L=\{(x,Du(x)):x\in B_1\}$ has harmonic phase $\th$ satisfying \eqref{C-SC-phase} on $L$, then $L$ is smooth and the norm of second fundamental form of $L$ is uniformly bounded on $\mathbf{B}_{1/2}$ by a constant depending only on $n,\sup_{B_1}|Du|$.
\end{theorem}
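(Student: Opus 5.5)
Smoothness of $L$ is already given by Lemma \ref{smoothLu}, so what remains is the quantitative curvature bound on $\mathbf{B}_{1/2}$. I would prove it by contradiction together with a blow-up argument. Suppose there are $u_k\in C^{1,1}(B_1)$ with $\sup_{B_1}|Du_k|\le\k$ and harmonic phases $\th_k$ satisfying \eqref{C-SC-phase}, but with $\sup_{L_k\cap\mathbf{B}_{1/2}}|A_{L_k}|\to\infty$. Each $L_k$ is smooth by Lemma \ref{smoothLu}. So Lemma \ref{Cri-supCri-Du}, applied after rescaling from $B_2$ to $B_1$, gives a uniform volume bound for $L_k$ on $B_{3/4}\times\R^n$, depending only on $n$ and $\k$. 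The phases are bounded by \eqref{C-SC-phase}, and the graphs are single-valued. Hence Theorem \ref{Ahlfors} and the almost monotonicity formula (Theorem \ref{Al-Mono-int}) hold uniformly in $k$, and Theorem \ref{Comp-LCHP} applies.

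Next I would run a point-picking step to normalize the curvature. Choose $\mathbf{x}_k\in L_k\cap\mathbf{B}_{5/8}$ maximizing $(5/8-|\mathbf{x}|)|A_{L_k}|(\mathbf{x})$. Then rescale by $\la_k=|A_{L_k}|(\mathbf{x}_k)\to\infty$, setting $\tilde L_k=(\tau_{\mathbf{x}_k,\la_k^{-1}})_\#L_k$. The rescaled surfaces $\tilde L_k$ have $|A|\le2$ on balls of radius $\to\infty$ and $|A|(\mathbf{0})=1$. They are still Lagrangian graphs of gradients $D\tilde u_k$, because the Hessian bound is not needed for graphicality, only for the Lipschitz constant. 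The phase stays in the same interval, since it is scale invariant, and $|\na\tilde\th_k|\le c|A|$. Bounded curvature gives local $C^{1,\a}$ graph representations over tangent planes. Harmonicity of $\th$ gives the fourth-order elliptic HSL system, and Schauder theory then yields smooth subconvergence to a complete smooth HSL $\tilde L_\infty$ with $|A|(\mathbf{0})=1$.

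Then I would show that the limit is special Lagrangian. By the almost monotonicity and \eqref{Integrable-W}, the scale-invariant quantity $s^{2-n}\int_{\mathbf{B}_s}|\na\th|^2$ is controlled by $\mathbf{m}(\cdot,R)-\mathbf{m}$. In the rescaled picture $\int_{\mathbf{B}_R}|\na\tilde\th_k|^2\to0$ for every fixed $R$, because $\la_k^{-1}R\to0$ and $\mathbf{m}_{\mathbf{p}}(\mu,\cdot)$ converges uniformly as $r\to0$ near regular points. The uniformity would come from Proposition \ref{semi-bfTh} together with a compactness argument. Therefore $\tilde L_\infty$ has constant phase $\th_*\in[(n-2)\pi/2,n\pi/2]$.

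Finally I would rule out the limit. The surface $\tilde L_\infty$ is an entire special Lagrangian with supercritical or critical phase and Euclidean volume growth. It is a graph over $\R^n$ if graphicality survives in the limit; if not, I would use the Lewy--Yuan rotation to regain a graph. The rigidity theorem of Yuan \cite{Y2} for such entire solutions then forces it to be flat, contradicting $|A|(\mathbf{0})=1$.

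The main obstacle is the endpoint $\th_*=n\pi/2$ together with the loss of graphicality in the limit: without Hessian bounds the gradients $D\tilde u_k$ may become vertical. I would try to handle this by applying Theorem \ref{Comp-LCHP} and Lemma \ref{Rig-Gcodim1} to show that the blow-down is a union of Lagrangian planes. I would then use the strict inequality in \eqref{C-SC-phase}, preserved under the convexity of the level sets $\{\sum\arctan\la_i\ge(n-2)\pi/2\}$, to exclude vertical pieces, so that the tangent cone at infinity is a single multiplicity-one plane. Allard (Theorem \ref{Allard}) then gives flatness.
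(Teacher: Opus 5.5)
Your setup (contradiction, point-picking, smooth limit $\tilde{L}_\infty$ with $|A|(\mathbf{0})=1$) matches the paper. The gap is your third step, where you conclude that $\tilde{L}_\infty$ has constant phase, and that step is the heart of the theorem. Your scaling identity $\int_{\mathbf{B}_R}|\nabla\tilde\theta_k|^2d\mu_{\tilde{L}_k}=R^{n-2}s_k^{2-n}\int_{\mathbf{B}_{s_k}(\mathbf{x}_k)}|\nabla\theta_k|^2d\mu_{L_k}$, with $s_k=R/\lambda_k$, is correct. However, \eqref{Integrable-W} bounds the right side only by the drop $\mathbf{m}_{\mathbf{x}_k}(\mu_{V_k},Cs_k)-\mathbf{m}_{\mathbf{x}_k}(\mu_{V_k})$ across the curvature scale, and nothing makes this drop small.

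Proposition \ref{semi-bfTh} concerns a single varifold and gives \emph{upper} semicontinuity in the base point. To make the drop vanish along your sequence you would need the reverse inequality $\liminf_k\mathbf{m}_{\mathbf{x}_k}(\mu_{V_k})\ge\mathbf{m}_{\mathbf{x}_\infty}(\mu_{V_\infty})$ for the unrescaled limit $V_\infty$ at $\mathbf{x}_\infty=\lim\mathbf{x}_k$. You would also need continuity of the $\mathcal{W}$-part of $\mathbf{m}$, which is only lower semicontinuous under varifold convergence. Since $\mathbf{m}_{\mathbf{x}_k}(\mu_{V_k})=\omega_n|\theta_k(\mathbf{x}_k)|^{\ell_*}$ at smooth multiplicity-one points, the reverse inequality is exactly what fails where curvature concentrates.

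Indeed, by Theorem \ref{Allard} and scaling, the pinching condition \eqref{muVbgTh0} must fail for $L_k$ at $\mathbf{x}_k$ at every scale $C\lambda_k^{-1}\le r\le r_0$. So ``uniform convergence of $\mathbf{m}$ near regular points'' is unavailable precisely at these points; asserting it amounts to assuming the regularity you are trying to prove. A drop that is not small is compatible both with a non-conical special Lagrangian and with a nonconstant phase, so nothing forces $\int_{\mathbf{B}_R}|\nabla\tilde\theta_k|^2\to0$. Put differently, constancy of the bounded harmonic phase on $\tilde{L}_\infty$ is a Liouville statement, which would need a uniform Neumann--Poincar\'e inequality on $\tilde{L}_\infty$. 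The rotation supplies one in case ii) of Theorem \ref{HessHS}, but under \eqref{C-SC-phase} there are no Hessian bounds and no such inequality is available.

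The paper avoids this by never claiming the blow-up is special Lagrangian.
It takes $z_k\in L_\infty$ with $\theta_\infty(z_k)\to\sup_{L_\infty}\theta_\infty$. It rescales an indecomposable component around $z_k$ at the smallest scale $\rho_k$ where the pinching in \eqref{muVbgTh0} fails by a fixed $\delta^*$. It then passes via Theorem \ref{current-n} to a Hamiltonian stationary current $T^*$. Smooth convergence on $\mathbf{B}_{1/2}$ (Theorem \ref{Allard}) makes $\theta^*$ attain its supremum at $\mathbf{0}$, so by the strong maximum principle $\theta^*$ is constant on a maximal region $L^*_0$. The decisive step is $\partial T^*_0=0$. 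This is proved with the Neumann--Poincar\'e inequality for limits (Lemma \ref{thinfty-vari-harm}) applied to $\log(\theta^*(\mathbf{0})+\epsilon-\theta^*)$, together with a De Giorgi--Nash--Moser mean value bound. Only then is there a multiplicity-one special Lagrangian to which rigidity applies.

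Your endgame is also incomplete:
the Lewy--Yuan rotation restores graphicality only under a strictly supercritical bound $\theta\ge(n-2)\pi/2+\delta$, which gives a lower Hessian bound; it does not work at the critical endpoint. The strict upper bound in \eqref{C-SC-phase} is lost in the limit, and the lower bound was never strict. Lemma \ref{Rig-Gcodim1} only excludes cones of the special form $\gamma_*\times\R^{n-1}$, so it does not show the blow-down is a union of planes. The paper instead uses Theorem 6.2 of \cite{D3} (the piece is flat or an entire graph), linear growth via the blow-down, and Proposition 2.1 of \cite{WdY}.
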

\begin{proof}
Let us prove it by contradiction. Given a constant $K\ge0$.
Let $u_\ell\in C^{1,1}(B_1)$, and $L_\ell$ be the Lagrangian graph of $Du_\ell$ with phase $\th_\ell\ge\f{n-2}2\pi$ and $\sup_{B_1}|Du_\ell|\le K$ so that $\lim_{\ell\to\infty}\sup_{\mathbf{B}_{1/2}}|A_{L_\ell}|=\infty$ with the second fundamental form $A_{L_\ell}$ of $L_\ell$ ($L_\ell$ is smooth from Lemma \ref{smoothLu}).
There exists a sequence of points $\mathbf{p}_\ell\in \mathbf{B}_1\cap L_\ell$ such that
\begin{equation}\aligned\label{rkgotoinfty}
r_\ell:=\left(1-|\mathbf{p}_\ell|\right)|A_{L_\ell}|(\mathbf{p}_\ell)=\sup_{\mathbf{B}_1\cap L_\ell}\left(1-|\mathbf{x}|\right)|A_{L_\ell}|(\mathbf{x}) \rightarrow\infty\qquad \text{as } \ell\rightarrow\infty. 
\endaligned
\end{equation}
Denote $\mathbf{p}_\ell=(p_\ell,Du_\ell(p_\ell))$.
Put $\tau_\ell=1-|\mathbf{p}_\ell|>0$, then $\mathbf{B}_{\tau_\ell}(\mathbf{p}_\ell)\subset\mathbf{B}_1$. Let $R_\ell=2r_\ell/\tau_\ell$, and $\tilde{L}_\ell$ be a scaling of a part of $L_\ell$ defined by
$$\tilde{L}_\ell=\{R_\ell(\mathbf{x}-\mathbf{p}_\ell)\in\R^n\times\R^n|\, \mathbf{x}\in L_\ell\cap \mathbf{B}_{\tau_\ell/2}(\mathbf{p}_\ell)\},$$
then $\tilde{L}_\ell$ is a special Lagrangian graph in $\mathbf{B}_{r_\ell}$ with $\p\tilde{L}_\ell\cap\mathbf{B}_{r_\ell}=\emptyset$.
Let $A_{\tilde{L}_\ell}$ be the second fundamental form of $\tilde{L}_\ell$ in $\R^{2n}$.
Since $\f{\tau_\ell}2\le1-|\mathbf{x}|$ for every $\mathbf{x}\in \mathbf{B}_{\f{\tau_\ell}2}(\mathbf{p}_\ell)$, by the definition of $r_\ell$ we have
\begin{equation}\aligned\label{BdSii}
\sup_{\tilde{L}_\ell}|A_{\tilde{L}_\ell}|=&\f1{R_\ell}\sup_{\mathbf{B}_{\f{\tau_\ell}2}(\mathbf{p}_\ell)\cap L_\ell}|A_{L_\ell}|
\le\f1{R_\ell}\f{2}{\tau_\ell}\sup_{\mathbf{B}_{\f{\tau_\ell}2}(\mathbf{p}_\ell)\cap L_\ell}\left(1-|\mathbf{x}|\right)|A_{L_\ell}|(\mathbf{x})\\
\le&\f{2}{R_\ell\tau_\ell}\sup_{\mathbf{B}_1\cap L_\ell}\left(1-|\mathbf{x}|\right)|A_{L_\ell}|(\mathbf{x})=\f{2r_\ell}{R_\ell\tau_\ell}=1,
\endaligned
\end{equation}
and
\begin{equation}\aligned\label{BdSii0}
|A_{\tilde{L}_\ell}|(0)=\f1{R_\ell}|A_{L_\ell}|(\mathbf{p}_\ell)
=\f1{R_\ell}\f{1}{\tau_\ell}\sup_{\mathbf{B}_1\cap L_\ell}\left(1-|\mathbf{x}|\right)|A_{L_\ell}|(\mathbf{x})=\f{r_\ell}{R_\ell\tau_\ell}=\f12.
\endaligned
\end{equation}
Up to choosing the subsequence, we assume that $\tilde{L}_\ell$ converges to a complete immersed non-flat HSL submanifold $L_\infty\subset\R^{2n}$ with single-valued phase $\th_\infty$. 
From $\f{n-2}2\pi\le\th_\ell<\f n2\pi$, it follows that $\f{n-2}2\pi\le\th_\infty\le\f n2\pi$.
Let $\{z_k\}$ be a sequence of points in $L_\infty$ so that
\begin{equation}\aligned
\sup_{L_\infty}\th_\infty=\lim_{k\to\infty}\th_\infty(z_k).
\endaligned
\end{equation}

Let $T=[|L_\infty|]$ with phase $\th=\th_\infty$.
Given $r>0$, let $\{T_{k,r}\}_k$ be a sequence of indecomposable components of $\{T\llcorner\mathbf{B}_{2r}(z_k)\}_k$ with $z_k\in\mathrm{spt}T_{k,r}$ and $V_{k,r}=|T_{k,r}|$.
We claim
\begin{equation}\aligned
\liminf_{k\to\infty}\max\left\{\f{\mu_{V_{k,r}}(\mathbf{B}_{r}(z_k))}{\omega_n r^n},\f{\mathbf{m}_{z_k}(\mu_{V_{k,r}},r)}{\mathbf{m}_{z_k}(\mu_{V_{k,r}})}\right\}>1.
\endaligned
\end{equation}
Or else, $T_{k,r}$ converges to a ball in an $n$-plane of multiplicity one up to a subsequence, which violates to \eqref{BdSii}\eqref{BdSii0} and Theorem \ref{Allard}.

Given a positive sequence $\{r_k\}$, we say it \emph{smallest} satisfying a property if for any positive sequence $\{r_k'\}$ satisfying the property, there holds 
$$\limsup_{k\to\infty}\f{r_k}{r_k'}\le1.$$
Let $\de\in(0,1/4]$ be the constant in Theorem \ref{Allard} with $\be=n\pi/2$ there.
There are a constant $0<\de^*<<\de$ and a smallest positive sequence $\{\r_k\}$ satisfying 
\begin{equation}\aligned\label{de*rkpk*}
\liminf_{k\to\infty}\max\left\{\f{\mu_{V_k}(\mathbf{B}_{\r_k}(z_k))}{\omega_n\r_k^n},\f{\mathbf{m}_{z_k}(\mu_{V_k},\r_k)}{\mathbf{m}_{z_k}(\mu_{V_k})}\right\}\ge1+\de^*,
\endaligned
\end{equation}
where $V_k=|T_k|$ and $\{T_k\}$ is a sequence of indecomposable components of $\{T\llcorner\mathbf{B}_{2\r_k}(z_k)\}$ with $z_k\in\mathrm{spt}T_k$.

For each $k$,
let $\tilde{T}_k=(\tau_{z_k,\r_k})_\#T_k$, $\tilde{V}_k=|\tilde{T}_k|$ and $\tilde{\th}_k=\th\circ(\tau_{z_k,\r_k})^{-1}$.
Up to choosing subsequences, we may assume that $\tilde{V}_k$ converges to an integral Lagrangian varifold $V^*=(L^*,\vth^*)$, and $\tilde{\th}_k$ converges to a function $\th^*$ satisfying $\f{n-2}2\pi\le\th^*\le\f n2\pi$. From Theorem \ref{Ahlfors} and Lemma \ref{Cri-supCri-Du}, $V^*$ has Euclidean volume growth.
From Theorem \ref{current-n}, 
there is an orientation $\vec{T}^*$ so that $T^*=(L^*,\vth^*,\vec{T}^*)$ is a HSL current in $\R^{2n}$ with $\p T^*=0$.
From Theorem \ref{Allard} and \eqref{de*rkpk*}, the support of $\tilde{T}_k$ in $\mathbf{B}_{1/2}$ converges to $L^*\cap\mathbf{B}_{1/2}$ smoothly. Hence,
$\th^*(0)=\sup_{L^*}\th^*$, and $\th^*$ is a constant on $L^*\cap\mathbf{B}_{1/2}$ by the maximum principle for the harmonic $\th^*$.

Let $L^*_0$ denote the maximal connected relatively open subset of $L^*$ containing the origin so that $\th^*$ is a constant on $ L^*_0$. 
Set $T^*_0=(L^*_0,\vth^*,\vec{T}^*)$. We claim 
\begin{equation}\label{remthL*}
\p T^*_0=0.
\end{equation}
Let us prove \eqref{remthL*} by contradiction.
We assume $\p T^*_0\neq0$. 
There exist a point $x_*\in\text{spt}\p T^*_0$ and two constants $\ep_*,\tilde{\ep}_*>0$ so that
\begin{equation}\aligned
\mathbf{M}(\p T^*_0\llcorner \mathbf{B}_{\ep_*}(x_*))\ge\tilde{\ep}_*.
\endaligned
\end{equation}
Let $T^*_t=T^*\llcorner\{\th^*>\th^*(0)-t\}$ for each $t>0$.
From $T_t^*\rightharpoonup T_0^*$ as $t\to0$, we have
\begin{equation}\aligned
\mathbf{M}(\p T^*_0\llcorner \mathbf{B}_{\ep_*}(x_*))\le\liminf_{t\to0}\mathbf{M}(\p T^*_t\llcorner \mathbf{B}_{\ep_*}(x_*))
\endaligned
\end{equation}
from the semi-continuity of mass. Hence, there is a constant $t_*>0$ so that
\begin{equation}\aligned
\tilde{\ep}_*\le\mathbf{M}(\p T^*_0\llcorner \mathbf{B}_{\ep_*}(x_*))\le2\mathbf{M}(\p T^{*}_t\llcorner \mathbf{B}_{\ep_*}(x_*))\qquad\text{for each }t\in[0,t_*].
\endaligned
\end{equation}
Without loss of generality, we assume that $(\p T^{*}_{t_*}-\p T^{*}_0)\llcorner\mathbf{B}_{\ep_*}(x_*)\neq0$, or else we further consider $\p T^{*}_t$ for $t>t_*$.
Hence, there is a small ball $\mathbf{B}_{\si}(y)\subset \mathbf{B}_{\ep_*}(x_*)$ with $y\in\text{spt}\p T^*_0$ and $0<\si<<\ep_*$ so that $(T^*-T^{*}_s)\llcorner\mathbf{B}_{\si}(y)=0$ for each $s\ge t_*$. Therefore, there is a constant $\de^*>0$ so that
\begin{equation}\aligned
\mathbf{M}(\p T^*_t\llcorner \mathbf{B}_{\ep_*+\si}(y))
\ge\de^*\min\{\mathbf{M}((T^*-T^*_t)\llcorner\mathbf{B}_\si(y)),\mathbf{M}(T^*_t\llcorner \mathbf{B}_\si(y))\}
\endaligned
\end{equation}
for all $t\in\R$.

Let $f_\ep(\tau)=\log(\th^*(0)+\ep-\tau)$ for each $\ep\in(0,1]$.
Now, we can apply Lemma \ref{thinfty-vari-harm} to get the following Neumann-Poincar\'e inequality 
\begin{equation}\aligned\label{fepth*mean}
\int_{\mathbf{B}_{\si}(y)}|f_\ep\circ\th^*-\overline{f_\ep\circ\th^*}|d\mu_{T^{*}}\le c_*\sup_{k\in\Z} \liminf_{i\to\infty}\int_{\mathbf{B}_{\ep_*+\si}(x_*)}|\na^{\Si_i} (f_\ep\circ\varphi_{i,k}\circ\tilde{\th}_i)|d\mu_{\tilde{T}_i},
\endaligned
\end{equation}
where $\overline{f_\ep\circ\th^*}$ is the average of $f_\ep\circ\th^*$ on $\mathbf{B}_{\si}(y)$ w.r.t. $\mu_{T^{*}}$, $c_*>0$ is a constant, and $\Si_i$ is the support of $\tilde{T}_i$. Here, 
\begin{equation}
\varphi_{i,k}\circ\tilde{\th}_i=\left\{\begin{split}
&\sup_{L_i}\th_i\qquad &&\text{if}\  \tilde{\th}_i\ge\sup_{L_i}\th_i+2k\pi/\ell^*\\
&\tilde{\th}_i-2k\pi/\ell^* \qquad &&\text{if}\  \tilde{\th}_i<\sup_{L_i}\th_i+2k\pi/\ell^*
\end{split}\right.
\end{equation}
for each $k\in\Z_-$, and $\ell^*$ is the positive integer defined as in Lemma \ref{thinfty-vari-harm} with $\tilde{\La}=n\pi/2$.
Hence, $\th^*(0)+\ep-\varphi_{i,k}\circ\tilde{\th}_i$ is a positive subharmonic function on $\Si_i$ for large $i$.
Then, \eqref{caccith*} holds for $\ell=0$ with $\th$ replaced by $\varphi_{i,k}\circ\tilde{\th}_i$. From the monotonicity of $f_\ep$ and Cauchy-Schwartz inequality, we conclude that
\begin{equation}\aligned\label{fepth*deri}
\liminf_{i\to\infty}\int_{\mathbf{B}_{\ep_*+\si}(x_*)}|\na^{\Si_i} (f_\ep\circ\varphi_{i,k}\circ\tilde{\th}_i)|d\mu_{\tilde{T}_i}\le c_{**},
\endaligned
\end{equation}
where $c_{**}$ is a constant independing of $\ep\in(0,1]$ and $k\in\Z$.
Put $\phi_\ep=f_\ep\circ\th^*$. Compared with \eqref{thinftykVinfty}, we get
\begin{equation}\aligned
|\na^{L^*}\phi_\ep|^2\le-\De_{L^*}\phi_\ep \qquad \text{in the distribution sense}.
\endaligned
\end{equation}
Combining \eqref{fepth*mean} and \eqref{fepth*deri}, we can empoly De Giorgi-Nash-Moser iteration to obtain the mean value inequality on a ball $\mathbf{B}_{\si/2}(y)$ for $\th^*_\ep:=\th^*(0)-\th^*+\ep$. Namely,
\begin{equation}\aligned
\f1{\mu_{T^{*}}(B_{\si/2}(y))}\int_{B_{\si/2}(y)}\th^*_\ep d\mu_{T^{*}}\le\tilde{c}\th^*_\ep=\tilde{c}\ep,
\endaligned
\end{equation}where $\tilde{c}$ is a constant independent of $\ep$.
Letting $\ep\to0$ gives that $\th^*$ is a constant on $\mathbf{B}_{\si/2}(y)\cap L^*$.
This contradicts to the definition of $L^*_0$. We have proven the claim \eqref{remthL*}.

From \eqref{de*rkpk*}, $T^*_0$ has multiplicity one on $L^*_0$.
From Theorem 6.2 in \cite{D3}, either $L^*_0$ is flat, or $L^*_0$ is an entire graph in $\R^{2n}$ of the function $Du^*$ for some $u^*\in C^\infty(\R^n)$. 
For the graph $L^*_0$, we consider tangent cones of $L^*_0$ at infinity, and deduce that the function $Du^*$ has linear growth using Theorem 6.2 in \cite{D3} again. From Proposition 2.1 in \cite{WdY}, we get the flatness of $L^*_0$ up to Federer's dimension reduction argument.

From \eqref{de*rkpk*}, $T^*$ is indecomposable in $\mathbf{B}_2$, i.e., $V^*\llcorner\mathbf{B}_2=|T_0^*|\llcorner\mathbf{B}_2$. From Theorem \ref{Allard}, the support of $\tilde{T}_k$ in $\mathbf{B}_{3/2}$ converges to $L^*_0\cap\mathbf{B}_{3/2}$ smoothly. Then \eqref{de*rkpk*} gives
\begin{equation}\aligned
\max\left\{\f{\mu_{V^*}(\mathbf{B}_{1})}{\omega_n},\f{\mathbf{m}_{0}(\mu_{V^*},1)}{\mathbf{m}_{0}(\mu_{V^*})}\right\}\ge1+\de^*.
\endaligned
\end{equation}
This contradicts to the flat $L_0^*$ and $V^*\llcorner\mathbf{B}_2=|T_0^*|\llcorner\mathbf{B}_2$.
Hence, we complete the proof.
\end{proof}
From Theorem \ref{Cri-superCri}, we immediately have the following corollary.
\begin{corollary}
Let $L$ be an n-dimensional smooth entire HSL graph in $\R^{2n}$ with phase $\th\ge\f{n-2}2\pi$. If the defining function of $L$ has linear growth,
then $L$ is flat.
\end{corollary}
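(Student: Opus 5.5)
The corollary follows from Theorem \ref{Cri-superCri} by a scaling (blow-down) argument. Write $L=\{(x,Du(x)):x\in\R^n\}$ with $u\in C^\infty(\R^n)$. The phase is $\th=\sum_i\arctan\la_i$, so $\th<n\pi/2$ automatically. Together with the hypothesis $\th\ge\f{n-2}2\pi$, condition \eqref{C-SC-phase} therefore holds on all of $\R^n$.

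Linear growth of the defining function means $|Du(x)|\le C(1+|x|)$. For each $R\ge1$ I consider the rescaled potential $u_R(y):=R^{-2}u(Ry)$ on $B_1$. Then $Du_R(y)=R^{-1}Du(Ry)$, so
$$\sup_{B_1}|Du_R|\le C(1+R)/R\le 2C,$$
uniformly in $R$. The graph $L_R$ of $Du_R$ is just the dilation $R^{-1}L$. It is again a smooth Lagrangian graph whose phase is harmonic, since the Hessian is invariant under this scaling ($D^2u_R(y)=D^2u(Ry)$). Its phase also still satisfies \eqref{C-SC-phase}.

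Applying Theorem \ref{Cri-superCri} to $u_R$ on $B_1$ gives
$$\sup_{L_R\cap\mathbf{B}_{1/2}}|A_{L_R}|\le c(n,2C),$$
with a constant independent of $R$. The second fundamental form scales like inverse length, so $|A_{L_R}|(R^{-1}\mathbf{x})=R\,|A_L|(\mathbf{x})$. Hence
$$\sup_{L\cap\mathbf{B}_{R/2}}|A_L|\le c/R.$$
Fix any point of $L$ and let $R\to\infty$; this gives $A_L\equiv0$, so $L$ is flat.

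The only point needing care is that the estimate in Theorem \ref{Cri-superCri} is stated on the ball $\mathbf{B}_{1/2}\subset\R^{2n}$ centered at the origin, while we need it around arbitrary points. Two adjustments handle this. First, translate in $x$ so the chosen point sits over the origin. Second, subtract a linear function from $u$ so that $Du(0)=0$; this changes $Du$ by a constant and preserves both the phase and the linear growth bound. After these normalizations, all points of $L\cap\mathbf{B}_{R/2}$ are covered.
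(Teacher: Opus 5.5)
Your blow-down argument is correct and is exactly how the paper obtains the corollary, which it says follows immediately from Theorem \ref{Cri-superCri}. The paper's intended route is the same as yours: apply the curvature bound, which depends only on $n$ and $\sup_{B_1}|Du_R|\le 2C$, to the rescaled graphs $R^{-1}L$ and let $R\to\infty$. Your final normalization step (translating and subtracting a linear function) is harmless but unnecessary, since the balls $\mathbf{B}_{R/2}$ centered at the origin already exhaust $\R^{2n}$.
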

\begin{remark}
If $L$ is an n-dimensional smooth entire HSL graph in $\R^{2n}$ with phase $\th$ satisfying $\f{n-2}2\pi<\inf_L\th<\f n2\pi$, then the defining function of $L$ has linear growth after a small rotation. Warren \cite{Wm} has already proved the flatness of $L$ in this case.
\end{remark}

\begin{proof}[Proof of Theorem \ref{HessHS}]
From Theorem \ref{Cri-superCri}, we only need to consider the case $D^2u\ge-1/\sqrt{3}$ a.e. on $B_1$.
By a blowing up argument as in Lemma \ref{smoothLu}, it follows that $u$ is smooth using Theorem 3.1 in \cite{WmY0}.
Let us prove the curvature estimate by contradiction. 

Let $u_\ell\in C^\infty(B_1)$ with $D^2u_\ell\ge-1/\sqrt{3}$ a.e., and $L_\ell$ be the Lagrangian graph of $Du_\ell$ with $Du_\ell(0)=0$ so that the second fundamental form $A_{L_\ell}$ of $L_\ell$ satisfies 
$\lim_{\ell\to\infty}\sup_{\mathbf{B}_{1/2}}|A_{L_\ell}|=\infty$.
We follow the beginning of the proof of Theorem \ref{Cri-superCri}, and obtain a complete smooth non-flat HSL submanifold $L_\infty$ with phase $\th_\infty\in[-\f n6\pi,\f n2\pi]$. 
We consider a rotation $\mathfrak{R}:\,\R^n\times\R^n\to\R^n\times\R^n$ defined by $\mathfrak{R}(x,y)=(\hat{x},\hat{y})$ with
\begin{equation}\label{labxby}
\left\{\begin{split}
&\hat{x}=\f{\sqrt{3}}2 x+\f12 y\\
&\hat{y}=-\f12 x+\f{\sqrt{3}}2 y\\
\end{split}\right..
\end{equation}
From (9.10) in \cite{D3}, $\mathfrak{R}(L_\ell)$ is a graph of $D\tilde{u}_\ell$ with $-\sqrt{3}\le D^2\tilde{u}_\ell\le\sqrt{3}$. Hence, $\mathfrak{R}(L_\infty)$ is an entire Lagrangian graph of $D\tilde{u}_\infty$ with $-\sqrt{3}\le D^2\tilde{u}_\infty\le\sqrt{3}$ on $\R^n$. 
In particualr, there holds a Neumann-Poincar\'e inequality on $L_\infty$. For the harmonic function $\th:=\sup_{L_\infty}\th_\infty-\th_\infty\ge0$, we have the Harnack's inequality
\begin{equation}\aligned
\th(x)\le c\,\th(y),
\endaligned
\end{equation}
where $c>1$ is a constant. From $\inf_{L_\infty}\th=0$ and the above inequality, we get $\th\equiv0$ on $L_\infty$.
Namely, $\mathfrak{R}(L_\ell)$ is an entire special Lagrangian graph over $\R^n$ with $-\sqrt{3}\le D^2\tilde{u}_\infty\le\sqrt{3}$ on $\R^n$.  
Hence, $\tilde{u}_\infty$ is quadratic from the proof Theorem 1.1 in \cite{WmY0} (see also \cite{D1}).
However, this contradicts to the non-flat $L_\infty$. We complete the proof.
\end{proof}

From Theorem \ref{HessHS}, we immediately have the following corollary.
\begin{corollary}
Let $L$ be an n-dimensional smooth entire HSL graph of the function $Du$ in $\R^{2n}$. If $D^2u\ge-1/\sqrt{3}$ a.e. on $\R^n$,
then $L$ is flat.
\end{corollary}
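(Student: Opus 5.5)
By Theorem \ref{HessHS}, the graph $L$ is smooth, and the norm of its second fundamental form on $L\cap(B_{1/2}\times\R^n)$ is bounded by a constant depending only on $n$ and $\sup_{B_1}|Du|$. The plan is to apply this estimate to rescalings of $L$ and pass to the limit as the scale goes to infinity.

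Let $L$ be the entire graph of $Du$. In the setting of this corollary $D^2u\ge-1/\sqrt3$ holds on all of $\R^n$. For each $R>0$, set $u_R(x)=R^{-2}u(Rx)$; its graph is the rescaling $R^{-1}L$. The Hessian condition is scale invariant, since $D^2u_R(x)=D^2u(Rx)\ge-1/\sqrt3$. The rescaled phase is still harmonic.

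Theorem \ref{HessHS}(ii), as stated, gives a bound depending on $\sup_{B_1}|Du_R|=R^{-1}\sup_{B_R}|Du|$, which need not stay bounded as $R\to\infty$. However, the paper notes that in case ii) the curvature estimate actually depends only on $n$. In its proof, one normalizes $Du_\ell(0)=0$, and the Lewy--Yuan rotation $\mathfrak{R}$ turns the graph into one with $-\sqrt3\le D^2\tilde u\le\sqrt3$. This gives uniform Lipschitz control regardless of the size of $Du$. I would therefore use the $n$-only version of the estimate. First subtract the linear function $\langle Du(Rx_0),x\rangle$ so that $Du_R(x_0)=0$; this is a translation of the graph, which is also an HSL with the same Hessian. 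Then apply the estimate on the ball $B_1(x_0)$ after translating $x_0$ to the origin.

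Applying this to $u_R$ at every point $x_0$ gives $|A_{R^{-1}L}|\le c(n)$ near every point over $B_{1/2}(x_0)$. Since $|A|$ scales like inverse length, this means $|A_L|\le c(n)/R$ everywhere on $L$. Letting $R\to\infty$ gives $A_L\equiv0$, so $L$ is flat.

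The main obstacle is making sure the estimate is genuinely independent of $\sup|Du|$. If one distrusts the $n$-only remark, there is a fallback. In the rotated picture $\mathfrak R(L)$ is an entire graph with $|D^2\tilde u|\le\sqrt3$, so $D\tilde u$ has linear growth. Rescaling $\mathfrak R(L)$ then gives uniform bounds on the gradient of the defining function on unit balls, which again makes the curvature bound scale-free. Alternatively, one can repeat the Harnack argument from the proof of Theorem \ref{HessHS} directly on $L$. The Neumann--Poincar\'e inequality holds on $L$, so the harmonic function $\sup\th-\th\ge0$ is constant. Hence $L$ is special Lagrangian with bounded Hessian after rotation, and the result of \cite{WmY0} shows it is quadratic, i.e. $L$ is flat.
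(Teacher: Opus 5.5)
Your proposal is correct and follows the paper's own (implicit) route: the paper derives the corollary directly from Theorem \ref{HessHS}. It uses the blow-downs $u_R(x)=R^{-2}u(Rx)$, normalizes each point by a translation of the graph, and invokes the remark that in case ii) the curvature bound depends only on $n$, which gives $|A_L|\le c(n)/R\to0$. One small slip: to get $Du_R(x_0)=0$ you must subtract $R^{-1}\langle Du(Rx_0),x\rangle$ from $u_R$, or equivalently subtract $\langle Du(Rx_0),\cdot\rangle$ from $u$ before rescaling, not $\langle Du(Rx_0),x\rangle$.
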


\section{Appendix I: Smoothing Lipschitz functions on countably rectifiable sets}

Let $M$ be a complete Riemannian manifold and $U$ be an open set in $M$. 
Let $V$ be a rectifiable $n$-varifold in $U$ with integer multiplicity and support $L$.
We assume the corresponding Radon measure $\mu_V$ satisfying $\mu_V(U)<\infty$.
\begin{lemma}\label{Lip-to-smooth}
Let $f$ be a Lipschitz function on $L$, then for any $\ep>0$, there is a smooth function $f_\ep$ on $U$ so that 
\begin{equation}\aligned
\int_U\left(|f_\ep-f|^2+|\na^L(f_\ep-f)|^2\right)d\mu_V<\ep.
\endaligned
\end{equation}
\end{lemma}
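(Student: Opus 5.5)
The plan is to reduce the problem to the classical Whitney/Lusin-type approximation of Lipschitz functions by $C^1$ functions on Euclidean space, and then mollify. Since $U$ is properly embedded in $\R^m$, we work in $\R^m$ throughout. First I extend $f$ to a Lipschitz function $F$ on $\R^m$ with the same Lipschitz constant $\La$, using McShane's extension. Because $L$ is countably $n$-rectifiable and $\mu_V(U)<\infty$, for $\mu_V$-a.e. $x\in L$ the approximate tangent plane $T_xL$ exists. At such points $f$ is tangentially differentiable, with $\na^Lf(x)$ equal to the tangential gradient of $F$. I will use this identification as the meaning of $\na^L f$, consistent with Cheeger's differential on the rectifiable space $(L,d_L,\mu_V)$.

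Next I apply the $C^1$ Lusin-type theorem for Lipschitz functions (Federer 3.1.16, or Evans--Gariepy, Theorem 6.11) to $F$ on $\R^m$, but relative to the measure $\mu_V$ rather than Lebesgue measure. Concretely, I cover $L$ up to a $\mu_V$-null set by countably many $C^1$ $n$-dimensional graphs $\Gamma_j$ (this is rectifiability). On each $\Gamma_j$, $F|_{\Gamma_j}$ is Lipschitz, so by Whitney's extension theorem it agrees with a $C^1$ function off a set of small $\mathcal{H}^n$-measure. Since $\mu_V\ll\mathcal{H}^n\llcorner L$ with $L^1$ density $\vth$ and $\mu_V(U)<\infty$, absolute continuity gives a compact $K\subset L$ and a $C^1$ function $g$ on $\R^m$ with $\mu_V(U\setminus K)<\eta$, $g=F$ on $K$, and $\na^L g=\na^L f$ $\mu_V$-a.e. on $K$. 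The last point holds because two functions agreeing on $K$ have equal approximate tangential derivatives at $\mu_V$-density points of $K$. Truncating, I may also assume $|g|\le\sup|f|$ and $|\na g|\le C\La$. Getting the gradient bound costs care: in Whitney extension the $C^1$ norm is controlled by the Lipschitz constant on $K$ only after the standard choice of $K$. If that is awkward, I would instead take $g$ and then replace it by $\min\{\max\{g,F-\eta\},F+\eta\}$-type truncations, or simply note that only $L^2$ control off $K$ is needed.

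Then I estimate
\[
\int_U\left(|g-f|^2+|\na^L(g-f)|^2\right)d\mu_V\le \int_{U\setminus K}\left(4\sup|f|^2+(C+1)^2\La^2\right)d\mu_V\le c\,\eta .
\]
Finally, I mollify $g$ in $\R^m$ to get smooth $g_\si$ with $g_\si\to g$ and $\na g_\si\to\na g$ uniformly on compact sets. Projecting onto $T_xL$, this gives $\na^Lg_\si\to\na^Lg$ uniformly $\mu_V$-a.e.; since $\mu_V(U)<\infty$, the error is below $\eta$. Restricting to $U$ and choosing $\eta$ small relative to $\ep$ yields $f_\ep$.

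The main obstacle is the second step: producing $g\in C^1$ with $g=f$ on a large compact set and a uniform gradient bound, adapted to the measure $\mu_V$ rather than Lebesgue measure. It also requires checking that the tangential gradients agree $\mu_V$-a.e. on the coincidence set, which I would prove via the density-point argument on each rectifiable piece $\Gamma_j$.
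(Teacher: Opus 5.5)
Your proposal is correct in outline, but it takes a genuinely different route from the paper.

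\textbf{How the two routes differ.} The paper never extends $f$ off $L$. It keeps finitely many pieces $\hat S_j$ of $C^1$ submanifolds, pairwise at distance at least $\de_\ep$ and carrying all but $\ep$ of $\mu_V$. It then smooths by a normalized convolution against $\mathcal{H}^n\llcorner S$, $S=\bigcup_j\hat S_j$: $f_t(y)$ is $\int\si_t(|y-x|^2)f(x)\,d\mu_S(x)$ divided by $\int\si_t(|y-x|^2)\,d\mu_S(x)$. Blowing $S$ up to its tangent plane at a point of tangential differentiability gives $\na^Sf_t(p)\to\na^Sf(p)$. Dominated convergence, via the bound $|\na^Sf_t|\le c_nl_f$, then gives $W^{1,2}$ convergence on $S$.

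You instead produce a $C^1$ function $g$ on $\R^m$ that agrees with $f$ to first order on a large compact $K$, and then mollify in the ambient space. There, uniform convergence of $g_\si$ and $\na g_\si$ on compact sets passes to $\mu_V$ for free.

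The paper's argument is self-contained and uses Lipschitz bounds only inside each separated piece. Yours relies on the Lusin/Whitney theorems, but in exchange yields a function smooth on all of $U$, with explicit control on $L\setminus K$. By contrast, the paper's $f_t$ is only defined within distance $t$ of $S$. Its uniform gradient bound also needs extra care near the relative boundaries of the $\hat S_j$, where the ratio of $\int|X\si_t|\,d\mu_S$ to $\int\si_t\,d\mu_S$ is not uniformly controlled.

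\textbf{Points in your second step to settle rather than hedge.} First, the bound $|\na g|\le C\La$, uniform in $\eta$, is exactly what the estimate on $U\setminus K$ needs. It comes from a \emph{single} Whitney extension from $K=\bigcup_{j\le N}K_j$. Take $K_j\subset L\cap\Gamma_j$ compact and pairwise disjoint, hence at positive mutual distance, with $F=h_j$ and $\na^{\Gamma_j}F=\na^{\Gamma_j}h_j$ on $K_j$ for some $h_j\in C^1(\Gamma_j)$. Then extend the jet $(F,\na^{\Gamma_j}h_j)$.
\begin{itemize}
\item The Whitney condition holds uniformly on $K$.
\item Since $F$ is $\La$-Lipschitz and $|\na^{\Gamma_j}h_j|\le\La$ on $K_j$, the standard construction gives $\|\na g\|_\infty\le C(m)\La$. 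At $y\notin K$, the first-order Taylor polynomials of the nearby points of $K$ differ at $y$ by at most $C\La\,\mathrm{dist}(y,K)$, while the partition of unity has gradients at most $C/\mathrm{dist}(y,K)$.
\item Gluing separate per-piece extensions with cutoffs would instead give a bound depending on the separation of the pieces, which deteriorates as $\eta\to0$.
\end{itemize}

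Second, every modification of $g$ must stay $C^1$.
\begin{itemize}
\item Cap $|g|$ by composing with a smooth bounded $\psi$ equal to the identity on $[-\sup|f|,\sup|f|]$, not by $\min/\max$.
\item Drop the fallback $\min\{\max\{g,F-\eta\},F+\eta\}$. Truncation cannot produce a gradient bound anyway.
\item A merely Lipschitz $g$ is useless here: $L$ is Lebesgue-null in $\R^m$, so mollifying $g$ gives no control of $\na^Lg_\si$ on $L$.
\end{itemize}

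Finally, McShane's extension presumes $f$ is Lipschitz for the ambient distance. The paper's own proof uses the same kind of bound, $|f(x)-f(x')|\le l_f|x-x'|$ on $S$, so this is consistent with it.
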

\begin{proof}
By Nash's isometric embedding theorem,
we assume that $U$ is a smooth submanifold (with boundary) in $\R^m$.
From the countably $n$-rectifiable $L$, there is a sequence of $n$-dimensional embedded $C^1$-submanifolds $\{S_j\}_{j\ge1}$ of $U\cap \R^m$ so that $L\subset\cup_{j\ge0}S_j$, where $\mathcal{H}^n(S_0)=0$. We define $\tilde{S_1}=S_1$, and $\tilde{S_j}=S_j\setminus\cup_{1\le i\le j-1}S_i$ by induction for $j\ge2$. Then $\{\tilde{S_j}\}_{j\ge1}$ are mutually disjoint. For any $\ep>0$, there is an integer $n_\ep>0$ so that $\sum_{j> n_\ep}\mu_V(L\cap\tilde{S_j})<\ep/2$. Put $\hat{S_1}=\tilde{S_1}$. There is a sequence of relatively open subsets $\hat{S_j}\subset\tilde{S_j}$ so that $\mu_V(\tilde{S_j}\setminus\hat{S_j})<2^{-j-1}\ep$ and $\de_j:=d(\hat{S_j},\cup_{1\le i\le j-1}\tilde{S}_i)>0$. Put $\de_\ep=\min_{1\le j\le n_\ep}\de_j$ and $S=\cup_{1\le j\le n_\ep}\hat{S_j}$. Then
\begin{equation}\aligned\label{muThatSj}
\mu_V(U\setminus S)\le\sum_{j> n_\ep}\mu_V(L\cap\tilde{S_j})+\sum_{j=1}^{n_\ep}\mu_V(\tilde{S_j}\setminus\hat{S_j})<\f{\ep}2+\sum_{j=1}^{n_\ep}2^{-j-1}\ep<\ep.
\endaligned
\end{equation}
In particular, $f$ is Lipschitz on $S$.

Let $\si$ be a symmetric smooth nonnegative function on $\R$ with compact support in $(-1,1)$.
Put $\si_{\ep}(s)=\left(\int_{\R^{n}}\si(|\mathrm{x}|^2)d\mathrm{x}\right)^{-1}\ep^{-n}\si(\ep^{-2}s)$ for all $\ep>0$. Then 
\begin{equation}\aligned\label{sits=1}
\mathcal{H}^{n-1}(\mathbb{S}^{n-1}(1))\int_0^\ep s^{n-1}\si_{\ep}(s^2)ds=\int_{\R^{n}}\si_\ep(|\mathrm{x}|^2)d\mathrm{x}=1.
\endaligned
\end{equation}
Let  $\mu_S=\mathcal{H}^n\llcorner S$.
For each $t<\de_\ep/2$, we define a smooth function $f_t$ on $U$ by letting
\begin{equation}\aligned\label{juan}
f_t(y)=\f{\int \si_t(|y-x|^2)f(x) d\mu_S(x)}{\int\si_t(|y-x|^2)d\mu_S(x)}\qquad\qquad \mathrm{for\ each}\ y\in L.
\endaligned
\end{equation} 
We consider a point $p\in L\cap S$ so that $\na^L f(p)$ exists.
Given a vector $\mathrm{v}\in T_pS$, we assume $T_pS=\R^n\times\{0^{m-n}\}\subset\R^{m}$ up to a rotation. 
Let $X$ be a smooth vector field in $U$ with $X(p)=\mathrm{v}$.
Since
$f(x)-f(p)=(x-p)\cdot\na^S f(p)+o(|x-p|)$,
we have
\begin{equation}\aligned\label{ep0Xsiepv}
\lim_{t\to0}\int X\si_t(|y-x|^2)\big|_{y=p} &(f(x)-f(p)) d\mu_S=-\int_{B_1(0^n)} \na_{\mathrm{v}}\si(|\mathrm{x}|^2)\lan\mathrm{x},\na^Sf(p)\ran d\mathcal{H}^n\\
=&\int_{B_1(0^n)} \si(|\mathrm{x}|^2)\lan\mathrm{v}, \na^Sf(p)\ran d\mathcal{H}^n=\lan\mathrm{v}, \na^Sf(p)\ran,
\endaligned
\end{equation} 
where the directional differential $X$ of $\si_t$ is w.r.t. $y$, and
we have used integration by parts in the second equality in \eqref{ep0Xsiepv}.
On the other hand,
\begin{equation}\aligned\label{ep0Xsiepv0}
&\lim_{t\to0}\int \si_t(|y-x|^2)\big|_{y=p}(f(x)-f(p)) d\mu_S\int X\si_t(|y-x|^2)\big|_{y=p} d\mu_S\\
=&\lim_{t\to0}\int \si_t(|x-p|^2)\f{(x-p)\cdot \na^Sf(p)+o(|x-p|)}{t}d\mu_S\int t X\si_t(|y-x|^2)\big|_{y=p} d\mu_S\\
=&-\int_{B_1(0^n)} \si(|\mathrm{x}|^2)\mathrm{x}\cdot \na^Sf(p)  d\mathcal{H}^n\int_{B_1(0^n)} \na_{\mathrm{v}}\si(|\mathrm{x}|^2)d\mathcal{H}^n=0.
\endaligned
\end{equation} 
A direct calculation gives
\begin{equation}\aligned\label{Xfty=p}
Xf_t(y)\big|_{y=p}=&\f{\int X\si_t(|y-x|^2)\big|_{y=p}f(x) d\mu_S(x)}{\int\si_t(|p-x|^2)d\mu_S(x)}\\
&-\f{\int \si_t(|p-x|^2)f(x) d\mu_S(x)\int X\si_t(|y-x|^2)\big|_{y=p} d\mu_S(x)}{\left(\int\si_t(|p-x|^2)d\mu_S(x)\right)^2}\\
=&\f{\int X\si_t(|y-x|^2)\big|_{y=p}(f(x)-f(p)) d\mu_S(x)}{\int\si_t(|p-x|^2)d\mu_S(x)}\\
&-\f{\int \si_t(|p-x|^2)(f(x)-f(p)) d\mu_S(x)\int X\si_t(|y-x|^2)\big|_{y=p} d\mu_S(x)}{\left(\int\si_t(|p-x|^2)d\mu_S(x)\right)^2}.
\endaligned
\end{equation} 
Combining \eqref{ep0Xsiepv} and \eqref{ep0Xsiepv0}, we get
\begin{equation}\aligned\label{epXthlep}
\lim_{t\to0}Xf_t(y)\big|_{y=p}=\lan\mathrm{v}, \na^Sf(p)\ran.
\endaligned
\end{equation} 
This implies $\lim_{t\to0}\na^Sf_t(p)=\na^Sf(p)$. 

Let us consider a general point $y$ in $L\cap S$ where $\na^Sf$ may not exist at $y$.
Let $l_f$ denote the Lipschitz constant of $f$ on $L\cap S$. From \eqref{Xfty=p}, we get
\begin{equation}\aligned
\big|Xf_t\big|(y)\le2l_ft\f{\int \big|X\si_t(|y-x|^2)\big| d\mu_S(x)}{\int\si_t(|y-x|^2)d\mu_S(x)}.
\endaligned
\end{equation} 
Noting that the tangent space $T_xS$ varies continuously.
Hence, there is a constant $c_n>0$ depending only on $n$ so that $|\na^S f_t|\le c_nl_f$ on $L\cap S$. Since $f_t\to f$ and $\na^Sf_t\to\na^Sf$ as $t\to0$ both in the sense of measure, we conclude that
\begin{equation}\aligned
\lim_{t\to0}\int_L\left(|f_t-f|^2+|\na^L(f_t-f)|^2\right)d\mu_S=0.
\endaligned
\end{equation}
Combining \eqref{muThatSj}, for any $\ep>0$, there is a constant $0<t_\ep<\de_\ep/2$ so that 
\begin{equation}\aligned
\int_U\left(|f_{t_\ep}-f|^2+|\na^L(f_{t_\ep}-f)|^2\right)d\mu_V<\ep.
\endaligned
\end{equation}
This completes the proof up to a choice of the index $t$ in $f_t$.
\end{proof}

\section{Appendix II: Almost monotonicity for varifolds in manifolds}

Let $M$ be an $n$-dimensional complete Riemannian manifold. Let $U\subset M$ be an open set, and $V=\vth|L|$ be an integral $k$-varifold in $U$ with generalized mean curvature $H\in L^1_{\mu_V}$ with $1\le k<n$.
From Nash's isometric embedding theorem,
we assume that $U$ is a submanifold in $\R^m$ with $m\ge n$ and $\p U\cap \mathbf{B}_1=\emptyset$. Let $\mathbf{B}_r$ denote the ball in $\R^m$ with radius $r$ and centered at the origin.

Let $\bn$ denote the Levi-Civita connection of $\R^m$, and $\r(\mathbf{x})=|\mathbf{x}|$, then 
\begin{equation}\aligned
&\mathrm{div}_{P}\mathbf{x}=k\qquad\mathrm{and}\qquad \mathbf{x}=\r\bn\r\qquad\qquad \text{for a }k \text{-plane } P\subset \R^m.
\endaligned
\end{equation}
Let $P^N$ denote the orthogonal $(m-k)$-plane to $P$.
For each $\e\in C^1(\R,\R^+)$ and $f\in C^1(\mathbf{B}_1,\R^+)$,
from (3.25) and (3.26) both in \cite{CM1} we get 
\begin{equation}\aligned\label{efxapp}
\mathrm{div}_{P}\left(\e(\r) f\mathbf{x}\right)=&k\e(\r)f+\lan\na_P f,\mathbf{x}\ran\e(\r)+\r\e'(\r)f|\na_P\r|^2\\
=&k\e(\r)f+\lan\na_P f,\mathbf{x}\ran\e(\r)+\r\e'(\r)f-\r\e'(\r)f\left|\na_{P^N}\r\right|^2.
\endaligned
\end{equation}

Now we adapt the method in Proposition 3.7 of Colding-Minicozzi's book \cite{CM1} to study almost monotonicity for $V$.
Let $\phi$ be a nonnegative cut-off function with $\phi'\le0$, $\phi\equiv1$ on $[0,1/2]$ and
spt$\phi\subset[0,1]$. We fix $0<s<1$ for the moment and let $\e(\r)=\phi(\r/s)$ so that
$$\r\e'(\r)=-s\f{\p}{\p s}\phi\left(\f {\r}s\right).$$
We denote $\na_{P^N}$ by $\na^N$ if $P\subset \R^m$ denotes the tangent plane $T_\mathbf{x}L$ whenever it exists.
Let $\mathbf{A}_U$ denote the second fundamental form of $U$ in $\R^m$, and $\k$ be a constant satisfying $|\mathbf{A}_U(Y,Y)|\le\k|Y|^2/n$ for any $Y$ tangent to $U$.
We also see the mean curvature $H$ as a vector in $\R^m$ by adding zero components in the normal direction to $U$.
Let $X=\e(\r) f\mathbf{x}$, using \eqref{efxapp} we get
\begin{equation}\aligned
&-\int \lan H+\mathrm{tr}_L\mathbf{A}_{U},\e(\r) f\mathbf{x}\ran d\mu_V=\int_U\mathrm{div}_LX d\mu_V\\
=&\int\left(k\e(\r)f+\lan\na^L f,\mathbf{x}\ran\e(\r)+\r\e'(\r)f-\r\e'(\r)f\left|\na^N\r\right|^2\right)d\mu_V\\
=&\int\left(k\phi\left(\f{\r}s\right)-s\f{\p}{\p s}\phi\left(\f {\r}s\right)+s\f{\p}{\p s}\phi\left(\f {\r}s\right)\left|\na^N\r\right|^2\right)fd\mu_V+\int\lan\na^L f,\mathbf{x}\ran\phi\left(\f {\r}s\right)d\mu_V.
\endaligned
\end{equation}
Then
\begin{equation}\aligned\label{phis-kk*}
\f{d}{ds}\left(s^{-k}\int\phi\left(\f{\r}s\right)fd\mu_V\right)=& s^{-k-1}\int s\f{\p}{\p s}\phi\left(\f {\r}s\right)\left|\na^N\r\right|^2f d\mu_V\\
&+s^{-k-1}\int\lan f(H+\mathrm{tr}_L\mathbf{A}_{U})+\na^L f,\mathbf{x}\ran\phi\left(\f {\r}s\right)d\mu_V.
\endaligned
\end{equation}
Let $\phi$ increase to the characteristic function of $[0, 1]$ and we integrate \eqref{phis-kk*} for $s$ from $r$ to $R<1$. With the monotone convergence theorem, we get
\begin{equation}\aligned\label{monofHkdf}
\f{1}{R^k}\int_{\mathbf{B}_R}fd\mu_{V}-\f{1}{r^k}\int_{\mathbf{B}_r}fd\mu_{V}\ge-\int_r^R\f{1}{s^{k+1}}\int_{\mathbf{B}_s}(sf(|H|+\k)+\lan\na^L f,\mathbf{x}\ran) d\mu_Vds.
\endaligned
\end{equation}

On the other hand, from \eqref{phis-kk*} it follows that
\begin{equation}\aligned\label{s-nphiVinfty}
\f{d}{ds}\left(\f{e^{\k s}}{s^k}\int\phi\left(\f{\r}s\right)fd\mu_V\right)\ge&\int e^{\k \r}\r^{-k}\f{\p}{\p s}\phi\left(\f {\r}s\right)\left|\na^N\r\right|^2fd\mu_V\\
&+\f{e^{\k s}}{s^{k+1}}\int\lan fH+\na^L f,\mathbf{x}\ran\phi\left(\f {\r}s\right) d\mu_V\\
=\f{d}{ds}\left(\int\f{e^{\k \r}}{\r^k}\phi\left(\f {\r}s\right)\left|\na^N\r\right|^2fd\mu_V\right)&+\f{e^{\k s}}{s^{k+1}}\int\lan fH+\na^L f,\mathbf{x}\ran\phi\left(\f {\r}s\right) d\mu_V.
\endaligned
\end{equation}
Noting $\mathbf{x}=\r\bn\r$. Compared with \eqref{monofHkdf}, we have
\begin{equation}\aligned\label{Almost-mono-Vinfty}
\f{e^{\k R}}{R^k}\int_{\mathbf{B}_R}fd\mu_{V}-\f{e^{\k r}}{r^k}\int_{\mathbf{B}_r}fd\mu_{V}\ge&\int_{\mathbf{B}_R\setminus \mathbf{B}_r}\f{e^{\k \r}}{\r^k}\left|\na^N\r\right|^2fd\mu_V\\
&+\int_r^R\f{e^{\k s}}{s^{k+1}}\int_{\mathbf{B}_s}\lan fH+\na^L f,\r\na\r\ran d\mu_Vds,
\endaligned
\end{equation}
where $\na$ denotes the Levi-Civita connection of $M$.
Analog to the above argument, we get
\begin{equation}\aligned\label{Almost-mono-Vinfty**}
\f{e^{-\k R}}{R^k}\int_{\mathbf{B}_R}fd\mu_{V}-\f{e^{-\k r}}{r^k}\int_{\mathbf{B}_r}fd\mu_{V}\le&\int_{\mathbf{B}_R\setminus \mathbf{B}_r}\f{e^{-\k \r}}{\r^k}\left|\na^N\r\right|^2fd\mu_V\\
&+\int_r^R\f{e^{-\k s}}{s^{k+1}}\int_{\mathbf{B}_s}\lan fH+\na^L f,\r\na\r\ran d\mu_Vds.
\endaligned
\end{equation}

\bibliographystyle{amsplain}

\end{document}